\newtheorem{theorem}{Theorem}[section]
\newtheorem{lemma}[theorem]{Lemma}
\newtheorem{prop}[theorem]{Proposition}
\newtheorem{cor}[theorem]{Corollary}
\theoremstyle{definition}
\newtheorem{definition}[theorem]{Definition}
\theoremstyle{remark}
\newtheorem{remark}[theorem]{Remark}
\numberwithin{equation}{section}
\DeclareMathOperator{\rg}{rg}
\DeclareMathOperator{\rank}{rank}
\begin{document}

\title{Stable blowup for the cubic wave equation in higher dimensions}

\author{Athanasios Chatzikaleas}
\address{Rheinische Friedrich - Wilhelms - Universit\"at Bonn, Mathematisches Institut, Endenicher Allee 60, D-53115 Bonn, Germany}
\email{achatzik@math.uni-bonn.de}

\author{Roland Donninger}
\address{Universit\"at Wien, Fakult\"at f\"ur Mathematik, Oskar-Morgenstern-Platz 1, A-1090 Vienna, Austria}
\email{roland.donninger@univie.ac.at}
\thanks{R.D.~is supported by the Austrian Science Fund FWF, Project P 30076-N32. The authors would like to thank Birgit Sch{\"o}rkhuber for fixing
a mistake in an earlier version of this paper.}

\dedicatory{}

\begin{abstract}
We consider the wave equation with a focusing cubic nonlinearity in
higher odd space dimensions without symmetry restrictions on the data. We prove that there exists an open set of initial data such that the corresponding solution exists in a backward light-cone and approaches the ODE blowup profile.
\end{abstract}

\maketitle


\section{Introduction} 

\noindent
\subsection{Cubic wave equation} 

In this paper we study the wave equation with a focusing cubic nonlinearity
\begin{align} \label{cubicwave}
\Box u (t,x) +u^{3} (t,x) = 0,
\end{align}
with $(t,x) \in  \mathbb{R}^{1+d}$. Here, $\Box$ stands for the Laplace-Beltrami operator on Minkowski space with signature $(-+++)$, i.e.,
\begin{align*}
\Box := - \partial ^{2}_{t} + \Delta _{x}.
\end{align*}
Equation \eqref{cubicwave} has the conserved energy
\begin{align*}
E[u](t):= \frac{1}{2} \int _{\mathbb{R}^{d}}    \left | \partial _{t} u(t,x) \right|^{2} dx + \frac{1}{2}  \int _{\mathbb{R}^{d}}  \left | \nabla _{x} u(t,x) \right|^{2} dx - \frac{1}{4} \int _{\mathbb{R}^{d}}   \left |  u(t,x) \right|^{4} dx.
\end{align*}
Obviously, equation \eqref{cubicwave} is invariant under time-translations. In addition, other symmetries of the equation that are relevant in our context are Lorentz boosts, 
namely, if $u$ is a solution to \eqref{cubicwave}, so is 
\begin{align}  \label{Lorentz}
u_{T,\alpha} (t,x):=u \circ \Lambda _{T} (\alpha)
  \begin{pmatrix}
    t \\
    x
  \end{pmatrix},
\end{align}
for $T \in \mathbb{R}$ and $\alpha = ( \alpha ^{1},\cdots,\alpha^{d} ) \in \mathbb{R}^{d}$. Here, we define the Lorentz transformations in a way that resembles circular rotations in $d$-dimensional space using hyperbolic functions, that is
\begin{align*} 
\Lambda _{T} (\alpha) : = \Lambda ^{d}_{T} (\alpha ^{d}) \circ \Lambda ^{d-1}_{T} (\alpha ^{d-1}) \circ \cdots \circ \Lambda ^{1}_{T} (\alpha ^{1}) 
\end{align*}
where the boost in the $j-$direction is given by
\begin{align*}
\Lambda ^{j}_{T} (\alpha ^{j}) 
  \begin{pmatrix}
    t \\
    x^{1} \\
    \vdots \\
    x^{j} \\
    \vdots \\
    x^{d}
  \end{pmatrix}
  :=
   \begin{pmatrix}
    (t-T) \cosh (\alpha ^{j}) +x^{j} \sinh (\alpha ^{j}) + T \\
    x^{1} \\
    \vdots \\
    (t-T) \sinh (\alpha ^{j}) + x^{j} \cosh (\alpha ^{j}) \\
    \vdots \\
    x^{d}
  \end{pmatrix}. 
\end{align*}
A Lorentz boost can be thought of as a hyperbolic rotation of spacetime coordinates of the $(1+d)-$dimensional Minkowski space. The parameter $\alpha \in \mathbb{R}^{d}$ (called rapidity) is the hyperbolic angle of rotation, analogous to the ordinary angle for circular rotations. Note in particular that the spacetime event $(T,0)$ is a fixed point of the transformation $\Lambda _{T} (\alpha)$ and the light-cones emanating from $(T , 0)$ are invariant under $\Lambda _{T} (\alpha)$.

\subsection{Blowup solutions}

Equation \eqref{cubicwave} exhibits finite-time blowup from smooth, compactly supported initial data. This fact is most easily seen by looking at spatially homogeneous blowup solutions. In other words, we ignore the Laplacian in the space variable in the equation and the remaining ordinary differential equation can be solved explicitly. This leads to the solution 
\begin{align*}
u_{1}(t,x):= \frac{\sqrt{2}}{1-t}.
\end{align*}
Using the symmetries of the equation we get a larger family of blowup solutions. Namely, time translation symmetry yields
\begin{align} \label{groundstate}
u_{T}(t,x):= \frac{\sqrt{2}}{T-t}
\end{align} 
and Lorentz symmetry implies that
\begin{align} \label{blowsol}
u_{T,\alpha} (t,x) =
  \frac{\sqrt{2}}{A_{0}(\alpha)(T-t)-A_{j}(\alpha) x^{j}}
\end{align}
is also a solution, see \eqref{Lorentz}. Here and in the following, we adopt the Einstein summation convention, namely
\begin{align*}
a_{j}b^{j} = \sum _{j=1}^{d}a_{j}b^{j}
\end{align*} 
and 
\begin{align*}
  \left \{
  \begin{aligned}
     A_{0}(\alpha) &:=\cosh (\alpha^{d})  \cdots   \cosh (\alpha^{3}) \cosh (\alpha^{2}) \cosh (\alpha^{1}), && \ \\
    A_{1}(\alpha) &:= \cosh (\alpha^{d}) \cdots   \cosh (\alpha^{3})\cosh (\alpha^{2}) \sinh (\alpha^{1}), && \ \\
    A_{2}(\alpha) &:=   \cosh (\alpha^{d}) \cdots  \cosh (\alpha^{3}) \sinh (\alpha^{2}), && \ \\
    &\quad \vdots && \ \\
    A_{d-1}(\alpha) &:=\cosh (\alpha^{d}) \sinh (\alpha^{d-1}), && \ \\
    A_{d}(\alpha) &:= \sinh (\alpha^{d}). && \ \\
  \end{aligned} \right.
\end{align*} 
Observe that $A_{0}(\alpha) = \mathcal{O}(1)$ whereas $A_{j}(\alpha) = \mathcal{O}(\alpha)$ for all sufficiently small $\alpha \in \mathbb{R}^{d}$.
\subsection{The Cauchy problem} 

Our intention is to study the future development of small perturbations of $u_{T_{0},\alpha_{0}}$ under \eqref{cubicwave} for fixed $T_{0} \in \mathbb{R}$ and $\alpha _{0} \in \mathbb{R}^{d}$. Hence, we consider the Cauchy problem 
\begin{align} \label{Cauchy}
\begin{cases}
\Box u (t,x) +u^{3} (t,x) = 0,\\
u[0]=(f,g),
\end{cases}
\end{align}
where
\begin{align} \label{datatilde}
(f,g) = u_{T_{0},\alpha_{0}}[0] + (\tilde{f},\tilde{g}).
\end{align}
Here, we use the abbreviation $u[t]=(u(t,\cdot),\partial_t u(t,\cdot))$ for convenience, $u_{T_{0},\alpha_{0}}$ is defined in \eqref{blowsol} and $(\tilde{f},\tilde{g})$ are small in a suitable sense. Furthermore, we restrict the evolution to the backward light-cone 
\begin{align*}
C_{T}:=\{ (t,x): 0 \leq t <T,~|x| \leq T-t \} = \bigcup_{t\in[0,T)} \{t\} \times \mathbb{B}_{T-t}^{d}.
\end{align*}

\subsection{Related results}

There is a lot of activity in the study of blowup for wave equations. The interest in \eqref{cubicwave} stems from the fact that this equation contains many features
common to a whole range of blow-up problems arising in mathematical physics, as for example in nonlinear optics \cite{BiTaSz11} and general relativity \cite{DoScSo12}. \\ 
By definition, $u$ is a solution to \eqref{Cauchy} if and only if it satisfies the equation in the integral form using Duhamel's principle, namely
\begin{align*}
u(t,\cdot)= \cos \left( t \left | \nabla \right | \right) f + \frac{ \sin \left( t \left | \nabla \right | \right)}{\left | \nabla \right |} g  
+ \int _{0}^{t} \frac{ \sin \left( (t-s) \left | \nabla \right | \right)}{\left | \nabla \right |} u^{3} (s,\cdot) ds,
\end{align*}
for initial data
\begin{align*}
(f,g) \in  H^{s} ( \mathbb{R}^{d} ) \times H^{s-1} ( \mathbb{R}^{d} ).
\end{align*}
Using this formula, one can show that \eqref{Cauchy} is locally well-posed for initial data in $\dot{H}^{s} ( \mathbb{R}^{d} ) \times \dot{H}^{s-1} ( \mathbb{R}^{d} )$ for $s>\frac{d}{2}$, see \cite{TaoNonlinear06}. On the one hand, equation \eqref{cubicwave} is invariant under the scaling transformation
\begin{align}  \label{scaling}
u _{\lambda} (t,x):= \frac{1}{\lambda} u \left( \frac{t}{\lambda},\frac{x}{\lambda} \right),~ \lambda > 0
\end{align}
and
\begin{align*}
\left \| u _{\lambda}(t,\cdot) \right \|_{ \dot{H}^{s} \left ( \mathbb{R}^{d} \right)  } = \lambda ^{ \frac{d}{2} -1 - s } \left \| u \left( \frac{t}{\lambda}, \cdot \right) \right \|_{ \dot{H}^{s} \left ( \mathbb{R}^{d} \right)  }. 
\end{align*}
This scaling property is closely related to the existence of a suitable local theory for the problem and distinguishes the space $\dot H^{s_{3} }( \mathbb{R}^{d} ) \times \dot H^{s_{3}-1 } ( \mathbb{R}^{d} ),~s_{3}:=\frac{d}{2}-1$ as the critical Sobolev space, the unique $L^2$-based homogeneous Sobolev space preserved by the scaling \eqref{scaling}. Indeed, Strichartz theory shows that \eqref{Cauchy} is locally well-posed for initial data in the critical Sobolev space $\dot{H}^{s_{3}} ( \mathbb{R}^{d} ) \times \dot{H}^{s_{3}-1} ( \mathbb{R}^{d} )$, \cite{Sogge08}, \cite{Lindblad95}. On the other hand, equation \eqref{cubicwave} has the conserved energy
\begin{align*}
E[u](t):= \frac{1}{2} \int _{\mathbb{R}^{d}}    \left | \partial _{t} u(t,x) \right|^{2} dx + \frac{1}{2}  \int _{\mathbb{R}^{d}}  \left | \nabla _{x} u(t,x) \right|^{2} dx - \frac{1}{4} \int _{\mathbb{R}^{d} }   \left |  u(t,x) \right|^{4} dx
\end{align*}
which distinguishes the space $\dot{H}^{1}( \mathbb{R}^{d} ) \times L^{2}( \mathbb{R}^{d} )$ as the energy space, that is, the space of initial data for which the energy is known to be finite. For $d \geq 5$, the critical regularity $s_{3}=\frac{d}{2}-1$ is larger than the energy-critical regularity $s=1$ and equation \eqref{cubicwave} is energy-supercritical. \\ \\
The one-dimensional case has been completely understood, see \cite{MeZa07}, \cite{MeZa08}, \cite{MeZa12}, \cite{MeZa12b} where Merle and Zaag exhibited a universal one-parameter family of functions which yields the blowup profile in self-similar variables for general initial data. In higher dimensions, the situation is less clear. In three space dimensions, Bizo\'{n} together with Breitenlohner, Maison and Wasserman in \cite{BiMaDiWa07}, \cite{BiBrMaWa10} showed that equation \eqref{cubicwave} admits infinitely many radial self-similar blowup solutions of the
form 
\begin{align*}
\frac{1}{T-t} f_{n} \left( \frac{|x|}{T-t} \right).
\end{align*}
Here, the ground-state solution \eqref{groundstate} corresponds to
$f_{0}=\sqrt{2}$. Levine \cite{Le74} used energy methods and a
convexity argument to show that initial data with negative energy and
finite $L^{2}-$norm lead to blowup in finite time, see also
\cite{KiBeVi14} for generalizations to the Klein-Gordon equation. We
also mention the works of Alinhac \cite{Se95} and Caffarelli and Friedman \cite{CaFr86}, \cite{CaFr85} where more blowup results can be found. The stability of the ground-state has been studied extensively by Sch\"{o}rkhuber and the second author in three space dimensions (in \cite{DonSch12}, \cite{DonSch14} for radial initial data and in \cite{DonSch16a} without symmetry restrictions) and later in \cite{DonSch17} for all space dimensions and for radial initial data. Some numerical results are available in a series of papers by Bizo\'{n}, Chmaj, Tabor and Zengino\u{g}lu, see \cite{Bi01}, \cite{BiChTa04}, \cite{BiZe09}. Furthermore, in the superconformal and Sobolev subcritical range, an upper bound on the blowup rate was proved by Killip, Stoval and Vi\c{s}an in \cite{KiBeVi14}, then refined by Hamza and Zaag in \cite{HaZa13}. In a series of papers \cite{MeZa05}, \cite{MeZa16}, \cite{MeZa15}, \cite{MerZa05c}, \cite{MerZa03c}, Merle and Zaag obtained sharp upper and lower bounds on the blowup rate of the $H^{1}-$norm of the solution inside cones that terminate at the singularity, see also the work of Alexakis and Shao \cite{AleSh17}. We also mention the recent work by Dodson-Lawrie \cite{DoLa15} on large-data scattering for the cubic equation in five dimensions.


\section{The main result}
\noindent
By finite speed of propagation one can use $u_{T,\alpha}$ to construct smooth, compactly supported initial data which lead to a solution that blows up as $t \longrightarrow T$.  In the present work, we study the asymptotic nonlinear stability of $u_{T,\alpha}$. As a matter of fact, we prove that all initial data from an open, sufficiently small region centered at $u_{T,\alpha}$ lead to the same type of blowup described by the ODE blowup profile. First, we need a definition for our notion of the blowup time.
\begin{definition}
Given initial data $(f,g)$, we define
\begin{align*}
T_{(f,g)} := \sup  \left\{
T >0 \middle|
  \begin{subarray}{c}  
  \exists \text{~solution~} u : C_{T} \longrightarrow \mathbb{R} \text{~to~} \eqref{Cauchy} \text{~in the sense of}   \\ 
   \text{~Definition~} \ref{def} \text{~with initial data~} u[0]=(f,g) |_{\mathbb{B}_{T}^{d}} 
  \end{subarray} \right\} \cup  \{0\}.
\end{align*}
In the case where $T_{(f,g)} < \infty$, we call $T = T_{(f,g)}$ the blowup time at the origin.
\end{definition}
The main result of this work is the following.
\begin{theorem} \label{mainresult}
Fix $d \in \{5,7,9,11,13\}$, $T_{0}>0$ and $\alpha _{0} \in \mathbb{R}^{d}$. There exist constants $M,\delta >0$ such that the following holds. Suppose that the initial data
\begin{align*}
(f,g)  \in H^{\frac{d+1}{2}} (\mathbb{B}_{T_{0}+\delta}^{d}) \times H^{\frac{d-1}{2} } (\mathbb{B}_{T_{0}+\delta}^{d}  )
\end{align*}
satisfy
\begin{align*}
\Big \|  (f,g) -  u_{T_{0},\alpha_{0}} [0] \Big \|_{ H^{\frac{d+1}{2}} (\mathbb{B}_{T_{0}+\delta}^{d}) \times H^{\frac{d-1}{2} } (\mathbb{B}_{T_{0}+\delta}^{d}  )} \leq \frac{\delta}{M}.
\end{align*}
Then, $T = T_{ u[0] } \in [T_{0}-\delta,T_{0}+\delta]$ and there exists an $\alpha \in \mathbb{B}_{3M\delta}^{d}(\alpha_{0})$ such that the solution $u: C_{T} \longrightarrow \mathbb{R}$ to \eqref{Cauchy} satisfies the estimates
\begin{align*}
& (T-t)^{k-\frac{d}{2} +1} \Big \|    u (t,\cdot) - u _{T,\alpha} (t, \cdot) \Big \|_{ \dot{H}^{k}(\mathbb{B}^{d}_{T-t} ) }  \leq \delta (T-t)^{\frac{1}{2}}, \\
& (T-t)^{\ell-\frac{d}{2}+2} \Big \| \partial_t u(t,\cdot) - \partial_t u _{T,\alpha} (t, \cdot) \Big \|_{ \dot{H}^{\ell}(\mathbb{B}^{d}_{T-t} ) }  \leq \delta (T-t)^{\frac{1}{2}},
\end{align*}
for all $k=0,1,\cdots,\frac{d+1}{2}$ and $\ell=0,1,\cdots,\frac{d-1}{2}$.
\end{theorem}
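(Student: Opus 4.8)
The plan is to follow the by now standard route to stable blowup: pass to self-similar variables adapted to the backward light-cone, linearize around the blowup profile, carry out a spectral analysis of the resulting non-self-adjoint generator, and close the argument with a Lyapunov--Perron type fixed point combined with a modulation that absorbs the $(d{+}1)$ symmetry directions (one time translation, $d$ Lorentz boosts). Concretely: for a blowup time $T$ ranging over a small interval around $T_{0}$, introduce $\tau=-\log(T-t)$ and $\xi=\tfrac{x}{T-t}$, which send $C_{T}$ to the cylinder $[0,\infty)\times\overline{\mathbb{B}^{d}_{1}}$, and rescale $u$ at the ODE rate via $\psi(\tau,\xi):=(T-t)\,u(t,x)$. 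In these variables \eqref{cubicwave} becomes an autonomous semilinear equation on the cylinder, the ground state $u_{T}$ becomes the constant $\sqrt{2}$, and each boosted profile $u_{T,\alpha}$ from \eqref{blowsol} becomes an explicit $\tau$-independent solution $\psi_{\alpha}$. Writing $\psi=\psi_{\alpha}+\varphi$ and passing to the first-order variable $\Phi=(\varphi,\partial_{\tau}\varphi)$ yields an evolution equation $\partial_{\tau}\Phi=\mathbf{L}_{\alpha}\Phi+\mathbf{N}(\Phi)$ on a Hilbert space $\mathcal{H}$ modeled on $H^{\frac{d+1}{2}}(\mathbb{B}^{d}_{1})\times H^{\frac{d-1}{2}}(\mathbb{B}^{d}_{1})$; these exponents are integers precisely because $d$ is odd, and they sit above the scaling-critical regularity, which is what allows one to absorb the cubic nonlinearity $\varphi\mapsto(\psi_{\alpha}+\varphi)^{3}-\psi_{\alpha}^{3}$ in the energy-supercritical range.

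Next, split $\mathbf{L}_{\alpha}=\mathbf{L}_{0}+\mathbf{L}'_{\alpha}$, where $\mathbf{L}_{0}$ is the generator of the rescaled free wave evolution (whose semigroup $S_{0}(\tau)$ admits a sharp growth bound on $\mathcal{H}$ via a direct energy computation specific to these odd dimensions) and $\mathbf{L}'_{\alpha}$ is the relatively compact potential perturbation coming from linearizing $u^{3}$ at $\psi_{\alpha}$. A perturbation argument shows $\mathbf{L}_{\alpha}$ generates a strongly continuous semigroup $S_{\alpha}(\tau)$, and the heart of the matter is \emph{mode stability}: expanding in spherical harmonics on $S^{d-1}$ reduces the eigenvalue problem to a family of singular second-order ODEs of hypergeometric type, and one must prove that the only spectral points of $\mathbf{L}_{\alpha}$ with $\operatorname{Re}\lambda\ge-\omega_{0}$ (for a suitable $\omega_{0}>0$) are the algebraically simple eigenvalue $\lambda=1$, with eigenfunction the generator of time translations, and the eigenvalue $\lambda=0$ of multiplicity $d$, with eigenfunctions $\partial_{\alpha^{j}}\psi_{\alpha}$ coming from the Lorentz boosts. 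Projecting off this $(d{+}1)$-dimensional subspace with a spectral projection $\mathbf{P}_{\mathrm s}$, one obtains the decay $\|S_{\alpha}(\tau)\mathbf{P}_{\mathrm s}\|_{\mathcal{H}\to\mathcal{H}}\lesssim e^{-\omega\tau}$ for an $\omega$ that may be taken to equal $\tfrac12$, consistent with the $(T-t)^{1/2}$ gain in the statement.

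With the linear theory in hand, treat $T$ and $\alpha$ as free parameters and, for each choice in the allowed range, solve the Duhamel equation $\Phi(\tau)=S_{\alpha}(\tau)\Phi(0)+\int_{0}^{\tau}S_{\alpha}(\tau-\sigma)\mathbf{N}(\Phi(\sigma))\,d\sigma$ in the Banach space $\{\Phi\in C([0,\infty),\mathcal{H}):\sup_{\tau\ge0}e^{\omega\tau}\|\Phi(\tau)\|_{\mathcal{H}}<\infty\}$, subtracting as usual the finite-dimensional growing/neutral component along $\mathbf{1}-\mathbf{P}_{\mathrm s}$ so as to run a contraction mapping; this yields, for data within $\delta/M$ of $u_{T_{0},\alpha_{0}}[0]$, a unique decaying $\Phi$ for each $(T,\alpha)$ modulo a residual in the $(d{+}1)$-dimensional bad subspace. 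A fixed-point / implicit-function argument in $(T,\alpha)$ then annihilates this residual and pins down $T\in[T_{0}-\delta,T_{0}+\delta]$ and $\alpha\in\mathbb{B}^{d}_{3M\delta}(\alpha_{0})$. Undoing the change of variables finally converts $\|\Phi(\tau)\|_{\mathcal{H}}\lesssim\delta e^{-\tau/2}$ into the claimed $\dot{H}^{k}$ and $\dot{H}^{\ell}$ estimates on $C_{T}$ with the stated powers of $T-t$.

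The decisive difficulty is the non-radial mode-stability analysis: for every spherical-harmonic sector one must rigorously exclude eigenvalues of $\mathbf{L}_{\alpha}$ in the closed right half-plane beyond the two forced by the symmetries, which comes down to a delicate analysis of the associated hypergeometric connection problem, and it is exactly here --- together with the need for a sharp semigroup bound for $\mathbf{L}_{0}$ on the high-regularity space $\mathcal{H}$ --- that the argument is confined to $d\in\{5,7,9,11,13\}$, the dimensions in which the relevant connection coefficients and decay rates can be evaluated in closed form. A secondary, more technical obstacle is propagating the high-order, non-radial Sobolev control through the cubic nonlinearity and the modulation error terms uniformly in $\tau$.
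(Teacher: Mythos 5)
Your outline matches the paper's strategy essentially step for step: self-similar coordinates mapping $C_T$ to a cylinder, rescaling at the ODE rate, a first-order reformulation on $\mathcal{H}=H^{\frac{d+1}{2}}(\mathbb{B}^d)\times H^{\frac{d-1}{2}}(\mathbb{B}^d)$, a hand-built inner product yielding a sharp decay estimate for the free semigroup via Lumer--Phillips, compactness of the potential $\mathbf{L}'_\alpha$, mode stability via spherical-harmonic reduction to hypergeometric ODEs and the connection formula, a Riesz-projection argument pinning the multiplicities of $\lambda=0$ (dimension $d$) and $\lambda=1$ (dimension $1$), and a fixed point that uses the $(d{+}1)$-dimensional freedom in $(T,\alpha)$ to kill the unstable/neutral modes.

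Two implementation choices differ from the paper, and one attribution is slightly off. First, the paper does not take the naive first-order variable $(\varphi,\partial_\tau\varphi)$; it uses $\psi_2=\partial_\tau\psi+\xi^j\partial_{\xi^j}\psi+\psi$, and this choice is what makes the energy identity behind the Lumer--Phillips estimate clean. Your version would require reworking the boundary-term cancellations and is not obviously as tractable. Second, you treat $\alpha$ as a static parameter and invoke an implicit-function argument to annihilate the residual in the $\rg\mathbf{Q}_\alpha$ direction; the paper instead makes $\alpha=\alpha(\tau)$ a genuine modulation variable solved as part of the fixed point (with the additional forcing term $\partial_\tau\mathbf{\Psi}_{\alpha(\tau)}$), and uses a static Brouwer argument only for $T$. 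Both routes should close, but the dynamic-$\alpha$ version is what allows the neutral mode to be suppressed for large $\tau$ without imposing a constraint on $\mathbf{Q}\Phi(0)$, which is fixed by the data. Finally, you attribute the restriction $d\in\{5,7,9,11,13\}$ partly to the hypergeometric connection coefficients; in fact the mode-stability analysis carries over to all odd $d$. The genuine bottleneck is the explicit construction of the boundary operators $D_{2k+1}$, $\tilde D_{2k+1}$ in the inner product: the recurrence for their coefficients becomes unmanageable for larger $d$, which is why the paper lists them explicitly only up to $d=13$.
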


\begin{remark}
Theorem \ref{mainresult} shows that the future development of small perturbations of the blowup solution $u_{T_{0},\alpha_{0}}$ defined in \eqref{blowsol} converge back to $u_{T_{0},\alpha_{0}}$ up to symmetries of the 
equation. 
\end{remark}

\begin{remark}
Note that the normalizing factors on the left-hand sides appear naturally and reflect the behavior of the solution $u_{T,\alpha}$ in the respective homogeneous Sobolev norms. Namely, for
\begin{align} \label{static}
\psi _{\alpha} (\xi) := \frac{\sqrt{2}}{A_{0}(\alpha)-A_{j}(\alpha) \xi ^{j}}
\end{align}
we have
\begin{align*}
& (T-t)^{k-\frac{d}{2} +1} \left \|  u _{T,\alpha} (t, \cdot) \right \|_{ \dot{H}^{k}(\mathbb{B}^{d}_{T-t} ) }  =
(T-t) \left \|  u _{T,\alpha} (t, (T-t) \cdot ) \right \|_{ \dot{H}^{k}(\mathbb{B}^{d}_{1} ) } \simeq
\left \| \psi _{\alpha} \right \|_{ \dot{H}^{k}(\mathbb{B}^{d}_{1} ) }, \\
&  (T-t)^{l-\frac{d}{2} +2} \left \| \partial _{t} u _{T,\alpha} (t, \cdot) \right \|_{ \dot{H}^{\ell}(\mathbb{B}^{d}_{T-t} ) }  =
(T-t)^2 \left \| \partial _{t} u _{T,\alpha} (t, (T-t) \cdot ) \right \|_{ \dot{H}^{\ell}(\mathbb{B}^{d}_{1} ) } \simeq
\left \| \nabla \psi _{\alpha} \right \|_{ \dot{H}^{\ell}(\mathbb{B}^{d}_{1} ) },
\end{align*}
for all $k,\ell \in \mathbb{N}_{0}$ and $\alpha \neq 0$.
\end{remark}

\begin{remark}
We strongly believe that the result holds true in all odd dimensions
$d$ and the restriction on $d$ is not essential and for technical reasons
only. Similarly, the restriction to the cubic power is for the sake of
simplicity only. Similar results are true for any focusing power and
can be proved by straightforward adaptations of our method.
\end{remark}

\begin{remark}
The corresponding result in $d=3$ was proved in \cite{DonSch16a} and
relied on a delicate identity that only holds in 3 dimensions. In this
paper we show that our method is robust enough to extend to
all odd dimensions.
  \end{remark}


\section{Formulation as a first-order system in time} 
Without loss of generality we assume that $T_{0}=1$ and $\alpha_{0}=0$.
\subsection{First-order system}

To start our analysis, we write the Cauchy problem \eqref{Cauchy} as a first-order system in time. First, we change coordinates and map the backward light-cone 
\begin{align*}
C_{T}=\{ (t,x): 0 \leq t <T,~ |x| \leq T-t \} = \bigcup_{t \in [0,T)} \{ t \} \times \mathbb{B}_{T-t}^{d} 
\end{align*}
diffeomorphically into the cylinder 
\begin{align*}
\mathcal{C}:=\{ (\tau,\xi): 0 \leq \tau < +\infty,~|\xi| \leq 1\}= [0,\infty)\times\mathbb{B}^{d}.
\end{align*} 
Specifically, we introduce the similarity coordinates 
\begin{align*}
(t,x) \longmapsto \mu(t,x):=\left( \tau (t,x),\xi(t,x) \right):= \left( \log \left( \frac{T}{T-t} \right),  \frac{x}{T-t} \right)
\end{align*}
and derivatives translate according to
\begin{align*}
\partial _{t} & = \frac{e^{\tau}}{T} \left( \partial _{\tau} + \xi ^{j}  \partial _{\xi ^{j}} \right), \\
\partial _{t}^{2} & =  \frac{e^{2 \tau}}{T^{2}} \left( \partial _{\tau}^{2} +\partial _{\tau} + 2 \xi ^{j}  \partial _{\xi ^{j}} \partial _{\tau} + \xi ^{j} \xi ^{k} \partial _{\xi ^{i}}  \partial _{\xi ^{k}}  +2 \xi ^{j} \partial _{\xi ^{j}} \right), \\
\partial _{x^{j}} & = \frac{e^{\tau}}{T} \partial _{\xi ^{j}}, \\
\partial _{x^{j}} \partial _{x_{j}} & = \frac{e^{2 \tau}}{T^{2}}\partial _{\xi ^{j}}  \partial _{\xi _{j}}.
\end{align*}
Notice in particular that the blowup time $T$ is mapped to $\infty$. Now, equation \eqref{cubicwave} can be written equivalently as 
\begin{align*}
 \frac{e^{2T}}{T^{2}} \Big(- \partial _{\tau}^{2} - \partial _{\tau} - 2 \xi ^{j}   \partial _{\xi ^{j}} & \partial _{\tau} + \left( \delta ^{jk} -\xi ^{j} \xi ^{k}  \right) \partial _{\xi ^{j}} \partial _{\xi ^{k}} - 2  \xi ^{j}  \partial _{\xi ^{j}}  \Big) U(\tau,\xi) = 
- U^{3}(\tau,\xi) ,
\end{align*}
for $U:=u \circ \mu ^{-1}$. Next, we remove the $\tau-$dependent weight on the left hand side by rescaling,
\begin{align*}
\psi(\tau,\xi):=T e^{-\tau} U(\tau,\xi),
\end{align*}
which implies
\begin{align*} 
\Big( \partial _{\tau}^{2} +3 \partial _{\tau} + 2 \xi ^{j}   \partial _{\xi ^{j}} \partial _{\tau} - ( \delta ^{jk}  -  \xi ^{j} \xi ^{k}  ) \partial _{\xi ^{j}} \partial _{\xi ^{k}} +4  \xi ^{j}  \partial _{\xi ^{j}} +2  \Big) \psi(\tau,\xi) = 
  \psi^{3}(\tau,\xi).
\end{align*}
Finally, we set
\begin{align*}
\left( \mathbf{\Psi} (\tau) \right)(\xi) :=
\begin{pmatrix}
\psi _{1} (\tau,\xi) \\
\psi _{2} (\tau,\xi)
\end{pmatrix} 
:= 
\begin{pmatrix}
\psi  (\tau,\xi) \\
\partial _{\tau} \psi (\tau,\xi) + \xi ^{j} \partial _{\xi ^{j}} \psi (\tau,\xi) + \psi (\tau,\xi)
\end{pmatrix}
\end{align*}
which yields
\begin{align} \label{systempsi}
\partial _{\tau}  \mathbf{\Psi} (\tau) =\widetilde{\mathbf{L}} \left( \mathbf{\Psi} (\tau)  \right) + \mathbf{N} \left( \mathbf{\Psi} (\tau)  \right) 
\end{align}
where
\begin{align*}
&\widetilde{ \mathbf{L} } \left( \mathbf{u}  \right) (\xi):=
\begin{pmatrix}
- \xi \cdot \nabla u_{1}(\xi) - u_{1}(\xi) + u_{2} (\xi) \\
\Delta ^{\mathbb{R}^{d}} u _{1} (\xi) - \xi \cdot \nabla u_{2}(\xi) - 2 u_{2} (\xi)
\end{pmatrix}, \\[5pt]
& \mathbf{N} \left( \mathbf{u} \right)(\xi):=
 \begin{pmatrix}
0 \\
u_{1}^{3}(\xi)  
\end{pmatrix}.
\end{align*}

\subsection{Static blowup solution}

Now, starting from \eqref{blowsol}, we switch to similarity coordinates and rescale the function appropriately as before to find a $d-$parameter family $\mathbf{\Psi}_{\alpha}$ of static blowup solutions to \eqref{systempsi}, i.e.,
\begin{align}  \label{staticbold}
\mathbf{\Psi}_{\alpha}(\xi):= \begin{pmatrix}
\psi _{\alpha}(\xi) \\
\xi^{j} \partial _{j} \psi_{\alpha}(\xi)+\psi_{\alpha}(\xi),
\end{pmatrix}
\end{align}
where $\psi_{\alpha}$ is defined in \eqref{static}. We emphasize that there is no trace of the blowup time $T$ in the definition of $\psi _{\alpha}$.


\section{The linear free evolution in the backward light-cone}

In this section, we focus on the evolution of the free linear equation and obtain a useful decay estimate for the solution operator. To this end, we need to find a norm 
\begin{align*}
\left \| \cdot \right \|: \mathcal{H}  \longrightarrow \mathbb{R}
\end{align*}
on the function space
\begin{align*}
\mathcal{H}:=H^{\frac{d+1}{2}} \left( \mathbb{B}^{d} \right) \times H^{\frac{d-1}{2}} \left( \mathbb{B}^{d} \right)
\end{align*}
which yields the sharp decay for the free evolution. Specifically, we define
\begin{align*}
\mathcal{D} \big(\widetilde{ \mathbf{L}} \big):= C^{\frac{d+3}{2}} \big( \overline{ \mathbb{B}^{d} } \big) \times C^{\frac{d+1}{2}} \big(  \overline{ \mathbb{B}^{d}  }\big)
\end{align*}
and work towards proving the following result.
\begin{prop} \label{Lumer}
The free operator $\widetilde{\mathbf{L}}:\mathcal{D} \big(\widetilde{ \mathbf{L}} \big) \subseteq \mathcal{H} \longrightarrow \mathcal{H}$ is densely defined, closable and its closure 
$\mathbf{L}:\mathcal{D} \big( \mathbf{L} \big) \subseteq \mathcal{H} \longrightarrow \mathcal{H}$ generates a strongly continuous one-parameter semigroup of bounded operators 
$\mathbf{S}: [0,\infty) \longrightarrow \mathcal{B} \left( \mathcal{H} \right)$ which satisfies the decay estimate
\begin{align*}
\| \mathbf{S} \left( \tau \right) \| \leq M e^{-\tau} 
\end{align*}
for all $\tau \geq 0$ and for some constant $M \geq 1$.
\end{prop}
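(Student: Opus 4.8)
The plan is to establish Proposition \ref{Lumer} via the Lumer--Phillips theorem, which is the standard route for proving semigroup generation together with an exponential decay bound. The key observation is that the decay rate $e^{-\tau}$ suggests we should look for an inner product (or an equivalent norm) on $\mathcal{H}$ with respect to which the shifted operator $\widetilde{\mathbf{L}} + \mathbf{I}$ is dissipative, i.e.\ $\mathrm{Re}\langle \widetilde{\mathbf{L}}\mathbf{u} + \mathbf{u}, \mathbf{u}\rangle \leq 0$ for all $\mathbf{u} \in \mathcal{D}(\widetilde{\mathbf{L}})$. First I would write down a candidate inner product on $\mathcal{H} = H^{\frac{d+1}{2}}(\mathbb{B}^d) \times H^{\frac{d-1}{2}}(\mathbb{B}^d)$ adapted to the structure of the wave operator in similarity coordinates; because the top-order part of $\widetilde{\mathbf{L}}$ on the first slot is the transport operator $-\xi\cdot\nabla - 1$ and on the second slot $\Delta - \xi\cdot\nabla - 2$, the natural choice pairs derivatives of $u_1$ of order $\frac{d+1}{2}$ against derivatives of $u_2$ of order $\frac{d-1}{2}$, plus lower-order correction terms chosen so that the boundary terms arising from integration by parts over $\mathbb{B}^d$ have a favorable sign. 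This is precisely where the restriction to the specific dimensions $d \in \{5,7,9,11,13\}$ and to odd $d$ enters: the half-integer Sobolev exponents become integers, so the norms are built from genuine local $L^2$ derivatives and one can integrate by parts cleanly; the explicit algebra of the correction terms is what forces the dimension restriction.

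Second, I would carry out the dissipativity computation. Apply $\widetilde{\mathbf{L}}$ to $\mathbf{u} \in \mathcal{D}(\widetilde{\mathbf{L}}) = C^{\frac{d+3}{2}}(\overline{\mathbb{B}^d}) \times C^{\frac{d+1}{2}}(\overline{\mathbb{B}^d})$, differentiate, and integrate by parts. The transport terms $\xi\cdot\nabla$ produce, after integration by parts, a bulk term with a definite sign (reflecting the dilation structure) together with a boundary term on $\partial\mathbb{B}^d = S^{d-1}$ where $\xi\cdot\nu = 1$; the Laplacian term on the second slot, when paired correctly against $u_1$, produces the cross term that cancels against the $u_2$ contribution in the first slot up to boundary terms. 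One then checks that the combined boundary contribution over $S^{d-1}$ is nonpositive — this typically requires Cauchy--Schwarz and a trace-type bound on the sphere, and it is the delicate part of the calculation. After choosing the lower-order coefficients in the inner product optimally, one arrives at $\mathrm{Re}\langle (\widetilde{\mathbf{L}} + \mathbf{I})\mathbf{u}, \mathbf{u}\rangle \leq 0$, which is the dissipativity of $\widetilde{\mathbf{L}} + \mathbf{I}$.

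Third, to invoke Lumer--Phillips I would verify the range condition: that $\lambda\mathbf{I} - (\widetilde{\mathbf{L}} + \mathbf{I})$ has dense range for some (equivalently all) $\lambda > 0$, i.e.\ that $(\lambda + 1)\mathbf{I} - \widetilde{\mathbf{L}}$ is surjective onto a dense subset of $\mathcal{H}$. Given $\mathbf{f} = (f_1, f_2) \in \mathcal{D}(\widetilde{\mathbf{L}})$, solving $((\lambda+1)\mathbf{I} - \widetilde{\mathbf{L}})\mathbf{u} = \mathbf{f}$ reduces, after eliminating $u_2$ from the first equation, to a second-order linear ODE/elliptic boundary value problem for $u_1$ of the form $(\delta^{jk} - \xi^j\xi^k)\partial_j\partial_k u_1 + (\text{lower order}) = (\text{data})$ on $\mathbb{B}^d$; the principal symbol $(\delta^{jk} - \xi^j\xi^k)$ is degenerate elliptic (it degenerates in the radial direction at $|\xi| = 1$), but the degeneracy is of Fuchsian type at the boundary, so one can construct a smooth solution by, e.g., expanding in spherical harmonics and solving the resulting radial ODEs, whose indicial equations one checks have no obstructions for $\lambda$ large. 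Combined with density of $\mathcal{D}(\widetilde{\mathbf{L}})$ in $\mathcal{H}$ (which follows since $C^\infty(\overline{\mathbb{B}^d})$ is dense in the Sobolev spaces), Lumer--Phillips yields that the closure $\mathbf{L}$ of $\widetilde{\mathbf{L}}$ generates a strongly continuous semigroup $\mathbf{S}(\tau)$ with $\|\mathbf{S}(\tau)\| \leq e^{-\tau}$ with respect to the constructed inner-product norm; passing back to the original equivalent norm on $\mathcal{H}$ introduces the constant $M \geq 1$, giving $\|\mathbf{S}(\tau)\| \leq M e^{-\tau}$.

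The main obstacle I anticipate is the dissipativity estimate — specifically, finding the correct lower-order structure of the inner product so that all the boundary terms on $S^{d-1}$ generated by the high-order integrations by parts combine into something manifestly nonpositive. Unlike the $d=3$ case treated in \cite{DonSch16a}, where a special algebraic identity did the job, here one must handle $\frac{d+1}{2}$ derivatives and the bookkeeping of boundary terms grows with $d$; this is exactly why the argument is carried out dimension by dimension for $d \in \{5,7,9,11,13\}$ rather than uniformly. The surjectivity step is comparatively routine once one recognizes the Fuchsian nature of the boundary degeneracy, and I would treat it by reduction to a family of ODEs via spherical-harmonic decomposition.
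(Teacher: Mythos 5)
Your proposal matches the paper's strategy essentially step for step: the paper applies Lumer--Phillips with a hand-built equivalent inner product on $\mathcal{H}$ (a sum of higher-energy sesquilinear forms plus a rank-one correction), proves $\mathrm{Re}(\widetilde{\mathbf{L}}\mathbf{u}\,|\,\mathbf{u}) \leq -\|\mathbf{u}\|^2$ by integrating by parts and controlling the sphere boundary terms with Cauchy--Schwarz, and verifies the range condition by reducing to radial ODEs (of Fuchsian/hypergeometric type) via spherical-harmonic decomposition. The one small inaccuracy is cosmetic — the constructed inner product pairs $u_1$-derivatives against $u_1$-derivatives and $u_2$-derivatives against $u_2$-derivatives (the $u_1$--$u_2$ cross terms arise only when $\widetilde{\mathbf{L}}$ is applied), and the crucial final summand is a degenerate rank-one form needed to make the whole thing positive definite — but your outline captures the substance of the argument.
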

To proceed, we fix $d=5$ and construct a suitable inner product on $ \mathcal{H}=H^{3} \left( \mathbb{B}^{5} \right) \times H^{2} \left( \mathbb{B}^{5} \right)$.
\subsection{Inner Product} \label{InnerProduct}
We define 
\begin{align*}
\mathcal{\widetilde{H}}=C^{3} \big ( \overline{ \mathbb{B}^{5} } \big) \times C^{2} \big ( \overline{ \mathbb{B}^{5} } \big)
\end{align*}
and consider the sesquilinear forms 
\begin{align*} 
& \left( \mathbf{u} \big{|} \mathbf{v}  \right)_{1}:= \int _{\mathbb{B}^{5}} 
\partial _{i} \partial _{j} \partial _{k} u_{1} (\xi)
\overline{ \partial ^{i} \partial ^{j} \partial ^{k} v_{1} (\xi) } d \xi +  \int _{\mathbb{B}^{5}} \partial _{i} \partial _{j} u_{2} (\xi) \overline{\partial ^{i} \partial ^{j} v_{2}(\xi) } d \xi 
+ \int _{\mathbb{S}^{4}} \partial _{i} \partial _{j} u_{1} (\omega) \overline{ \partial ^{i} \partial ^{j} v_{1} (\omega) } d \sigma (\omega), \\
& \left( \mathbf{u} \big{|} \mathbf{v}  \right)_{2}:= \int _{\mathbb{B}^{5}} \partial _{i} \partial ^{k} \partial _{k} u_{1}(\xi)  \overline{ \partial ^{i} \partial ^{j} \partial _{j} v_{1} (\xi) } d \xi 
+  \int _{\mathbb{B}^{5}} \partial _{i} \partial _{j} u_{2}(\xi)  \overline{ \partial ^{i} \partial ^{j} v_{2} (\xi) } d \xi 
+ \int _{\mathbb{S}^{4}} \partial _{j} u_{2}  (\omega) \overline{ \partial ^{j} v_{2} (\omega) } d \sigma (\omega), \\
& \left( \mathbf{u} \big{|} \mathbf{v}  \right)_{3}:=
%
%
%
%
%
%
%
%
%
%
5
\left( \mathbf{u} \big{|} \mathbf{v}  \right)_{1} + \left( \mathbf{u} \big{|} \mathbf{v}  \right)_{2} + \int _{\mathbb{S}^{4}} u_{2} (\omega) \overline{ v_{2}  (\omega) } d \sigma (\omega), \\
& \left( \mathbf{u} \big{|} \mathbf{v}  \right)_{4}:=\left( \mathbf{u} \big{|} \mathbf{v}  \right)_{1} + \left( \mathbf{u} \big{|} \mathbf{v}  \right)_{2} + \int _{\mathbb{S}^{4}} \partial _{i} u_{1}  (\omega) \overline{ \partial ^{i}  v_{1} (\omega) }d \sigma (\omega),
\end{align*}
for all $\mathbf{u},\mathbf{v} \in \mathcal{\widetilde{H}}$. All these sesquilinear forms are derived from a higher energy of the free wave equation but neither of them defines an inner product on $\mathcal{H}$. To fix this, we also define
\begin{align*}
\left( \mathbf{u} \big{|} \mathbf{v}  \right)_{5}:= 
\left(  \int _{\mathbb{S}^{4}} \zeta \left(\omega,\mathbf{u} (\omega) \right) d \sigma (\omega)  \right) 
\left(  \int _{\mathbb{S}^{4}} \overline{ \zeta \left(\omega,\mathbf{v}(\omega)  \right) } d \sigma (\omega)  \right) 
\end{align*}
where
\begin{align*}
\zeta \left(\omega,\mathbf{w}(\omega)  \right):= D_{5} w_{1}(\omega) + \tilde{D}_{5} w_{2}(\omega)
\end{align*}
and
\begin{align*}
& D_{5} w_{1}(\omega) :=  \omega ^{i}  \omega ^{j} \partial _{i} \partial _{j} w_{1} (\omega) + 5 \omega ^{i} \partial _{i} w_{1} (\omega)+3 w_{1} (\omega), \\ 
& \tilde{D}_{5} w_{2} (\omega):=   \omega ^{j} \partial _{j} w_{2} (\omega)+ 3 w_{2}(\omega).
\end{align*}
Finally, let
\begin{align} \label{sesquilinear1}
\left( \cdot \big{|} \cdot \right): \mathcal{\widetilde{H}} \times \mathcal{\widetilde{H}} \longrightarrow \mathbb{R},\quad \left( \mathbf{u} \big{|} \mathbf{v}  \right):=\sum _{i =1}^{5}  \left( \mathbf{u} \big{|} \mathbf{v}  \right)_{i}
\end{align}
and
\begin{align} \label{sesquilinear2}
\| \cdot \|: \mathcal{\widetilde{H}} \longrightarrow \mathbb{R},\quad \| \cdot \|:=\sqrt{ \left( \cdot \big{|} \cdot \right) }.
\end{align}
Now, we will show that the norm \eqref{sesquilinear2} induced by the inner product \eqref{sesquilinear1} defines indeed a norm equivalent to $\| \cdot \|_{  H^{3} \left( \mathbb{B}^{5} \right) \times H^{2} \left( \mathbb{B}^{5} \right) }$. However, we first need the following technical result.
\begin{lemma} \label{Hnormequiv}
For all $(u_{1},u_{2}) \in \mathcal{\widetilde{H}}$, we have
\begin{align*}
& \| u_{1} \| _{H^{3} \left( \mathbb{B}^{5} \right)} \simeq \| \partial ^{3} u_{1} \|_{L^{2} \left( \mathbb{B}^{5} \right) } + \| \partial^{2} u_{1} \|_{L^{2} \left( \mathbb{S}^{4} \right) } + 
\| \partial u_{1} \|_{L^{2} \left( \mathbb{S}^{4} \right) } + \|  u_{1} \|_{L^{2} \left( \mathbb{S}^{4} \right) },\\
& \| u_{2} \| _{H^{2} \left( \mathbb{B}^{5} \right)} \simeq \| \partial^{2} u_{2} \|_{L^{2} \left( \mathbb{B}^{5} \right) } + 
\| \partial u_{2} \|_{L^{2} \left( \mathbb{S}^{4} \right) } + \|  u_{2} \|_{L^{2} \left( \mathbb{S}^{4} \right) }.
\end{align*}
\end{lemma}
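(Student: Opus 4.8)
The plan is to prove the equivalence $\simeq$ as a pair of one-sided bounds; I describe the argument for $u_1$, the one for $u_2$ being identical with one iteration step removed. The direction in which the right-hand side is dominated by $\|u_1\|_{H^3(\mathbb{B}^5)}$ is immediate from the trace theorem on the ball: for $j\in\{0,1,2\}$ the function $\partial^j u_1$ lies in $H^1(\mathbb{B}^5)$ with $\|\partial^j u_1\|_{H^1(\mathbb{B}^5)}\lesssim\|u_1\|_{H^3(\mathbb{B}^5)}$, and the trace operator $H^1(\mathbb{B}^5)\to L^2(\mathbb{S}^4)$ is bounded, so $\|\partial^j u_1\|_{L^2(\mathbb{S}^4)}\lesssim\|u_1\|_{H^3(\mathbb{B}^5)}$; the term $\|\partial^3 u_1\|_{L^2(\mathbb{B}^5)}$ is trivially at most $\|u_1\|_{H^3(\mathbb{B}^5)}$. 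The same holds for $u_2$ with $H^3$ replaced by $H^2$.

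For the converse bound the key ingredient is the elementary one-dimensional-type estimate
\begin{align*}
\|g\|_{L^2(\mathbb{B}^5)}\lesssim\|g\|_{L^2(\mathbb{S}^4)}+\|\nabla g\|_{L^2(\mathbb{B}^5)}\qquad\text{for every }g\in C^1(\overline{\mathbb{B}^5}).
\end{align*}
To prove it, use polar coordinates $\xi=r\omega$, $r\in(0,1]$, $\omega\in\mathbb{S}^4$, and the fundamental theorem of calculus along the radial segment from $\omega$ to $r\omega$,
\begin{align*}
g(r\omega)=g(\omega)-\int_r^1\omega\cdot\nabla g(s\omega)\,ds,
\end{align*}
so that $|g(r\omega)|^2\lesssim|g(\omega)|^2+\big(\int_r^1|\nabla g(s\omega)|\,ds\big)^2$. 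By Cauchy--Schwarz with the weight splitting $|\nabla g|=(|\nabla g|\,s^{2})(s^{-2})$,
\begin{align*}
\Big(\int_r^1|\nabla g(s\omega)|\,ds\Big)^2\le\Big(\int_r^1 s^{-4}\,ds\Big)\int_0^1|\nabla g(s\omega)|^2 s^4\,ds\le\tfrac13\,r^{-3}\int_0^1|\nabla g(s\omega)|^2 s^4\,ds.
\end{align*}
Multiplying by the Jacobian weight $r^4$, integrating over $r\in(0,1)$, and then over $\omega\in\mathbb{S}^4$ gives the claim, since $r^4\cdot r^{-3}=r$ is integrable at the origin and $\int_{\mathbb{S}^4}\int_0^1|\nabla g(s\omega)|^2 s^4\,ds\,d\sigma(\omega)=\|\nabla g\|_{L^2(\mathbb{B}^5)}^2$. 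This integrability at $r=0$ is the only point requiring attention — one must make sure the estimate reaches all the way to the origin, not just a shell near $\mathbb{S}^4$ — and it works precisely because the surface weight $r^{d-1}$ dominates the $r^{-(d-2)}$ growth of $\int_r^1 s^{-(d-1)}\,ds$, leaving the integrable $\int_0^1 r\,dr$.

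Finally, iterate the displayed estimate: applying it to each entry of the Hessian of $u_1$ gives $\|\partial^2 u_1\|_{L^2(\mathbb{B}^5)}\lesssim\|\partial^2 u_1\|_{L^2(\mathbb{S}^4)}+\|\partial^3 u_1\|_{L^2(\mathbb{B}^5)}$, applying it to $\partial u_1$ gives $\|\partial u_1\|_{L^2(\mathbb{B}^5)}\lesssim\|\partial u_1\|_{L^2(\mathbb{S}^4)}+\|\partial^2 u_1\|_{L^2(\mathbb{B}^5)}$, and applying it to $u_1$ gives $\|u_1\|_{L^2(\mathbb{B}^5)}\lesssim\|u_1\|_{L^2(\mathbb{S}^4)}+\|\nabla u_1\|_{L^2(\mathbb{B}^5)}$. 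Substituting these successively and summing over all multi-indices of order at most $3$ yields $\|u_1\|_{H^3(\mathbb{B}^5)}\lesssim\|\partial^3 u_1\|_{L^2(\mathbb{B}^5)}+\|\partial^2 u_1\|_{L^2(\mathbb{S}^4)}+\|\partial u_1\|_{L^2(\mathbb{S}^4)}+\|u_1\|_{L^2(\mathbb{S}^4)}$, which is the first asserted equivalence; the second is obtained the same way, starting the iteration one step later (from $\partial u_2$ rather than $\partial^2 u_1$). A softer alternative is a Rellich compactness argument — a normalized sequence violating the bound would converge in the lower norm to a polynomial of degree $<3$ all of whose derivatives of order $\le 2$ vanish on $\mathbb{S}^4$, hence to $0$, a contradiction — but the direct argument above is preferable here since it produces explicit constants.
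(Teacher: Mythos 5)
Your proof is correct, and it establishes the same two one‑sided inequalities as the paper's proof (trace theorem for one direction, a radial estimate iterated through the derivatives for the other), but the key technical step is carried out differently. The paper writes
\begin{align*}
r^4\lvert u_2(r\omega)\rvert^2=\Bigl\lvert\int_0^r\partial_s\bigl(s^2 u_2(s\omega)\bigr)\,ds\Bigr\rvert^2,
\end{align*}
that is, it integrates \emph{outward} from the origin, absorbs the surface Jacobian $s^2$ into the function being differentiated so that the boundary contribution at $s=0$ vanishes automatically, applies Jensen (equivalently Cauchy--Schwarz), expands the square, and finally integrates by parts to throw the cross term onto $s^3$ and produce the boundary value $\lvert u_2(\omega)\rvert^2$ on $\mathbb S^4$. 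You instead integrate \emph{inward} from the sphere, $g(r\omega)=g(\omega)-\int_r^1\omega\cdot\nabla g(s\omega)\,ds$, and handle the Jacobian by the weighted Cauchy--Schwarz splitting $\lvert\nabla g\rvert=(\lvert\nabla g\rvert s^2)s^{-2}$, which gives a factor $r^{-3}$ that is killed by the $r^4$ surface weight to leave the integrable $\int_0^1 r\,dr$. Both devices solve the same problem, namely making sure the estimate reaches all the way to $r=0$ without a non‑integrable singularity, and both produce the same radial inequality $\|g\|_{L^2(\mathbb B^5)}\lesssim\|g\|_{L^2(\mathbb S^4)}+\|\nabla g\|_{L^2(\mathbb B^5)}$, which you then iterate exactly as the paper does. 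Your version is arguably slightly more streamlined since it avoids the expansion-plus-integration-by-parts bookkeeping, and it makes the $d$-dimensional mechanism ($r^{d-1}$ dominating $r^{-(d-2)}$) more transparent, which is a useful remark given the paper's interest in higher $d$; the paper's version has the minor advantage of yielding the explicit constant $2$ in the pointwise intermediate bound. Either way, the lemma is proved.
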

\begin{proof}
The process is the same for both estimates and so we illustrate it on the second estimate only. Note that, for a generic function $f \in L^{2}(\mathbb{B}^{5})$, we have
\begin{align*}
\| f \|_{L^{2}(\mathbb{B}^{5})}^{2} & = \int_{0}^{1} \int _{\mathbb{S}^{4}} r^{4} | f (r \omega) |^{2} d \sigma (\omega) dr.
\end{align*}
By density, it suffices to consider $u_{2} \in C^{\infty}( \overline{ \mathbb{B}^{5}}) $. Now, the fundamental theorem of calculus, Jensen's inequality and integration by parts yield
\begin{align*}
 r^{4} | u_{2} (r \omega) |^{2}&  = \left | \int_{0}^{r} \partial_{s} \left(  s^2  u_{2} (s \omega)  \right) ds \right |^{2}  \leq  \left(  \int_{0}^{r} \left | \partial_{s} \left(  s^2 u_{2} (s \omega)  \right)  \right | ds  \right)^{2} \\
&  \leq r  \int_{0}^{r} \left | \partial_{s} \left(  s^2  u_{2} (s \omega)  \right)  \right |^{2} ds \leq  \int_{0}^{1} \left | \partial_{s} \left(  s^2  u_{2} (s \omega)  \right)  \right |^{2} ds \\
& =  \int_{0}^{1} \left |   2 s  u_{2} (s \omega) +    s^2 \partial_{s}  u_{2} (s \omega)    \right |^{2} ds \\
  &=  \int_{0}^{1} \Big(    4 s^2 | u_{2} (s \omega) |^2 +    s^4 \left | \partial_{s}  u_{2} (s \omega) \right |^2 + 
  2 s^3 \left(  u_{2} (s \omega) \overline{ \partial_{s}  u_{2} (s \omega) }  + \overline{  u_{2} (s \omega) } \partial_{s}  u_{2} (s \omega)  \right)  \Big) ds \\
  & =  \int_{0}^{1} \left ( 4 s^2 | u_{2} (s \omega) |^{2} + s^{4}  | \partial_{s}  u_{2} (s \omega) |^{2} + 2 s^3 \partial_{s}   | u_{2} (s \omega) |^{2}  \right ) ds \\
& =  2 | u_{2} (\omega) |^{2} +  \int_{0}^{1} \Big( - 2 s^2 | u_{2} (s \omega) |^2  +  s^{4} | \partial _{s} u_{2} (s \omega) |^{2} \Big) ds \\
 & \leq 2 | u_{2} (\omega) |^{2} +  \int_{0}^{1}  s^{4} | \partial _{s} u_{2} (s \omega) |^{2} ds \\
  & = 2 | u_{2} (\omega) |^{2} +  \int_{0}^{1}  s^{4} | \omega ^{j} \partial _{j} u_{2} (s \omega) |^{2} ds \\
&  \leq 2 | u_{2} (\omega) |^{2} +  \int_{0}^{1}  s^{4} | \partial u_{2} (s \omega) |^{2} ds.
\end{align*}
Integrating this inequality with respect to $r \in [0,1]$ and $\omega \in \mathbb{S}^{4}$ yields the estimate
\begin{align*}
\left \| u_{2} \right \|_{L^{2}(\mathbb{B}^{5})} \lesssim \left \| \partial u_{2} \right \|_{L^{2}(\mathbb{B}^{5})} + \left \| u_{2} \right \|_{L^{2}(\mathbb{S}^{4})}.
\end{align*}
Replacing $u_{2}$ by $\partial _{i} u_{2}$, we find
\begin{align*}
 r^{4} | \partial _{i} u_{2} (r \omega) |^{2}   \leq 2 | \partial _{i} u_{2} (\omega) |^{2} +  \int_{0}^{1}  s^{4} | \partial \partial _{i} u_{2} (s \omega) |^{2} ds,
\end{align*}
for all $i \in \{ 1,2,3,4,5 \}$, and hence
\begin{align*}
\left \| \partial u_{2} \right \|_{L^{2}(\mathbb{B}^{5})} \lesssim \left \| \partial ^{2} u_{2} \right \|_{L^{2}(\mathbb{B}^{5})} + \left \| \partial u_{2} \right \|_{L^{2}(\mathbb{S}^{4})}.
\end{align*}
In summary, we get
\begin{align*}
\left \| u_{2} \right \|_{H^{2}(\mathbb{B}^{5})} \lesssim \left \| \partial ^{2} u_{2} \right \|_{L^{2}(\mathbb{B}^{5})} + \left \| \partial u_{2} \right \|_{L^{2}(\mathbb{S}^{4})} +  \left \| u_{2} \right \|_{L^{2}(\mathbb{S}^{4})}.
\end{align*}
This concludes the proof since the reverse inequality is a direct consequence of the trace inequality (see Theorem 1, page 258, \cite{Evans}).
\end{proof}
\begin{lemma} \label{Sobolevequiv}
The sesquilinear form $\left( \cdot \big{|} \cdot \right)$ in \eqref{sesquilinear1} defines an inner product on $\mathcal{\widetilde{H}}$. Furthermore, the completion of $\mathcal{\widetilde{H}}$ is a Hilbert space which is 
equivalent to $\mathcal{H}$.
\end{lemma}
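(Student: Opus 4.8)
The plan is to prove that the norm $\| \cdot \|$ from \eqref{sesquilinear2} is equivalent, on $\mathcal{\widetilde{H}}$, to $\| \cdot \|_{H^3(\mathbb{B}^5) \times H^2(\mathbb{B}^5)}$; the rest of the lemma then follows for soft reasons. Indeed, a two-sided bound $\| \mathbf{u} \| \simeq \| u_1 \|_{H^3(\mathbb{B}^5)} + \| u_2 \|_{H^2(\mathbb{B}^5)}$ on $\mathcal{\widetilde{H}}$ shows that $( \mathbf{u} \mid \mathbf{u} ) = 0$ forces $\mathbf{u} = 0$, so (sesquilinearity and Hermitian symmetry of $( \cdot \mid \cdot )$ being evident from the definition) $( \cdot \mid \cdot )$ is an inner product on $\mathcal{\widetilde{H}}$; and since $C^\infty(\overline{\mathbb{B}^5})$ is dense in both $H^3(\mathbb{B}^5)$ and $H^2(\mathbb{B}^5)$, the space $\mathcal{\widetilde{H}} = C^3(\overline{\mathbb{B}^5}) \times C^2(\overline{\mathbb{B}^5})$ is dense in $\mathcal{H}$, so the inclusion $\mathcal{\widetilde{H}} \hookrightarrow \mathcal{H}$ extends to a Banach space isomorphism from the completion of $( \mathcal{\widetilde{H}}, \| \cdot \| )$ onto $\mathcal{H}$. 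The completion is then automatically a Hilbert space, equivalent to $\mathcal{H}$.

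For the upper bound I would note that, after expanding indices, each of $( \mathbf{u} \mid \mathbf{u} )_1, \dots, ( \mathbf{u} \mid \mathbf{u} )_4$ is a finite sum of squared $L^2$-norms of derivatives of $u_1$ of order at most $3$ and of $u_2$ of order at most $2$, integrated over $\mathbb{B}^5$ or over $\mathbb{S}^4$; by the trace theorem (Theorem 1, p.~258, \cite{Evans}) each such term is $\lesssim \| u_1 \|_{H^3(\mathbb{B}^5)}^2 + \| u_2 \|_{H^2(\mathbb{B}^5)}^2$. For $( \mathbf{u} \mid \mathbf{u} )_5$, the map $\omega \mapsto \zeta(\omega, \mathbf{u}(\omega))$ involves only derivatives of $u_1$ of order at most $2$ and of $u_2$ of order at most $1$ restricted to $\mathbb{S}^4$, hence $\| \zeta(\cdot, \mathbf{u}) \|_{L^2(\mathbb{S}^4)} \lesssim \| u_1 \|_{H^3(\mathbb{B}^5)} + \| u_2 \|_{H^2(\mathbb{B}^5)}$ by the trace theorem, and Cauchy--Schwarz on the finite-measure space $\mathbb{S}^4$ bounds $( \mathbf{u} \mid \mathbf{u} )_5 = \big| \int_{\mathbb{S}^4} \zeta(\omega, \mathbf{u}(\omega)) \, d\sigma(\omega) \big|^2$ by $|\mathbb{S}^4| \, \| \zeta(\cdot, \mathbf{u}) \|_{L^2(\mathbb{S}^4)}^2$. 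Summing gives $\| \mathbf{u} \| \lesssim \| u_1 \|_{H^3(\mathbb{B}^5)} + \| u_2 \|_{H^2(\mathbb{B}^5)}$.

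The lower bound is where the actual work sits, and I expect the only genuinely delicate point to be recovering $\| u_1 \|_{L^2(\mathbb{S}^4)}$. By Lemma \ref{Hnormequiv} it suffices to dominate $\| \partial^3 u_1 \|_{L^2(\mathbb{B}^5)}$, $\| \partial^2 u_1 \|_{L^2(\mathbb{S}^4)}$, $\| \partial u_1 \|_{L^2(\mathbb{S}^4)}$, $\| u_1 \|_{L^2(\mathbb{S}^4)}$, $\| \partial^2 u_2 \|_{L^2(\mathbb{B}^5)}$, $\| \partial u_2 \|_{L^2(\mathbb{S}^4)}$ and $\| u_2 \|_{L^2(\mathbb{S}^4)}$ by $\| \mathbf{u} \|$. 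Each of these except $\| u_1 \|_{L^2(\mathbb{S}^4)}$ appears as an individual nonnegative summand of one of $( \mathbf{u} \mid \mathbf{u} )_1, \dots, ( \mathbf{u} \mid \mathbf{u} )_4$ (for instance $\| \partial^3 u_1 \|_{L^2(\mathbb{B}^5)}^2$, $\| \partial^2 u_1 \|_{L^2(\mathbb{S}^4)}^2$ and $\| \partial^2 u_2 \|_{L^2(\mathbb{B}^5)}^2$ inside $( \mathbf{u} \mid \mathbf{u} )_1$, $\| \partial u_2 \|_{L^2(\mathbb{S}^4)}^2$ inside $( \mathbf{u} \mid \mathbf{u} )_2$, $\| \partial u_1 \|_{L^2(\mathbb{S}^4)}^2$ inside $( \mathbf{u} \mid \mathbf{u} )_4$, $\| u_2 \|_{L^2(\mathbb{S}^4)}^2$ inside $( \mathbf{u} \mid \mathbf{u} )_3$), and since all five forms are nonnegative each such summand is $\le \| \mathbf{u} \|^2$. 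For $\| u_1 \|_{L^2(\mathbb{S}^4)}$ I would split $u_1|_{\mathbb{S}^4}$ into its spherical mean $\bar{u} := |\mathbb{S}^4|^{-1} \int_{\mathbb{S}^4} u_1 \, d\sigma$ plus the mean-zero remainder: the Poincar\'e inequality on $\mathbb{S}^4$ gives $\| u_1|_{\mathbb{S}^4} - \bar{u} \|_{L^2(\mathbb{S}^4)} \lesssim \| \nabla_{\mathbb{S}^4} u_1 \|_{L^2(\mathbb{S}^4)} \le \| \partial u_1 \|_{L^2(\mathbb{S}^4)}$, already controlled, while the mean is handled through $( \cdot \mid \cdot )_5$. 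Precisely, since $D_5 u_1$ contains the zeroth-order term $3 u_1$, integrating the identity $\zeta(\omega, \mathbf{u}) = D_5 u_1 + \tilde{D}_5 u_2$ over $\mathbb{S}^4$ expresses $3 |\mathbb{S}^4| \, \bar{u}$ as $\int_{\mathbb{S}^4} \zeta(\omega, \mathbf{u}(\omega)) \, d\sigma(\omega)$ minus the integrals over $\mathbb{S}^4$ of $\omega^i \omega^j \partial_i \partial_j u_1$, $5 \omega^i \partial_i u_1$, $\omega^j \partial_j u_2$ and $3 u_2$; the first term has modulus $( \mathbf{u} \mid \mathbf{u} )_5^{1/2} \le \| \mathbf{u} \|$, and the rest are bounded by Cauchy--Schwarz on $\mathbb{S}^4$ in terms of the quantities already controlled. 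Hence $| \bar{u} | \lesssim \| \mathbf{u} \|$, so $\| u_1 \|_{L^2(\mathbb{S}^4)} \lesssim \| \mathbf{u} \|$, and Lemma \ref{Hnormequiv} then yields $\| u_1 \|_{H^3(\mathbb{B}^5)} + \| u_2 \|_{H^2(\mathbb{B}^5)} \lesssim \| \mathbf{u} \|$. The only conceptual subtlety, and the reason the rank-one form $( \cdot \mid \cdot )_5$ is included at all, is that it is the sole part of $\| \cdot \|$ carrying zeroth-order information about $u_1$ on $\mathbb{S}^4$.
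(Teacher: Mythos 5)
Your argument is correct and follows essentially the same route as the paper's proof: the reverse inequality (the upper bound on $\|\mathbf{u}\|$) via the trace theorem, reduction of the forward inequality to $\|u_1\|_{L^2(\mathbb{S}^4)} \lesssim \|\mathbf{u}\|$ via Lemma~\ref{Hnormequiv}, and then recovery of the mean of $u_1$ over $\mathbb{S}^4$ by combining the Poincar\'e inequality with the rank-one form $(\cdot\,|\,\cdot)_5$, whose zeroth-order coefficient $3$ in $D_5$ is precisely what makes the mean accessible. You also correctly identify why $(\cdot\,|\,\cdot)_5$ must be included, and the bookkeeping of which nonnegative summand inside which $(\cdot\,|\,\cdot)_i$ dominates each trace term is accurate. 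No gap.
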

\begin{proof}
From \eqref{sesquilinear1} and \eqref{sesquilinear2}, we get
\begin{align*}
\| \mathbf{u} \|^{2} & \simeq  \int _{\mathbb{B}^{5}} \partial_{i} \partial_{j} \partial_{k} u_{1}(\xi) \overline{  \partial^{i} \partial^{j} \partial^{k} u_{1}(\xi)  }  d \xi + 
 \int _{\mathbb{B}^{5}}  
 \partial_{i} \partial_{j} u_{2}(\xi) \overline{  \partial^{i} \partial^{j} u_{2}(\xi)  } d \xi \\
 & + \int _{\mathbb{S}^{4}} 
\partial_{i} \partial_{j} u_{1}(\omega) \overline{  \partial^{i} \partial^{j} u_{1}(\omega)  }
 d \sigma (\omega) 
 + \int _{\mathbb{S}^{4}} 
\partial_{i}  u_{1}(\omega) \overline{  \partial^{i} u_{1}(\omega)  }
 d \sigma (\omega) \\
 & +  \int _{\mathbb{B}^{5}}
 \partial_{i} \partial_{j}  \partial^{j} u_{1}(\xi) \overline{  \partial^{i}  \partial^{k} \partial_{k} u_{1}(\xi)  } d \xi + 
\int _{\mathbb{S}^{4}}
 \partial_{j} u_{2} (\omega) \overline{ \partial^{j} u_{2} (\omega) }
  d \sigma (\omega) \\
& + \int _{\mathbb{S}^{4}} \big| u_{2}(\omega) \big|^{2}   ~d \sigma (\omega)
+ \left | \int _{\mathbb{S}^{4}} \zeta \left( \omega, \mathbf{u}(\omega) \right) d \sigma (\omega) \right |^{2},
\end{align*}
for all $\mathbf{u} \in \mathcal{\widetilde{H}}$. We need to show that $\| \mathbf{u} \| \simeq \| \mathbf{u} \|_{H^{3}\left ( \mathbb{B}^{5} \right) \times H^{2} \left ( \mathbb{B}^{5} \right)}$, for all $\mathbf{u} \in \mathcal{\widetilde{H}}$. First, note that it suffices to prove 
$ \| \mathbf{u} \|_{H^{3}\left ( \mathbb{B}^{5} \right) \times H^{2} \left ( \mathbb{B}^{5} \right)} \lesssim \| \mathbf{u} \| $ since the reverse inequality is a direct consequence of the trace theorem (see Theorem 1, page 258, \cite{Evans}) and the embedding $L^{2} \left( \mathbb{S}^{4} \right) \xhookrightarrow{} L^{1} \left( \mathbb{S}^{4} \right)$. From Lemma \ref{Hnormequiv}, we get
\begin{align*}
\| \mathbf{u} \|_{H^{3}\left ( \mathbb{B}^{5} \right) \times H^{2} \left ( \mathbb{B}^{5} \right)} \lesssim \| \mathbf{u} \| + \| u_{1} \|_{L^{2} \left( \mathbb{S}^{4} \right)}
\end{align*}
and the Poincare inequality on the $4-$sphere (see Theorem 2.10, page 40, \cite{Hebey}), 
\begin{align*}
\left \| u_{1} - \frac{2}{\pi ^{2}} \int _{\mathbb{S}^{4}} u_{1}(\omega) d \sigma (\omega) \right \|_{L^{2} \left( \mathbb{S}^{4} \right)} \lesssim \| \nabla  u_{1} \|_{L^{2} \left( \mathbb{S}^{4} \right)},
\end{align*}
together with the embedding $L^{2} \left( \mathbb{S}^{4} \right) \xhookrightarrow{} L^{1} \left( \mathbb{S}^{4} \right)$ yield
\begin{align*}
 \| u_{1} \|_{L^{2} \left( \mathbb{S}^{4} \right)} & \lesssim  \| \nabla  u_{1} \|_{L^{2} \left( \mathbb{S}^{4} \right)} + \left | \int _{\mathbb{S}^{4}} u_{1} (\omega) d \sigma (\omega) \right | \\
& \leq \| \mathbf{u} \| + \left |   \int_{\mathbb{S}^{4}} u_{1} (\omega)d \sigma (\omega)  \right | \\
& \lesssim  \| \mathbf{u} \| + \left |   \int_{\mathbb{S}^{4}} \zeta \left( \omega, \mathbf{u}(\omega) \right) d \sigma (\omega)  \right |
+ \left |   \int_{\mathbb{S}^{4}} \omega ^{i} \omega ^{j} \partial _{i} \partial _{j} u_{1} (\omega) d \sigma (\omega)  \right |  \\
& +  \left |   \int_{\mathbb{S}^{4}}\omega ^{j} \partial _{j} u_{1} (\omega) d \sigma (\omega)  \right | 
+  \left |   \int_{\mathbb{S}^{4}} \omega ^{j} \partial _{j} u_{2} (\omega) d \sigma (\omega)  \right | 
+  \left |   \int_{\mathbb{S}^{4}}  u_{2}(\omega) d \sigma (\omega)  \right | \\
& \lesssim  \| \mathbf{u} \| 
+ \left (  \int_{\mathbb{S}^{4}} \big| \partial^{2} u_{1}(\omega) \big|^{2} d \sigma (\omega)  \right )^{\frac{1}{2}}  
+  \left (   \int_{\mathbb{S}^{4}} \big|   \partial u_{1} (\omega) \big|^{2} d \sigma (\omega)  \right )^{\frac{1}{2}} \\
& +  \left (   \int_{\mathbb{S}^{4}} \big|  \partial  u_{2}(\omega)  \big|^{2} d \sigma (\omega)  \right )^{\frac{1}{2}} 
+  \left (   \int_{\mathbb{S}^{4}} \big| u_{2}(\omega) \big|^{2}~ d \sigma (\omega)  \right )^{\frac{1}{2}} \\
& \lesssim  \| \mathbf{u} \|, 
\end{align*}
which concludes the proof.
\end{proof}
\subsection{Free evolution and decay in time}
Now, we focus on the proof of Proposition \ref{Lumer} and show that a semigroup (solution operator) is generated and decays in time with a sharp decay estimate. We specify the domain of $\widetilde{\mathbf{L}}$,
\begin{align} \label{domain}
\mathcal{D} \big( \widetilde{\mathbf{L}} \big):= C^{4} \big( \overline{ \mathbb{B}^{5} } \big) \times C^{3} \big( \overline{ \mathbb{B}^{5} } \big).
\end{align}
To prove Proposition \ref{Lumer}, we intend to apply the Lumer-Phillips theorem (see Theorem 3.15, page 83, \cite{EngNag00}). The following two Lemmas constitute the key property of the sesquilinear forms defined above and verify the first part of the hypothesis of the Lumer-Phillips theorem. First, we define
\begin{align*}
\| \cdot \|_{j}: \mathcal{\widetilde{H}} \longrightarrow \mathbb{R},\quad \| \cdot \|_{j}:=\sqrt{ \left( \cdot \big{|} \cdot \right)_{j} },
\end{align*}
for all $j \in \{1,2,3,4,5\}$, where the sesquilinear forms $\left( \cdot \big{|} \cdot \right)_{j}$ are defined in section \ref{InnerProduct}.
\begin{lemma} \label{Lumer1a}
For all $\mathbf{u} \in \mathcal{D} \big( \widetilde{ \mathbf{L} } \big)$ and $i \in \{1,2,3,4 \}$, we have
\begin{align*}
\mathrm{Re} \big( \widetilde{ \mathbf{L} }  \mathbf{u}  \big{|} \mathbf{u}  \big)_{i} \leq - \frac{3}{2} \| \mathbf{u} \|_{i}^{2}.
\end{align*}
\end{lemma}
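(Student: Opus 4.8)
The plan is to verify the dissipativity estimate $\mathrm{Re}(\widetilde{\mathbf L}\mathbf u\,|\,\mathbf u)_i \le -\tfrac32\|\mathbf u\|_i^2$ by direct computation, integrating by parts in the bulk integrals over $\mathbb B^5$ and the boundary integrals over $\mathbb S^4$ appearing in the definition of $(\cdot\,|\,\cdot)_i$. Since $\widetilde{\mathbf L}\mathbf u = (-\xi\cdot\nabla u_1 - u_1 + u_2,\ \Delta u_1 - \xi\cdot\nabla u_2 - 2u_2)$, and the forms $(\cdot\,|\,\cdot)_i$ are built from the derivatives $\partial_i\partial_j\partial_k u_1$, $\partial_i\partial_j u_2$ (bulk) and lower-order derivatives restricted to $\mathbb S^4$, the first step is to commute the relevant derivatives through $\widetilde{\mathbf L}$. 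The key commutator identities are $\partial_m(\xi\cdot\nabla f) = \xi\cdot\nabla(\partial_m f) + \partial_m f$, so that applying three derivatives to the first component of $\widetilde{\mathbf L}\mathbf u$ produces $-\xi\cdot\nabla(\partial^3 u_1) - 4\,\partial^3 u_1 + \partial^3 u_2$, and similarly $\partial_m\partial_n$ applied to the second component gives $\Delta(\partial_m\partial_n u_1) - \xi\cdot\nabla(\partial_m\partial_n u_2) - 4\,\partial_m\partial_n u_2$. The shift in the zeroth-order coefficient (from $1$ to $4$, resp. $2$ to $4$) after differentiating is exactly what will generate the favorable constant.

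Next I would treat each bulk term. For a transport term of the form $\int_{\mathbb B^5} (\xi\cdot\nabla F)\overline F$, integration by parts gives $\mathrm{Re}\int_{\mathbb B^5}(\xi\cdot\nabla F)\overline F = \tfrac12\int_{\mathbb S^4}|F|^2 d\sigma - \tfrac d2\int_{\mathbb B^5}|F|^2$, i.e.\ with $d=5$ a contribution $+\tfrac12\|F\|_{L^2(\mathbb S^4)}^2 - \tfrac52\|F\|_{L^2(\mathbb B^5)}^2$. So the term $-\xi\cdot\nabla(\partial^3 u_1) - 4\partial^3 u_1$ integrated against $\overline{\partial^3 u_1}$ yields, in real part, $-\tfrac12\|\partial^3 u_1\|_{L^2(\mathbb S^4)}^2 + \tfrac52\|\partial^3 u_1\|_{L^2(\mathbb B^5)}^2 - 4\|\partial^3 u_1\|_{L^2(\mathbb B^5)}^2 = -\tfrac12\|\partial^3 u_1\|_{L^2(\mathbb S^4)}^2 - \tfrac32\|\partial^3 u_1\|_{L^2(\mathbb B^5)}^2$ — precisely $-\tfrac32$ of the bulk $u_1$-part, plus a sign-definite boundary leftover. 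The Laplacian term $\int_{\mathbb B^5}\Delta(\partial^2 u_1)\overline{\partial^2 u_2}$ and the cross term $\int_{\mathbb B^5}\Delta(\partial^2 u_1)\overline{\partial^2\partial u_1}$-type expressions appearing in $(\cdot\,|\,\cdot)_1,(\cdot\,|\,\cdot)_2$ must be integrated by parts once to move a derivative onto the conjugate slot, producing boundary integrals over $\mathbb S^4$ together with bulk gradient terms; these boundary pieces should be designed to cancel against the boundary integrals already present in the definition of the forms (the $\int_{\mathbb S^4}\partial_i\partial_j u_1\overline{\partial^i\partial^j v_1}$, $\int_{\mathbb S^4}\partial_j u_2\overline{\partial^j v_2}$, $\int_{\mathbb S^4}u_2\overline{v_2}$ terms) — this is the point of including exactly those surface terms in the inner product.

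The cleanest organizing principle is: show that for each $i$ the "bulk part" of $\mathrm{Re}(\widetilde{\mathbf L}\mathbf u\,|\,\mathbf u)_i$ already equals $-\tfrac32$ times the bulk part of $\|\mathbf u\|_i^2$ plus boundary remainders, then show the boundary-integral part contributes $\le -\tfrac32$ times the boundary part of $\|\mathbf u\|_i^2$ after accounting for the remainders. For the boundary terms one uses that on $\mathbb S^4$ the transport operator $\xi\cdot\nabla$ restricted to the sphere is just the radial derivative, whose normal part is controlled, and one again differentiates $\widetilde{\mathbf L}\mathbf u$ the appropriate number of times before restricting. I expect the main obstacle to be bookkeeping: ensuring that all the spurious boundary integrals generated by integration by parts in the bulk (especially the $\Delta u_1$ cross terms, which on $\mathbb S^4$ involve normal derivatives $\partial_\nu\partial^2 u_1 = \omega^k\partial_k\partial^2 u_1$ that are not directly among the norm's boundary terms) either cancel in pairs or can be absorbed, and that the numerology works out to the clean constant $-\tfrac32$ rather than merely some negative constant. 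The auxiliary rank-one form $(\cdot\,|\,\cdot)_5$ is deliberately excluded from this lemma (the statement is only for $i\in\{1,2,3,4\}$), which is consistent with the fact that $(\cdot\,|\,\cdot)_5$ need not be dissipative on its own and will be handled separately in the proof of Proposition \ref{Lumer}.
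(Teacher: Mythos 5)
Your overall strategy coincides with the paper's: commute derivatives through $\widetilde{\mathbf L}$, integrate by parts in the bulk to extract the $-\tfrac32$ coefficient, and track the boundary remainders on $\mathbb S^4$. The transport identity you write down, $\mathrm{Re}\int_{\mathbb B^5}(\xi\cdot\nabla F)\overline F = \tfrac12\int_{\mathbb S^4}|F|^2\,d\sigma-\tfrac52\int_{\mathbb B^5}|F|^2$, and the coefficient shift from $1$ to $4$ under three derivatives, are exactly the computations the paper performs. This is genuinely the right approach.

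However, there is a gap you flag but do not close, and it is not ``just bookkeeping.'' After all the divergence-theorem identities one is left, for each $i$, with an identity of the form
\begin{align*}
\mathrm{Re}\,\big(\widetilde{\mathbf L}\mathbf u\,\big|\,\mathbf u\big)_i + \tfrac32\|\mathbf u\|_i^2 = -\int_{\mathbb S^4}(\text{norm term})\,d\sigma + \int_{\mathbb S^4}R_i(\omega)\,d\sigma,
\end{align*}
where the remainder $R_i(\omega)$ is a sum of three negative-definite quadratic pieces (with coefficient $-\tfrac12$) and three sign-indefinite cross terms involving, e.g., $\mathrm{Re}\big(\omega^k\partial_k\partial_i\partial_j u_1\,\overline{\partial^i\partial^j u_2}\big)$ and $\mathrm{Re}\big(\partial_i\partial_j u_1\,\overline{\partial^i\partial^j u_2}\big)$. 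These cross terms do not cancel in pairs, and naive Young-type absorption does not obviously give non-positivity: one must observe the specific algebraic inequality
\begin{align*}
\mathrm{Re}(a\bar b)+\mathrm{Re}(a\bar c)-\mathrm{Re}(b\bar c)\le\tfrac12|a|^2+\tfrac12|b|^2+\tfrac12|c|^2, \qquad a,b,c\in\mathbb C,
\end{align*}
applied (in conjunction with the Cauchy--Schwarz estimate $|\omega^k\partial_k F|^2\le\sum_k|\partial_k F|^2$) to the particular triple $a=\omega^k\partial_k\partial_i\partial_j u_1$, $b=\partial_i\partial_j u_2$, $c=\partial_i\partial_j u_1$ (and analogously for $i=2,3,4$). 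This is the one non-mechanical step in the proof, and it is precisely what your sketch leaves as ``I expect the numerology works out.'' Without supplying this inequality the proof does not go through, so as written the proposal is incomplete: the architecture is right but the keystone is missing.
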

\begin{proof}
To begin with, fix an arbitrary $\mathbf{u} \in C^{4} \big( \overline{ \mathbb{B}^{5} } \big) \times C^{3} \big( \overline{ \mathbb{B}^{5} } \big)$. On the one hand, the divergence theorem implies
\begin{align*}
\mathrm{Re} \int _{\mathbb{B}^{5}} \partial _{i} \partial _{j} \partial _{k} \big( \widetilde{ \mathbf{L} } \mathbf{u} \big)_{1} ( \xi )  \overline{ \partial ^{i} \partial ^{j} \partial ^{k} u_{1}  ( \xi ) } d \xi & = 
-\frac{3}{2} \int _{\mathbb{B}^{5}} 
\partial _{i} \partial _{j} \partial _{k} u_{1} (\xi) \overline{ \partial ^{i} \partial ^{j} \partial ^{k} u_{1} (\xi) } d \xi \\
& - \frac{1}{2} \int _{\mathbb{S}^{4}} 
\partial _{i} \partial _{j} \partial _{k} u_{1} (\omega) \overline{  \partial ^{i} \partial ^{j} \partial ^{k} u_{1} (\omega) }
d \sigma (\omega) \\
& +\mathrm{Re} \int _{\mathbb{B}^{5}}  \partial _{i} \partial _{j} \partial _{k} u_{1}( \xi )  \overline{  \partial ^{i} \partial ^{j} \partial ^{k} u_{2}  ( \xi ) }d \xi, \\ 
\mathrm{Re} \int _{\mathbb{B}^{5}} \partial _{i} \partial _{j} \big( \widetilde{ \mathbf{L} }  \mathbf{u}  \big)_{2} ( \xi ) \overline{ \partial ^{i} \partial ^{j} u_{2} ( \xi ) } d \xi & =  
 - \frac{3}{2} \int _{\mathbb{B}^{5}} 
 \partial _{i} \partial _{j} u_{2} (\xi) \overline{ \partial ^{i} \partial ^{j} u_{2} (\xi)  } d \xi \\
 & - \mathrm{Re} \int _{\mathbb{B}^{5}}  \partial _{i} \partial _{j} \partial _{k} u_{1} ( \xi )  \overline{  \partial ^{i} \partial ^{j} \partial ^{k} u_{2} ( \xi ) }d \xi \\
&+\mathrm{Re} \int _{\mathbb{S}^{4}} \omega ^{k}  \partial _{k} \partial _{i} \partial _{j} u_{1} (\omega) \overline{ \partial ^{i} \partial ^{j} u_{2} (\omega) }d \sigma( \omega) \\
& - \frac{1}{2}   \int _{\mathbb{S}^{4}}
 \partial _{i} \partial _{j} u_{2} (\omega) \overline{ \partial ^{i} \partial ^{j}  u_{2} (\omega) }
 d \sigma (\omega) 
\end{align*}
and, on the other hand, we have
\begin{align*}
\mathrm{Re} \int _{\mathbb{S}^{4}} \partial _{i} \partial _{j} \big( \widetilde{ \mathbf{L} } \mathbf{u} \big)_{1} (\omega)  \overline{ \partial ^{i} \partial ^{j} u_{1} (\omega)} d \sigma (\omega) & = 
 - 3 \int _{\mathbb{S}^{4}} 
 \partial _{i} \partial _{j} u_{1} (\omega) \overline{ \partial ^{i} \partial ^{j} u_{1} (\omega)  }
 d \sigma( {\omega})  \\
 & -  \mathrm{Re} \int _{\mathbb{S}^{4}} \omega ^{k}  \partial _{k} \partial _{i} \partial _{j} u_{1} (\omega)  \overline{  \partial ^{i} \partial ^{j} u_{1} (\omega)  }d \sigma( \omega) \\
 &+ \mathrm{Re} \int _{\mathbb{S}^{4}} \partial _{i} \partial _{j} u_{1} (\omega) \overline{ \partial ^{i} \partial ^{j} u_{2} (\omega) }d \sigma( \omega). 
\end{align*}
Hence, we obtain
\begin{align*}
\mathrm{Re}~ \big( \widetilde{ \mathbf{L} }  \mathbf{u}  \big{|} \mathbf{u}  \big)_{1}  + \frac{3}{2} \| \mathbf{u} \|_{1}^{2} & =
- \int _{\mathbb{S}^{4}} 
\partial _{i} \partial _{j}  u_{1} (\omega) \overline{ \partial ^{i} \partial ^{j} u_{1} (\omega) }
d \sigma (\omega) +   \int _{\mathbb{S}^{4}} A(\omega) d \sigma (\omega),
\end{align*}
where
\begin{align*}
A(\omega)&:= -\frac{1}{2} 
\partial _{i} \partial _{j} \partial _{k} u_{1} (\omega) \overline{ \partial ^{i} \partial ^{j} \partial ^{k} u_{1} (\omega) }
-\frac{1}{2} 
\partial _{i} \partial _{j} u_{2} (\omega) \overline{ \partial ^{i} \partial ^{j}  u_{2} (\omega) }
- \frac{1}{2}
 \partial _{i} \partial _{j}  u_{1} (\omega) \overline{ \partial ^{i} \partial ^{j}  u_{1} (\omega) }
  \\
& +\mathrm{Re} \Big( \omega ^{k} \partial _{k} \partial _{i} \partial _{j} u_{1} (\omega) \overline{ \partial ^{i} \partial ^{j} u_{2}(\omega) } \Big) + 
\mathrm{Re} \Big(\partial _{i} \partial _{j} u_{1} (\omega) \overline{ \partial ^{i} \partial ^{j} u_{2} (\omega) } \Big) \\
& -  \mathrm{Re} \Big( \omega ^{k} \partial _{k} \partial _{i} \partial _{j} u_{1}(\omega)  \overline{ \partial ^{i} \partial ^{j} u_{1}(\omega) } \Big). 
\end{align*}
Now, we use the inequality
\begin{align} \label{ReIneq}
\mathrm{Re} ( a \overline{b} ) + \mathrm{Re} \left( a \overline{c} \right) - \mathrm{Re} \left( b \overline{c} \right) \leq \frac{1}{2} |a|^{2} + \frac{1}{2} |b|^{2} + \frac{1}{2} |c|^{2},
\end{align}
which holds for all $a,b,c \in \mathbb{C}$, together with Cauchy-Schwarz inequality
\begin{align*}
\left | \sum_{k} \omega^{k} \partial_{k} \partial _{i} \partial _{j} u_{1}(\omega) \right |^{2}  
\leq  \sum_{k} \left( \omega ^{k} \right)^{2} \sum_{k} \left | \partial_{k} \partial _{i} \partial _{j} u_{1}(\omega) \right |^{2} 
= \sum_{k} \left | \partial_{k}\partial _{i} \partial _{j} u_{1}(\omega) \right |^{2}
\end{align*}
to obtain $A(\omega) \leq 0$ for all $\omega \in \mathbb{S}^{4}$ and the desired estimate for $ \big( \widetilde{ \mathbf{L} }  \mathbf{u}  \big{|} \mathbf{u}  \big)_{1}$ follows. For the second estimate, the divergence theorem yields
\begin{align*}
\mathrm{Re} \int _{\mathbb{B}^{5}} \partial _{i} \partial ^{k} \partial _{k} \big( \widetilde{ \mathbf{L} }  \mathbf{u}  \big)_{1} (\xi )  \overline{ \partial _{i} \partial ^{j} \partial _{j} u_{1} (\xi ) } d \xi & = 
-\frac{3}{2} \int _{\mathbb{B}^{5}} 
\partial _{i} \partial ^{j} \partial _{j} u_{1} (\xi) \overline{ \partial ^{i} \partial _{k} \partial ^{k} u_{1} (\xi) } d \xi \\
& - \frac{1}{2} \int _{\mathbb{S}^{4}} 
 \partial _{i} \partial ^{j} \partial _{j} u_{1} (\omega) \overline{ \partial ^{i} \partial _{k} \partial ^{k} u_{1} (\omega) } 
 d \sigma (\omega) \\
& +\mathrm{Re} \int _{\mathbb{B}^{5}}  \partial ^{i} \partial ^{j} \partial _{j} u_{1}(\xi )   \overline{  \partial _{i} \partial ^{k} \partial _{k} u_{2} (\xi )  }d \xi, \\
\mathrm{Re} \int _{\mathbb{B}^{5}} \partial _{i} \partial _{j} \big( \widetilde{ \mathbf{L} }  \mathbf{u} \big)_{2} (\xi )   \overline{ \partial ^{i} \partial ^{j} u_{2}(\xi )  } d \xi & =
-\frac{3}{2} \int _{\mathbb{B}^{5}}
 \partial _{i} \partial _{j}  u_{2} (\xi) \overline{ \partial ^{i} \partial ^{j} u_{2} (\xi) } 
 d \xi \\
& - \frac{1}{2} \int _{\mathbb{S}^{4}} 
\partial _{i} \partial _{j}  u_{2} (\omega) \overline{ \partial ^{i} \partial ^{j} u_{2} (\omega) } 
d \sigma (\omega) \\
& -\mathrm{Re} \int _{\mathbb{B}^{5}}  \partial ^{i} \partial ^{k} \partial _{k} u_{1} (\xi ) \overline{  \partial _{i} \partial ^{j} \partial _{j} u_{2} (\xi ) }d \xi \\
& +\mathrm{Re} \int _{\mathbb{S}^{4}} \omega ^{j}  \partial _{i }\partial _{j} u_{2} (\omega) \overline{  \partial ^{i} \partial ^{k} \partial _{k} u_{1} (\omega) }d \sigma(\omega),
\end{align*}
and, in addition, we have
\begin{align*}
\mathrm{Re} \int _{\mathbb{S}^{4}} \partial _{j} \big( \widetilde{ \mathbf{L} }  \mathbf{u}  \big)_{2}(\omega) \overline{ \partial ^{j} u_{2} (\omega) } d \sigma (\omega) & =
 - 3 \int _{\mathbb{S}^{4}} 
 \partial _{i}   u_{2} (\omega) \overline{ \partial ^{i}  u_{2} (\omega) } 
 d \sigma( {\omega})  \\
& - \mathrm{Re} \int _{\mathbb{S}^{4}} \omega ^{k}  \partial _{k} \partial _{j} u_{2} (\omega) \overline{  \partial ^{j} u_{2} (\omega)  }d \sigma( {\omega}) \\
 &+ \mathrm{Re} \int _{\mathbb{S}^{4}} \partial ^{i} \partial _{i} \partial _{j} u_{1}(\omega)  \overline{ \partial ^{j} u_{2}(\omega) }d \sigma( {\omega}).  
\end{align*}
Therefore, we get
\begin{align*}
\mathrm{Re}~ \big( \widetilde{ \mathbf{L} }  \mathbf{u}  \big{|} \mathbf{u}  \big)_{2}  + \frac{3}{2} \| \mathbf{u} \|_{2}^{2} & =
- \int _{\mathbb{S}^{4}} 
\partial _{i} u_{2} (\omega) \overline{ \partial ^{i}  u_{2} (\omega) } 
d \sigma (\omega) +   \int _{\mathbb{S}^{4}} B(\omega) d \sigma (\omega),
\end{align*}
where
\begin{align*}
B(\omega)&:= -\frac{1}{2}
 \partial _{i}  \partial ^{j}  \partial _{j}  u_{1} (\omega) \overline{ \partial ^{i}   \partial ^{k}  \partial _{k} u_{1} (\omega) } 
 -\frac{1}{2} 
 \partial _{i}  \partial_{j} u_{2} (\omega) \overline{ \partial ^{i} \partial^{j} u_{2} (\omega) } 
 -\frac{1}{2}
   \partial _{i} u_{2} (\omega) \overline{ \partial ^{i}u_{2} (\omega) }  \\
& +\mathrm{Re} \Big( \omega ^{j} \partial _{j} \partial _{i} u_{2}(\omega) \overline{ \partial ^{i} \partial ^{k} \partial _{k} u_{1}(\omega) } \Big) \\
& + \mathrm{Re} \Big(\partial ^{j} u_{2}(\omega) \overline{ \partial ^{i} \partial _{i} \partial _{j} u_{1} (\omega) } \Big)
-  \mathrm{Re} \Big( \omega ^{k} \partial _{k} \partial _{j} u_{2} (\omega) \overline{ \partial ^{j} u_{2} (\omega) } \Big). 
\end{align*}
As before, we use \eqref{ReIneq} together with Cauchy-Schwarz inequality
\begin{align*}
\left | \sum_{k} \omega^{k} \partial_{k} \partial _{i} u_{2}(\omega) \right |^{2}  
\leq  \sum_{k} \left( \omega ^{k} \right)^{2} \sum_{k} \left | \partial_{k} \partial _{i} u_{2}(\omega) \right |^{2} 
= \sum_{k} \left | \partial_{k}\partial _{i} u_{2}(\omega) \right |^{2}
\end{align*}
to get $B(\omega) \leq 0$ for all $\omega \in \mathbb{S}^{4}$ and the claim for $ \big(  \widetilde{ \mathbf{L} }  \mathbf{u}  \big{|} \mathbf{u}  \big)_{2}$ follows. For the third estimate, we use the previous estimates together with Cauchy-Schwarz inequalities
\begin{align*}
& \left | \sum_{i} \partial_{i} \partial^{i} u_{1}(\omega) \right |^{2}  \leq  \sum_{i} 1^{2} \sum_{i} \left | \partial_{i} \partial^{i} u_{1}(\omega) \right |^{2} \leq 5 \sum_{i,j} \left | \partial_{i} \partial_{j} u_{1}(\omega) \right |^{2},  
\\
& \left | \sum_{k} \omega^{k} \partial_{k} u_{2}(\omega) \right |^{2}  \leq  \sum_{k} \left( \omega ^{k} \right)^{2} \sum_{k} \left | \partial_{k} u_{2}(\omega) \right |^{2} = \sum_{k} \left | \partial_{k} u_{2}(\omega) \right |^{2},
\end{align*}
and Young's inequality to obtain
\begin{align*}
\mathrm{Re}~ \big( \widetilde{ \mathbf{L} }  \mathbf{u}  \big{|} \mathbf{u}  \big)_{3}  + \frac{3}{2} \| \mathbf{u} \|_{3}^{2} & =
5 \left( \mathrm{Re}~ \big( \widetilde{ \mathbf{L} }  \mathbf{u}  \big{|} \mathbf{u}  \big)_{1}  + \frac{3}{2} \| \mathbf{u} \|_{1}^{2} \right)+ 
\mathrm{Re}~ \big( \widetilde{ \mathbf{L} }  \mathbf{u}  \big{|} \mathbf{u}  \big)_{2}  + \frac{3}{2} \| \mathbf{u} \|_{2}^{2} + \\
& +\mathrm{Re}~  \int _{\mathbb{S}^{4}} \left( \big( \widetilde{ \mathbf{L} }  \mathbf{u}(\omega)  \big)_{2} \overline{ u_{2} (\omega) } + \frac{3}{2} \big| u_{2}(\omega) \big|^{2}  \right) d \sigma (\omega) \\
& \leq -5  \int _{ \mathbb{S}^{4} }  
 \partial _{i}  \partial _{j}  u_{1} (\omega) \overline{ \partial ^{i}   \partial ^{j}  u_{1} (\omega) } 
d \sigma (\omega) 
- \int _{ \mathbb{S}^{4} } 
  \partial _{i}   u_{2} (\omega) \overline{ \partial ^{i}  u_{2} (\omega) } 
 d \sigma (\omega) \\
& + \mathrm{Re}~ \int _{\mathbb{S}^{4}} \left( \partial ^{i} \partial _{i} u_{1}(\omega) \overline{u_{2} (\omega) }  -   \omega ^{k} \partial _{k} u_{2}(\omega) \overline{  u_{2} (\omega)} 
 -\frac{1}{2} 
 \big| u_{2}(\omega) \big|^{2} \right) d \sigma (\omega) \\
 & \leq - \int _{ \mathbb{S}^{4} }  
\partial _{i}  \partial ^{i}  u_{1} (\omega) \overline{ \partial _{j}   \partial ^{j}  u_{1} (\omega) } 
d \sigma (\omega) 
- \int _{ \mathbb{S}^{4} } 
  \partial _{i}   u_{2} (\omega) \overline{ \partial ^{i}  u_{2} (\omega) } 
 d \sigma (\omega) \\
& + \mathrm{Re}~ \int _{\mathbb{S}^{4}} \left( \partial ^{i} \partial _{i} u_{1}(\omega) \overline{u_{2} (\omega) }  -   \omega ^{k} \partial _{k} u_{2}(\omega) \overline{  u_{2} (\omega)} 
 -\frac{1}{2} 
 \big| u_{2}(\omega) \big|^{2} \right) d \sigma (\omega) \\
&= -\frac{1}{2}  \int _{ \mathbb{S}^{4} } 
 \partial _{i}  \partial ^{i}  u_{1} (\omega) \overline{ \partial _{j}   \partial ^{j}  u_{1} (\omega) }   
 d \sigma (\omega) -\frac{1}{2}  \int _{ \mathbb{S}^{4} }  
   \partial _{i}  u_{2} (\omega) \overline{ \partial ^{i}   u_{2} (\omega) } 
  d \sigma (\omega) \\
& -\mathrm{Re}~   \int _{ \mathbb{S}^{4} } \partial ^{i} \partial _{i} u_{1} (\omega)  \overline{ \omega ^{k} \partial _{k} u_{2} (\omega) } d \sigma (\omega)
+\int _{\mathbb{S}^{4}} C(\omega) d \sigma (\omega) \\
& \leq \int _{\mathbb{S}^{4}} C(\omega) d \sigma (\omega)
\end{align*}
where
\begin{align*}
C(\omega)&:=  -\frac{1}{2}
\partial _{i}  \partial ^{i}  u_{1} (\omega) \overline{ \partial _{j}   \partial ^{j}  u_{1} (\omega) }  
 -\frac{1}{2} 
  \partial _{i}  u_{2} (\omega) \overline{ \partial ^{i} u_{2} (\omega) } 
 -\frac{1}{2} \big| u_{2}(\omega) \big|^{2} \\
& +\mathrm{Re} \Big( u_{2} (\omega) \overline{ \partial ^{i} \partial _{i} u_{1} (\omega) } \Big) + \mathrm{Re} \Big(\partial ^{i} \partial _{i} u_{1} (\omega) \overline{ \omega ^{k} \partial _{k} u_{2} (\omega) } \Big) 
-  \mathrm{Re} \Big( \omega ^{k} \partial _{k} u_{2}(\omega) \overline{  u_{2}(\omega) } \Big).
\end{align*}
Inequality \eqref{ReIneq} implies $C(\omega)  \leq 0$ for all $\omega \in \mathbb{S}^{4}$ and the claim for $ \big( \widetilde{ \mathbf{L} }  \mathbf{u}  \big{|} \mathbf{u}  \big)_{3}$ follows. Finally, for the last estimate, we use the previous estimates together Cauchy-Schwarz inequality 
\begin{align*}
 \left | \sum_{k} \omega^{k} \partial_{k} \partial_{i} u_{1}(\omega) \right |^{2}  
 \leq  \sum_{k} \left( \omega ^{k} \right)^{2} \sum_{k} \left | \partial_{k} \partial_{i} u_{1}(\omega) \right |^{2} 
 = \sum_{k} \left | \partial_{k} \partial_{i} u_{1}(\omega) \right |^{2}
\end{align*}
and Young's inequality once more to obtain
\begin{align*}
\mathrm{Re}~ \big( \widetilde{ \mathbf{L} }  \mathbf{u}  \big{|} \mathbf{u}  \big)_{4}  + \frac{3}{2} \| \mathbf{u} \|_{4}^{2} & =
\mathrm{Re}~ \big( \widetilde{ \mathbf{L} }  \mathbf{u}  \big{|} \mathbf{u}  \big)_{1}  + \frac{3}{2} \| \mathbf{u} \|_{1}^{2} + 
\mathrm{Re}~ \big( \widetilde{ \mathbf{L} }  \mathbf{u}  \big{|} \mathbf{u}  \big)_{2}  + \frac{3}{2} \| \mathbf{u} \|_{2}^{2} + \\
& +\mathrm{Re}~  \int _{\mathbb{S}^{4}} \left( \partial _{i} \big( \widetilde{ \mathbf{L} }  \mathbf{u}   \big)_{1}  (\omega)\overline{ \partial ^{i} u_{1}(\omega) } + 
\frac{3}{2} 
 \partial _{i} u_{1} (\omega) \overline{ \partial ^{i}  u_{1} (\omega) } \right) d \sigma (\omega) \\
& \leq - \int _{ \mathbb{S}^{4} } 
  \partial _{i}  \partial _{j}  u_{1} (\omega) \overline{ \partial ^{i}   \partial ^{j} u_{1} (\omega) } 
 d \sigma (\omega) 
- \int _{ \mathbb{S}^{4} }  
 \partial _{i}  u_{2} (\omega) \overline{ \partial ^{i} u_{2} (\omega) } 
d \sigma (\omega) \\
& + \mathrm{Re}~ \int _{\mathbb{S}^{4}} \left( \partial _{i} u_{2}(\omega) \overline{\partial ^{i} u_{1} (\omega) }  -   \omega ^{k} \partial _{k} \partial _{i} u_{1} (\omega) \overline{ \partial ^{i} u_{1}(\omega) }  
- \frac{1}{2} 
 \partial _{i}    u_{1} (\omega) \overline{ \partial ^{i}  u_{1} (\omega) } 
\right) d \sigma (\omega) \\
&= -\frac{1}{2}  \int _{ \mathbb{S}^{4} } 
  \partial _{i}  \partial _{j}  u_{1} (\omega) \overline{ \partial ^{i}   \partial ^{j}  u_{1} (\omega) } 
 d \sigma (\omega) 
-\frac{1}{2}  \int _{ \mathbb{S}^{4} } 
  \partial _{i}  u_{2} (\omega) \overline{ \partial ^{i}   u_{2} (\omega) } 
 d \sigma (\omega) \\
& -\mathrm{Re}~   \int _{ \mathbb{S}^{4} } \partial _{i} u_{2} (\omega) \overline{ \omega ^{k} \partial _{k} \partial ^{i} u_{1}(\omega) } d \sigma (\omega)
+\int _{\mathbb{S}^{4}} D(\omega) d \sigma (\omega) \\
& \leq \int _{\mathbb{S}^{4}} D(\omega) d \sigma (\omega),
\end{align*}
where
\begin{align*}
D(\omega)&:=  -\frac{1}{2} 
 \partial _{i}  u_{2} (\omega) \overline{ \partial ^{i} u_{2} (\omega) } 
 -\frac{1}{2} 
  \partial _{i}  u_{1} (\omega) \overline{ \partial ^{i}   u_{1} (\omega) } 
 -\frac{1}{2} 
  \partial _{i}  \partial _{j} u_{1} (\omega) \overline{ \partial ^{i}   \partial ^{j} u_{1} (\omega) } 
 \\
& +\mathrm{Re} \Big( \partial _{i} u_{2} (\omega) \overline{ \partial ^{i} u_{1} (\omega) } \Big) + 
\mathrm{Re} \Big(\partial _{i} u_{2}(\omega) \overline{ \omega ^{k} \partial _{k} \partial ^{i} u_{1} (\omega) } \Big) 
-  \mathrm{Re} \Big( \omega ^{k} \partial _{k} \partial _{i} u_{1}(\omega) \overline{ \partial ^{i} u_{1}(\omega) } \Big).
\end{align*}
As before, \eqref{ReIneq} implies $D(\omega)  \leq 0$ for all $\omega \in \mathbb{S}^{4}$ and the claim for $ \big( \widetilde{ \mathbf{L} }  \mathbf{u}  \big{|} \mathbf{u}  \big)_{4}$ follows.
\end{proof}
\begin{lemma} \label{Lumer1b}
For all $\mathbf{u} \in \mathcal{D} \big( \widetilde{ \mathbf{L} } \big)$, we have
\begin{align*}
\mathrm{Re} \big( \widetilde{ \mathbf{L} }  \mathbf{u}  \big{|} \mathbf{u}  \big)_{5} = -  \| \mathbf{u} \|_{5}^{2}.
\end{align*}
\end{lemma}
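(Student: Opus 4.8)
The plan is to reduce the identity to a single scalar computation on $\mathbb{S}^4$ and then read it off. Since the sesquilinear form $\left(\cdot\,\big|\,\cdot\right)_5$ is a product of two scalars, we have
\begin{align*}
\big(\widetilde{\mathbf{L}}\mathbf{u}\,\big|\,\mathbf{u}\big)_5=\left(\int_{\mathbb{S}^4}\zeta\big(\omega,\widetilde{\mathbf{L}}\mathbf{u}(\omega)\big)\,d\sigma(\omega)\right)\left(\int_{\mathbb{S}^4}\overline{\zeta(\omega,\mathbf{u}(\omega))}\,d\sigma(\omega)\right),
\end{align*}
and $\|\mathbf{u}\|_5^2=\big|\int_{\mathbb{S}^4}\zeta(\omega,\mathbf{u}(\omega))\,d\sigma(\omega)\big|^2$. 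Hence it suffices to establish the scalar identity
\begin{align*}
\int_{\mathbb{S}^4}\zeta\big(\omega,\widetilde{\mathbf{L}}\mathbf{u}(\omega)\big)\,d\sigma(\omega)=-\int_{\mathbb{S}^4}\zeta(\omega,\mathbf{u}(\omega))\,d\sigma(\omega),
\end{align*}
since then $\big(\widetilde{\mathbf{L}}\mathbf{u}\,\big|\,\mathbf{u}\big)_5=-\|\mathbf{u}\|_5^2$, which is in particular real.

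To prove the scalar identity I would pass to polar coordinates $\xi=r\omega$ and use that $D_5$ and $\tilde D_5$ are radial operators evaluated at $r=1$: for fixed $\omega$, writing $a(r):=u_1(r\omega)$, $b(r):=u_2(r\omega)$, one checks that $D_5 w_1(\omega)=(\partial_r^2+5\partial_r+3)\big[w_1(r\omega)\big]\big|_{r=1}$ and $\tilde D_5 w_2(\omega)=(\partial_r+3)\big[w_2(r\omega)\big]\big|_{r=1}$, while the Euler operator satisfies $\xi\cdot\nabla u(\xi)|_{\xi=r\omega}=r\partial_r[u(r\omega)]$. Substituting $\big(\widetilde{\mathbf{L}}\mathbf{u}\big)_1(r\omega)=-ra'(r)-a(r)+b(r)$ and $\big(\widetilde{\mathbf{L}}\mathbf{u}\big)_2(r\omega)=c(r)-rb'(r)-2b(r)$, with $c(r):=(\Delta u_1)(r\omega)$, and carrying out the differentiations, the second-order terms in $b$ cancel and $\zeta\big(\omega,\widetilde{\mathbf{L}}\mathbf{u}(\omega)\big)$ becomes a fixed linear combination of $a(1),a'(1),a''(1),a'''(1),b(1),b'(1),c(1),c'(1)$.

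Next I would insert the polar decomposition of the Euclidean Laplacian in $\mathbb{R}^5$, namely $c(r)=a''(r)+\tfrac{4}{r}a'(r)+\tfrac{1}{r^2}\Lambda(r)$ with $\Lambda(r):=\big(\Delta_{\mathbb{S}^4}[u_1(r\,\cdot)]\big)(\omega)$. Feeding this into the previous expression, the purely radial part collapses exactly to $-\zeta(\omega,\mathbf{u}(\omega))$, and one is left with
\begin{align*}
\zeta\big(\omega,\widetilde{\mathbf{L}}\mathbf{u}(\omega)\big)+\zeta(\omega,\mathbf{u}(\omega))=\Lambda(1)+\Lambda'(1).
\end{align*}
Because $u_1\in C^4\big(\overline{\mathbb{B}^5}\big)$, the derivative $\partial_r$ commutes with $\Delta_{\mathbb{S}^4}$, so both $\Lambda(1)=\Delta_{\mathbb{S}^4}\big[u_1|_{\mathbb{S}^4}\big](\omega)$ and $\Lambda'(1)$ are spherical Laplacians of smooth functions on the closed manifold $\mathbb{S}^4$. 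Integrating over $\mathbb{S}^4$ and invoking the divergence theorem gives $\int_{\mathbb{S}^4}\big(\Lambda(1)+\Lambda'(1)\big)\,d\sigma(\omega)=0$, which is precisely the scalar identity, and the lemma follows.

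The computation is entirely mechanical, and the only genuine care required is the bookkeeping of coefficients in $(\partial_r^2+5\partial_r+3)\big(-ra'-a+b\big)$ and $(\partial_r+3)\big(c-rb'-2b\big)$ at $r=1$, together with the $d=5$-specific radial/angular splitting of $\Delta$. I do not expect an analytic obstacle: the role of the large domain $\mathcal{D}(\widetilde{\mathbf{L}})=C^4\big(\overline{\mathbb{B}^5}\big)\times C^3\big(\overline{\mathbb{B}^5}\big)$ is exactly to legitimize the third radial derivative of $u_1$ and the interchange of $\partial_r$ with $\Delta_{\mathbb{S}^4}$.
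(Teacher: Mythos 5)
Your proof is correct and is essentially the paper's argument: you derive the same key identity, namely $\zeta(\omega,\widetilde{\mathbf{L}}\mathbf{u}(\omega))+\zeta(\omega,\mathbf{u}(\omega))=\Delta^{\mathbb{S}^4}_\omega\big(u_1(\omega)+\omega^j\partial_j u_1(\omega)\big)$ (which is the paper's \eqref{identityzeta5}, since $\Lambda(1)+\Lambda'(1)$ is exactly the spherical Laplacian of $u_1|_{\mathbb{S}^4}+\omega^j\partial_j u_1|_{\mathbb{S}^4}$), and then conclude by integrating over $\mathbb{S}^4$ and using that the spherical Laplacian of a smooth function has zero mean. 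The only difference is that the paper simply asserts the identity as a "long but straightforward calculation," whereas you organize it cleanly through the polar representation $D_5=(\partial_r^2+5\partial_r+3)|_{r=1}$, $\tilde D_5=(\partial_r+3)|_{r=1}$; the arithmetic in your sketch (the cancellation of $b''$, and the collapse of the radial part against $-\zeta(\omega,\mathbf{u})$) checks out.
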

\begin{proof}
Fix $\mathbf{u} \in C^{4} \big( \overline{ \mathbb{B}^{5} } \big) \times C^{3} \big( \overline{ \mathbb{B}^{5} } \big)$. A long but straight-forward calculation yields
\begin{align} \label{identityzeta5}
 \zeta \left(\omega, \widetilde{\mathbf{L} }  \mathbf{u} (\omega) \right) = - \zeta \left(\omega,  \mathbf{u} (\omega) \right) +
 \Delta ^{\mathbb{S}^{4}}_{\omega} \Big( u_{1} (\omega) + \omega ^{j} \partial _{j} u_{1} (\omega) \Big),
\end{align}
where $\Delta^{\mathbb{S}^{4}}_{\omega}  $ stands for the Laplace-Beltrami operator on the $4-$sphere, namely
\begin{align*}
\Delta ^{\mathbb{S}^{4}}_{\omega}  =\left( \delta ^{jk} - \omega ^{j} \omega ^{k} \right) \partial _{\omega ^{j}} \partial _{\omega ^{k}} -4 \omega ^{j} \partial _{\omega^{j}}.
\end{align*}
Now, Stoke's theorem yields 
\begin{align*}
\int _{\mathbb{S}^{4}} \Delta ^{\mathbb{S}^{4}}_{\omega}  \Big( u_{1} (\omega)+ \omega ^{j} \partial _{j}  u_{1}(\omega) \Big) = 0
\end{align*}
which implies the initial claim.
\end{proof}
Summarizing the results of the two previous Lemmas, we get
\begin{cor} \label{Lumer1}
For all $\mathbf{u} \in \mathcal{D} \big( \widetilde{ \mathbf{L} } \big)$, we have
\begin{align*}
\mathrm{Re} \big( \widetilde{ \mathbf{L} }  \mathbf{u}  \big{|} \mathbf{u}  \big) \leq -  \| \mathbf{u} \|^{2}.
\end{align*}
\end{cor}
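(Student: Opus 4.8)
The plan is to read the result off directly from Lemma \ref{Lumer1a} and Lemma \ref{Lumer1b}, exploiting that the inner product \eqref{sesquilinear1} is by construction the sum $(\cdot|\cdot)=\sum_{i=1}^{5}(\cdot|\cdot)_{i}$ of the five sesquilinear forms analysed there. Consequently, for any $\mathbf{u}\in\mathcal{D}(\widetilde{\mathbf{L}})$ both sides of the desired inequality split additively: $\mathrm{Re}(\widetilde{\mathbf{L}}\mathbf{u}\,|\,\mathbf{u})=\sum_{i=1}^{5}\mathrm{Re}(\widetilde{\mathbf{L}}\mathbf{u}\,|\,\mathbf{u})_{i}$ and $\|\mathbf{u}\|^{2}=\sum_{i=1}^{5}\|\mathbf{u}\|_{i}^{2}$.

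First I would invoke Lemma \ref{Lumer1a} to bound $\mathrm{Re}(\widetilde{\mathbf{L}}\mathbf{u}\,|\,\mathbf{u})_{i}\le-\tfrac{3}{2}\|\mathbf{u}\|_{i}^{2}$ for $i\in\{1,2,3,4\}$, and Lemma \ref{Lumer1b} for the identity $\mathrm{Re}(\widetilde{\mathbf{L}}\mathbf{u}\,|\,\mathbf{u})_{5}=-\|\mathbf{u}\|_{5}^{2}$. Summing the four inequalities and the one identity, and using the trivial bound $-\tfrac{3}{2}t\le-t$ for $t\ge0$ applied to each $t=\|\mathbf{u}\|_{i}^{2}$ with $i\le 4$, yields $\mathrm{Re}(\widetilde{\mathbf{L}}\mathbf{u}\,|\,\mathbf{u})\le-\sum_{i=1}^{5}\|\mathbf{u}\|_{i}^{2}=-\|\mathbf{u}\|^{2}$, which is the claim.

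There is essentially no obstacle: the entire content sits in the two preceding lemmas, whose proofs in turn reduce---after the divergence theorem and Stokes' theorem on $\mathbb{S}^{4}$---to the pointwise inequality \eqref{ReIneq} and Cauchy--Schwarz. The only subtlety worth flagging is purely bookkeeping: the forms $(\cdot|\cdot)_{3}$ and $(\cdot|\cdot)_{4}$ contain copies of $(\cdot|\cdot)_{1}$ and $(\cdot|\cdot)_{2}$, so the five summands are not independent; but this is immaterial, since each $(\cdot|\cdot)_{i}$ is treated as its own nonnegative sesquilinear form and the five estimates are simply added. The surplus factor $\tfrac{3}{2}$ over $1$ is discarded here and is not needed for the dissipativity statement (it will reappear as room to spare once compact perturbations and the nonlinearity are added).
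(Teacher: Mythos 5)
Your proposal is correct and coincides with the paper's argument: the corollary is obtained by summing the four dissipativity estimates of Lemma \ref{Lumer1a} with the identity of Lemma \ref{Lumer1b} and using $(\cdot\,|\,\cdot)=\sum_{i=1}^{5}(\cdot\,|\,\cdot)_{i}$, discarding the extra factor $\tfrac{3}{2}$. Nothing further is needed.
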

Next, we prove that the range of $\lambda - \widetilde{ \mathbf{L} }$ is dense in $\mathcal{H}$ for some $\lambda > -1$ which verifies the second and last hypothesis of the Lumer-Phillips theorem. However, we will first need a technical result.
\begin{lemma} \label{technical}
For any $F \in H^{2} ( \mathbb{B}^{5} )$ and $\epsilon >0$, there exists $v \in C^{4} ( \overline{ \mathbb{B}^{5} } )$ such that the function defined by
\begin{align*}
h(\xi):=-\left( \delta ^{ij} - \xi ^{i} \xi^{j} \right) \partial _{i} \partial _{j}  v (\xi) + 7 \xi ^{j} \partial _{j} v(\xi) + \frac{35}{4}v (\xi)
\end{align*}
satisfies $h \in C^{2} ( \overline{ \mathbb{B}^{5} } )$ and $\| h-F \| _{ H^{2} ( \mathbb{B}^{5} ) } < \epsilon$.
\end{lemma}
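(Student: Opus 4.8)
\textbf{The plan is to solve the PDE $h = F$ for $v$ first in a smooth setting and then pass to the limit.} The operator appearing here,
\[
\mathcal{A}v := -\left(\delta^{ij}-\xi^i\xi^j\right)\partial_i\partial_j v + 7\xi^j\partial_j v + \tfrac{35}{4}v,
\]
is (up to a shift) precisely the reduced elliptic operator obtained from $\lambda - \widetilde{\mathbf{L}}$ after eliminating the first component: writing $(\lambda-\widetilde{\mathbf{L}})\mathbf{u} = (f_1,f_2)$ and solving the first equation for $u_2$ in terms of $u_1$, substitution into the second equation yields an equation of the form $\mathcal{A}u_1 = \tilde f$ with the specific constants $7$ and $\tfrac{35}{4}$ corresponding to the choice $\lambda = \tfrac{3}{2}$ (the Lumer--Phillips resolvent parameter), in $d=5$. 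So the content of the lemma is that this degenerate-elliptic operator on $\mathbb{B}^5$ has range dense in $H^2(\mathbb{B}^5)$.

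\textbf{First I would establish the claim for smooth right-hand sides.} By density of $C^\infty(\overline{\mathbb{B}^5})$ in $H^2(\mathbb{B}^5)$, it suffices to treat $F \in C^\infty(\overline{\mathbb{B}^5})$ and produce $v \in C^4(\overline{\mathbb{B}^5})$ with $\mathcal{A}v = F$ exactly. The operator $\mathcal{A}$ is uniformly elliptic in the interior $\mathbb{B}^5$ but degenerates on the boundary $\mathbb{S}^4$, where the principal part $-(\delta^{ij}-\xi^i\xi^j)\partial_i\partial_j$ loses ellipticity in the radial direction; this is the characteristic Fuchsian degeneracy familiar from similarity-coordinate problems. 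The natural route is to decompose into spherical harmonics: writing $v(\xi) = \sum_{\ell,m} v_{\ell,m}(\rho)\, Y_{\ell,m}(\omega)$ with $\rho = |\xi|$, the equation reduces to a family of ODEs on $[0,1]$ of the form
\[
-(1-\rho^2)v_{\ell,m}'' + \left(\text{lower order in }\rho\right)v_{\ell,m}' + \left(\tfrac{35}{4} + \tfrac{\ell(\ell+3)}{\rho^2}\cdot(\cdots)\right)v_{\ell,m} = F_{\ell,m}(\rho),
\]
each with a regular singular point at $\rho=0$ (the angular-momentum barrier) and at $\rho=1$ (the light-cone degeneracy). For each mode one solves the two-point boundary value problem by selecting the solution regular at $\rho=0$ and checking that it extends smoothly across $\rho=1$; the indicial roots at $\rho=1$ must be analyzed to confirm that the solution bounded there is automatically $C^\infty$ up to the boundary (this is where the precise constant $\tfrac{35}{4}$ — equivalently $\lambda=\tfrac32$ — matters, as it must avoid resonances with the indicial exponents). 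Summing the modes back up, with quantitative control of the $\rho$-regularity that is uniform enough in $\ell$ to guarantee convergence in $C^4(\overline{\mathbb{B}^5})$, produces the desired $v$, and then $h = \mathcal{A}v = F$ gives $\|h - F\|_{H^2} = 0 < \epsilon$.

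\textbf{The main obstacle} is the behavior at the degenerate boundary $\mathbb{S}^4$: one must show that solving the interior elliptic problem with the natural (regularity, not Dirichlet) condition at $\rho=1$ does not lose derivatives, i.e. that $F\in C^\infty(\overline{\mathbb{B}^5})$ forces $v\in C^4(\overline{\mathbb{B}^5})$, and that the per-mode estimates are summable. A cleaner alternative that sidesteps the mode-by-mode ODE analysis — and the one I would actually try to push through — is to not solve $\mathcal{A}v=F$ exactly but only approximately: approximate $F$ by a polynomial $P$ (Stone--Weierstrass, in $H^2$), observe that $\mathcal{A}$ maps polynomials of degree $n$ into polynomials of degree $n$ (since each term $\xi^i\xi^j\partial_i\partial_j$, $\xi^j\partial_j$, and the identity preserves the space of polynomials of degree $\le n$), and show $\mathcal{A}$ is \emph{invertible} on this finite-dimensional space — which follows because $-\operatorname{Re}(\mathcal{A}p\,|\,p)_{L^2}$-type coercivity (inherited from the Lumer--Phillips dissipativity estimate of Corollary \ref{Lumer1}, restricted appropriately) shows $\mathcal{A}$ has trivial kernel on polynomials, hence is bijective there. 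Then $v := \mathcal{A}^{-1}P$ is itself a polynomial, so certainly $v\in C^4(\overline{\mathbb{B}^5})$ and $h=\mathcal{A}v=P\in C^2(\overline{\mathbb{B}^5})$, with $\|h-F\|_{H^2}=\|P-F\|_{H^2}<\epsilon$. The only thing to verify carefully in this approach is that $\mathcal{A}$ restricted to degree-$\le n$ polynomials has trivial kernel for every $n$, which is exactly a finite-dimensional eigenvalue statement and should follow from the dissipativity already proved (no $-\tfrac32$-eigenfunction that is polynomial), making this the path of least resistance.
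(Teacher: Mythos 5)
Your second, "polynomial" argument is correct and is genuinely different from the paper's proof. The paper takes essentially the route you sketch first: density of $C^\infty(\overline{\mathbb{B}^5})$, truncation to finitely many spherical-harmonic modes, then a mode-by-mode ODE analysis via the hypergeometric equation and variation of constants, with a Taylor-series argument near $\rho=1$ to show the particular solution is smooth across the degenerate boundary and elliptic regularity to handle $\rho=0$. Your polynomial alternative bypasses all of that: since $-\Delta$ drops degree by two while $\xi^i\xi^j\partial_i\partial_j$, the Euler operator $\xi^j\partial_j$, and the identity all preserve the space $V_n$ of polynomials of degree at most $n$, the operator $\mathcal{A}$ maps $V_n$ to $V_n$. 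Injectivity on $V_n$ then reduces to ruling out $\lambda=\tfrac32$ as an eigenvalue of $\widetilde{\mathbf{L}}$ with polynomial eigenfunction: if $\mathcal{A}p=0$, lift $p$ to $\mathbf{u}=(p,\tfrac52 p+\xi^j\partial_j p)\in\mathcal{D}(\widetilde{\mathbf{L}})$; the algebra of the first-order system (carried out in the proof of Lemma~\ref{Lumer2}) gives $\widetilde{\mathbf{L}}\mathbf{u}=\tfrac32\mathbf{u}$, so Corollary~\ref{Lumer1} forces $\tfrac32\|\mathbf{u}\|^2\leq-\|\mathbf{u}\|^2$, i.e. $\mathbf{u}=0$. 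Hence $\mathcal{A}|_{V_n}$ is bijective, $v:=(\mathcal{A}|_{V_n})^{-1}P$ is a polynomial (so in $C^4(\overline{\mathbb{B}^5})$), and $h=\mathcal{A}v=P$ approximates $F$ in $H^2$ as well as you like by density of polynomials. This is strictly shorter and needs none of the hypergeometric machinery, and it clearly transfers to the other odd dimensions $d\in\{7,9,11,13\}$ since the operators appearing there are likewise degree-preserving and the dissipativity estimate is proved in each case. One small imprecision: the coercivity you cite is not an $L^2$ pairing but the inner product $(\cdot\,|\,\cdot)$ on $\mathcal{H}$ of Section~\ref{InnerProduct}; and the crucial intermediate step — reconstructing $\mathbf{u}$ from $p$ so that the dissipativity of $\widetilde{\mathbf{L}}$, rather than some separate coercivity of $\mathcal{A}$, yields injectivity — should be stated explicitly, since $\mathcal{A}$ itself is not self-adjoint and no direct quadratic-form estimate for $\mathcal{A}$ on $L^2(\mathbb{B}^5)$ is available.
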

\begin{proof}
To begin with, we pick an arbitrary $F \in H^{2} ( \mathbb{B}^{5} )$ and $\epsilon >0$. Since $C^{\infty} ( \overline{ \mathbb{B} ^{5} } )$ is dense in $H^{2} ( \mathbb{B}^{5} )$, we pick a function 
$\tilde{h} \in C^{\infty} ( \overline{ \mathbb{B} ^{5} } )$ such that $\| F-\tilde{h} \|_{H^{2} (\mathbb{B}^{5})}<\frac{\epsilon}{2}$. We consider the equation
\begin{align} \label{1}
-(\delta ^{ij} - \xi ^{i} \xi ^{j}) \partial _{i} \partial _{j} v (\xi) + 7 \xi ^{j} \partial _{j} v (\xi)+ \frac{35}{4}v(\xi) = \tilde{h}(\xi).
\end{align}
To solve \eqref{1}, we switch to spherical coordinates $\xi = \rho \omega$, where $\rho =|\xi|$ and $\omega=\frac{\xi}{|\xi|}$. Then,
\begin{align*}
\partial _{j} \rho (\xi) = \omega _{j} (\xi),\quad \partial _{j} \omega ^{k} (\xi) = \frac{ \delta_{j}^{k} - \omega_{j}(\xi) \omega ^{k}(\xi)  }{\rho(\xi)}
\end{align*}
and derivatives transform according to
\begin{align*} 
& \xi ^{j} \partial _{j} u(\xi) = \rho \partial _{\rho} u (\rho \omega), \\[5pt]
& \xi ^{i} \xi ^{j} \partial _{i} \partial _{j} u(\xi) = \rho ^{2} \partial _{\rho}^{2} u (\rho \omega), \\[5pt]  
&  \partial ^{j} \partial _{j} u(\xi) = \left( \partial _{\rho }^{2}+\frac{4}{\rho} \partial _{\rho}  +\frac{1}{\rho ^{2}} \Delta _{\omega}^{\mathbb{S}^{4}} \right) u (\rho \omega).
\end{align*}
Hence, \eqref{1} can be written equivalently as
\begin{align} \label{2}
\Bigg( -\left( 1- \rho ^{2} \right) \partial _{\rho}^{2} + \left( -\frac{4}{\rho} + 7 \rho \right) \partial _{\rho} +\frac{35}{4} - \frac{1}{\rho ^{2}}  \Delta _{\omega}^{\mathbb{S}^{4}} \Bigg) v(\rho \omega) = \tilde{h} (\rho \omega).
\end{align}
The Laplace-Beltrami operator $\Delta^{\mathbb{S}^{4}}$ is self-adjoint on $L^{2} \left( \mathbb{S}^{4} \right)$ and its spectrum coincides with the point spectrum
\begin{align*}
\sigma \big( - \Delta ^{\mathbb{S}^{4}} \big) = \sigma _{p} \big( - \Delta ^{\mathbb{S}^{4}} \big) = \big \{ l(l+3): l \in \mathbb{N}_{0} \big \}.
\end{align*}
For each $l \in \mathbb{N}_{0}$, the eigenspace to the eigenvalue $l(l+3)$ is finite dimensional and spanned by the spherical harmonics $\{ Y_{l,m} : m \in \Omega_{l}  \}$ which are obtained by restricting harmonic homogeneous polynomials in $\mathbb{R}^{5}$ to $\mathbb{S}^{4}$. Here, $\Omega_{l} \subseteq \mathbb{Z}$ stands for the set of admissible indices $m$. Since $\tilde{h} \in C^{\infty} ( \overline{ \mathbb{B} ^{5} } )$, we can expand
\begin{align*}
\tilde{h} \left( \rho \omega \right) = \sum _{l=0}^{\infty} \sum _{m \in \Omega_{l}} \tilde{h} _{l,m}(\rho) Y_{l,m} (\omega)
\end{align*}
and we define $\tilde{h} _{N} \in C^{\infty} ( \overline{ \mathbb{B} ^{5} } )$ by
\begin{align*}
\tilde{h}_{N} \left( \rho \omega \right) = \sum _{l=0}^{N} \sum _{m \in \Omega_{l}} \tilde{h} _{l,m}(\rho) Y_{l,m} (\omega),
\end{align*}
for all $N \in \mathbb{N}$. It is well known that
\begin{align*}
\big \| \tilde{h} - \tilde{h}_{N} \big \|_{H^{2} \left( \mathbb{B}^{5} \right) } \longrightarrow 0,~~ \text{~as~}N\longrightarrow \infty
\end{align*}
and therefore we can pick $N \in \mathbb{N}$ large enough so that $\big \| \tilde{h} - \tilde{h}_{N} \big \|_{H^{2} \left( \mathbb{B}^{5} \right) } <\frac{\epsilon}{2}$. Then, \eqref{2} and the linear independence of $Y_{l,m}$ yield the decoupled system of elliptic ordinary differential equations
\begin{align} \label{3}
\Bigg( -\left( 1- \rho ^{2} \right) \frac{d^{2}}{d {\rho}^{2} } + \left( -\frac{4}{\rho} + 7 \rho \right) \frac{d}{d {\rho} }  +\frac{35}{4} + \frac{l(l+3)}{ \rho^{2} } \Bigg) v_{l,m}(\rho ) = \tilde{h}_{l,m} (\rho).
\end{align}
Setting $u_{l,m}(\rho) = \rho v_{l,m}(\rho)$, \eqref{3} yields an equation for $u_{l,m}$, that is
\begin{align} \label{4}
\Bigg( -\left( 1- \rho ^{2} \right) \frac{d^{2}}{d {\rho}^{2} }  + \left( -\frac{2}{\rho} + 5 \rho \right) \frac{d}{d {\rho} }  +\frac{15}{4} + \frac{(l+1)(l+2)}{ \rho^{2} } \Bigg) u_{l,m}(\rho ) =\rho  \tilde{h}_{l,m} (\rho).
\end{align}
Note that this is a second-order linear ordinary differential equation with four regular singular points, $\rho =-1,0,1$ and $\infty$. By the reflection symmetry, these four singular points can be reduced to three and therefore, \eqref{4} can be transformed into a hypergeometric differential equation. First, consider the homogeneous version of this equation, namely we set the right hand side equal to zero. Now, we introduce a new dependent variable. The transformation $u_{l,m} (\rho) = \rho^{l+1} w_{l,m}(z)$, $z=\rho^2$ brings \eqref{4} to a hypergeometric differential equation in its canonical form
\begin{align} \label{hypergeometric}
z(1-z) w_{l,m}^{\prime \prime } (z) + \Big( c-(a+b+1)z \Big) w_{l,m}^{\prime} (z) -a b w_{l,m}(z)=0,
\end{align}
where
\begin{align*}
a=\frac{5+2l}{4},~b=a+\frac{1}{2}=\frac{7+2l}{4},~c=2a=\frac{5+2l}{2}.
\end{align*}
Then, \eqref{hypergeometric} admits two solutions 
\begin{align*}
\phi _{0,l} (z) = {}_2 F_1 \left( a,a+\frac{1}{2},2a;z\right),\quad  \phi _{1,l} (z) = {}_2 F_1 \left( a,a+\frac{1}{2},\frac{3}{2};1-z\right) , 
\end{align*}
which are analytic around $z=0$ and $z=1$ respectively, see \cite{handbook}, page 395, 15.10.2 and 15.10.4. First, notice that both $\phi _{0,l}$ and $ \phi _{1,l}$ can be expressed in closed forms as 
\begin{align} \label{psi1l2}
&  \phi _{0,l} (z) = \frac{1}{\sqrt{1-z}} \left( \frac{2}{ 1+\sqrt{1-z} } \right)^{\frac{3}{2}+l}, \\
& \phi _{1,l} (z) = \frac{1}{(3+2l)\sqrt{1-z}}\left( \left( \frac{1}{1-\sqrt{1-z}} \right)^{\frac{3}{2}+l} -  \left( \frac{1}{1+ \sqrt{1-z}} \right)^{\frac{3}{2}+l} \right),
\end{align}
see \cite{handbook}, page 387, 15.4.18 and \cite{handbook}, page 386, 15.4.9. Second, we argue that $ \phi _{0,l}$ and $ \phi _{1,l}$ are linearly independent. Indeed, we assume that there exist constants $c_{0,l},c_{1,l} \in \mathbb{C}$ such that 
\begin{align*}
c_{0,l}  \phi _{0,l} (z)  + c_{1,l}  \phi _{1,l} (z) =0.
\end{align*}
Now, $c_{1,l}=0$ since $\lim_{z \rightarrow 0^{+}} \phi _{1,l} (z) = \infty$ whereas $\lim_{z \rightarrow 0^{+}} \phi _{0,l} (z) < \infty$. Furthermore,  $c_{0,l}=0$ since $\lim_{z \rightarrow 1^{-}} \sqrt{1-z} \phi _{1,l} (z) < \infty$. For later reference, we note that the function
\begin{align*}
\tilde{\phi} _{1,l} (z) = (1-z)^{-\frac{1}{2}} \hat{\phi_{1}} (z)
\end{align*}
is also a solution to \eqref{hypergeometric}, see \cite{handbook}, page 395, 15.10.4, where
\begin{align*}
\hat{\phi_{1}}(z):= {}_2 F_1 \left(a,a-\frac{1}{2},\frac{1}{2};1-z \right)
\end{align*}
is analytic around $z=1$, see \cite{handbook}, page 384, 15.2.1. Since $\{ \phi _{0,l}, \phi _{1,l} \}$ is a fundamental system for \eqref{hypergeometric}, we get that there exist constants $\alpha_{l},\beta_{l} \in \mathbb{C}$ such that
\begin{align*}
\phi _{0,l} (z) =\alpha_{l} \phi _{1,l} (z) +  \beta_{l} \tilde{\phi}_{1,l}(z).
\end{align*}
Transforming back, we obtain two linearly independent solutions $\psi_{j,l} (\rho) = \rho ^{l+1} \phi _{j,l} (\rho^{2})$, $j \in \{0,1\}$ to the homogeneous version of equation \eqref{4} as well as $\tilde{\psi}_{1,l} (\rho) = \rho ^{l+1} 
\tilde{\phi _{1,l}} (\rho^{2})$. In particular, we get that there exist constants $\alpha_{l},\beta_{l} \in \mathbb{C}$ such that
\begin{align*}
\psi_{0,l} (\rho) = \alpha_{l} \psi_{1,l} (\rho) + \beta_{l} (1-\rho)^{- \frac{1}{2}} \hat{\psi}_{1,l} (\rho), 
\end{align*}
where $\hat{\psi}_{1,l}$ is analytic around $\rho=1$. Moreover, $\psi_{1,l}$ is analytic around $p=1$ since $\phi_{1,l}$ is analytic around $z=1$, see \cite{handbook}, page 384, 15.2.1. Next, we find the Wronskian. A straightforward calculation yields
\begin{align} \label{wronski}
W(\psi_{0,l},\psi_{1,l}  ) (\rho)  = 2 \rho^{3l+2} W(\phi_{0,l},\phi_{1,l}  ) (\rho^2) =
\frac{-2^{l+\frac{3}{2}}}{\rho^{2} \left( 1 -\rho^{2} \right)^{\frac{3}{2}}}.
\end{align} 
By the variation of constants formula, a particular solution to equation \eqref{4} is given by
\begin{align} \label{particularsol}
u_{l,m}(\rho) = - \psi _{0,l} (\rho) I_{1,l} (\rho)  - \psi _{1,l} (\rho) I_{0,l} (\rho), 
\end{align}
where
\begin{align*}
 I_{0,l} (\rho) := \int_{0}^{\rho}  \psi _{0,l} (s)  Z_{l,m}(s)   ds, \quad I_{1,l} (\rho) := \int_{\rho}^{1} \psi _{1,l} (s)  Z_{l,m}(s) ds,
\end{align*}
and
\begin{align*}
Z_{l,m}(s):=\frac{ s \tilde{h}_{l,m} (s) }{(1-s^2) W(\psi_{0,l},\psi_{1,l}  ) (s) } =(1-s)^{\frac{1}{2}} \xi_{l,m}(s),\quad \xi_{l,m}(s):= - \frac{1}{2^{l+\frac{3}{2}}} (1+s)^{\frac{1}{2}} s^3 \tilde{h}_{l,m} (s).
\end{align*}
Notice that $\xi _{l,m} \in C^{\infty}([0,1])$ since $\tilde{h}_{l,m} \in C^{\infty}([0,1])$. We claim that $u_{l,m} \in C^{\infty}\left( 0,1\right]$. To prove this, we first observe that the quantity
\begin{align*}
c_{l,m}:=\int_{0}^{1}(1-s)^{ \frac{1}{2}} \psi_{0,l} (s) \xi_{l,m}(s)= \alpha_{l} \int_{0}^{1} (1-\rho)^{ \frac{1}{2}}  \psi_{1,l} (\rho) \xi_{l,m}(s) ds+ \beta_{l}  \int_{0}^{1}\hat{\psi}_{1,l} (s) \xi_{l,m}(s) ds 
\end{align*}
is a real number since both integrands are continuous functions on the closed interval $[0,1]$. Hence, we can write
\begin{align*}
I_{0,l} (\rho) = c_{l,m} -  \alpha_{l}  \int_{\rho}^{1} (1-s)^{\frac{1}{2}} \psi_{1,l} (s)  \xi_{l,m}(s) ds - \beta_{l} \int_{\rho}^{1} \hat{\psi}_{1,l} (s)  \xi_{l,m}(s) ds.
\end{align*}
Moreover,
\begin{align*}
\psi_{1,l}(\rho) I_{0,l}(\rho) & = c_{l,m} \psi_{1,l}(\rho)  \\
&-  \alpha_{l} \psi_{1,l}(\rho)  \int_{\rho}^{1} (1-s)^{\frac{1}{2}} \psi_{1,l} (s)  \xi_{l,m}(s) ds -\beta_{l}  \psi_{1,l}(\rho)  \int_{\rho}^{1} \hat{\psi}_{1,l} (s)  \xi_{l,m}(s) ds, \\
\psi_{0,l}(\rho) I_{1,l}(\rho) & = \alpha_{l} \psi_{1,l}(\rho)  \int_{\rho}^{1} (1-s)^{\frac{1}{2}} \psi_{1,l} (s)  \xi_{l,m}(s) ds \\
&+ \beta_{l} (1-\rho)^{-\frac{1}{2}} \hat{\psi}_{1,l} (\rho) \int_{\rho}^{1} (1-s)^{\frac{1}{2}} \psi_{1,l} (s)  \xi_{l,m}(s) ds,
\end{align*}
and hence
\begin{align*}
u_{l,m}(\rho) &= - c_{l,m} \psi_{1,l}(\rho) \\
& + \beta_{l}  \psi_{1,l}(\rho)  \int_{\rho}^{1} \hat{\psi}_{1,l} (s)  \xi_{l,m}(s) ds -  \beta_{l} (1-\rho)^{-\frac{1}{2}} \hat{\psi}_{1,l} (\rho) \int_{\rho}^{1} (1-s)^{\frac{1}{2}}  \psi_{1,l} (s)  \xi_{l,m}(s) ds. 
\end{align*}
Obviously, the first and the second terms belong to $C^{\infty}\left( 0,1\right]$. Therefore, we focus on the third term and define 
\begin{align*}
U_{l,m} (\rho):=(1-\rho)^{-\frac{1}{2}} \hat{\psi}_{1,l} (\rho) \int_{\rho}^{1} (1-s)^{\frac{1}{2}}  \psi_{1,l} (s)  \xi_{l,m}(s) ds.
\end{align*}
Now, we choose an arbitrary $N\in\mathbb{N}$ and show that the limit
\begin{align} \label{limitU}
\lim_{\rho \rightarrow 1^{-}} \frac{d^{N}}{d \rho^{N}} U_{l,m} (\rho)
\end{align}
exists. Fix sufficiently small $\delta>0$, $\rho \in (1-\delta,1)$. Then, the Taylor series expansion yields
\begin{align*}
\xi_{l,m}(\rho)=\sum_{i=0}^{N} a_{i,l,m}(1-\rho)^{i} + R_{N+1}(1-\rho), 
\end{align*}
for some coefficients $a_{i,l,m}$. Here, $R_{M}(1-\rho)$ stands for a remainder term which may change from line to line and satisfies the estimates
\begin{align*}
 \left | R_{M}(1-\rho) \right | \leq K (1-\rho)^{M}, \quad \left | \partial_{\rho}^{k} R_{M}(1-\rho) \right | \leq \Lambda (1-\rho)^{M-k},  
\end{align*}
for all $k=0,1,\dots,M$ and $\rho \in (1-\delta,1)$ and for some constants $M\in \mathbb{R}$, $K,\Lambda \geq 0$. Recall that $\psi_{1,l}$ and $\hat{\psi}_{1,l}$ are analytic functions around $\rho=1$ and hence we can write
\begin{align*}
\psi_{1,l} (\rho) = \sum_{i=0}^{\infty} b_{i,l} (1-\rho)^{i}, \quad \hat{\psi}_{1,l} (\rho) = \sum_{i=0}^{\infty} \epsilon_{i,l} (1-\rho)^{i}
\end{align*}
for some coefficients $b_{l,l}$ and $\epsilon_{i,l}$. Then, we have
\begin{align*}
& (1-\rho)^{\frac{1}{2}} \psi_{1,l}(\rho) \xi_{l,m}(\rho) = \sum_{k=0}^{\infty} \gamma_{i,l,m} (1-\rho)^{k+\frac{1}{2}} + R_{N+1+\frac{1}{2}}(1-\rho), \\
& \int_{\rho}^{1}  (1-s)^{\frac{1}{2}} \psi_{1,l}(s) \xi_{l,m}(s) = \sum_{k=0}^{\infty} \frac{2 \gamma_{i,l,m}}{2k+3} (1-\rho)^{k+\frac{3}{2}} + R_{N+2+\frac{1}{2}}(1-\rho), \\
&(1-\rho)^{-\frac{1}{2}} \int_{\rho}^{1}  (1-s)^{\frac{1}{2}} \psi_{1,l}(s) \xi_{l,m}(s) = \sum_{k=0}^{\infty} \frac{2 \gamma_{i,l,m}}{2k+3} (1-\rho)^{k+1} + R_{N+2}(1-\rho), \\
&  \hat{\psi}_{1,l} (\rho) (1-\rho)^{-\frac{1}{2}} \int_{\rho}^{1}  (1-s)^{\frac{1}{2}} \psi_{1,l}(s) \xi_{l,m}(s) = \sum_{k=0}^{\infty} \zeta_{k,l,m} (1-\rho)^{k+1} + R_{N+2}(1-\rho), 
\end{align*}
for some coefficients $\gamma_{i,l,m}$ and $\zeta_{k,l,m}$. Therefore,
\begin{align*}
\frac{d^{N}}{d \rho^{N}} U_{l,m} (\rho)  = \frac{d^{N}}{d \rho^{N}} \left( \sum_{k=0}^{\infty} \zeta_{k,l,m} (1-\rho)^{k+1} +R_{N+2} (1-\rho) \right) =  \sum_{i=0}^{\infty} \eta_{i,l,m} (1-\rho)^{i} +R_{2} (1-\rho),
\end{align*}
for some coefficients $\eta_{i,l,m}$. Consequently, the limit \eqref{limitU} exists and we get that $u_{l,m} \in C^{\infty}\left( 0,1\right]$. Finally, $u \in H^{2} (\mathbb{B}^{5}) \cap C^{\infty} ( \overline{ \mathbb{B}^{5} } \setminus \{0\} )$ and translating back we get $v \in H^{2} (\mathbb{B}^{5}) \cap C^{\infty} ( \overline{ \mathbb{B}^{5} } \setminus \{0\} )$. By elliptic regularity, we infer
$v \in C^{\infty} (\mathbb{B}^{5}) \cap C^{\infty} ( \overline{ \mathbb{B}^{5} } \setminus \{0\} )$ which implies $v \in C^{\infty} ( \overline{ \mathbb{B}^{5} } )$ as desired.
\end{proof}
\begin{lemma} \label{Lumer2}
The range of $\frac{3}{2}-\widetilde{ \mathbf{L} }$ is dense in $\mathcal{H}$.
\end{lemma}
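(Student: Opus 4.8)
The plan is to reduce the resolvent equation $\bigl(\tfrac32-\widetilde{\mathbf{L}}\bigr)\mathbf{u}=\mathbf{f}$ to the scalar degenerate elliptic equation already solved in Lemma \ref{technical}. Fix $\mathbf{f}=(f_1,f_2)\in\mathcal{H}=H^3(\mathbb{B}^5)\times H^2(\mathbb{B}^5)$ and $\epsilon>0$. Writing out the two components of $\bigl(\tfrac32-\widetilde{\mathbf{L}}\bigr)\mathbf{u}=\mathbf{f}$ gives
\begin{align*}
\tfrac52 u_1+\xi^j\partial_j u_1-u_2&=f_1,\\
-\Delta u_1+\xi^j\partial_j u_2+\tfrac72 u_2&=f_2.
\end{align*}
Solving the first line for $u_2$ and substituting into the second line eliminates $u_2$; using $\xi^j\partial_j(\xi^k\partial_k u_1)=\xi^j\partial_j u_1+\xi^j\xi^k\partial_j\partial_k u_1$ one is left with the single second-order equation
\begin{align*}
-\bigl(\delta^{jk}-\xi^j\xi^k\bigr)\partial_j\partial_k u_1+7\xi^j\partial_j u_1+\tfrac{35}{4}u_1=f_2+\xi^j\partial_j f_1+\tfrac72 f_1,
\end{align*}
whose left-hand side is exactly the operator from Lemma \ref{technical} applied to $u_1$.

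Concretely, I would argue as follows. Since $C^\infty(\overline{\mathbb{B}^5})$ is dense in $H^3(\mathbb{B}^5)$, first pick $\tilde f_1\in C^\infty(\overline{\mathbb{B}^5})$ with $\|f_1-\tilde f_1\|_{H^3(\mathbb{B}^5)}<\tfrac\epsilon2$, and set
\begin{align*}
F:=f_2+\xi^j\partial_j\tilde f_1+\tfrac72\tilde f_1,
\end{align*}
which lies in $H^2(\mathbb{B}^5)$ since $f_2\in H^2(\mathbb{B}^5)$ and $\tilde f_1$ is smooth on $\overline{\mathbb{B}^5}$. Applying Lemma \ref{technical} to this $F$ with accuracy $\tfrac\epsilon2$ produces $u_1\in C^4(\overline{\mathbb{B}^5})$ such that
\begin{align*}
h:=-\bigl(\delta^{jk}-\xi^j\xi^k\bigr)\partial_j\partial_k u_1+7\xi^j\partial_j u_1+\tfrac{35}{4}u_1\in C^2(\overline{\mathbb{B}^5}),\qquad \|h-F\|_{H^2(\mathbb{B}^5)}<\tfrac\epsilon2.
\end{align*}
Now define $u_2:=\tfrac52 u_1+\xi^j\partial_j u_1-\tilde f_1$. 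Because $u_1\in C^4(\overline{\mathbb{B}^5})$ and $\tilde f_1\in C^\infty(\overline{\mathbb{B}^5})$, we have $u_2\in C^3(\overline{\mathbb{B}^5})$, so $\mathbf{u}=(u_1,u_2)\in\mathcal{D}\bigl(\widetilde{\mathbf{L}}\bigr)=C^4(\overline{\mathbb{B}^5})\times C^3(\overline{\mathbb{B}^5})$.

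It then remains to estimate $\bigl(\tfrac32-\widetilde{\mathbf{L}}\bigr)\mathbf{u}-\mathbf{f}$ in $\mathcal{H}$. By the very definition of $u_2$ the first component of $\bigl(\tfrac32-\widetilde{\mathbf{L}}\bigr)\mathbf{u}$ equals $\tfrac52 u_1+\xi^j\partial_j u_1-u_2=\tilde f_1$, so it differs from $f_1$ by less than $\tfrac\epsilon2$ in $H^3(\mathbb{B}^5)$; and rerunning the elimination above with $\tilde f_1$ in the role of $f_1$ shows that the second component equals $h-F+f_2$, which differs from $f_2$ by $h-F$, of $H^2(\mathbb{B}^5)$-norm less than $\tfrac\epsilon2$. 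Hence $\bigl\|\bigl(\tfrac32-\widetilde{\mathbf{L}}\bigr)\mathbf{u}-\mathbf{f}\bigr\|_{\mathcal{H}}<\epsilon$, which gives density. The only genuinely delicate point --- the solvability of the degenerate elliptic equation for $u_1$ together with its $C^4$-regularity up to $\partial\mathbb{B}^5$ --- is already supplied by Lemma \ref{technical}; the one subtlety in assembling the pieces is that the closed-form expression for $u_2$ must land in $C^3(\overline{\mathbb{B}^5})$, which is exactly why $f_1$ has to be replaced by a smooth approximant before the scalar equation is set up.
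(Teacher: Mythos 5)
Your proof is correct and follows essentially the same route as the paper: reduce the resolvent equation to the scalar degenerate elliptic equation for $u_1$, invoke Lemma \ref{technical}, reconstruct $u_2$ from the first component equation, and track the error. The only cosmetic difference is that the paper first reduces to smooth $\mathbf{f}\in (C^\infty(\overline{\mathbb{B}^5}))^2$ via density (so that only the second component incurs an $\epsilon$-error), while you smooth only $f_1$ and bound the errors in both components; both arrangements are equally valid.
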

\begin{proof}
Since $\big( C^{\infty} ( \overline{ \mathbb{B}^{5}  }) \big)^{2}$ is dense in $\mathcal{H}$, it suffices to show that
\begin{align*}
\forall ~\mathbf{f} \in \big( C^{\infty} ( \overline{ \mathbb{B}^{5}  }) \big)^{2} \text{~and~} \forall \epsilon >0,~\exists \mathbf{g} \in \text{rg}\left( \frac{3}{2} -  \widetilde{ \mathbf{L} } \right): 
\left \| \mathbf{f} -\mathbf{g}  \right\| <\epsilon.
\end{align*}
First note that, for any $\mathbf{u} \in \mathcal{D} \big(  \widetilde{ \mathbf{L} }  \big)$, the equation $\left( \frac{3}{2} -  \widetilde{ \mathbf{L} } \right) \mathbf{u} = \mathbf{g}$ reads
\begin{align*}
    \begin{cases}
      u_{2}  (\xi) = \frac{5}{2} u_{1} (\xi)+ \xi ^{j} \partial _{j} u_{1}  (\xi)- g_{1} (\xi), &  \\
      -\partial ^{j} \partial _{j} u_{1}  (\xi)+ \xi^{i} \partial _{i} u_{2}  (\xi)+\frac{7}{2} u_{2}  (\xi)= g_{2} (\xi)
    \end{cases}
\end{align*}
Inserting $u_{2}$ into the second equation, we obtain
\begin{align*}
-(\delta ^{ij} - \xi ^{i} \xi ^{j}) \partial _{i} \partial_{j} u_{1} (\xi)+ 7 \xi^{i} \partial _{i} u_{1} (\xi)+ \frac{35}{4} u_{1} (\xi)= G(\xi),
\end{align*}
where
\begin{align*}
G  (\xi) = g_{2} (\xi)+ \frac{7}{2} g_{1} (\xi)+  \xi ^{j} \partial _{j} g_{1}(\xi).
\end{align*}
Now, pick an arbitrary $\mathbf{f} \in \big( C^{\infty} ( \overline{ \mathbb{B}^{5}  }) \big)^{2}$, $\epsilon >0$ and apply Lemma \ref{technical} to the function
\begin{align*}
F  (\xi) = f_{2} (\xi)+ \frac{7}{2} f_{1} (\xi)+  \xi ^{j} \partial _{j} f_{1}(\xi).
\end{align*}
We infer the existence of a function $v \in C^{4} ( \overline{ \mathbb{B}^{5} } )$ such that 
\begin{align*}
h(\xi):=-\left( \delta ^{ij} - \xi ^{i} \xi^{j} \right)\partial _{i} \partial_{j}v (\xi)+ 7 \xi ^{j} \partial _{j} v (\xi)+ \frac{35}{4}v(\xi)
\end{align*}
satisfies $h \in C^{2} ( \overline{ \mathbb{B}^{5} } )$ and $\| h-F \| _{ H^{2} ( \mathbb{B}^{5} ) } < \epsilon$. Now, define
\begin{align*}
&
  \begin{cases}
u_{1}(\xi):= v(\xi), & \\
u_{2} (\xi) := \frac{5}{2} u_{1}(\xi)+ \xi ^{j} \partial _{j} u_{1}(\xi) - f_{1}(\xi), 
   \end{cases} \\
&
\begin{cases}
g_{1} (\xi) := f_{1} (\xi), & \\
g_{2} (\xi) := h(\xi)-F(\xi) +f_{2} (\xi).
   \end{cases}
\end{align*}
Then, by construction, we have $\mathbf{u} \in \mathcal{D} \big( \widetilde{\mathbf{L}} \big)$, $\left( \frac{3}{2} -  \widetilde{ \mathbf{L} } \right) \mathbf{u} = \mathbf{g}$ and $\|\mathbf{f} - \mathbf{g} \| < \epsilon$.
\end{proof}
\begin{proof}[Proof of Proposition $\ref{Lumer}$]
It follows immediately from Corollary \ref{Lumer1} and Lemma \ref{Lumer2}. 
\end{proof}


\section{Modulation ansatz} 

To account for the Lorentz symmetry we use a modulation ansatz. To be precise, we allow for the unknown parameter $\alpha$ to depend on $\tau$, set $\alpha(0) = 0$ initially and assume (and later verify) that 
$\alpha _{\infty} := \lim_{\tau \rightarrow \infty} \alpha(\tau)$ exists. Then, we define
\begin{align} \label{modulationansatz}
\mathbf{\Phi}(\tau):=\mathbf{\Psi}(\tau)- \mathbf{\Psi}_{\alpha(\tau)}
\end{align}
where $\mathbf{\Psi}_{\alpha}$ are the Lorentz transformations defined in \eqref{staticbold} of the static blowup solution solution $\mathbf{\Psi}_{0}$. This ansatz leads to an equivalent description as an evolution equation for the perturbation term $\mathbf{\Phi}$, that is
\begin{align} \label{modulation}
\partial _{\tau} \mathbf{\Phi}(\tau) - \big( \mathbf{L} + \mathbf{L}^{\prime}_{\alpha _{\infty}} \big) \mathbf{\Phi}(\tau) = 
\hat{\mathbf{L}}_{\alpha(\tau)} \mathbf{\Phi}(\tau) + \mathbf{N}_{\alpha(\tau)} (\mathbf{\Phi}(\tau)) - \partial _{\tau} \mathbf{\Psi}_{\alpha(\tau)},
\end{align}
where  
\begin{align}
\hat{\mathbf{L}}_{\alpha(\tau)}:= \mathbf{L}^{\prime}_{\alpha(\tau)} - \mathbf{L}^{\prime}_{\alpha _{\infty}} 
\end{align}
and $\mathbf{L}^{\prime}_{\alpha (\tau)}$ denotes the linearized part of the nonlinearity $\mathbf{N}$, i.e 
\begin{align} \label{Lprime}
 \mathbf{L}^{\prime}_{\alpha (\tau)} (\mathbf{u}(\xi)):=
 \begin{pmatrix}
0 \\
V_{\alpha (\tau)} (\xi) u_{1}(\xi)  
\end{pmatrix},\quad
V_{\alpha (\tau)} (\xi):=\frac{6}{\left( A_{0}(\alpha(\tau))-A_{j}(\alpha(\tau))\xi^{j} \right)^2}
\end{align}
and $\mathbf{N}_{\alpha(\tau)}$ stands for the remaining full nonlinearity
\begin{align} \label{nonlinear}
 \mathbf{N}_{\alpha(\tau)} (\mathbf{u}) :=  \mathbf{N} (\mathbf{u} + \mathbf{\Psi}_{\alpha(\tau)})  - \mathbf{N} (\mathbf{\Psi}_{\alpha(\tau)})-\mathbf{L}^{\prime}_{\alpha(\tau)} \mathbf{u}.  
\end{align}
The advantage of this formulation is that the left hand side of \eqref{modulation} consists (besides $\partial_{\tau} \mathbf{\Phi}$) only of linear and $\tau-$independent operations on $\mathbf{\Phi}$, whereas the right hand side is expected to be small for large $\tau$. Therefore, the right hand side of the equation \eqref{modulation} will be treated perturbatively. Note that, for sufficiently small $\alpha$, we have $A_{0}(\alpha) = \mathcal{O}(1)$ whereas $A_{j}(\alpha) = \mathcal{O}(\alpha)$ which shows that 
\begin{align} \label{boundV}
\sup _{j \in \{ 0,1,2 \} }\left \| \partial ^{j} V_{\alpha} \right \|_{L^{\infty}(\mathbb{B}^{5})} \lesssim 1
\end{align}
provided that $\alpha$ is sufficiently small. As we will now prove, this fact, together with the compactness of the Sobolev embedding yields the compactness of the operator $\mathbf{L} ^{\prime}_{\alpha}$ for sufficiently small $\alpha$.
\begin{lemma} \label{compact}
Let $\alpha \in \mathbb{R}^{5}$ be sufficiently small. Then, the operator $\mathbf{L} ^{\prime}_{\alpha}$ defined in \eqref{Lprime} is compact. In particular, the operator
\begin{align} \label{Lalpha}
 \mathbf{L}_{\alpha} := \mathbf{L} + \mathbf{L} ^{\prime}_{\alpha}
 \end{align}
generates a strongly continuous one parameter semigroup of bounded operators $\mathbf{S}_{\alpha}:[0,\infty) \longrightarrow \mathcal{B}(\mathcal{H})$.
\end{lemma}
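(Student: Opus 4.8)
The plan is to exhibit $\mathbf{L}'_{\alpha}$ as a composition of the compact Sobolev embedding $H^{3}(\mathbb{B}^{5})\hookrightarrow H^{2}(\mathbb{B}^{5})$ with a bounded multiplication operator, and then to obtain the semigroup statement from Proposition \ref{Lumer} via the bounded perturbation theorem.

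First I would note from \eqref{Lprime} that $\mathbf{L}'_{\alpha}$ acts only on the first slot and produces output only in the second slot, namely $\mathbf{L}'_{\alpha}\mathbf{u}=(0,V_{\alpha}u_{1})$. Hence it suffices to show that the multiplication map $m_{\alpha}\colon u_{1}\mapsto V_{\alpha}u_{1}$ is compact from $H^{3}(\mathbb{B}^{5})$ into $H^{2}(\mathbb{B}^{5})$. Here \eqref{boundV} is the essential input: it gives $V_{\alpha}\in W^{2,\infty}(\mathbb{B}^{5})$ with a bound uniform in small $\alpha$, and by the Leibniz rule multiplication by a $W^{2,\infty}$ function is a bounded operator on $H^{2}(\mathbb{B}^{5})$, since in each term of $\partial^{\beta}(V_{\alpha}u_{1})$ with $|\beta|\le 2$ at most two derivatives ever fall on $V_{\alpha}$ (controlled in $L^{\infty}$) against an $L^{2}$ derivative of $u_{1}$; in particular $m_{\alpha}$ is bounded $H^{3}(\mathbb{B}^{5})\to H^{2}(\mathbb{B}^{5})$. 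Since $\mathbb{B}^{5}$ is a bounded domain with smooth boundary, the Rellich--Kondrachov theorem (see, e.g., \cite{Evans}) gives that the embedding $H^{3}(\mathbb{B}^{5})\hookrightarrow H^{2}(\mathbb{B}^{5})$ is compact. Composing the compact embedding with the bounded operator $m_{\alpha}|_{H^{2}}$ shows $m_{\alpha}$ is compact: concretely, given a bounded sequence $(\mathbf{u}_{n})\subset\mathcal{H}$, by the norm equivalence of Lemma \ref{Sobolevequiv} the first components $(u_{1,n})$ are bounded in $H^{3}(\mathbb{B}^{5})$, hence have a subsequence converging in $H^{2}(\mathbb{B}^{5})$, along which $(V_{\alpha}u_{1,n})$ converges in $H^{2}(\mathbb{B}^{5})$, so that $(\mathbf{L}'_{\alpha}\mathbf{u}_{n})=(0,V_{\alpha}u_{1,n})$ converges in $\mathcal{H}$. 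This proves $\mathbf{L}'_{\alpha}$ is compact, and in particular $\mathbf{L}'_{\alpha}\in\mathcal{B}(\mathcal{H})$.

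For the second assertion, Proposition \ref{Lumer} provides that $\mathbf{L}$ generates a strongly continuous semigroup on $\mathcal{H}$, and $\mathbf{L}'_{\alpha}\in\mathcal{B}(\mathcal{H})$ by the above. The bounded perturbation theorem (see Theorem III.1.3, \cite{EngNag00}) then shows that $\mathbf{L}_{\alpha}=\mathbf{L}+\mathbf{L}'_{\alpha}$, with domain $\mathcal{D}(\mathbf{L}_{\alpha})=\mathcal{D}(\mathbf{L})$, generates a strongly continuous one-parameter semigroup $\mathbf{S}_{\alpha}\colon[0,\infty)\to\mathcal{B}(\mathcal{H})$ of bounded operators.

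I do not expect a genuine obstacle here: the only point requiring a little care is the boundedness of $m_{\alpha}$ on $H^{2}(\mathbb{B}^{5})$, which is a routine Sobolev multiplier estimate made possible precisely by the uniform bound \eqref{boundV}, together with the fact that at most two derivatives can land on $V_{\alpha}$ in the $H^{2}$ norm; once compactness of $\mathbf{L}'_{\alpha}$ is in hand, the semigroup generation is immediate from the perturbation theorem. (If desired, the same argument shows $\|\mathbf{L}'_{\alpha}\|_{\mathcal{B}(\mathcal{H})}\lesssim 1$ uniformly for small $\alpha$, though this is not needed for the statement.)
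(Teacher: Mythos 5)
Your proposal is correct and follows essentially the same route as the paper: compactness of the Sobolev embedding $H^{3}(\mathbb{B}^{5})\hookrightarrow H^{2}(\mathbb{B}^{5})$ combined with boundedness of multiplication by $V_{\alpha}$ on $H^{2}(\mathbb{B}^{5})$ (which the paper obtains from \eqref{boundV} and H\"older, and you obtain from the Leibniz rule and \eqref{boundV} -- the same estimate), followed by the bounded perturbation theorem. Your ``composition of compact embedding with bounded multiplier'' framing is a slightly cleaner packaging of the paper's sequential Cauchy argument, which you also reproduce verbatim as the concrete verification.
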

\begin{proof}
To begin with, we fix $\alpha$ sufficiently small. First, we prove that $\mathbf{L} ^{\prime}_{\alpha}$ is compact. We pick a bounded sequence $\{ \mathbf{u}_{n} \}_{n=1}^{\infty} \subseteq \mathcal{H}$. The compactness of the Sobolev embedding
$H^{3} \big( \mathbb{B}^{5} \big) \hookrightarrow  H^{2} \big( \mathbb{B}^{5} \big)$ yields the existence of a subsequence $\{ \mathbf{u}_{k_{n}} \}_{n=1}^{\infty}$ in $H^{3} \big( \mathbb{B}^{5} \big)$ which is Cauchy in 
$H^{2} \big( \mathbb{B}^{5} \big)$. Now, \eqref{boundV} together with H{\"o}lder's inequality imply
\begin{align*}
\left \| \mathbf{L} ^{\prime}_{\alpha} \mathbf{u}_{k_{n}} -   \mathbf{L} ^{\prime}_{\alpha}  \mathbf{u}_{k_{m}} \right \| =
\left \| V_{\alpha} \left( u_{1,k_{n}} - u_{1,k_{m}}  \right) \right \| _{H^{2}(\mathbb{B}^{5}  )} \lesssim \left \|   u_{1,k_{n}} - u_{1,k_{m}} \right \| _{H^{2}(\mathbb{B}^{5}  )}
\end{align*}
for sufficiently large $n,m \in \mathbb{N}$. This proves that $\{ \mathbf{L} ^{\prime} \mathbf{u}_{k_{n}} \}_{n=1}^{\infty}$ is Cauchy in $\mathcal{H}$ and the claim follows. It remains to apply the Bounded Perturbation Theorem 
(see Theorem 1.3, page 158, \cite{EngNag00}) to show that $\mathbf{L}_{\alpha} := \mathbf{L} + \mathbf{L} ^{\prime}_{\alpha}$ is the generator of a strongly continuous semigroup $\left(\mathbf{S}_{\alpha}(\tau) \right)_{\tau>0}$.
\end{proof}
\subsection{Solution to the full linear problem}
Due to Lemma \ref{compact}, we can write the solution to the linear part of \eqref{modulation},
\begin{align*}
  \left \{
  \begin{aligned}
    & \partial _{\tau} \mathbf{\Phi}(\tau) = \left( \mathbf{L} + \mathbf{L}^{\prime}_{\alpha _{\infty}} \right) \mathbf{\Phi}(\tau), && \ \\
    & \mathbf{\Phi}(0)= \mathbf{u} \in \mathcal{H}, && 
  \end{aligned} \right.
\end{align*} 
as 
\begin{align} \label{Duhamel}
\mathbf{\Phi}(\tau)= \mathbf{S}_{\alpha_{\infty}}(\tau) \mathbf{u},
\end{align}
provided that $\alpha_{\infty}$ is sufficiently small which is verified later, see \eqref{ainfty}. In addition to the existence of the semigroup $ \mathbf{S}_{\alpha_{\infty}}$, we need growth estimates in time. By Proposition \ref{Lumer} and Lemma \ref{compact}, the Bounded Perturbation Theorem (see Theorem 1.3, page 158, \cite{EngNag00}) yields 
\begin{align*}
\| \mathbf{S}_{\alpha_{\infty}} (\tau) \|\leq M e^{  \left( -1+M \|  \mathbf{L}_{\alpha _{\infty} }^{\prime} \|   \right) \tau  },
\end{align*}
as long as $\alpha_{\infty}$ is sufficiently small. However, such a growth estimate would not suffice and hence we turn our attention to the spectrum of the generator $\mathbf{L}_{\alpha}$.


\section{Spectral Analysis} 

In this section, we intend to establish a useful growth estimate for $\mathbf{S}_{\alpha}$ for sufficiently small $\alpha$ and therefore we turn our attention to the spectrum of the generator $\mathbf{L}_{\alpha}$. We start our analysis with the case $\alpha=0$ where the Lorentz boost $\Lambda(0)$ is the identity. Therefore, the potential $V_{0}$ in the definition of $\mathbf{L}^{\prime}_{0}$, see \eqref{Lprime}, is constant in $\xi$. Consequently, the spectral equation can be solved explicitly and solutions belong to the hypergeometric class, as it turns out. The advantage here is that we can use the connection formula which is well known for this class. Then, we proceed to the case where $\alpha \neq 0$ but we are only interested in small $\alpha$ which allows for a perturbative approach, as already explained above.

\subsection{The spectrum of the free operator.}
We can use the decay estimate for the free semigroup $\left(\mathbf{S}(\tau) \right)_{\tau>0}$ from Proposition \ref{Lumer} to locate the spectrum of the closure $\mathbf{L}$ of the free operator $\widetilde{ \mathbf{L} }$. 
As a matter of fact, by \cite{EngNag00}, p. 55, Theorem 1.10, we immediately infer
\begin{align} \label{sigmaL}
\sigma \left(  \mathbf{L} \right) \subseteq \left \{ \lambda \in \mathbb{C}: \mathrm{Re}\lambda \leq -1  \right \}.
\end{align}
\subsection{The spectrum of the full linear operator for $\alpha=0$.} \label{spectrumfulllinear}
To begin with, we use the fact that $\mathbf L ^{\prime}_{\alpha}$ is compact for sufficiently small $\alpha$ to see that it suffices to consider the point spectrum of $\mathbf{L}_{\alpha}$.
\begin{lemma} \label{sigmaLfull}
Let $\alpha \in \mathbb{R}^{5}$ be sufficiently small. We have
 \begin{align*}
\sigma (\mathbf{L}_{\alpha}) \setminus \sigma(\mathbf{L}) \subseteq \sigma _{p} (\mathbf{L}_{\alpha}).
\end{align*}
\end{lemma}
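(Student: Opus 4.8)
The plan is to use that, by Lemma~\ref{compact}, $\mathbf{L}^{\prime}_{\alpha}$ is a compact operator on $\mathcal{H}$, so that $\mathbf{L}_{\alpha}=\mathbf{L}+\mathbf{L}^{\prime}_{\alpha}$ is a compact perturbation of $\mathbf{L}$ by an everywhere-defined bounded operator, and to reduce the spectral problem to a Fredholm problem via the Riesz--Schauder theory. First I would fix $\alpha$ small enough that Lemma~\ref{compact} applies, so that $\mathbf{L}^{\prime}_{\alpha}\in\mathcal{B}(\mathcal{H})$ is compact and $\mathcal{D}(\mathbf{L}_{\alpha})=\mathcal{D}(\mathbf{L})$. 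Let $\lambda\in\sigma(\mathbf{L}_{\alpha})\setminus\sigma(\mathbf{L})$. Since $\lambda$ lies in the resolvent set of $\mathbf{L}$, the operator $(\lambda-\mathbf{L})^{-1}\in\mathcal{B}(\mathcal{H})$ is well defined, and on $\mathcal{D}(\mathbf{L})$ one has the factorization
\[
\lambda-\mathbf{L}_{\alpha}=(\lambda-\mathbf{L})\big(I-(\lambda-\mathbf{L})^{-1}\mathbf{L}^{\prime}_{\alpha}\big).
\]

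Next, set $\mathbf{K}_{\lambda}:=(\lambda-\mathbf{L})^{-1}\mathbf{L}^{\prime}_{\alpha}$, which is compact as the composition of a bounded operator with a compact one and whose range lies in $\mathcal{D}(\mathbf{L})$. Because $\lambda-\mathbf{L}$ is boundedly invertible, the factorization shows that $\lambda-\mathbf{L}_{\alpha}$ is boundedly invertible \emph{if and only if} $I-\mathbf{K}_{\lambda}$ is. By the Fredholm alternative for compact operators, $I-\mathbf{K}_{\lambda}$ is either boundedly invertible or has non-trivial (finite-dimensional) kernel; since $\lambda\in\sigma(\mathbf{L}_{\alpha})$, the former is excluded, so there exists $\mathbf{u}\in\mathcal{H}\setminus\{0\}$ with $\mathbf{u}=\mathbf{K}_{\lambda}\mathbf{u}$. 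Then $\mathbf{u}$ lies in the range of $(\lambda-\mathbf{L})^{-1}$, hence $\mathbf{u}\in\mathcal{D}(\mathbf{L})=\mathcal{D}(\mathbf{L}_{\alpha})$, and applying $\lambda-\mathbf{L}$ to $\mathbf{u}=(\lambda-\mathbf{L})^{-1}\mathbf{L}^{\prime}_{\alpha}\mathbf{u}$ gives $(\lambda-\mathbf{L})\mathbf{u}=\mathbf{L}^{\prime}_{\alpha}\mathbf{u}$, i.e. $(\lambda-\mathbf{L}_{\alpha})\mathbf{u}=0$. Thus $\lambda\in\sigma_{p}(\mathbf{L}_{\alpha})$, which is the desired inclusion.

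I do not anticipate a genuine obstacle here: this is the classical argument that a compact perturbation can only add point spectrum outside the spectrum of the unperturbed operator. The only points requiring a little care are that the factorization above is a true operator identity on the common domain $\mathcal{D}(\mathbf{L})=\mathcal{D}(\mathbf{L}_{\alpha})$ — which uses only that $\mathbf{L}^{\prime}_{\alpha}$ is everywhere defined and bounded — and the appeal to the Fredholm alternative, which hinges on $\mathbf{K}_{\lambda}$ being \emph{compact}; this is precisely where the smallness of $\alpha$ enters, through Lemma~\ref{compact}. Alternatively, one could quote a ready-made stability result for the (essential) spectrum under compact perturbations, e.g.\ from \cite{EngNag00}, but the direct argument above is short enough to carry out in full.
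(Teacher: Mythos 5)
Your argument is correct and follows essentially the same route as the paper: factor $\lambda - \mathbf{L}_{\alpha}$ through $\lambda - \mathbf{L}$ and a compact operator, then invoke the Fredholm alternative to produce an eigenvector. The only cosmetic difference is that you put the compact factor on the right, $\lambda-\mathbf{L}_{\alpha}=(\lambda-\mathbf{L})\bigl(I-(\lambda-\mathbf{L})^{-1}\mathbf{L}^{\prime}_{\alpha}\bigr)$, so the nontrivial fixed point of $\mathbf{K}_{\lambda}$ is directly the eigenvector, whereas the paper uses $\lambda-\mathbf{L}_{\alpha}=\bigl(1-\mathbf{L}^{\prime}_{\alpha}\mathbf{R}_{\mathbf{L}}(\lambda)\bigr)(\lambda-\mathbf{L})$ and applies $\mathbf{R}_{\mathbf{L}}(\lambda)$ once more to recover it; both are valid and equivalent.
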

\begin{proof}
Fix $\alpha \in \mathbb{R}^{5}$ sufficiently small and pick $\lambda \in \sigma (\mathbf{L}_{\alpha}) \setminus \sigma(\mathbf{L})$. From the identity $\lambda-\mathbf {L}_{\alpha}=[1-\mathbf L^{\prime}_{\alpha} \mathbf R_{\mathbf L}(\lambda)](\lambda-\mathbf L)$ we see that $1\in \sigma(\mathbf L^{\prime}_{\alpha} \mathbf R_{\mathbf L}(\lambda))$. Since $\mathbf L^{\prime}_{\alpha}\mathbf R_{\mathbf L}(\lambda)$ is compact, it follows that $1\in \sigma_p(\mathbf L^{\prime}_{\alpha}\mathbf R_{\mathbf L}(\lambda))$ and thus, there exists a nontrivial $\mathbf f \in \mathcal H$ such that $[1-\mathbf L^{\prime}_{\alpha}\mathbf R_{\mathbf L}(\lambda)]\mathbf f=0$.
Consequently, $\mathbf u:=\mathbf R_{\mathbf L}(\lambda)\mathbf f\not= 0$
satisfies $(\lambda-\mathbf {L}_{\alpha})\mathbf u=0$ and thus, $\lambda\in \sigma_p(\mathbf {L}_{\alpha})$.
\end{proof}
Now, we prove the following result.
\begin{prop} \label{pointspectrum}
We have
\begin{align*}
\sigma \left(  \mathbf{L}_{0} \right) \subseteq \left \{ \lambda \in \mathbb{C}: \mathrm{Re}\lambda \leq -1  \right \} \cup \{ 0,1\}. 
\end{align*}
\end{prop}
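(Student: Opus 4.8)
The plan is to leverage the two results already available: the spectral bound $\sigma(\mathbf L)\subseteq\{\lambda:\mathrm{Re}\lambda\le-1\}$ from \eqref{sigmaL} together with Lemma \ref{sigmaLfull}, which reduce the statement to showing that any $\lambda\in\sigma_p(\mathbf L_0)$ with $\mathrm{Re}\lambda>-1$ satisfies $\lambda\in\{0,1\}$. So I fix such a $\lambda$ and a corresponding eigenfunction $\mathbf u=(u_1,u_2)\in\mathcal D(\mathbf L_0)\subseteq\mathcal H$, so that in particular $u_1\in H^3(\mathbb B^5)$. Since $A_0(0)=1$ and $A_j(0)=0$, the potential is constant, $V_0\equiv 6$, and the eigenvalue equation $(\lambda-\mathbf L_0)\mathbf u=0$ reads $u_2=(\lambda+1)u_1+\xi^j\partial_j u_1$ and $(\lambda+2)u_2+\xi^j\partial_j u_2-\Delta u_1-6u_1=0$. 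Eliminating $u_2$ yields the single degenerate-elliptic equation
\[
-(\delta^{jk}-\xi^j\xi^k)\partial_j\partial_k u_1+(2\lambda+4)\,\xi^j\partial_j u_1+(\lambda+4)(\lambda-1)\,u_1=0 .
\]

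I would then separate variables. Expanding $u_1$ into spherical harmonics on $\mathbb S^4$ (with $-\Delta^{\mathbb S^4}Y_{l,m}=l(l+3)Y_{l,m}$) and using the coordinate formulas recorded in the proof of Lemma \ref{technical}, each radial coefficient $f_{l,m}$ solves a second-order ODE; the substitution $f_{l,m}(\rho)=\rho^l g(\rho)$, $z=\rho^2$, reduces it to the hypergeometric equation \eqref{hypergeometric} with
\[
a=\tfrac{l+\lambda+4}{2},\qquad b=\tfrac{l+\lambda-1}{2},\qquad c=l+\tfrac52,
\]
so that $c-a-b=1-\lambda$ and $a+b-c=\lambda-1$. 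At $\rho=0$ the two Frobenius exponents of the $z=0$ equation are $0$ and $1-c=-l-\tfrac32$, and the latter yields a radial profile $\sim\rho^{-l-3}$, which fails to be in $L^2(\mathbb B^5)$. Hence, since $c=l+\tfrac52\notin\mathbb Z_{\le0}$ so that ${}_2F_1(a,b;c;\,\cdot\,)$ is the solution analytic at $0$, the function $u_1$ must be a scalar multiple of $\rho^l\,{}_2F_1(a,b;c;\rho^2)\,Y_{l,m}(\omega)$.

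The decisive step is the behavior at the outer boundary $\rho=1$. A second solution of \eqref{hypergeometric} near $z=1$ carries the factor $(1-z)^{c-a-b}=(1-z)^{1-\lambda}$ (times a factor analytic at $z=1$, with logarithmic corrections when $1-\lambda\in\mathbb Z$), and the Gauss connection formula (\cite{handbook}) gives the coefficient of this singular branch in ${}_2F_1(a,b;c;z)$ as $\dfrac{\Gamma(c)\,\Gamma(a+b-c)}{\Gamma(a)\,\Gamma(b)}=\dfrac{\Gamma(c)\,\Gamma(\lambda-1)}{\Gamma(a)\,\Gamma(b)}$. Since $1-z\simeq 2(1-\rho)$ near $\rho=1$, a factor $(1-\rho)^{1-\lambda}$ belongs to $H^3$ near the boundary only when $1-\lambda$ is a non-negative integer or $\mathrm{Re}(1-\lambda)>\tfrac52$; for $\mathrm{Re}\lambda>-1$ the first possibility means $\lambda\in\{0,1\}$ and the second is impossible. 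Thus if $\lambda\notin\{0,1\}$ the singular branch must be absent, i.e.\ $\dfrac{\Gamma(c)\,\Gamma(\lambda-1)}{\Gamma(a)\,\Gamma(b)}=0$. Here $\Gamma(c)$ is finite and nonzero, and $\Gamma(\lambda-1)$ is finite and nonzero because $\lambda-1\in\mathbb Z_{\le0}$ together with $\mathrm{Re}\lambda>-1$ would again force $\lambda\in\{0,1\}$; hence we would need $a\in\mathbb Z_{\le0}$ or $b\in\mathbb Z_{\le0}$. But $a=\tfrac{l+\lambda+4}{2}\in\mathbb Z_{\le0}$ forces $\mathrm{Re}\lambda\le-4$, while $b=\tfrac{l+\lambda-1}{2}\in\mathbb Z_{\le0}$ forces $\lambda=1-l+2k$ with $k\in\mathbb Z_{\le0}$, and then $\mathrm{Re}\lambda>-1$ leaves only $l\in\{0,1\}$, $k=0$, i.e.\ $\lambda\in\{1,0\}$. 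This is a contradiction in every case, so $\lambda\in\{0,1\}$, and combining with \eqref{sigmaL} and Lemma \ref{sigmaLfull} proves the proposition.

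The step I expect to require the most care is the boundary analysis: making rigorous the claim that a boundary singularity of the form $(1-\rho)^{1-\lambda}$ (possibly with a logarithm) genuinely obstructs membership in $H^3(\mathbb B^5)$, and handling the \emph{resonant} cases where $1-\lambda\in\mathbb Z$ and the Gauss formula degenerates into logarithmic expansions, so that one must still extract the nonvanishing coefficient $\Gamma(c)\Gamma(\lambda-1)/(\Gamma(a)\Gamma(b))$ from the correct limiting form of the connection formula. One can moreover check directly that $(1,2)$ and $(\xi^j,2\xi^j)$ are eigenfunctions of $\mathbf L_0$ with eigenvalues $1$ and $0$ respectively, so that $\{0,1\}\subseteq\sigma_p(\mathbf L_0)$, although this is not needed for the stated inclusion.
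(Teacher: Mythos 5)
Your proposal is correct and follows essentially the same route as the paper: reduce to the point spectrum via Lemma~\ref{sigmaLfull} and \eqref{sigmaL}, eliminate $v_2$, separate into spherical harmonics, reduce the radial ODE to the hypergeometric equation via $\rho^l$ and $z=\rho^2$, and then combine regularity at $\rho=0$ (discarding the $z^{1-c}$ branch) with $H^3$ regularity at $\rho=1$ and the Gauss connection formula to force $\Gamma(a)^{-1}\Gamma(b)^{-1}=0$, which under $\mathrm{Re}\,\lambda>-1$ leaves only $\lambda\in\{0,1\}$. Your relabeling $a\leftrightarrow b$ is purely cosmetic since ${}_2F_1$ is symmetric in its first two parameters, and the case analysis on $-a,-b\in\mathbb N_0$ matches the paper's. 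You are in fact slightly more careful than the paper at two points: you record the exact $H^3$ threshold ($\mathrm{Re}(1-\lambda)>\tfrac52$, with the exception $1-\lambda\in\mathbb N_0$) rather than the looser ``$<2$'' bound in the paper, and you explicitly flag the resonant cases $1-\lambda\in\mathbb Z$ where the connection formula degenerates into a logarithmic expansion -- a subtlety the paper passes over silently. You don't fully carry out that resonant case, but neither does the paper; the leading $(1-z)^{c-a-b}$ coefficient in the logarithmic connection formula still carries the factor $\Gamma(a)^{-1}\Gamma(b)^{-1}$, so the conclusion indeed persists.
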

\begin{proof}
To prove this result, we argue by contradiction. To begin with, fix a spectral point $\lambda \in \sigma  \left(  \mathbf{L}_{0} \right)$ with Re$\lambda > -1$ and $\lambda \ne 0,1$. Then, \eqref{sigmaL} implies that 
$\lambda \notin \sigma  \left(  \mathbf{L} \right)$ and Lemma \ref{sigmaLfull} yields $\lambda \in \sigma _{p} \left(  \mathbf{L}_{0} \right)$. Consequently, there exists a non-trivial element $\mathbf{v} \in  \mathcal{D} \big(\mathbf{L}_{0} \big) \subseteq \mathcal{H}$ such that $\big(\lambda - \mathbf{L}_{0} \big) \mathbf{v} = 0$. Then, for $\mathbf{v}=(v_{1},v_{2})$, we get
\begin{align*}
    \begin{cases}
      v_{2} (\xi)= (\lambda +1) v_{1} (\xi)+ \xi ^{j} \partial _{j} v_{1} (\xi), &  \\
      -\partial ^{j} \partial _{j} v_{1} (\xi)+ \xi^{i} \partial _{i} v_{2} (\xi)+(\lambda +2) v_{2}(\xi) - 6 v_{1} (\xi)= 0.
    \end{cases}
\end{align*}
Inserting $v_{2}$ into the second equation, we obtain
\begin{align*}
- \left( \delta ^{ij} - \xi ^{i} \xi ^{j} \right) \partial _{i} \partial _{j} v_{1} (\xi)+ 2 (\lambda +2) \xi ^{i} \partial _{i} v_{1} (\xi) +  \Big( (\lambda +1) (\lambda +2) - 6 \Big) v_{1} (\xi) = 0.
\end{align*}
To solve this equation, we switch to spherical coordinates $\xi = \rho \omega$, where $\rho =|\xi|$ and $\omega=\frac{\xi}{|\xi|}$. Then,
\begin{align*}
\partial _{j} \rho (\xi) = \omega _{j} (\xi),\quad \partial _{j} \omega ^{k} (\xi) = \frac{ \delta_{j}^{k} - \omega_{j}(\xi) \omega ^{k}(\xi)  }{\rho(\xi)}
\end{align*}
and derivatives transform according to
\begin{align*} 
& \xi ^{j} \partial _{j} v_{1}(\xi) = \rho \partial _{\rho} v_{1} (\rho \omega), \\[5pt]
& \xi ^{i} \xi ^{j} \partial _{i} \partial _{j} v_{1}(\xi) = \rho ^{2} \partial _{\rho}^{2} v_{1} (\rho \omega), \\[5pt]  
&  \partial ^{j} \partial _{j} v_{1}(\xi) = \left( \partial _{\rho }^{2}+\frac{4}{\rho} \partial _{\rho}  +\frac{1}{\rho ^{2}} \Delta _{\omega}^{\mathbb{S}^{4}} \right) v_{1}(\rho \omega).
\end{align*}
Hence, the spectral equation above can be written equivalently as 
\begin{align*}
\Bigg[ -\left( 1- \rho^{2} \right) \partial _{\rho}^{2}  - \Bigg( \frac{4}{\rho} -2(\lambda +2) \rho  \Bigg) \partial _{\rho} 
+ \Big( (\lambda +1)(\lambda +2)  - 6 \Big)   -\frac{1}{\rho^{2}} \Delta _{\omega}^{\mathbb{S}^{4}} \Bigg] v_{1}(\rho \omega)= 0.
\end{align*}
By elliptic regularity, we infer $v_{1} \in C^{\infty}(\mathbb{B}^{5}) \cap H^{3}(\mathbb{B}^{5})$. Therefore, we may expand 
\begin{align*}
v_{1}(\rho \omega) = \sum _{l=0}^{\infty} \sum _{m \in \Omega_{l}} v_{1,l,m} (\rho) Y_{l,m} (\omega). 
\end{align*}
Inserting this ansatz into the spectral equation above, we obtain the decoupled system of ordinary differential equations
\begin{align} \label{55}
\Bigg[ -\left( 1- \rho^{2} \right) \frac{d^{2}}{d \rho ^{2}}   - \Bigg( \frac{4}{\rho} -2(\lambda +2) \rho  \Bigg) \frac{d}{d \rho} 
+ \Big( (\lambda +1)(\lambda +2)  - 6 + \frac{l(l+3)}{\rho^{2}}  \Big)  \Bigg] v_{1,l,m}(\rho )= 0.
\end{align}
From now on we suppress the subscripts. First note that this is a second order ordinary differential equation with four regular singular points: $-1,0,1$ and $\infty$. Again, by the reflection symmetry, these four singular points can be reduced to three and therefore, \eqref{55} can be transformed into a hypergeometric differential equation. To do so, we introduce the change of variables $v(\rho)=\rho ^{l} w(z)$ with $z=\rho^{2}$ and we get
\begin{align} \label{hyper}
z(1-z)w^{\prime \prime} (z)+ \Big( c-(a+b+1)z \Big) w^{\prime} (z) - ab w(z) = 0
\end{align}
where
\begin{align*}
a:=\frac{1}{2}(\lambda +l -1),~~~ b:=\frac{1}{2}(\lambda + l + 4),~~~ c:=\frac{5}{2}+l.
\end{align*}
The functions
\begin{align*}
& w_{0}(z):= {}_2F_1\left( a,b; c; z \right), \\
& \widetilde{w}_{0}(z):=z^{1-c} {}_2F_1\left( a-c+1,b-c+1; 2-c; z \right), \\
& w_{1}(z):= {}_2F_1\left( a,b; a+b+1-c; 1-z \right), \\
& \widetilde{w}_{1}(z):=(1-z)^{c-a-b} {}_2F_1\left( c-a,c-b; c-a-b+1; 1-z \right),
\end{align*}
are all solutions to \eqref{hyper}, see \cite{handbook}. First, note that $ \widetilde{w}_{1}$ is not admissible since the initial condition Re$\lambda>-1$ yields
\begin{align*}
\mathrm{Re}(c-a-b)=1-\mathrm{Re}\lambda<2
\end{align*}
and thus $\widetilde{w}_{1} \notin H^{3} (\frac{1}{2},1)$ whereas $\mathcal{D} \big(\mathbf{L}_{0} \big) \subseteq \mathcal{H}$. Similarly, $\widetilde{w}_{0}$ is not admissible either since it would lead to a solution $v_{l,m}$ that behaves like $\rho^{-\frac{3}{2}}$ as $\rho \rightarrow 0+$ which contradicts $v_{l,m} \in C^{\infty}[0,1)$. Hence, we are left with $ w_{0}$ and $ w_{1}$ and since both $\{ w_{0}, \widetilde{w}_{0} \}$ and $\{ w_{1}, \widetilde{w}_{1} \}$ are fundamental systems for the hypergeometric equation \eqref{hyper} we infer that $w_{0}$ and $w_{1}$ must be linearly dependent. In view of the connection formula \cite{handbook},
\begin{align*}
w_{0}(z)=\frac{\Gamma(c)\Gamma(c-a-b)}{\Gamma(c-a)\Gamma(c-b)} w_{1}(z) + \frac{\Gamma(c)\Gamma(a+b-c)}{\Gamma(a)\Gamma(b)} \widetilde{w}_{1}(z), 
\end{align*}
the linear dependence of $w_{0}$ and $w_{1}$ implies that
\begin{align*}
 \frac{\Gamma(c)\Gamma(a+b-c)}{\Gamma(a)\Gamma(b)} =0.
\end{align*}
However, the gamma function has no zeros and therefore we see that $a$ or $b$ must be a pole of $\Gamma$. The latter means $-a \in \mathbb{N}_{0}$ or $-b \in \mathbb{N}_{0}$. The first condition, $-a=n$ for some $n \in \mathbb{N}_{0}$, yields $2n<2-l$ which is possible only if $n=0$ and $l \in \{0,1\}$ which in turn implies $\lambda \in \{0,1\}$ and refutes the initial assumption. The second condition, $-b=m$ for some $m \in \mathbb{N}_{0}$, yields 
$\lambda = -2m-4-l$ and the initial hypothesis on $\lambda$ yields $-1<$Re$\lambda=-2m-4-l$ which is a contradiction, namely $3<-(2m+l)$.
\end{proof}
\begin{remark}
The spectral equations for $\lambda =0$ and $\lambda =1$ respectively read
\begin{align*}
& - \left( \delta ^{ij} - \xi ^{i} \xi ^{j} \right) \partial _{i} \partial _{j} v_{1} (\xi)+ 4 \xi ^{i} \partial _{i} v_{1} (\xi) -4 v_{1} (\xi) = 0, \\
& - \left( \delta ^{ij} - \xi ^{i} \xi ^{j} \right) \partial _{i} \partial _{j} v_{1} (\xi)+ 6 \xi ^{i} \partial _{i} v_{1} (\xi)  = 0.
\end{align*}
It is straightforward to check that, for all fixed $j\in \{1,2,3,4,5\}$, $v_{1}(\xi)=\xi^{j}$ solves the first equation whereas the constant function $v_{1}(\xi)=1$ solves the second equation. Consequently, the eigenspaces for the isolated eigenvalues $\lambda =0$ and $\lambda=1$ of the operator $\mathbf{L}_{0}$ are spanned respectively by
\begin{align*}
& \mathbf{h}_{0,j} (\xi)= \partial _{\alpha^{j}} \mathbf{\Psi}_{\alpha} (\xi) |_{\alpha=0} = \sqrt{2}
  \begin{pmatrix}
    \xi ^{j} \\
    2\xi^{j}
  \end{pmatrix},~~~j\in \{1,2,3,4,5\} \\
 &   \mathbf{g}_{0} (\xi)= 
  \begin{pmatrix}
    1 \\
    2
  \end{pmatrix},
\end{align*}
and hence $\{0,1\} \subseteq \sigma _{p} (\mathbf{L}_{0})$. Finally, notice that the above derivation shows that the geometric eigenspaces of $0$ and $1$ are $5-$dimensional and $1-$dimensional, respectively. 
\end{remark}
Note that since the operator $\mathbf{L}_{0}$ is highly non self-adjoint, it is not straightforward to see that the algebraic multiplicity of the isolated eigenvalues $\lambda=0$ and $\lambda =1$ are equal to $5$ and $1$, respectively. Now, we focus on proving this result rigorously. To be precise, we follow \cite{DonSch16a} and use the fact that the eigenvalues $\lambda =0$ and $\lambda=1$ are isolated to introduce two (non-orthogonal) Riesz projections $\mathbf{Q}_{0}$ and $\mathbf{P}_{0}$, namely
\begin{align*}
& \mathbf{Q}_{0}:=\frac{1}{2\pi i} \int_{\gamma_{0}} \mathbf{R}_{\mathbf{L}_{0}}(\zeta) d \zeta, \\
& \mathbf{P}_{0}:=\frac{1}{2\pi i} \int_{\gamma_{1}} \mathbf{R}_{\mathbf{L}_{0}}(\zeta) d \zeta,
\end{align*}
where $\gamma _{0},\gamma_{1}:[0,1] \rightarrow \mathbb{C}$ stand for the circles centered at $\lambda=0$ and $\lambda=1$,
\begin{align*}
\gamma _{0}(s):=\frac{1}{2} e^{2\pi i s},\quad \gamma _{1}(s):=1+\frac{1}{2} e^{2\pi i s},
\end{align*}
respectively. These projections decompose the Hilbert space of initial data $\mathcal{H}$ into rg$(1-\mathbf{Q}_{0})$ (stable space for $\lambda=0$) and rg$\mathbf{Q}_{0}$ (unstable space for $\lambda=0$),
\begin{align*}
\mathcal{H} =\text{rg}(1-\mathbf{Q}_{0})  \oplus \text{rg} (\mathbf{Q}_{0}).
\end{align*}
Similarly, for $\mathbf{P}_{0}$. We show that
\begin{align*}
& m_{a} (\lambda =0):=\mathrm{rank}\, \mathbf{Q}_{0} =\dim\mathrm{rg}\, \mathbf{Q}_{0}, \\
& m_{a} (\lambda =1):=\mathrm{rank}\, \mathbf{P}_{0} =\dim\mathrm{rg}\, \mathbf{P}_{0},
\end{align*}
are equal to $5$ and $1$ respectively.
\begin{lemma} \label{rank}
We have $\dim\mathrm{rg}\, \mathbf{Q}_{0}=5$ and $\dim\mathrm{rg}\, \mathbf{P}_{0}=1$.
\end{lemma}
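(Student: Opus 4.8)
The plan is to show that each of the eigenvalues $0$ and $1$ has ascent one, so that the algebraic multiplicities coincide with the geometric multiplicities $5$ and $1$ computed above. For $\lambda\in\{0,1\}$ we have $\mathbf{L}_{0}-\lambda=(\mathbf{L}-\lambda)+\mathbf{L}'_{0}$, a compact perturbation (Lemma~\ref{compact}) of the boundedly invertible operator $\mathbf{L}-\lambda$ (note $\lambda\notin\sigma(\mathbf{L})$ by \eqref{sigmaL}); hence $0$ and $1$ are isolated points of $\sigma(\mathbf{L}_{0})$ of finite algebraic multiplicity, the resolvent $\mathbf{R}_{\mathbf{L}_{0}}$ has a pole of finite order at each of them, and therefore $\mathrm{rg}\,\mathbf{Q}_{0}=\bigcup_{k\geq 1}\ker(\mathbf{L}_{0})^{k}$ and $\mathrm{rg}\,\mathbf{P}_{0}=\bigcup_{k\geq 1}\ker(\mathbf{L}_{0}-1)^{k}$ are finite dimensional and contain the geometric eigenspaces. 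It thus suffices to prove $\ker(\mathbf{L}_{0})^{2}=\ker(\mathbf{L}_{0})$ and $\ker(\mathbf{L}_{0}-1)^{2}=\ker(\mathbf{L}_{0}-1)$; equivalently, that there is no $\mathbf{v}\in\mathcal{D}(\mathbf{L}_{0})$ with $\mathbf{L}_{0}\mathbf{v}\in\ker(\mathbf{L}_{0})\setminus\{0\}$, and none with $(\mathbf{L}_{0}-1)\mathbf{v}\in\ker(\mathbf{L}_{0}-1)\setminus\{0\}$.

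For $\lambda=0$, suppose such a $\mathbf{v}=(v_{1},v_{2})$ exists, so that $\mathbf{L}_{0}\mathbf{v}=\mathbf{h}$ with $\mathbf{h}=\sqrt{2}\,(a_{j}\xi^{j},\,2a_{j}\xi^{j})$ for some $a\in\mathbb{R}^{5}\setminus\{0\}$. Since $V_{0}$ in \eqref{Lprime} is constant, $\mathbf{L}_{0}$ commutes with the $SO(5)$-action $\mathbf{u}\mapsto\mathbf{u}(R\,\cdot)$, which is an isometry of $\mathcal{H}$ preserving $\mathcal{D}(\mathbf{L}_{0})$; after a rotation and rescaling we may therefore take $\mathbf{h}=\mathbf{h}_{0,1}=\sqrt2(\xi^{1},2\xi^{1})$. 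Eliminating $v_{2}$ exactly as in the proof of Proposition~\ref{pointspectrum} yields
\[
-(\delta^{ij}-\xi^{i}\xi^{j})\partial_{i}\partial_{j}v_{1}(\xi)+4\xi^{i}\partial_{i}v_{1}(\xi)-4v_{1}(\xi)=-5\sqrt{2}\,\xi^{1}.
\]
Expanding $v_{1}(\rho\omega)=\sum_{l,m}v_{1,l,m}(\rho)Y_{l,m}(\omega)$ and using that $\xi^{1}=\rho\,\omega^{1}$ is a single first-order spherical harmonic, the source feeds only the corresponding $l=1$ mode; every other mode solves the homogeneous equation \eqref{55} with $\lambda=0$, which by the connection-formula analysis in the proof of Proposition~\ref{pointspectrum} has a solution in the required regularity class ($C^{\infty}$ at $\rho=0$, $H^{3}$ near $\rho=1$) only for $l=1$, so those modes lie in $\ker(\mathbf{L}_{0})$ and are harmless. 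For the sourced mode the substitution $v_{1,1,m}(\rho)=\rho\,w(\rho^{2})$ turns \eqref{55} into \eqref{hyper} with $a=0$, $b=\tfrac52$, $c=\tfrac72$ and a nonzero \emph{constant} right-hand side; since $ab=0$ the equation integrates explicitly by variation of constants, and one checks that the unique solution regular at $\rho=0$ (the other homogeneous solution behaving like $\rho^{-4}$) develops a boundary singularity of type $(1-\rho)\log(1-\rho)$ as $\rho\to1^{-}$. Such a function is not in $H^{3}$ near the light cone, contradicting $v_{1}\in H^{3}(\mathbb{B}^{5})$, which is forced by $\mathbf{v}\in\mathcal{D}(\mathbf{L}_{0})\subseteq\mathcal{H}$. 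Hence $\mathbf{h}=0$, so $\mathrm{rg}\,\mathbf{Q}_{0}=\ker(\mathbf{L}_{0})$ and $\dim\mathrm{rg}\,\mathbf{Q}_{0}=5$.

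The eigenvalue $\lambda=1$ is treated identically. If $(\mathbf{L}_{0}-1)\mathbf{v}=c\,\mathbf{g}_{0}$ with $c\neq0$, then eliminating $v_{2}$ produces an inhomogeneous version of \eqref{55} with $\lambda=1$ and \emph{constant} right-hand side, so only the radial ($l=0$) mode is excited; the substitution $v_{1,0}(\rho)=w(\rho^{2})$ again leads to \eqref{hyper} with $a=0$ (here $b=\tfrac52$, $c=\tfrac52$) and constant source, and the solution regular at the origin behaves like $\log(1-\rho)$ as $\rho\to1^{-}$, hence lies outside $H^{3}$. This contradiction gives $c=0$, so $\mathrm{rg}\,\mathbf{P}_{0}=\ker(\mathbf{L}_{0}-1)$ and $\dim\mathrm{rg}\,\mathbf{P}_{0}=1$.

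The crux is the last step in each case: one must integrate the degenerate hypergeometric equation with a constant inhomogeneity and identify the Frobenius behaviour at the regular singular point $\rho=1$, where the integrality of the index difference (a consequence of $a=0$, the same degeneracy that produced the eigenvalues $0$ and $1$ in Proposition~\ref{pointspectrum}) forces the logarithmic factor and the failure of $H^{3}$-regularity. The bookkeeping with the Riesz projections, the compact-perturbation facts, the reduction to a single spherical-harmonic mode, and the verification that $SO(5)$ acts by symmetries are comparatively routine.
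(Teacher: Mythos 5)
Your argument is correct and mirrors the paper's proof: both establish finite rank via stability of the essential spectrum under compact perturbation, then rule out a generalized eigenvector by reducing to a single angular-momentum mode whose inhomogeneous radial ODE forces a $(1-\rho)\log(1-\rho)$-type singularity incompatible with $H^{3}$-regularity at the boundary. The only cosmetic differences are that you justify the reduction to one mode by invoking explicit $SO(5)$-equivariance of $\mathbf{L}_{0}$ and of the inner product (where the paper simply asserts the without-loss-of-generality normalization $a_{0}=1$), and you read off the logarithm from the integer index difference $c-a-b$ at $z=1$ in the hypergeometric normal form, whereas the paper rewrites the reduced equation as an Euler-type ODE with fundamental system $\{\rho,\rho^{-4}\}$ and exhibits the variation-of-constants solution in closed form.
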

\begin{proof}
Since the process is the same for both quantities, we illustrate it on $\mathbf{Q}_{0}$ only. We refer the reader to \cite{Kat95} for the following standard results. The projection $\mathbf {Q}_{0}$ commutes with the operator $\mathbf{L}_{0}$ and thus with the semigroup $\mathbf S_{0}(\tau)$. Moreover, $\mathbf{Q}_{0}$ decomposes the Hilbert space as $\mathcal{H} =\mathcal M \oplus \mathcal N$, where $\mathcal M:=\rg \mathbf {Q}_{0}$ and $\mathcal N:=\ker \mathbf{Q}_{0}=\rg(1-\mathbf {Q}_{0})$. Most importantly, the operator $\mathbf{L}_{0}$ is decomposed accordingly into the parts $\mathbf{L}_{0,\mathcal M}$ and $\mathbf{L}_{0,\mathcal N}$ on $\mathcal M$ and $\mathcal N$, respectively. The spectra of these operators are given by
\begin{align} \label{spectrum}
\sigma \left( \mathbf L_{0,\mathcal N} \right ) = \sigma (\mathbf{L}_{0}) \setminus \{0\},\qquad \sigma \left( \mathbf{L}_{0,\mathcal M} \right ) = \{0\}.
\end{align}
Finally, $\rg \mathbf{Q}_{0} \subseteq \mathcal{D}(\mathbf{L})$. To proceed, we break down the proof into the following steps: \\ \\
Step 1: We prove that $\rank\mathbf{Q}_{0}:=\dim\rg\mathbf{Q}_{0}<+\infty$. We argue by contradiction and assume that $\rank\mathbf{Q}_{0}=+\infty$. Using \cite{Kat95}, p.~239, Theorem 5.28, the fact that $\mathbf{L}_{0}^{\prime}$ is compact (see Lemma $\ref{compact}$), and the fact that the essential spectrum is stable under compact perturbations (\cite{Kat95}, p.~244, Theorem 5.35), we obtain
\begin{align*}
\mathrm{rank}\,\mathbf{Q}_{0} = +\infty  \Longrightarrow 1 \in \sigma _{e} (\mathbf{L}_{0}) = \sigma _{e} (\mathbf{L}_{0} -\mathbf{L}_{0}^{\prime})=\sigma _{e}(\mathbf{L}) \subseteq \sigma (\mathbf{L}),
\end{align*}
which clearly contradicts \eqref{sigmaL}. 
\\ \\
Step 2: We prove that $\langle \mathbf{h}_{0,1}, \mathbf{h}_{0,2}, \mathbf{h}_{0,3}, \mathbf{h}_{0,4}, \mathbf{h}_{0,5} \rangle=\mathrm{rg}\,\mathbf{Q}_{0}$. It suffices to show $\mathrm{rg}\,\mathbf{Q}_{0} \subseteq \langle \mathbf{h}_{0,1}, \mathbf{h}_{0,2}, \mathbf{h}_{0,3}, \mathbf{h}_{0,4}, \mathbf{h}_{0,5} \rangle$ since the reverse inclusion follows from the abstract theory. From Step 1, the operator $\mathbf{L}_{0,\mathcal M}$ acts on the finite-dimensional Hilbert space $\mathcal M=\rg \mathbf {Q}_{0}$ and, from $\eqref{spectrum}$, $\lambda =0$ is its only spectral point. Hence, $\mathbf{L}_{0,\mathcal M}$ is nilpotent, i.e., there exists a minimal $k\in \mathbb N$ such that
\begin{align*}
\big(  \mathbf{L}_{0,\mathcal M}  \big)^{k}  \mathbf{u}= 0
\end{align*}
for all $\mathbf{u} \in \mathrm{rg}\, \mathbf{Q}_{0}$. Now, the claim follows immediately if $k=1$. Indeed, if $k=1$, then $\mathrm{rg}\ \mathbf{Q}_{0}=\ker \mathbf{L}_{0}=\langle \mathbf{h}_{0,1}, \mathbf{h}_{0,2}, \mathbf{h}_{0,3}, \mathbf{h}_{0,4}, \mathbf{h}_{0,5} \rangle$ which shows that $\dim\mathrm{rg}\, \mathbf{Q}_{0}=5$. We proceed by contradiction and assume that $k\geq 2$. Then, there exists a nontrivial function $\mathbf{u} \in \rg \mathbf{Q}_{0} \subseteq \mathcal{D}( \mathbf{L})$ such that 
$(\mathbf L_{0,\mathcal M})\mathbf u$ is nonzero and belongs to $\ker(\mathbf L_{0,\mathcal M})\subseteq \ker(\mathbf {L}_{0})$.
This means that $\mathbf{ u } \in \rg\mathbf{Q}_{0} \subseteq \mathcal{D} (\mathbf{L})$ satisfies $\mathbf{ L }_{0} \mathbf{ u } = \mathbf{ f}$, for some $\mathbf{f} \in \ker {\mathbf{L}_{0}}$. A straightforward computation shows that the first component of $\mathbf u$ solves the second order differential equation
\begin{align*}
- \left( \delta ^{ij} - \xi ^{i} \xi ^{j} \right) \partial _{i} \partial _{j} u_{1} (\xi)+ 4 \xi ^{i} \partial _{i} u_{1} (\xi) -4 u_{1} (\xi) = - f(\xi),
\end{align*}
where 
\begin{align*}
f(\xi):= \xi ^{j} \partial _{j} f_{1} (\xi) +2 f_{1}(\xi) + f_{2}(\xi)
\end{align*}
and $\mathbf{f}=(f_{1},f_{2})$. We switch to hyper-spherical coordinates $\xi=\rho \omega$ where $\rho=|\xi|$ and $\omega=\frac{\xi}{|\xi|}$. Then,
\begin{align*}
\Bigg [  -(1-\rho^2) \frac{d^2}{d \rho ^2} - \left( \frac{4}{\rho} -4 \rho \right) \frac{d}{d \rho} -4  -\frac{1}{\rho ^2} \Delta ^{\mathbb{S}^{4}}_{\omega} \Bigg ] u_{1} (\rho \omega) = f (\rho \omega).
\end{align*}
Since 
\begin{align*}
\mathbf{f} \in \ker(\mathbf{L}_{0})=\langle \mathbf{h}_{0,1}, \mathbf{h}_{0,2}, \mathbf{h}_{0,3}, \mathbf{h}_{0,4}, \mathbf{h}_{0,5} \rangle = 
\langle 
\sqrt{2} 
\begin{pmatrix}
   \xi^{1} \\
    2 \xi^{1}
\end{pmatrix}, 
\cdots,
\sqrt{2} 
\begin{pmatrix}
   \xi^{5} \\
    2 \xi^{5}
\end{pmatrix}
 \rangle,
\end{align*}
we infer that
\begin{align*}
f(\xi)=\widetilde{a}_{j} \xi^{j} =|\xi| \widetilde{a}_{j} \omega ^{j} = |\xi| \sum _{m=-2}^{2} a_{m} Y_{1,m}(\omega).
\end{align*}
Here, $a_{m} \neq 0$ for at least one $m\in \{-2, -1,0,1,2\}$. Without loss of generality we assume that $a_{0}=1$. An angular momentum decomposition as in the proof of Proposition \ref{pointspectrum} leads to the inhomogeneous ordinary differential equation
\begin{align} 
\Bigg [  -(1-\rho^2) \frac{d^2}{d \rho ^2} - \left( \frac{4}{\rho} -4 \rho \right) \frac{d}{d \rho} -4  + \frac{4}{\rho ^2}  \Bigg ] u_{1,1,0} (\rho) = \rho,
\end{align}
which can be simplified to 
\begin{align} \label{ODE}
u_{1,1,0} ^{\prime \prime} (\rho) + \frac{4}{\rho} u_{1,1,0} ^{\prime} (\rho) - \frac{4}{\rho ^2} u_{1,1,0} (\rho) = - \frac{\rho}{1-\rho^2}.
\end{align}
This is a second order ordinary differential equation and one can readily verify that $\{ \phi(\rho)=\rho,~ \psi (\rho)=\rho^{-4} \}$ is a fundamental system for the homogeneous version of \eqref{ODE}. We calculate the Wronskian $W(\phi,\psi)(\rho)=-5\rho^{-4}$ and the variation of constants formula yields 
\begin{align*}
u_{1,1,0} (\rho) & = \frac{ c_{1} }{ \rho^4 } + c_{0} \rho + \frac{ \rho }{10} \log \left( 1-\rho^2 \right) +
 \frac{1}{10 \rho^4}
 \log \left (  \frac{1+\rho}{1-\rho} \right) 
 -  \frac{1}{5 \rho^4}
 \left( \rho +\frac{1}{3} \rho^{3}+\frac{1}{5} \rho^{5}  \right)
\end{align*}
for some constants $c_{0},c_{1} \in \mathbb{C}$. Now, $(\cdot)^{-4} \notin L^{2}(0,1)$ whereas $u_{1,1,0} \in L^{2}(0,1)$ and therefore we must have
$c_{1}=0$.
This fact leaves us with
\begin{align*}
u_{1,1,0} (\rho) & =  c_{0} \rho + \frac{ \rho }{10} \log \left( 1-\rho^2 \right) +
 \frac{1}{10 \rho^4}
 \log \left (  \frac{1+\rho}{1-\rho} \right) 
 -  \frac{1}{5 \rho^4}
 \left( \rho +\frac{1}{3} \rho^{3}+\frac{1}{5} \rho^{5}  \right)
\end{align*}
which behaves like $(1-\rho)\log(1-\rho)$ near $\rho=1$ and thus, does
not belong to $H^3$. This contradiction shows that we must have $k=1$ and thus $\mathbf{Q}_{0}$ has rank equal to $5$. Similarly, one can show that $\mathbf{P}_{0}$ has rank equal to $1$.
\end{proof}

\subsection{The spectrum of the full linear operator for $\alpha \neq 0$.}

Now, we assume that $\alpha \neq 0$ is sufficiently small and we will show that the spectrum $\sigma (\mathbf{L}_{\alpha})$ is close to $\sigma (\mathbf{L}_{0})$. More precisely, we work towards proving the following result.

\begin{prop}   \label{spectrumfulllineara}
Let $\alpha \in \mathbb{R}^{5}$ be sufficiently small. Then,
\begin{align*}
\sigma(\mathbf{L}_{\alpha}) \subseteq \left  \{ \lambda \in \mathbb{C}: \mathrm{Re} \lambda \leq - \frac{3}{4}  \right \} \cup \{ 0,1\}.
\end{align*}
\end{prop}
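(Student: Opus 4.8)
The plan is to treat $\mathbf{L}_{\alpha}$ as a small, compact perturbation of $\mathbf{L}_{0}$ and to transport the spectral picture already obtained in Proposition \ref{pointspectrum} and Lemma \ref{rank}. Write $\mathbf{L}_{\alpha} = \mathbf{L}_{0} + \mathbf{M}_{\alpha}$ with $\mathbf{M}_{\alpha} := \mathbf{L}^{\prime}_{\alpha} - \mathbf{L}^{\prime}_{0}$; since $V_{0}$ is constant and $\alpha \mapsto V_{\alpha}$ is continuous, the bound \eqref{boundV} yields $\|\mathbf{M}_{\alpha}\| \to 0$ as $\alpha \to 0$, while $\mathbf{M}_{\alpha}$ is compact as a difference of compact operators (Lemma \ref{compact}). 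By Proposition \ref{pointspectrum} the closed half-plane $\{\mathrm{Re}\lambda \geq -\tfrac{3}{4}\}$ meets $\sigma(\mathbf{L}_{0})$ only in the isolated points $0$ and $1$, whose algebraic multiplicities are $5$ and $1$ by Lemma \ref{rank}. After fixing $\epsilon > 0$ so small that $\overline{D(0,\epsilon)}$ and $\overline{D(1,\epsilon)}$ are disjoint and miss the line $\mathrm{Re}\lambda = -\tfrac{3}{4}$, the goal becomes: for all sufficiently small $\alpha$, first that $\sigma(\mathbf{L}_{\alpha}) \cap \{\mathrm{Re}\lambda \geq -\tfrac{3}{4}\} \subseteq D(0,\epsilon) \cup D(1,\epsilon)$, and then that this set equals $\{0,1\}$.

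For the first half I would prove a uniform resolvent bound for $\mathbf{L}_{0}$ on $\Omega_{\epsilon} := \{\mathrm{Re}\lambda \geq -\tfrac{3}{4}\} \setminus (D(0,\epsilon) \cup D(1,\epsilon))$. On the compact set $\Omega_{\epsilon} \cap \{|\lambda| \leq R\}$ this is immediate from Proposition \ref{pointspectrum} (the set lies in $\rho(\mathbf{L}_{0})$) and analyticity of the resolvent. For $|\lambda| > R$ in the half-plane I would use the factorisation $\lambda - \mathbf{L}_{0} = (1 - \mathbf{L}^{\prime}_{0} \mathbf{R}_{\mathbf{L}}(\lambda))(\lambda - \mathbf{L})$, legitimate since $\mathrm{Re}\lambda > -1$, together with Proposition \ref{Lumer}, which furnishes both $\|\mathbf{R}_{\mathbf{L}}(\lambda)\| \leq M/(\mathrm{Re}\lambda + 1) \leq 4M$ on the half-plane and, writing $\mathbf{R}_{\mathbf{L}}(\lambda) = \int_{0}^{\infty} e^{-\lambda\tau} \mathbf{S}(\tau)\, d\tau$, the decay $\|\mathbf{L}^{\prime}_{0} \mathbf{R}_{\mathbf{L}}(\lambda)\| \to 0$ as $|\lambda| \to \infty$ with $\mathrm{Re}\lambda \geq -\tfrac{3}{4}$ — a norm-null product of the compact operator $\mathbf{L}^{\prime}_{0}$ with a uniformly bounded, strongly null family (Riemann--Lebesgue, using $\|\mathbf{S}(\tau)\| \leq M e^{-\tau}$). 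Choosing $R$ large enough that $\|\mathbf{L}^{\prime}_{0} \mathbf{R}_{\mathbf{L}}(\lambda)\| \leq \tfrac{1}{2}$ there gives $\|\mathbf{R}_{\mathbf{L}_{0}}(\lambda)\| \leq 8M$, hence a finite bound $C_{\epsilon} := \sup_{\Omega_{\epsilon}} \|\mathbf{R}_{\mathbf{L}_{0}}(\lambda)\|$. Consequently, once $\alpha$ is small enough that $\|\mathbf{M}_{\alpha}\| C_{\epsilon} < 1$, the operator $1 - \mathbf{M}_{\alpha} \mathbf{R}_{\mathbf{L}_{0}}(\lambda)$ is invertible for every $\lambda \in \Omega_{\epsilon}$, so $\Omega_{\epsilon} \subseteq \rho(\mathbf{L}_{\alpha})$ and $\sigma(\mathbf{L}_{\alpha}) \cap \{\mathrm{Re}\lambda \geq -\tfrac{3}{4}\} \subseteq D(0,\epsilon) \cup D(1,\epsilon)$.

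For the second half I would introduce the Riesz projections $\mathbf{Q}_{\alpha} := \frac{1}{2\pi i} \oint_{\partial D(0,\epsilon)} \mathbf{R}_{\mathbf{L}_{\alpha}}(\zeta)\, d\zeta$ and $\mathbf{P}_{\alpha} := \frac{1}{2\pi i} \oint_{\partial D(1,\epsilon)} \mathbf{R}_{\mathbf{L}_{\alpha}}(\zeta)\, d\zeta$, which are well defined for small $\alpha$ by the previous step; the resolvent identity together with $\|\mathbf{M}_{\alpha}\| \to 0$ makes $\alpha \mapsto \mathbf{Q}_{\alpha}, \mathbf{P}_{\alpha}$ norm-continuous at $\alpha = 0$, and norm-continuous families of projections have locally constant rank, so Lemma \ref{rank} gives $\mathrm{rank}\, \mathbf{Q}_{\alpha} = 5$ and $\mathrm{rank}\, \mathbf{P}_{\alpha} = 1$. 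I would then produce matching eigenfunctions of $\mathbf{L}_{\alpha}$: differentiating the static identity $\widetilde{\mathbf{L}} \mathbf{\Psi}_{\alpha} + \mathbf{N}(\mathbf{\Psi}_{\alpha}) = 0$ (cf. \eqref{systempsi}, \eqref{staticbold}) in $\alpha^{j}$ and recalling that $\mathbf{L}^{\prime}_{\alpha}$ is the linearisation of $\mathbf{N}$ at $\mathbf{\Psi}_{\alpha}$ (cf. \eqref{Lprime}) yields $\mathbf{L}_{\alpha}(\partial_{\alpha^{j}} \mathbf{\Psi}_{\alpha}) = 0$ for $j = 1, \dots, 5$, these being smooth and, by continuity from $\alpha = 0$ (cf. the remark following Proposition \ref{pointspectrum}), linearly independent for small $\alpha$; likewise, linearising the blowup family $u_{T,\alpha}$ from \eqref{blowsol} in the blowup time $T$ produces a nonzero $\mathbf{g}_{\alpha}$ with first component proportional to $\psi_{\alpha}^{2} \in C^{\infty}(\overline{\mathbb{B}^{5}})$ and with $(1 - \mathbf{L}_{\alpha}) \mathbf{g}_{\alpha} = 0$. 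Since $\ker \mathbf{L}_{\alpha} \subseteq \mathrm{rg}\, \mathbf{Q}_{\alpha}$ and $\ker(1 - \mathbf{L}_{\alpha}) \subseteq \mathrm{rg}\, \mathbf{P}_{\alpha}$, the inequalities $\dim \ker \mathbf{L}_{\alpha} \geq 5 = \dim \mathrm{rg}\, \mathbf{Q}_{\alpha}$ and $\dim \ker(1 - \mathbf{L}_{\alpha}) \geq 1 = \dim \mathrm{rg}\, \mathbf{P}_{\alpha}$ must be equalities; hence $\mathbf{L}_{\alpha}$ restricted to the $5$-dimensional space $\mathrm{rg}\, \mathbf{Q}_{\alpha}$ has full-dimensional kernel and so vanishes, giving $\sigma(\mathbf{L}_{\alpha}) \cap D(0,\epsilon) = \{0\}$, and similarly $\mathbf{L}_{\alpha} - 1$ vanishes on $\mathrm{rg}\, \mathbf{P}_{\alpha}$, giving $\sigma(\mathbf{L}_{\alpha}) \cap D(1,\epsilon) = \{1\}$. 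Together with the first half this yields $\sigma(\mathbf{L}_{\alpha}) \cap \{\mathrm{Re}\lambda \geq -\tfrac{3}{4}\} = \{0,1\}$, which is the assertion. The main obstacle in this plan is precisely the uniform resolvent bound for $\mathbf{L}_{0}$ along the line $\mathrm{Re}\lambda = -\tfrac{3}{4}$ — equivalently the decay $\|\mathbf{L}^{\prime}_{0} \mathbf{R}_{\mathbf{L}}(\lambda)\| \to 0$ as $|\mathrm{Im}\lambda| \to \infty$ — which rests on the smoothing (compactness) of $\mathbf{L}^{\prime}_{0}$ and the sharp free decay of Proposition \ref{Lumer}; the remaining steps are routine bookkeeping with compact perturbations and Riesz projections.
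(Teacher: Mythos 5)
Your plan follows the same overall architecture as the paper's: factor the resolvent through the free one, establish a uniform resolvent bound away from $\{0,1\}$ on the half-plane $\{\mathrm{Re}\lambda \geq -\tfrac{3}{4}\}$, and then use continuity of Riesz projections together with the explicitly known eigenfunctions $\mathbf{g}_\alpha$, $\mathbf{h}_{\alpha,j}$ to pin down the rank. The cosmetic differences (two small disks and two rank-stable projections versus one rectangle $\Omega_{k_0,\omega_0}$ and a single rank-$6$ total projection; uniform bound first for $\mathbf{L}_0$ and then perturbed, versus directly for $\mathbf{L}_\alpha$) are inessential.

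However, there is a genuine gap in the step you yourself flag as the main obstacle. You claim $\|\mathbf{L}'_0\,\mathbf{R}_{\mathbf L}(\lambda)\|\to 0$ as $|\lambda|\to\infty$ in the half-plane because $\mathbf{L}'_0$ is compact and $\mathbf{R}_{\mathbf L}(\lambda)\to 0$ strongly. This inference is in the wrong order. The standard fact is: if $T_n\to 0$ \emph{strongly} and is uniformly bounded, and $K$ is compact, then $T_n K\to 0$ \emph{in norm}; but it is \emph{false} that $K T_n\to 0$ in norm. A counterexample is $T_n x=\langle x,e_n\rangle e_1$ on $\ell^2$, which tends to $0$ strongly, and $K$ the rank-one projection onto $\mathrm{span}(e_1)$: then $K T_n=T_n$ has norm one for every $n$. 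In $\mathbf{L}'_0\mathbf{R}_{\mathbf L}(\lambda)$ the compact factor sits on the left and the strongly-null factor on the right, so the lemma you invoke does not apply. You can repair this by passing to adjoints, i.e.\ showing $\mathbf{R}_{\mathbf L}(\lambda)^* = \mathbf{R}_{\mathbf L^*}(\bar\lambda)\to 0$ strongly via the adjoint semigroup (which inherits the decay $\|\mathbf S(\tau)^*\|\leq Me^{-\tau}$) and then composing with the compact $(\mathbf{L}'_0)^*$ on the right; but this detour needs to be spelled out and is not what you wrote. The cleaner and more in line with what the cited Lemma 4.6 in \cite{DonSch16a} actually uses is to note that compactness is beside the point: $\mathbf{L}'_0\mathbf u=(0,V_0 u_1)$ depends only on $u_1$ through a lower-order norm, $\|\mathbf{L}'_0\mathbf u\|_{\mathcal H}\lesssim\|u_1\|_{H^2}$, and the first component of the resolvent system $(\lambda-\mathbf L)\mathbf u=\mathbf f$, namely $(\lambda+1)u_1=f_1+u_2-\xi\cdot\nabla u_1$, together with the uniform bound $\|\mathbf R_{\mathbf L}(\lambda)\|\lesssim 1$ on $\mathrm{Re}\lambda\geq-\tfrac34$, gives $\|u_1\|_{H^2}\lesssim|\lambda+1|^{-1}\|\mathbf f\|$ directly, whence $\|\mathbf{L}'_0\mathbf R_{\mathbf L}(\lambda)\|\lesssim |\lambda+1|^{-1}\to 0$. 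With that substitution your proof goes through.
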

However, we start with some useful properties of $\mathbf{L}_{\alpha}$. The first crucial observation is that $\mathbf{L}_{\alpha}^{\prime}$ depends continuously on $\alpha$. 
\begin{lemma} \label{continuously}
There exists $\delta>0$ sufficiently small such that
\begin{align*}
\| \mathbf{L}_{\alpha}^{\prime}  - \mathbf{L}_{\beta}^{\prime} \| \lesssim |\alpha - \beta|,
\end{align*}
for all $\alpha,\beta \in\overline{ \mathbb{B}_{\delta}^{5} }$.
\end{lemma}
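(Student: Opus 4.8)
The plan is to reduce the operator-norm bound on $\mathcal{H}$ to a single weighted Sobolev estimate for the multiplication operator $u_1 \mapsto (V_\alpha - V_\beta) u_1$, and then to exploit the smooth dependence of the potential $V_\alpha$ on the parameter $\alpha$. By \eqref{Lprime}, the difference $\mathbf{L}^{\prime}_\alpha - \mathbf{L}^{\prime}_\beta$ acts on $\mathbf{u} = (u_1,u_2) \in \mathcal{H}$ by $(\mathbf{L}^{\prime}_\alpha - \mathbf{L}^{\prime}_\beta)\mathbf{u} = (0,(V_\alpha - V_\beta)u_1)$, and by Lemma \ref{Sobolevequiv} the norm on $\mathcal{H}$ is equivalent to $\|\cdot\|_{H^3(\mathbb{B}^{5})\times H^2(\mathbb{B}^{5})}$. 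Hence it suffices to prove $\|(V_\alpha - V_\beta)u_1\|_{H^2(\mathbb{B}^{5})} \lesssim |\alpha - \beta|\,\|u_1\|_{H^2(\mathbb{B}^{5})}$. For this I would use the elementary product estimate $\|Wf\|_{H^2(\mathbb{B}^{5})} \lesssim \|W\|_{W^{2,\infty}(\mathbb{B}^{5})}\|f\|_{H^2(\mathbb{B}^{5})}$, obtained by distributing at most two derivatives via the Leibniz rule, so the whole lemma reduces to
\[
\|V_\alpha - V_\beta\|_{W^{2,\infty}(\mathbb{B}^{5})} \lesssim |\alpha - \beta|, \qquad \alpha,\beta \in \overline{\mathbb{B}_{\delta}^{5}},
\]
for $\delta>0$ sufficiently small.

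To establish this I would first show that $\alpha \mapsto V_\alpha$ is a $C^1$ (indeed real-analytic) map from a small ball around $0$ in $\mathbb{R}^5$ into $W^{2,\infty}(\mathbb{B}^{5})$. Writing $D_\alpha(\xi) := A_0(\alpha) - A_j(\alpha)\xi^j$ one has $V_\alpha = 6 D_\alpha^{-2}$; since $A_0(0)=1$ and, as recorded after \eqref{blowsol}, $A_0(\alpha)=1+\mathcal{O}(|\alpha|^2)$ and $A_j(\alpha)=\mathcal{O}(|\alpha|)$, the denominator satisfies $|D_\alpha(\xi)|\geq \tfrac12$ uniformly for $\xi\in\overline{\mathbb{B}^{5}}$ and $\alpha\in\overline{\mathbb{B}_{\delta}^{5}}$ once $\delta$ is chosen small enough — this is the condition that pins down $\delta$. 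Since $\alpha\mapsto A_0(\alpha)$ and $\alpha\mapsto A_j(\alpha)$ are real-analytic (finite products of $\cosh$ and $\sinh$), all the quantities $\partial_\xi^\gamma V_\alpha$ and $\partial_{\alpha^k}\partial_\xi^\gamma V_\alpha$ with $|\gamma|\leq 2$ are rational in $\xi$, $A_0(\alpha)$, $A_j(\alpha)$ with denominators a power of $D_\alpha$, hence continuous — and in particular bounded — on $\overline{\mathbb{B}_{\delta}^{5}}\times\overline{\mathbb{B}^{5}}$; compare \eqref{boundV}. Consequently $C:=\sup_{k}\sup_{\gamma\in\overline{\mathbb{B}_{\delta}^{5}}}\|\partial_{\alpha^k}V_\gamma\|_{W^{2,\infty}(\mathbb{B}^{5})}<\infty$.

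Finally, since $\overline{\mathbb{B}_{\delta}^{5}}$ is convex, the segment $t\mapsto \beta + t(\alpha-\beta)$, $t\in[0,1]$, stays in $\overline{\mathbb{B}_{\delta}^{5}}$, and the fundamental theorem of calculus in the Banach space $W^{2,\infty}(\mathbb{B}^{5})$ gives
\[
V_\alpha - V_\beta = \int_0^1 \sum_{k=1}^5 (\alpha^k-\beta^k)\,\big(\partial_{\alpha^k}V_\gamma\big)\big|_{\gamma=\beta+t(\alpha-\beta)}\,dt,
\]
whence $\|V_\alpha - V_\beta\|_{W^{2,\infty}(\mathbb{B}^{5})}\lesssim C\,|\alpha-\beta|$, which closes the reduction chain. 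I do not expect a serious obstacle here; the only two points requiring attention are the uniform lower bound $|D_\alpha|\geq\tfrac12$ on $\overline{\mathbb{B}_{\delta}^{5}}\times\overline{\mathbb{B}^{5}}$ (which dictates how small $\delta$ must be) and the fact that the $H^2$-multiplier estimate costs the full $W^{2,\infty}$-norm of $V_\alpha-V_\beta$, so that two \emph{parameter} derivatives of the potential — not merely its sup-norm — must be controlled, exactly the kind of bound already used in \eqref{boundV}.
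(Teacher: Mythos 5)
Your argument is correct and follows essentially the same route as the paper's, which (referring back to Lemma 4.4 of \cite{DonSch16a}) also reduces the operator-norm bound to a Lipschitz estimate on the potential $V_\alpha$ obtained via the fundamental theorem of calculus, with the uniform bounds from \eqref{boundV} supplying exactly the control on $\|\partial_{\alpha^k}V_\gamma\|_{W^{2,\infty}(\mathbb{B}^5)}$ that you invoke. The only thing worth noting is that you could also present the FTC step pointwise in $\xi$ and then take the sup, avoiding any discussion of Fr\'echet differentiability of $\alpha\mapsto V_\alpha$ as a Banach-space-valued map, but the content is the same.
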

\begin{proof}
It follows from the fundamental theorem of calculus, see Lemma 4.4 in \cite{DonSch16a}.
\end{proof}
The second observation is that spectrum of $\mathbf{L}_{\alpha}$ does not differ too much from the spectrum of $\mathbf{L}_{0}$ when $\alpha$ varies in sufficiently small and compact domains of $\mathbb{R}^{5}$.
\begin{lemma} \label{compactdomains}
There exists $\delta>0$ sufficiently small such that
\begin{align*}
\lambda \in \varrho (\mathbf{L}_{0}) \Longrightarrow \lambda \in  \varrho (\mathbf{L}_{\alpha})
\end{align*}
provided $|\alpha| \leq \delta \min \{ 1, \| \mathbf{R}_{\mathbf{L}_{0}}(\lambda) \| ^{-1}\}$.
\end{lemma}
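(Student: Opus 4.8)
The plan is to run the standard Neumann--series perturbation argument for the resolvent, exploiting that $\mathbf{L}_{\alpha}$ and $\mathbf{L}_{0}$ differ only by the bounded operator $\mathbf{L}'_{\alpha}-\mathbf{L}'_{0}$ (bounded, indeed compact, by Lemma \ref{compact}), so that in particular they share the domain $\mathcal{D}(\mathbf{L})$. Fix $\lambda\in\varrho(\mathbf{L}_{0})$ and write, on $\mathcal{D}(\mathbf{L})$,
\begin{align*}
\lambda-\mathbf{L}_{\alpha}=(\lambda-\mathbf{L}_{0})-(\mathbf{L}'_{\alpha}-\mathbf{L}'_{0})=\big[1-(\mathbf{L}'_{\alpha}-\mathbf{L}'_{0})\mathbf{R}_{\mathbf{L}_{0}}(\lambda)\big](\lambda-\mathbf{L}_{0}).
\end{align*}
Since $\lambda-\mathbf{L}_{0}$ is boundedly invertible, it suffices to show that the bounded operator $1-(\mathbf{L}'_{\alpha}-\mathbf{L}'_{0})\mathbf{R}_{\mathbf{L}_{0}}(\lambda)$ on $\mathcal{H}$ is invertible; then $\lambda\in\varrho(\mathbf{L}_{\alpha})$, with
\begin{align*}
\mathbf{R}_{\mathbf{L}_{\alpha}}(\lambda)=\mathbf{R}_{\mathbf{L}_{0}}(\lambda)\big[1-(\mathbf{L}'_{\alpha}-\mathbf{L}'_{0})\mathbf{R}_{\mathbf{L}_{0}}(\lambda)\big]^{-1}.
\end{align*}

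By Lemma \ref{continuously} there exist $\delta_{0}>0$ and $C>0$ with $\|\mathbf{L}'_{\alpha}-\mathbf{L}'_{0}\|\leq C|\alpha|$ for all $\alpha\in\overline{\mathbb{B}^{5}_{\delta_{0}}}$, whence for such $\alpha$
\begin{align*}
\big\|(\mathbf{L}'_{\alpha}-\mathbf{L}'_{0})\mathbf{R}_{\mathbf{L}_{0}}(\lambda)\big\|\leq C|\alpha|\,\|\mathbf{R}_{\mathbf{L}_{0}}(\lambda)\|.
\end{align*}
Now set $\delta:=\min\{\delta_{0},\tfrac{1}{2C}\}$ and impose $|\alpha|\leq\delta\min\{1,\|\mathbf{R}_{\mathbf{L}_{0}}(\lambda)\|^{-1}\}$. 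Then $|\alpha|\leq\delta_{0}$, so the bound above applies, and moreover $|\alpha|\,\|\mathbf{R}_{\mathbf{L}_{0}}(\lambda)\|\leq\delta$, so the displayed norm is at most $C\delta\leq\tfrac12<1$. Consequently the Neumann series $\sum_{n\geq0}\big((\mathbf{L}'_{\alpha}-\mathbf{L}'_{0})\mathbf{R}_{\mathbf{L}_{0}}(\lambda)\big)^{n}$ converges in $\mathcal{B}(\mathcal{H})$ and furnishes the bounded inverse of $1-(\mathbf{L}'_{\alpha}-\mathbf{L}'_{0})\mathbf{R}_{\mathbf{L}_{0}}(\lambda)$, which gives $\lambda\in\varrho(\mathbf{L}_{\alpha})$.

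The only point that needs attention is that a single $\delta$ must serve \emph{all} $\lambda\in\varrho(\mathbf{L}_{0})$ at once, even though $\|\mathbf{R}_{\mathbf{L}_{0}}(\lambda)\|$ blows up as $\lambda$ approaches $\sigma(\mathbf{L}_{0})$; this is exactly what the factor $\|\mathbf{R}_{\mathbf{L}_{0}}(\lambda)\|^{-1}$ in the hypothesis on $|\alpha|$ absorbs, turning the smallness of $\|(\mathbf{L}'_{\alpha}-\mathbf{L}'_{0})\mathbf{R}_{\mathbf{L}_{0}}(\lambda)\|$ into a uniform bound. Beyond this bookkeeping there is no genuine obstacle, since the constants $C,\delta_{0}$ come directly from Lemma \ref{continuously}. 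As a byproduct the argument also yields the quantitative resolvent estimate $\|\mathbf{R}_{\mathbf{L}_{\alpha}}(\lambda)\|\leq2\,\|\mathbf{R}_{\mathbf{L}_{0}}(\lambda)\|$, which is convenient for the finer comparison of $\sigma(\mathbf{L}_{\alpha})$ with $\sigma(\mathbf{L}_{0})$ needed in the proof of Proposition \ref{spectrumfulllineara}.
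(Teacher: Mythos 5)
Your argument is exactly the one the paper invokes: the identity $\lambda-\mathbf{L}_{\alpha}=[1+(\mathbf{L}'_{0}-\mathbf{L}'_{\alpha})\mathbf{R}_{\mathbf{L}_{0}}(\lambda)](\lambda-\mathbf{L}_{0})$ together with the Lipschitz bound from Lemma \ref{continuously} and a Neumann-series inversion, which is precisely what the paper defers to Corollary 4.5 of \cite{DonSch16a}. Your version simply spells out the choice of $\delta$ and the smallness estimate, and is correct.
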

\begin{proof}
It follows from Lemma \ref{continuously} and the identity 
\begin{align*}
\lambda - \mathbf{L}_{\alpha} = \left(  1+ \left( \mathbf{L}_{0}^{\prime} - \mathbf{L}_{\alpha}^{\prime}  \right) \mathbf{R}_{\mathbf{L}_{0}}(\lambda) \right) (\lambda - \mathbf{L}_{0}),
\end{align*} 
see Corollary 4.5 in \cite{DonSch16a}.
\end{proof}
The next result shows absence of spectrum points outside a sufficiently large neighbourhood of the origin. To be precise, we provide a uniform bound on the resolvent operator of $\mathbf{L}_{\alpha}$ on the set
\begin{align*}
\Omega _{k_{0},\omega_{0}}^{\prime} :=\left  \{  \lambda \in \mathbb{C}:~~~\mathrm{Re} \lambda \geq -\frac{3}{4} \right \}\setminus \Omega _{k_{0},\omega_{0}},  
\end{align*}
where
\begin{align*}
\Omega _{k_{0},\omega_{0}} :=\left  \{  \lambda \in \mathbb{C}:~~~\mathrm{Re} \lambda \in \left [-\frac{3}{4},k_{0} \right ],~~~ \text{Im} \lambda \in [-\omega_{0},\omega_{0}] \right \},  
\end{align*}
\begin{lemma} \label{awayorigin}
Let $k_{0},\omega_{0}>0$ be sufficiently large and $\delta>0$ sufficiently small. Then there exists a positive constant $C$ such that the resolvent $\mathbf{R}_{\mathbf{L}_{\alpha}}$ exists on $\Omega _{k_{0},\omega_{0}}^{\prime} $ and satisfies the uniform bound
\begin{align*}
\| \mathbf{R} _{\mathbf{L}_{\alpha} } (\lambda)\| \leq C,
\end{align*}
for all $\lambda  \in \Omega _{k_{0},\omega_{0}}^{\prime} $ and $\alpha \in \overline{  \mathbb{B}_{\delta}^{5} }$.
\end{lemma}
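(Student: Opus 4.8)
The plan is to cover $\Omega'_{k_0,\omega_0}$ by the half-plane $\{\mathrm{Re}\,\lambda>k_0\}$ and the horizontal strip $\{-\tfrac34\le\mathrm{Re}\,\lambda\le k_0,\ |\mathrm{Im}\,\lambda|>\omega_0\}$, treating the two pieces by different mechanisms. On the half-plane the bound is essentially free: by Lemma \ref{compact} and the Bounded Perturbation Theorem there is a constant $\omega_\ast>0$, uniform for $\alpha\in\overline{\mathbb B_\delta^5}$ with $\delta$ small, such that $\|\mathbf S_\alpha(\tau)\|\le M_\ast e^{\omega_\ast\tau}$; choosing $k_0>\omega_\ast+1$, the Laplace representation $\mathbf R_{\mathbf L_\alpha}(\lambda)=\int_0^\infty e^{-\lambda\tau}\mathbf S_\alpha(\tau)\,d\tau$ converges and yields $\|\mathbf R_{\mathbf L_\alpha}(\lambda)\|\le M_\ast/(\mathrm{Re}\,\lambda-\omega_\ast)\le M_\ast$ for $\mathrm{Re}\,\lambda>k_0$.

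On the strip I would use the perturbation identity
\begin{equation*}
\lambda-\mathbf L_\alpha=\bigl(1-\mathbf L'_\alpha\,\mathbf R_{\mathbf L}(\lambda)\bigr)(\lambda-\mathbf L),
\end{equation*}
which is legitimate since $\mathrm{Re}\,\lambda\ge-\tfrac34>-1$ places $\lambda$ in $\varrho(\mathbf L)$ with $\|\mathbf R_{\mathbf L}(\lambda)\|\le M/(\mathrm{Re}\,\lambda+1)\le 4M$ by Proposition \ref{Lumer} together with the Laplace bound. It then suffices to prove $\|\mathbf L'_\alpha\,\mathbf R_{\mathbf L}(\lambda)\|\le\tfrac12$ on the strip, provided $\omega_0$ is large and $\delta$ small: then $1-\mathbf L'_\alpha\mathbf R_{\mathbf L}(\lambda)$ is invertible by Neumann series, $\lambda\in\varrho(\mathbf L_\alpha)$, and $\mathbf R_{\mathbf L_\alpha}(\lambda)=\mathbf R_{\mathbf L}(\lambda)\bigl(1-\mathbf L'_\alpha\mathbf R_{\mathbf L}(\lambda)\bigr)^{-1}$ has norm at most $8M$. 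Because $\mathbf L'_\alpha\mathbf u=(0,V_\alpha u_1)$ with $V_\alpha$ a multiplier on $H^2(\mathbb B^5)$ bounded uniformly in small $\alpha$ by \eqref{boundV}, this reduces to the high-frequency resolvent estimate
\begin{equation*}
\bigl\|\,[\mathbf R_{\mathbf L}(\lambda)\mathbf f]_1\,\bigr\|_{H^2(\mathbb B^5)}\le\frac{C}{|\mathrm{Im}\,\lambda|}\,\|\mathbf f\|_{\mathcal H},\qquad -\tfrac34\le\mathrm{Re}\,\lambda\le k_0,
\end{equation*}
that is, measured one derivative below its natural $H^3$ regularity the first component of the free resolvent gains a factor $|\mathrm{Im}\,\lambda|^{-1}$; taking $\omega_0$ large then forces $\|\mathbf L'_\alpha\mathbf R_{\mathbf L}(\lambda)\|\le\tfrac12$.

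To establish this estimate I would resolve $(\lambda-\mathbf L)\mathbf u=\mathbf f$ explicitly: eliminating $u_2=(\lambda+1)u_1+\xi^j\partial_j u_1-f_1$ leaves the degenerate elliptic equation
\begin{equation*}
-\bigl(\delta^{ij}-\xi^i\xi^j\bigr)\partial_i\partial_j u_1+2(\lambda+2)\,\xi^j\partial_j u_1+(\lambda+1)(\lambda+2)u_1=F_\lambda,\qquad F_\lambda:=f_2+(\lambda+2)f_1+\xi^j\partial_j f_1,
\end{equation*}
and then expanding in spherical harmonics exactly as in the proof of Proposition \ref{pointspectrum} reduces it to hypergeometric ordinary differential equations for the coefficients $u_{1,l,m}$. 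Since the fundamental system and the Wronskian are explicit (cf.\ \eqref{psi1l2}, \eqref{wronski}), the variation-of-constants formula represents $u_{1,l,m}$ through a Green's kernel whose size one bounds uniformly in $l$ and which carries the decay in $|\mathrm{Im}\,\lambda|$ coming from the large-parameter asymptotics of the hypergeometric functions; resumming the modes and using orthogonality recovers the $H^2$ bound from $\|\mathbf f\|_{\mathcal H}$. Alternatively one can bypass the special functions and run weighted energy estimates directly on the displayed equation, pairing with $\bar u_1$ and its tangential derivatives, the degeneracy at $|\xi|=1$ guaranteeing that the integrations by parts produce no uncontrolled boundary terms.

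I expect this last estimate to be the technical heart of the lemma. The delicate points are the uniformity in the angular index $l$ — equivalently, the need to control the full degenerate elliptic operator rather than a single mode — and the fact that only $\mathrm{Re}\,\lambda$, not $|\mathrm{Re}\,\lambda|$, is bounded, so that in the energy approach the imaginary parts coming from the first-order drift $2(\lambda+2)\,\xi\cdot\nabla u_1$ and from the zeroth-order term $(\lambda+1)(\lambda+2)u_1$ must be combined with care to keep the leading $|\mathrm{Im}\,\lambda|$-contribution from degenerating near $\mathrm{Re}\,\lambda=1$; this is one reason why the explicit hypergeometric route is the cleaner one to push through.
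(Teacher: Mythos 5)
The overall framework you set up is correct and matches the paper: the perturbation identity $\lambda - \mathbf L_\alpha = [1 - \mathbf L'_\alpha \mathbf R_{\mathbf L}(\lambda)](\lambda - \mathbf L)$ together with the semigroup resolvent bound $\|\mathbf R_{\mathbf L}(\lambda)\| \leq M/(\mathrm{Re}\lambda + 1)$ reduces everything to smallness of $\mathbf L'_\alpha \mathbf R_{\mathbf L}(\lambda)$, and since $\mathbf L'_\alpha \mathbf u = (0,V_\alpha u_1)$ only sees the first component at $H^2$ level, this is exactly the issue of estimating $\bigl[\mathbf R_{\mathbf L}(\lambda)\mathbf f\bigr]_1$ in $H^2(\mathbb B^5)$, one derivative below its natural $H^3$ regularity, by a quantity that tends to zero as $|\lambda|\to\infty$ in $\Omega'_{k_0,\omega_0}$. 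Your half-plane/strip decomposition is harmless but redundant; the single perturbation-identity mechanism treats both regions at once.

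The gap is in how you propose to prove that estimate. You write down the first-component relation $u_2 = (\lambda+1)u_1 + \xi^j\partial_j u_1 - f_1$, then immediately eliminate $u_2$ to obtain a second-order degenerate elliptic equation for $u_1$ and propose to attack it either via uniform-in-$l$ control of hypergeometric Green's kernels or via weighted energy estimates. Neither route is carried out, and both are genuinely hard — you yourself flag the uniformity in $l$ and the delicacy of the imaginary-part bookkeeping as sticking points. But the estimate is in fact elementary, and it comes from the relation you already wrote down, \emph{before} elimination: rearranging gives
\begin{align*}
u_1 = \frac{1}{\lambda+1}\Bigl(u_2 - \xi^j\partial_j u_1 + f_1\Bigr),
\end{align*}
and since $u_2\in H^2(\mathbb B^5)$, $\xi^j\partial_j u_1\in H^2(\mathbb B^5)$ (because $u_1\in H^3$ and the coefficients $\xi^j$ are smooth on $\overline{\mathbb B^5}$), and $f_1\in H^3\subset H^2$, one obtains
\begin{align*}
\|u_1\|_{H^2(\mathbb B^5)} \lesssim \frac{1}{|\lambda+1|}\Bigl(\|\mathbf R_{\mathbf L}(\lambda)\mathbf f\|_{\mathcal H} + \|\mathbf f\|_{\mathcal H}\Bigr) \lesssim \frac{1}{|\lambda+1|}\Bigl(1 + \frac{1}{\mathrm{Re}\lambda+1}\Bigr)\|\mathbf f\|_{\mathcal H}.
\end{align*}
On $\Omega'_{k_0,\omega_0}$ the bracket is bounded uniformly because $\mathrm{Re}\lambda\geq-\tfrac34$, while $|\lambda+1|\geq\max(\mathrm{Re}\lambda+1,|\mathrm{Im}\lambda|)$ can be made arbitrarily large by taking $k_0,\omega_0$ large; together with \eqref{boundV} this gives $\|\mathbf L'_\alpha \mathbf R_{\mathbf L}(\lambda)\|\leq\tfrac12$ uniformly in $\alpha\in\overline{\mathbb B_\delta^5}$. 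That is the entire content of the smallness claim referred to in the paper's proof and supplied in Lemma 4.6 of \cite{DonSch16a}; no spherical-harmonic decomposition, hypergeometric asymptotics, or weighted energy identities are involved. As written, your proposal leaves the one nontrivial step unproved and points toward machinery that is considerably heavier than what the argument actually requires.
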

\begin{proof}
Let $\lambda \in \Omega _{k_{0},\omega_{0}}^{\prime}$. The identity 
\begin{align*}
(\lambda-\mathbf {L}_{\alpha})=[1-\mathbf L'_{\alpha} \mathbf R_{\mathbf L}(\lambda)](\lambda-\mathbf L)
 \end{align*}
implies that it suffices to show smallness of $\mathbf{L}_{\alpha}^{\prime} \mathbf{R}_{\mathbf{L}} (\lambda)$ which in turn follows from choosing $k_{0},\omega_{0}>0$ sufficiently large and the bound 
\[ \|\mathbf R_{\mathbf L}(\lambda)\mathbf f\|\leq \frac{1}{\mathrm {Re}\lambda+1}\|\mathbf f\| \]
which follows from semigroup theory, see \cite{EngNag00}, page 55, Theorem 1.10. For more details see Lemma 4.6 in \cite{DonSch16a}.
\end{proof}
%
%
%
%
%
%
%
%
%
%
%
%
%
%
%
%
%
%
%
%
%
%
%
%
%
%
%
%
%
%
%
%
%
%
%
%
%
%
%

%
%
%
%
%
%
%
%
%
\begin{remark} \label{rem}
A straightforward calculation shows that the eigenspaces for the isolated eigenvalues $\lambda =0$ and $\lambda=1$ of the operator $\mathbf{L}_{\alpha}$ are spanned respectively by
\begin{align*}
& \mathbf{g}_{\alpha}(\xi) = 
\begin{pmatrix}
A_{0}(\alpha) \left( A_{0}(\alpha) - A_{j} (\alpha) \xi ^{j}  \right)^{-2} \\
2 A_{0}^{2} (\alpha) \left( A_{0}(\alpha) - A_{j} (\alpha) \xi ^{j}  \right)^{-3}
\end{pmatrix}, \\
& \mathbf{h}_{\alpha,j}(\xi) = \partial _{\alpha^{j}} \mathbf{\Psi}_{\alpha} (\xi),~~~j \in \{ 1,2,3,4,5\}.
\end{align*}
and hence $\{0,1\} \subseteq \sigma _{p} (\mathbf{L}_{\alpha})$. Finally, the above derivation shows that the algebraic multiplicities of the eigenvalues $0$ and $1$ are equal to $5-$dimensional and $1-$dimensional, respectively.
\end{remark}
With these results at hand we can now prove Proposition \ref{spectrumfulllineara}.
\begin{proof}[Proof of Proposition $\ref{spectrumfulllineara}$]
To start with, we choose $k_{0},\omega_{0}$ sufficiently large so that $\overline{ \Omega _{k_{0},\omega_{0}}^{\prime} } \subseteq \rho (\mathbf{L}_{\alpha})$ (Lemma \ref{awayorigin}) and $\delta$ sufficiently small so that $\partial \Omega _{k_{0},\omega_{0}} \subseteq \rho (\mathbf{L}_{\alpha})$ for all $|\alpha| \leq \frac{\delta}{M}$ where $M:=\max \{1,\sup_{\zeta \in \partial \Omega _{k_{0},\omega_{0}} } \| \mathbf{R}_{\mathbf{L}_{0}}(\zeta) \| \}$ (Lemma \ref{compactdomains}). Now, we define the projection
\begin{align*}
\mathbf{P}_{\alpha}^{total} := \frac{1}{2 \pi i} \int _{ \partial \Omega _{k_{0},\omega_{0}} } \mathbf{R}_{\mathbf{L}_{\alpha}} (\zeta) d \zeta.
\end{align*}
Lemma \ref{continuously} shows that $\mathbf{P}_{\alpha}^{total} $ depends continuously on $\alpha$ and therefore, from Lemma 4.10 page 34 in \cite{Kat95}, it follows that $\rg ( \mathbf{P}_{\alpha}^{total}  )$ are all isomorphic to one another and the $\mathrm{rank}\, \mathbf{P}_{\alpha}^{total}  =\dim\mathrm{rg}\, \mathbf{P}_{\alpha}^{total} $ is constant for all $\alpha$ and Lemma \ref{rank} shows that $\dim\mathrm{rg}\, \mathbf{P}_{0}^{total} =6$. In addition, the total geometric multiplicity of the eigenvalues $\lambda = 0$ and $\lambda =1$ equals $6$ and since $\mathbf{P}_{\alpha}^{total}$ has rank $6$, there can be no other eigenvalues besides $\lambda =0$ and $\lambda =1$ in $\Omega _{k_{0},\omega_{0}} $. In addition, the algebraic multiplicity of the eigenvalues $0$ and $1$ must be $5$ and $1$ respectively.
\end{proof} 
\subsection{Growth estimates for the full linearized problem}
The above spectral analysis leads to a description of the full linearised evolution. In particular, we start by partitioning the space of initial data $\mathcal{H}$ into disjoint parts and we establish growth estimates for the semigroup $\mathbf{S}_{\alpha}$ in each of these parts. Namely, we define the projections
\begin{align*}
& \mathbf{Q}_{\alpha}:=\frac{1}{2\pi i} \int_{\gamma_{0}} \mathbf{R}_{\mathbf{L}_{\alpha}}(\zeta) d \zeta, \\
& \mathbf{P}_{\alpha}:=\frac{1}{2\pi i} \int_{\gamma_{1}} \mathbf{R}_{\mathbf{L}_{\alpha}}(\zeta) d \zeta,
\end{align*}
where $\gamma _{0},\gamma_{1}:[0,1] \rightarrow \mathbb{C}$ stand for the circles centered at $\lambda=0$ and $\lambda=1$,
\begin{align*}
\gamma _{0}(s):=\frac{1}{2} e^{2\pi i s},\quad \gamma _{1}(s):=1+\frac{1}{2} e^{2\pi i s},
\end{align*}
respectively. By remark \ref{rem}, we have
\begin{align*}
\mathrm{rg}\,\mathbf{Q}_{\alpha}= \langle \mathbf{h}_{\alpha,1}, \mathbf{h}_{\alpha,2}, \mathbf{h}_{\alpha,3}, \mathbf{h}_{\alpha,4}, \mathbf{h}_{\alpha,5} \rangle
\end{align*}
and hence we may write
\begin{align*}
\mathbf{Q}_{\alpha} \mathbf{f} = \sum _{j=1}^{5} a_{j} \mathbf{h}_{\alpha,j}
\end{align*}
for coefficients $a_{j} \in \mathbb{C}$ and for all $\mathbf{f} \in \mathcal{H}$. We define the projection onto the subspace generated by $\mathbf{h}_{\alpha,j}$, that is 
\begin{align*}
\mathbf{Q}_{\alpha,j} \mathbf{f} := a_{j} \mathbf{h}_{\alpha,j}, 
\end{align*}
for all $\mathbf{f} \in \mathcal{H}$. We show that the solution operator grows exponentially on $\rg (\mathbf{P}_{\alpha})$, is constant in time on $\rg (\mathbf{Q}_{\alpha,j})$ and decays exponentially on the remaining infinite-dimensional subspace.
\begin{lemma} \label{instability}
Let $\alpha \in \mathbb{R}^{5}$ be sufficiently small. Then, the projections $\mathbf{P}_{\alpha}$ and $\mathbf{Q}_{\alpha,j}$ for $j \in \{ 1,2,3,4,5\}$ have rank one and commute with the semigroup. In addition, 
\begin{align*}
& \mathbf{S}_{\alpha}(\tau) \mathbf{P}_{\alpha} = e^{\tau} \mathbf{P}_{\alpha}, \\
& \mathbf{S}_{\alpha}(\tau) \mathbf{Q}_{\alpha,j} = \mathbf{Q}_{\alpha,j}, \\
& \| \mathbf{S}_{\alpha}(\tau) \widetilde{ \mathbf{P}}_{\alpha}  \| \lesssim e^{-\frac{2}{3} \tau}  \| \widetilde{ \mathbf{P}}_{\alpha}  \|,
\end{align*}
where $\widetilde{ \mathbf{P}}_{\alpha} := \mathbf{I} - \mathbf{P}_{\alpha} - \mathbf{Q}_{\alpha}$. Furthermore, 
\begin{align*}
& \rg ( \mathbf{P}_{\alpha}) =  \langle \mathbf{g}_{\alpha} \rangle, \\
& \rg ( \mathbf{Q}_{\alpha,j})=  \langle \mathbf{h}_{\alpha,j}  \rangle, ~~~j \in \{ 1,2,3,4,5\},
\end{align*}
where $\mathbf{g}_{\alpha}$ and $\mathbf{h}_{\alpha,j}$ are eigenfunctions of $\mathbf{L}_{\alpha}$ with eigenvalues $1$ and $0$, respectively.
\end{lemma}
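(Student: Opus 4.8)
The plan is to read off all of the assertions from the spectral picture of Proposition~\ref{spectrumfulllineara} together with standard Riesz projection and semigroup theory; the only genuinely quantitative point is the exponential decay on $\rg\widetilde{\mathbf{P}}_{\alpha}$, which I would obtain from the uniform resolvent bound of Lemma~\ref{awayorigin} via the Gearhart--Pr\"uss theorem.

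First I would record the structural facts. By Proposition~\ref{spectrumfulllineara} and its proof, $\lambda=0$ and $\lambda=1$ are isolated points of $\sigma(\mathbf{L}_{\alpha})$ of finite algebraic multiplicity $5$ and $1$; hence the Riesz projections $\mathbf{Q}_{\alpha}$ (contour $\gamma_{0}$) and $\mathbf{P}_{\alpha}$ (contour $\gamma_{1}$) are bounded idempotents with $\rank\mathbf{Q}_{\alpha}=5$, $\rank\mathbf{P}_{\alpha}=1$, satisfy $\mathbf{P}_{\alpha}\mathbf{Q}_{\alpha}=\mathbf{Q}_{\alpha}\mathbf{P}_{\alpha}=0$ since the enclosed spectral sets are disjoint, and commute with $\mathbf{L}_{\alpha}$, hence with its resolvent and with $\mathbf{S}_{\alpha}(\tau)$ (see \cite{Kat95}). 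By Remark~\ref{rem} the eigenspaces of $0$ and $1$ are already $5$- and $1$-dimensional, spanned by the $\mathbf{h}_{\alpha,j}$, $j=1,\dots,5$ (which stay linearly independent for small $\alpha$ by continuity from $\alpha=0$) and by $\mathbf{g}_{\alpha}$; so the restrictions $\mathbf{L}_{\alpha}|_{\rg\mathbf{Q}_{\alpha}}$ and $\mathbf{L}_{\alpha}|_{\rg\mathbf{P}_{\alpha}}$ carry no nilpotent part and equal $0$ and $\mathbf{I}$ respectively. In particular $\rg\mathbf{P}_{\alpha}=\langle\mathbf{g}_{\alpha}\rangle$, $\rg\mathbf{Q}_{\alpha}=\langle\mathbf{h}_{\alpha,1},\dots,\mathbf{h}_{\alpha,5}\rangle$, so $\mathbf{Q}_{\alpha,j}$ is well defined (unique coordinates in this basis), rank one, idempotent, with $\rg\mathbf{Q}_{\alpha,j}=\langle\mathbf{h}_{\alpha,j}\rangle$. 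Exponentiating the generator on these finite-dimensional invariant subspaces gives $\mathbf{S}_{\alpha}(\tau)\mathbf{P}_{\alpha}=e^{\tau}\mathbf{P}_{\alpha}$ and $\mathbf{S}_{\alpha}(\tau)\mathbf{Q}_{\alpha}=\mathbf{Q}_{\alpha}$; since $\mathbf{S}_{\alpha}(\tau)$ thus acts as the identity on $\rg\mathbf{Q}_{\alpha}$ and leaves $\ker\mathbf{Q}_{\alpha}$ invariant, it commutes with each $\mathbf{Q}_{\alpha,j}$ and $\mathbf{S}_{\alpha}(\tau)\mathbf{Q}_{\alpha,j}=\mathbf{Q}_{\alpha,j}$.

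There remains the decay bound for $\widetilde{\mathbf{P}}_{\alpha}:=\mathbf{I}-\mathbf{P}_{\alpha}-\mathbf{Q}_{\alpha}$. Because $\{0,1\}$ is a compact isolated part of $\sigma(\mathbf{L}_{\alpha})$ at positive distance from the rest of the spectrum (Proposition~\ref{spectrumfulllineara}), $\widetilde{\mathbf{P}}_{\alpha}$ is precisely the Riesz projection for $\sigma(\mathbf{L}_{\alpha})\setminus\{0,1\}$: a bounded idempotent commuting with $\mathbf{S}_{\alpha}(\tau)$, with $\sigma\bigl(\mathbf{L}_{\alpha}|_{\rg\widetilde{\mathbf{P}}_{\alpha}}\bigr)=\sigma(\mathbf{L}_{\alpha})\setminus\{0,1\}\subseteq\{\mathrm{Re}\,\lambda\le-\tfrac34\}$. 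Passing from this spectral localization to a semigroup decay rate is the crux, and since $\mathcal{H}$ is a Hilbert space I would invoke the Gearhart--Pr\"uss theorem (see \cite{EngNag00}), for which it suffices to produce a uniform bound $\sup_{\mathrm{Re}\,\lambda>-\frac34}\bigl\|\mathbf{R}_{\mathbf{L}_{\alpha}|_{\rg\widetilde{\mathbf{P}}_{\alpha}}}(\lambda)\bigr\|<\infty$. For $|\lambda|$ large with $\mathrm{Re}\,\lambda\ge-\tfrac34$ this is exactly Lemma~\ref{awayorigin}; on the complementary compact rectangle $\Omega_{k_{0},\omega_{0}}$ the operator-valued map $\lambda\mapsto\mathbf{R}_{\mathbf{L}_{\alpha}}(\lambda)\widetilde{\mathbf{P}}_{\alpha}$ is holomorphic away from $0$ and $1$ and has removable singularities there --- the principal parts of $\mathbf{R}_{\mathbf{L}_{\alpha}}$ at $0$ and $1$ being $-\tfrac{1}{\zeta}\mathbf{Q}_{\alpha}$ and $-\tfrac{1}{\zeta-1}\mathbf{P}_{\alpha}$ (no Jordan blocks), which $\widetilde{\mathbf{P}}_{\alpha}$ annihilates --- hence bounded on that compact set, and on $\rg\widetilde{\mathbf{P}}_{\alpha}$ it coincides with the resolvent of the restricted operator. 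Gearhart--Pr\"uss then yields growth bound $\le-\tfrac34<-\tfrac23$, so $\|\mathbf{S}_{\alpha}(\tau)|_{\rg\widetilde{\mathbf{P}}_{\alpha}}\|\lesssim e^{-\frac23\tau}$, whence $\|\mathbf{S}_{\alpha}(\tau)\widetilde{\mathbf{P}}_{\alpha}\|\lesssim e^{-\frac23\tau}\|\widetilde{\mathbf{P}}_{\alpha}\|$. Uniformity of the implicit constant over all sufficiently small $\alpha$ follows by tracking the $\alpha$-dependence of the resolvent and of the three projections through Lemma~\ref{continuously}, the relevant quantities being continuous on the compact sets involved.

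The main obstacle is this last step: the spectral mapping theorem fails for general $C_{0}$-semigroups, so the inclusion $\sigma(\mathbf{L}_{\alpha})\setminus\{0,1\}\subseteq\{\mathrm{Re}\,\lambda\le-\tfrac34\}$ does not by itself control $\|\mathbf{S}_{\alpha}(\tau)\widetilde{\mathbf{P}}_{\alpha}\|$ --- this is why Lemma~\ref{awayorigin} was proved --- and one must verify that the simple poles of the resolvent at $0$ and $1$ are annihilated by $\widetilde{\mathbf{P}}_{\alpha}$ and that every estimate stays uniform in $\alpha$. An alternative avoiding Gearhart--Pr\"uss would use that $\mathbf{L}_{\alpha}^{\prime}$ is compact (Lemma~\ref{compact}), so $\mathbf{S}_{\alpha}(\tau)-\mathbf{S}(\tau)$ is compact and the essential growth bound of $\mathbf{S}_{\alpha}$ equals that of $\mathbf{S}$, which is $\le-1$ by Proposition~\ref{Lumer}; combined with the spectral localization above and the spectral mapping theorem for the part of the spectrum lying outside the essential spectral radius, this again gives growth bound $\le-\tfrac34$ on $\rg\widetilde{\mathbf{P}}_{\alpha}$.
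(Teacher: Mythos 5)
Your proof is correct and takes essentially the same route as the paper: the structural facts (ranks, commutation, identification of the ranges) are read off the Riesz projection calculus together with Proposition~\ref{spectrumfulllineara} and Remark~\ref{rem}, while the decay on $\rg\widetilde{\mathbf{P}}_{\alpha}$ is obtained from the Gearhart--Pr\"uss theorem applied to the uniform resolvent bound $\sup_{\mathrm{Re}\,\lambda\ge-\frac34}\|\mathbf{R}_{\mathbf{L}_{\alpha}}(\lambda)\widetilde{\mathbf{P}}_{\alpha}\|<\infty$ coming from Lemma~\ref{awayorigin} and Proposition~\ref{spectrumfulllineara}. You actually spell out more detail than the paper (e.g.\ the removable singularities of $\lambda\mapsto\mathbf{R}_{\mathbf{L}_{\alpha}}(\lambda)\widetilde{\mathbf{P}}_{\alpha}$ at $0$ and $1$, the uniformity in $\alpha$, and an alternative via essential growth bounds), but the core argument is the one the paper cites.
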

\begin{proof}
The growth estimates follow from the Gearhart-Pr{\"u}{\ss} Theorem (\cite{EngNag00}, page 302, Theorem 1.11) since Lemma \ref{spectrumfulllineara} and Lemma \ref{awayorigin} yield $\sup _{ \mathrm{Re}\zeta \geq -\frac{3}{4}} \| \mathbf{R}_{\mathbf{L}_{\alpha} } (\zeta) \widetilde{\mathbf{P} }_{\alpha} \|  < \infty$. The remaining statements are consequences of Lemma \ref{spectrumfulllineara}. For more details see Proposition 4.8, page 30, \cite{DonSch16a}.
\end{proof}
 \begin{remark} \label{propertiesQ}
It follows that $ \mathbf{Q}_{\alpha,j}  \mathbf{Q}_{\alpha,k} = \delta_{jk}  \mathbf{Q}_{\alpha,j}$ and  $ \mathbf{Q}_{\alpha,j}  \mathbf{P}_{\alpha} = \mathbf{P}_{\alpha}   \mathbf{Q}_{\alpha,j}  =0 $.
 \end{remark}


\section{Non-Linear Estimates}

In this section, we establish Lipschitz-type estimates for the eigenfunctions $\mathbf{g}_{\alpha}$, $\mathbf{h}_{\alpha,j}$, the projections $\mathbf{P}_{\alpha}$, $\mathbf{Q}_{\alpha}$, the semigroup $\mathbf{S}_{\alpha}$ as well as for the nonlinearity $\mathbf{N}_{\alpha}$. These estimates will be used later for the main fixed point theorem. To begin with, we prove the following result.
\begin{lemma} \label{nonlinearestimates2}
For all $\alpha,\beta \in \mathbb{R}^{5}$ and for all $j \in \{ 1,2,3,4,5\}$, we have
\begin{align}
& \| \mathbf{g}_{\alpha}  -\mathbf{g}_{\beta} \| + \| \mathbf{h}_{\alpha,j}  -\mathbf{h}_{\beta,j} \| \lesssim |\alpha-\beta|, \label{Lipg} \\
&\| \mathbf{P}_{\alpha}  -\mathbf{P}_{\beta} \| + \| \mathbf{Q}_{\alpha}  -\mathbf{Q}_{\beta} \| \lesssim |\alpha-\beta|, \label{LipP} \\
&  \| \mathbf{S}_{\alpha} (\tau) \widetilde{\mathbf{P}}_{\alpha}  -   \mathbf{S}_{\beta} (\tau) \widetilde{\mathbf{P}}_{\beta} \| \lesssim |\alpha-\beta| e^{-\frac{1}{2} \tau}, \label{LipS}
\end{align}
for all $\tau >0$.
\end{lemma}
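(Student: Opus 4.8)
The plan is to establish the three estimates separately, in order of increasing difficulty; throughout I fix $\delta>0$ small enough that Proposition \ref{spectrumfulllineara} and Lemma \ref{instability} hold with constants uniform over $\alpha,\beta\in\overline{\mathbb{B}_\delta^5}$ (in particular $\|\widetilde{\mathbf{P}}_\alpha\|,\|\mathbf{P}_\alpha\|,\|\mathbf{Q}_\alpha\|\lesssim 1$ and $\|\mathbf{S}_\alpha(\tau)\widetilde{\mathbf{P}}_\alpha\|\lesssim e^{-\frac23\tau}$), and I prove the bounds for such $\alpha,\beta$, which is the only regime used later. For \eqref{Lipg} I would differentiate directly: by Remark \ref{rem} the maps $\alpha\mapsto\mathbf{g}_\alpha$ and $\alpha\mapsto\mathbf{h}_{\alpha,j}=\partial_{\alpha^j}\mathbf{\Psi}_\alpha$ are given by explicit expressions whose denominators $A_0(\alpha)-A_k(\alpha)\xi^k$ stay bounded away from $0$ on $\overline{\mathbb{B}^5}$ uniformly in $\alpha\in\overline{\mathbb{B}_\delta^5}$ (since $A_0(0)=1$ and $A_k(0)=0$), so these are $C^1$ maps into $\mathcal{H}=H^3(\mathbb{B}^5)\times H^2(\mathbb{B}^5)$ with uniformly bounded derivatives, and the fundamental theorem of calculus along the segment from $\beta$ to $\alpha$ gives \eqref{Lipg}.

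For \eqref{LipP} I would use the Riesz-projection formulas $\mathbf{P}_\alpha=\frac{1}{2\pi i}\int_{\gamma_1}\mathbf{R}_{\mathbf{L}_\alpha}(\zeta)\,d\zeta$ and $\mathbf{Q}_\alpha=\frac{1}{2\pi i}\int_{\gamma_0}\mathbf{R}_{\mathbf{L}_\alpha}(\zeta)\,d\zeta$. By Proposition \ref{spectrumfulllineara} and Lemma \ref{compactdomains} the curves $\gamma_0,\gamma_1$ lie in $\varrho(\mathbf{L}_\alpha)$ for all $\alpha\in\overline{\mathbb{B}_\delta^5}$, and writing $\mathbf{R}_{\mathbf{L}_\alpha}(\zeta)=\mathbf{R}_{\mathbf{L}_0}(\zeta)[1+(\mathbf{L}_0'-\mathbf{L}_\alpha')\mathbf{R}_{\mathbf{L}_0}(\zeta)]^{-1}$, a convergent Neumann series since $\mathbf{R}_{\mathbf{L}_0}$ is bounded on the compact curves and $\|\mathbf{L}_0'-\mathbf{L}_\alpha'\|\lesssim|\alpha|$ by Lemma \ref{continuously}, shows that $C_0:=\sup\{\|\mathbf{R}_{\mathbf{L}_\alpha}(\zeta)\|:\alpha\in\overline{\mathbb{B}_\delta^5},\,\zeta\in\gamma_0\cup\gamma_1\}<\infty$. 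The second resolvent identity $\mathbf{R}_{\mathbf{L}_\alpha}(\zeta)-\mathbf{R}_{\mathbf{L}_\beta}(\zeta)=\mathbf{R}_{\mathbf{L}_\alpha}(\zeta)(\mathbf{L}_\alpha'-\mathbf{L}_\beta')\mathbf{R}_{\mathbf{L}_\beta}(\zeta)$ (using $\mathbf{L}_\alpha-\mathbf{L}_\beta=\mathbf{L}_\alpha'-\mathbf{L}_\beta'$ from \eqref{Lalpha}) together with Lemma \ref{continuously} then gives $\|\mathbf{R}_{\mathbf{L}_\alpha}(\zeta)-\mathbf{R}_{\mathbf{L}_\beta}(\zeta)\|\le C_0^2\|\mathbf{L}_\alpha'-\mathbf{L}_\beta'\|\lesssim|\alpha-\beta|$ on $\gamma_0\cup\gamma_1$, and integrating yields \eqref{LipP}.

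The main obstacle is \eqref{LipS}: the naive splitting $\mathbf{S}_\alpha(\tau)\widetilde{\mathbf{P}}_\alpha-\mathbf{S}_\beta(\tau)\widetilde{\mathbf{P}}_\beta=(\mathbf{S}_\alpha(\tau)-\mathbf{S}_\beta(\tau))\widetilde{\mathbf{P}}_\alpha+\mathbf{S}_\beta(\tau)(\widetilde{\mathbf{P}}_\alpha-\widetilde{\mathbf{P}}_\beta)$ is useless, because $\mathbf{S}_\beta(\tau)$ grows like $e^\tau$ on $\rg\mathbf{P}_\beta$ while $\widetilde{\mathbf{P}}_\alpha-\widetilde{\mathbf{P}}_\beta$ generically has a component there. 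To circumvent this I would fix $\mathbf{f}\in\mathcal{H}$, set $\mathbf{u}_\alpha(\tau):=\mathbf{S}_\alpha(\tau)\widetilde{\mathbf{P}}_\alpha\mathbf{f}$, $\mathbf{u}_\beta(\tau):=\mathbf{S}_\beta(\tau)\widetilde{\mathbf{P}}_\beta\mathbf{f}$, $\mathbf{w}(\tau):=\mathbf{u}_\alpha(\tau)-\mathbf{u}_\beta(\tau)$ (so that $\|\mathbf{u}_\alpha(\tau)\|+\|\mathbf{u}_\beta(\tau)\|\lesssim e^{-\frac23\tau}\|\mathbf{f}\|$ by Lemma \ref{instability}), and decompose $\mathbf{w}=\widetilde{\mathbf{P}}_\beta\mathbf{w}+\mathbf{P}_\beta\mathbf{w}+\mathbf{Q}_\beta\mathbf{w}$ by the spectral projections of $\mathbf{L}_\beta$. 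The point is that $\mathbf{u}_\beta\in\rg\widetilde{\mathbf{P}}_\beta$, and since $\mathbf{P}_\beta,\mathbf{Q}_\beta$ commute with $\mathbf{S}_\beta$ one has $\mathbf{P}_\beta\mathbf{u}_\beta=\mathbf{Q}_\beta\mathbf{u}_\beta=0$, so $\mathbf{P}_\beta\mathbf{w}=\mathbf{P}_\beta\mathbf{u}_\alpha$ and $\mathbf{Q}_\beta\mathbf{w}=\mathbf{Q}_\beta\mathbf{u}_\alpha$; likewise $\mathbf{P}_\alpha\mathbf{u}_\alpha=\mathbf{Q}_\alpha\mathbf{u}_\alpha=0$, whence $\mathbf{P}_\beta\mathbf{u}_\alpha=(\mathbf{P}_\beta-\mathbf{P}_\alpha)\mathbf{u}_\alpha$ and $\mathbf{Q}_\beta\mathbf{u}_\alpha=(\mathbf{Q}_\beta-\mathbf{Q}_\alpha)\mathbf{u}_\alpha$, so by \eqref{LipP} the $\beta$-unstable parts obey $\|\mathbf{P}_\beta\mathbf{w}(\tau)\|+\|\mathbf{Q}_\beta\mathbf{w}(\tau)\|\lesssim|\alpha-\beta|\,\|\mathbf{u}_\alpha(\tau)\|\lesssim|\alpha-\beta|e^{-\frac23\tau}\|\mathbf{f}\|$ --- the $|\alpha-\beta|$ gain appears without paying any growth.

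For the remaining, $\beta$-stable part I would use that $\mathbf{w}$ solves $\partial_\tau\mathbf{w}=\mathbf{L}_\beta\mathbf{w}+(\mathbf{L}_\alpha'-\mathbf{L}_\beta')\mathbf{u}_\alpha$, hence by Duhamel $\mathbf{w}(\tau)=\mathbf{S}_\beta(\tau)\mathbf{w}(0)+\int_0^\tau\mathbf{S}_\beta(\tau-s)(\mathbf{L}_\alpha'-\mathbf{L}_\beta')\mathbf{u}_\alpha(s)\,ds$ --- justified first for $\mathbf{f}\in\mathcal{D}(\mathbf{L})$, where $\widetilde{\mathbf{P}}_\alpha\mathbf{f}\in\mathcal{D}(\mathbf{L})=\mathcal{D}(\mathbf{L}_\beta)$ because $\rg\mathbf{P}_\alpha$ and $\rg\mathbf{Q}_\alpha$ consist of smooth eigenfunctions, and then for all $\mathbf{f}$ by density. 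Applying $\widetilde{\mathbf{P}}_\beta$, which commutes with $\mathbf{S}_\beta$, and using the decay $\|\mathbf{S}_\beta(\sigma)\widetilde{\mathbf{P}}_\beta\|\lesssim e^{-\frac23\sigma}$, the bound $\|\widetilde{\mathbf{P}}_\beta\mathbf{w}(0)\|\le\|\widetilde{\mathbf{P}}_\beta\|\,\|\widetilde{\mathbf{P}}_\alpha-\widetilde{\mathbf{P}}_\beta\|\,\|\mathbf{f}\|\lesssim|\alpha-\beta|\|\mathbf{f}\|$ (from \eqref{LipP}), $\|\mathbf{L}_\alpha'-\mathbf{L}_\beta'\|\lesssim|\alpha-\beta|$ (Lemma \ref{continuously}), and $\|\mathbf{u}_\alpha(s)\|\lesssim e^{-\frac23 s}\|\mathbf{f}\|$, I expect $\|\widetilde{\mathbf{P}}_\beta\mathbf{w}(\tau)\|\lesssim|\alpha-\beta|\|\mathbf{f}\|\bigl(e^{-\frac23\tau}+\tau e^{-\frac23\tau}\bigr)\lesssim|\alpha-\beta|e^{-\frac12\tau}\|\mathbf{f}\|$, the last inequality because $\tau e^{-\frac16\tau}$ is bounded. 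Adding the three contributions and using $\|\widetilde{\mathbf{P}}_\beta\|,\|\mathbf{P}_\beta\|,\|\mathbf{Q}_\beta\|\lesssim 1$ then yields \eqref{LipS}, and in retrospect the only delicate ingredient is the spectral splitting that confines the unstable directions to $\mathbf{u}_\alpha$ alone.
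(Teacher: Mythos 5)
Your proofs of \eqref{Lipg} and \eqref{LipP} coincide with the paper's: the fundamental theorem of calculus applied to the explicit expressions from Remark \ref{rem}, and the second resolvent identity $\mathbf{R}_{\mathbf{L}_\alpha}(\zeta)-\mathbf{R}_{\mathbf{L}_\beta}(\zeta)=\mathbf{R}_{\mathbf{L}_\alpha}(\zeta)(\mathbf{L}_\alpha'-\mathbf{L}_\beta')\mathbf{R}_{\mathbf{L}_\beta}(\zeta)$ combined with uniform resolvent bounds on the contours $\gamma_0,\gamma_1$.

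For \eqref{LipS} your argument is correct but organized differently from the paper's. You decompose $\mathbf w=\mathbf u_\alpha-\mathbf u_\beta$ into its $\beta$-spectral pieces $\widetilde{\mathbf P}_\beta\mathbf w$, $\mathbf P_\beta\mathbf w$, $\mathbf Q_\beta\mathbf w$, kill the unstable pieces via the projection algebra $\mathbf P_\beta\mathbf u_\beta=\mathbf P_\alpha\mathbf u_\alpha=0$ together with \eqref{LipP}, and then run Duhamel on the stable piece with generator $\mathbf L_\beta$ and source $(\mathbf L_\alpha'-\mathbf L_\beta')\mathbf u_\alpha$. The paper instead keeps the whole ratio $\Phi_{\alpha,\beta}(\tau)=\frac{\mathbf S_\alpha(\tau)\widetilde{\mathbf P}_\alpha\mathbf u-\mathbf S_\beta(\tau)\widetilde{\mathbf P}_\beta\mathbf u}{|\alpha-\beta|}$ as a single unknown, observes it satisfies an ODE driven by the generator $\mathbf L_\alpha\widetilde{\mathbf P}_\alpha=\mathbf L_\alpha-\mathbf P_\alpha$ with source $\frac{\mathbf L_\alpha\widetilde{\mathbf P}_\alpha-\mathbf L_\beta\widetilde{\mathbf P}_\beta}{|\alpha-\beta|}\mathbf u_\beta$, and relies on the identity $\mathbf L_\alpha\widetilde{\mathbf P}_\alpha-\mathbf L_\beta\widetilde{\mathbf P}_\beta=\mathbf L_\alpha'-\mathbf L_\beta'+\mathbf P_\beta-\mathbf P_\alpha$ to see that this source is bounded by $|\alpha-\beta|$ in operator norm. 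The underlying ingredients are the same (Lipschitz continuity of $\mathbf L_\alpha'$, the projection Lipschitz estimate, the $e^{-\frac23\tau}$ decay of $\mathbf S_\beta(\tau)\widetilde{\mathbf P}_\beta$, and Duhamel), but your splitting makes the control of the unstable modes explicit rather than absorbing them into the modified generator, which is arguably more transparent; the paper's formulation is more compact and avoids the need to track three separate components. Both reach $\tau e^{-\frac23\tau}\lesssim e^{-\frac12\tau}$ in the same way.
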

\begin{proof}
The estimate \eqref{Lipg} follows immediately from the fundamental theorem of calculus. Furthermore, the estimate \eqref{LipP} follows from a Lipschitz-type estimate for the resolvent operator, namely
\begin{align*}
 \| \mathbf{R}_{\mathbf{L}_{\alpha}} (\lambda) -  \mathbf{R}_{\mathbf{L}_{\beta}}  (\lambda) \|  \| \lesssim  \|  \mathbf{R}_{\mathbf{L}_{\alpha}} (\lambda)  \|  \|  \mathbf{R}_{\mathbf{L}_{\beta}} (\lambda)  \|    |\alpha-\beta|,
\end{align*}
which in turn follows from the identity
\begin{align*}
\mathbf{A}^{-1} - \mathbf{B}^{-1} = \mathbf{B}^{-1} (\mathbf{B}-\mathbf{A}) \mathbf{A}^{-1},
\end{align*}
valid for all invertible operators $\mathbf{A}$ and $\mathbf{B}$. Finally, we establish the estimate \eqref{LipS} for the semigroup. To do so, we first observe that the function
\begin{align*}
\Phi_{\alpha,\beta} (\tau):= \frac{   \mathbf{S}_{\alpha} (\tau) \widetilde{\mathbf{P}}_{\alpha} \mathbf{u} -   \mathbf{S}_{\beta} (\tau) \widetilde{\mathbf{P}}_{\beta} \mathbf{u} }{ |\alpha-\beta| }
\end{align*}  
for $\mathbf{u} \in \mathcal{D} (\mathbf{L}) \subseteq \mathcal{H}$, solves the initial value problem
\begin{align*}
  \left \{
  \begin{aligned}
    & \partial_{\tau} \Phi_{\alpha,\beta} (\tau) = \mathbf{L}_{\alpha} \widetilde{ \mathbf{P} }_{\alpha} \Phi_{\alpha,\beta} (\tau)  +
 \frac{   \mathbf{L}_{\alpha} \widetilde{\mathbf{P}}_{\alpha} - \mathbf{L}_{\beta}  \widetilde{\mathbf{P}}_{\beta} }{ |\alpha-\beta| } \mathbf{S}_{\beta} (\tau)  \widetilde{\mathbf{P}}_{\beta} \mathbf{u},&& \ \\
    &  \Phi_{\alpha,\beta} (0) =  \frac{ \widetilde{\mathbf{P}}_{\alpha} - \widetilde{\mathbf{P}}_{\beta} }{ |\alpha-\beta| }  \mathbf{u}. && 
  \end{aligned} \right.
\end{align*} 
The key observation here is that 
\begin{align*}
\mathbf{L}_{\alpha}  \widetilde{\mathbf{P}}_{\alpha} - \mathbf{L}_{\beta}  \widetilde{\mathbf{P}}_{\beta} = \mathbf{L}_{\alpha}^{\prime} - \mathbf{L}_{\beta}^{\prime} +\mathbf{P}_{\beta} - \mathbf{P}_{\alpha}
\end{align*}
and therefore the apparently unbounded operator $\mathbf{L}_{\alpha}  \widetilde{\mathbf{P}}_{\alpha} - \mathbf{L}_{\beta}  \widetilde{\mathbf{P}}_{\beta}$ is in fact bounded, that is
\begin{align*}
\| \mathbf{L}_{\alpha}  \widetilde{\mathbf{P}}_{\alpha} - \mathbf{L}_{\beta}  \widetilde{\mathbf{P}}_{\beta} \| \lesssim |\alpha - \beta|.
\end{align*}
Now, it remains to apply Duhamel's principle, write down the general solution formula for $\Phi_{\alpha,\beta} (\tau) $ and use the previous estimates. For more details see Lemma 4.9 in \cite{DonSch16a}.
\end{proof}
Next, we establish a Lipschitz-type estimate for the nonlinearity $\mathbf{N}_{\alpha}$. To begin with, recall \eqref{systempsi}, \eqref{Lprime} and \eqref{static}, i.e.,
\begin{align*}
\mathbf{N} \left( \mathbf{u} \right)(\xi):=
 \begin{pmatrix}
0 \\
u_{1}^{3}(\xi)  
\end{pmatrix}
\end{align*}
and
\begin{align*}
 \mathbf{L}^{\prime}_{\alpha } (\mathbf{u}(\xi)):=
 \begin{pmatrix}
0 \\
V_{\alpha} (\xi) u_{1}(\xi)  
\end{pmatrix},\quad
V_{\alpha } (\xi):=3\psi_{\alpha }^{2}(\xi), \quad
\psi_{\alpha} (\xi):=\frac{\sqrt{2}}{A_{0}(\alpha)-A_{j}(\alpha)\xi^{j} }.
\end{align*}
Furthermore, recall that $A_{0}(\alpha) = \mathcal{O}(1)$ whereas $A_{j}(\alpha) = \mathcal{O}(\alpha)$ for all sufficiently small $\alpha \in \mathbb{R}^{d}$. Hence, we find $\epsilon>0$ small enough so that
\begin{align} \label{psibound}
\sup_{ |\alpha|<\epsilon } ~~ \sup_{j \in \{ 0,1,2,3 \}} \| \partial ^{j} \psi_{\alpha} \|_{L^{\infty}(\mathbb{B}^{5})} \lesssim 1.
\end{align}
A direct calculation shows that the full non-linearity defined in \eqref{nonlinear} can be written as follows
\begin{align}  \label{Naformula}
 \mathbf{N}_{\alpha} (\mathbf{u}) :=  \mathbf{N} (\mathbf{u} + \mathbf{\Psi}_{\alpha})  - \mathbf{N} (\mathbf{\Psi}_{\alpha})-\mathbf{L}^{\prime}_{\alpha} \mathbf{u}   
=
 \begin{pmatrix}
 0 \\
\hat{N}(\psi_{\alpha},u_{1})
 \end{pmatrix},
\end{align}
where
\begin{align*}
\hat{N}(\psi_{\alpha}(\xi),u_{1}(\xi)):= 3 \psi_{\alpha}(\xi) u_{1}^{2}(\xi)  + u_{1}^{3}(\xi).
\end{align*}
Also, we define
\begin{align*}
\hat{M}(\psi_{\alpha}(\xi),u_{1}(\xi)):=\partial_{2} \hat{N}(\psi_{\alpha}(\xi),u_{1}(\xi)):= 6 \psi_{\alpha}(\xi) u_{1}(\xi)  +3  u_{1}^{2}(\xi).
\end{align*}
Finally, we write $\| \mathbf{f} \|:= \| \mathbf{f} \|_{\mathcal{H}}$ where $\mathcal{H}:=H^{3} \big( \mathbb{B}^{5} \big) \times H^{2} \big( \mathbb{B}^{5} \big) $. We prove the following result.
\begin{lemma} \label{nonlinearestimate}
Fix sufficiently small $\alpha \in \mathbb{R}^{5} $ and sufficiently small $\delta>0$. Then, we have
\begin{align} \label{Lipschitz}
\left \|  \mathbf{N}_{\alpha} (\mathbf{u}) -  \mathbf{N}_{\beta} (\mathbf{v}) \right \| \lesssim \big( \| \mathbf{u} \| + \| \mathbf{v} \| \big) \| \mathbf{u} - \mathbf{v} \| +  (  \| \mathbf{u} \|^{2} + \| \mathbf{v} \|^{2} ) |\alpha - \beta|,
\end{align}
for all $\mathbf{u},\mathbf{v} \in \mathcal{H}$ with $\| \mathbf{u} \| \leq \delta$ and $\| \mathbf{v} \| \leq \delta$ and for all $\alpha,\beta \in \overline{ \mathbb{B}_{\delta}^{5} }$.
\end{lemma}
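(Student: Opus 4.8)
The plan is to reduce the vector estimate to a scalar one in $H^{2}(\mathbb{B}^{5})$ and then split the difference at a midpoint, separating the $\mathbf{u}$-dependence from the $\alpha$-dependence. Since by \eqref{Naformula} the first component of $\mathbf{N}_{\alpha}(\mathbf{u})$ vanishes and the norm of $\mathcal{H}$ is equivalent to $\|\cdot\|_{H^{3}(\mathbb{B}^{5})\times H^{2}(\mathbb{B}^{5})}$ (Lemma \ref{Sobolevequiv}), it suffices to bound $\|\hat{N}(\psi_{\alpha},u_{1})-\hat{N}(\psi_{\beta},v_{1})\|_{H^{2}(\mathbb{B}^{5})}$ by the right-hand side of \eqref{Lipschitz}, using $\|u_{1}\|_{H^{3}(\mathbb{B}^{5})}\lesssim\|\mathbf{u}\|$ and $\|v_{1}\|_{H^{3}(\mathbb{B}^{5})}\lesssim\|\mathbf{v}\|$. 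First I would write
\begin{align*}
\hat{N}(\psi_{\alpha},u_{1})-\hat{N}(\psi_{\beta},v_{1})
=\big(\hat{N}(\psi_{\alpha},u_{1})-\hat{N}(\psi_{\alpha},v_{1})\big)
+\big(\hat{N}(\psi_{\alpha},v_{1})-\hat{N}(\psi_{\beta},v_{1})\big),
\end{align*}
where the first bracket equals $3\psi_{\alpha}(u_{1}+v_{1})(u_{1}-v_{1})+(u_{1}^{2}+u_{1}v_{1}+v_{1}^{2})(u_{1}-v_{1})$ and the second equals $3(\psi_{\alpha}-\psi_{\beta})v_{1}^{2}$.

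The main tool will be a product (Moser-type) estimate in $H^{2}(\mathbb{B}^{5})$. Expanding the derivatives of $fgh$ of order at most $2$ by the Leibniz rule and distributing the factors into Lebesgue spaces via the embeddings $H^{1}(\mathbb{B}^{5})\hookrightarrow L^{10/3}(\mathbb{B}^{5})$, $H^{2}(\mathbb{B}^{5})\hookrightarrow L^{10}(\mathbb{B}^{5})$, $H^{3}(\mathbb{B}^{5})\hookrightarrow L^{\infty}(\mathbb{B}^{5})$, Hölder's inequality, and the continuous inclusion $L^{q}(\mathbb{B}^{5})\hookrightarrow L^{2}(\mathbb{B}^{5})$ for $q\geq 2$ (boundedness of the domain), one obtains
\begin{align*}
\|fgh\|_{H^{2}(\mathbb{B}^{5})}\lesssim\|f\|_{W^{2,\infty}(\mathbb{B}^{5})}\|g\|_{H^{3}(\mathbb{B}^{5})}\|h\|_{H^{3}(\mathbb{B}^{5})}
\quad\text{and}\quad
\|fgh\|_{H^{2}(\mathbb{B}^{5})}\lesssim\|f\|_{H^{3}(\mathbb{B}^{5})}\|g\|_{H^{3}(\mathbb{B}^{5})}\|h\|_{H^{3}(\mathbb{B}^{5})}.
\end{align*}
Applying the first with $f=\psi_{\alpha}$, which satisfies $\|\psi_{\alpha}\|_{W^{2,\infty}(\mathbb{B}^{5})}\lesssim 1$ uniformly for small $\alpha$ by \eqref{psibound}, and the second to the purely cubic term, and using $\|u_{1}\|_{H^{3}},\|v_{1}\|_{H^{3}}\leq\delta\leq 1$, the first bracket is bounded by $(\|\mathbf{u}\|+\|\mathbf{v}\|)\|\mathbf{u}-\mathbf{v}\|$ (the cubic piece carries an extra factor $\delta$ and is hence of the same form).

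For the second bracket I would use the Lipschitz estimate $\|\psi_{\alpha}-\psi_{\beta}\|_{W^{2,\infty}(\mathbb{B}^{5})}\lesssim|\alpha-\beta|$ for $\alpha,\beta\in\overline{\mathbb{B}_{\delta}^{5}}$, which follows from the fundamental theorem of calculus $\psi_{\alpha}-\psi_{\beta}=\int_{0}^{1}\frac{d}{dt}\psi_{\beta+t(\alpha-\beta)}\,dt$ together with uniform bounds on the $\xi$- and $\alpha$-derivatives of $\psi_{\alpha}(\xi)=\sqrt{2}\big(A_{0}(\alpha)-A_{j}(\alpha)\xi^{j}\big)^{-1}$ up to the needed order on the compact set $\overline{\mathbb{B}^{5}}\times\overline{\mathbb{B}_{\delta}^{5}}$; here the denominator stays near $A_{0}(0)=1$ since $A_{0}(\alpha)=\mathcal{O}(1)$ and $A_{j}(\alpha)=\mathcal{O}(\alpha)$. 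The product estimate then yields $\|3(\psi_{\alpha}-\psi_{\beta})v_{1}^{2}\|_{H^{2}(\mathbb{B}^{5})}\lesssim|\alpha-\beta|\,\|v_{1}\|_{H^{3}}^{2}\lesssim|\alpha-\beta|\big(\|\mathbf{u}\|^{2}+\|\mathbf{v}\|^{2}\big)$, and combining the two brackets via the triangle inequality gives \eqref{Lipschitz}.

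I do not expect a deep obstacle here; the statement is essentially routine. The only point that genuinely requires care is that $H^{2}(\mathbb{B}^{5})$ is \emph{not} a Banach algebra in dimension five (as $2<\tfrac{5}{2}$), so the product estimates above cannot be obtained by simply multiplying $H^{2}$-norms: one must exploit that the potential factor $\psi_{\alpha}$ is smooth and that each variable factor lies in $H^{3}\hookrightarrow L^{\infty}$, and keep track of the worst Leibniz terms — for instance $(\partial^{2}u_{1})u_{1}v_{1}$, which a priori lies only in $L^{10/3}(\mathbb{B}^{5})$ and is placed in $L^{2}$ using the boundedness of $\mathbb{B}^{5}$.
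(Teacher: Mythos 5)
Your argument is correct, but it takes a genuinely different — and strictly more laborious — route than the paper. You are right that $H^{2}(\mathbb{B}^{5})$ is not an algebra (since $2 < \tfrac{5}{2}$), and your response is to prove trilinear estimates in $H^{2}$ by hand, distributing the Leibniz terms into $L^{p}$-spaces via the chain $H^{1}\hookrightarrow L^{10/3}$, $H^{2}\hookrightarrow L^{10}$, $H^{3}\hookrightarrow L^{\infty}$. The paper sidesteps the obstruction entirely: since all relevant factors actually lie in $H^{3}$ (the first component of $\mathcal{H}$, together with the smooth potential), one simply majorizes $\|\cdot\|_{H^{2}(\mathbb{B}^{5})}\leq\|\cdot\|_{H^{3}(\mathbb{B}^{5})}$ and then uses that $H^{3}(\mathbb{B}^{5})$ \emph{is} an algebra (as $3>\tfrac{5}{2}$), supplemented by Moser's inequality to control $\hat{M}(\psi_{\alpha},\cdot)$. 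So the non-algebra issue you flag as "the only point that genuinely requires care" is, in the paper's treatment, not present at all. Your algebraic decomposition into the $u$-difference and the $\alpha$-difference matches the paper's (including the use of the fundamental theorem of calculus for the $\psi_{\alpha}-\psi_{\beta}$ piece), and both bounds close the same way, so the structure is parallel; what differs is purely the product estimate underneath. One small overstatement: the Leibniz term $(\partial^{2}u_{1})u_{1}v_{1}$ is already in $L^{2}$ directly, since $\partial^{2}u_{1}\in L^{2}$ and $u_{1},v_{1}\in L^{\infty}$; invoking $\partial^{2}u_{1}\in H^{1}\hookrightarrow L^{10/3}$ and then the boundedness of the ball is unnecessary. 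Neither observation is a gap — your proof works — but if you ever repeat this in $d=13$, where the needed trilinear Leibniz bookkeeping in $H^{(d-1)/2}$ becomes tedious, the paper's "pass to the next higher Sobolev space where the algebra property holds" shortcut is the one that scales.
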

\begin{proof}
To begin with, we fix sufficiently small $\delta>0$, sufficiently small $\alpha \in \overline{ \mathbb{B}_{\delta}^{5} }$ and pick any $\mathbf{u},\mathbf{v} \in \mathcal{H}$ with $\| \mathbf{u} \| \leq \delta$ and $\| \mathbf{v} \| \leq \delta$. First, we show that
\begin{align} \label{first}
\big \| \mathbf{N_{\alpha} (u)} -\mathbf{N_{\alpha} (v)} \big\| \lesssim \left \| \mathbf{u} - \mathbf{v}  \right \| \left( \left \|   \mathbf{u} \right \|  + \left \|  \mathbf{v}   \right \| \right).
\end{align} 
Notice that the function $G(\xi,\zeta):=\hat{M} (\psi_{\alpha}(\xi), \zeta)= 6 \psi_{\alpha}(\xi) \zeta +3  \zeta^2,~(\xi,\zeta) \in \mathbb{R}^{5}\times\mathbb{R}$ belongs to $C^{\infty}(\mathbb{R}^{5}\times\mathbb{R};\mathbb{R})$ and $G(\xi,0)=0$. Furthermore, for any compact set $K \subseteq \mathbb{R}$, we have $\partial_{\xi,\zeta}^{\alpha} G \in L^{\infty} \left( \mathbb{R}^{5} \times K \right)$, for all multi-indices $\alpha$ with $|\alpha| \leq 4$, due to \eqref{psibound}.
Consequently, Moser's inequality (see \cite{Rau12}, p.~224, Theorem 6.4.1) and Sobolev extension imply
\begin{align} \label{Mhat}
\| \hat{M}(\psi_{\alpha},w)\|_{H^{3}(\mathbb B^{5})} \lesssim \|w\|_{H^{3}(\mathbb B^{5})}, 
\end{align}
for all $w \in H^{3}(\mathbb B^{5})$. For any fixed $\sigma \in [0,1]$, we define $ \zeta(\sigma):= \sigma u_{1}+(1- \sigma) v_{1}$. Now, since $3>\frac{5}{2}$, the algebra property
\begin{align} \label{algebraproperty}
\| fg \|_{ H ^{3} (\mathbb{B}^{5}) } \lesssim \| f \|_{ H ^{3} (\mathbb{B}^{5}) } \| g \|_{ H ^{3} (\mathbb{B}^{5}) }
\end{align}
holds and we can use this together with \eqref{Mhat} to estimate
\begin{align*} 
\big \| \mathbf{N_{\alpha} (u)} -\mathbf{N_{\alpha} (v)} \big\|  &
= \big \|  \hat N (\psi_{\alpha}, u_{1} ) - \hat N (\psi_{\alpha}, v_{1} ) \big \| _{H^{2} (\mathbb{B}^{5}) } \\
&\leq  \big \|  \hat N (\psi_{\alpha}, u_{1} ) - \hat N (\psi_{\alpha}, v_{1} ) \big \| _{H^{3} (\mathbb{B}^{5}) } \\
&=\left \|  \int _{v_{1}}^{u_{1} }  \partial _{2} \hat N (\psi_{\alpha}, \zeta )  d \zeta  \right \|  _{H^{3} (\mathbb{B}^{5})}   \\
&=\left \|   \left( u_{1} -  v_{1} \right) \int _{0}^{1} \partial _{2} \hat N (\psi_{\alpha} ,\zeta(\sigma))  d \sigma \right \| _{H^{3} (\mathbb{B}^{5}) }   \\ \nonumber 
&\lesssim \left \|   u_{1} - v_{1} \right \| _{ H ^{3} (\mathbb{B}^{5}) }  \left \|  \int _{0}^{1} \partial _{2} \hat N (\psi_{\alpha} , \zeta (\sigma) ) d \sigma \right \| _{H^{3} (\mathbb{B}^{5}) }  \\
& \lesssim \left \|   u_{1} - v_{1} \right \| _{ H^{3} (\mathbb{B}^{5}) }   \int _{0}^{1}  \left \|  \hat{M} (\psi_{\alpha}, \zeta (\sigma) ) \right \| _{H^{3} (\mathbb{B}^{5}) }  d \sigma   \\
&  \lesssim  \left \| u_{1} - v_{1}  \right \| _{ H^{3} (\mathbb{B}^{5}) }  \int _{0}^{1}  \left \|  \zeta (\sigma)  \right \| _{H^{3} (\mathbb{B}^{5}) }   d \sigma  \\
& \lesssim \left \| u_{1} - v_{1}  \right \| _{ H^{3} (\mathbb{B}^{5}) }  \int _{0}^{1} \left( \sigma  \left \|  u_{1} \right \| _{H^{3} (\mathbb{B}^{5}) }  +(1-\sigma) \left \|  v_{1}    \right \| _{H^{3} (\mathbb{B}^{5}) }  \right)  d \sigma   \\
& \lesssim \left \| u_{1} - v_{1}  \right \| _{ H^{3} (\mathbb{B}^{5}) }  \left( \left \|   u_{1} \right \| _{H^{3} (\mathbb{B}^{5}) }  + \left \|  v_{1}    \right \| _{H^{3} (\mathbb{B}^{5}) }  \right) \\
&\lesssim   \left \| \mathbf{u} - \mathbf{v}  \right \| \left( \left \|   \mathbf{u} \right \|  + \left \|  \mathbf{v}   \right \| \right).
\end{align*}
To complete the proof, it suffices show that
\begin{align*}
\|  \mathbf{N}_{\alpha} (\mathbf{u}) -  \mathbf{N}_{\beta} (\mathbf{u}) \|  \lesssim  \| u_{1}  \|_{H^{3}(\mathbb{B}^{5})}^{2} |\alpha - \beta|,
\end{align*}
which is a consequence of the fundamental theorem of calculus. Indeed, we fix $\alpha,\beta \in \mathbb{R}^{5}$ sufficiently small and let $\gamma (t):=t\beta +(1-t)\alpha,~t\in[0,1]$ be a parametrisation of the line segment $E[\alpha,\beta]$ joining $\alpha$ and $\beta$. Then,
\begin{align*}
\psi _{\alpha} - \psi _{\beta} & = \psi _{\gamma(0)} - \psi _{\gamma(1)} = \int_{E[\alpha,\beta]} \partial \psi _{\gamma} \cdot d \ell = \sum _{j=1}^{5} (\beta ^{j}-\alpha ^{j}) \int_{0}^{1} \partial _{\gamma ^{j}} \psi _{\gamma(t)} dt,
\end{align*} 
and the triangle inequality implies the bound
\begin{align*} 
\| \partial^{m} \left( \psi _{\alpha} - \psi _{\beta} \right) \|_{L^{2}(\mathbb{B}^{5})} \lesssim \sum _{j=1}^{5} |\beta ^{j} - \alpha ^{j} | 
\sup _{s \in E[\alpha,\beta]} \|  \partial ^{m} \partial _{\gamma ^{j}} \psi _{s} \|_{L^{2}(\mathbb{B}^{5})}
 \lesssim  |\beta- \alpha  |,
\end{align*}
for all $m \in \{0,1,2,3 \}$, due to \eqref{psibound}. Therefore, \eqref{algebraproperty} yields
\begin{align*}
\|  \mathbf{N}_{\alpha} (\mathbf{u}) -  \mathbf{N}_{\beta} (\mathbf{u}) \| &  =  \| 3 u_{1}^{2} (\psi _{\alpha} - \psi _{\beta}) \|_{H^{2}(\mathbb{B}^{5})}  \leq  \| 3 u_{1}^{2} (\psi _{\alpha} - \psi _{\beta}) \|_{H^{3}(\mathbb{B}^{5})} \\
& \lesssim  \| u_{1} \|_{H^{3}(\mathbb{B}^{5})}^{2} 
  \| \psi _{\alpha} - \psi _{\beta} \|_{H^{3}(\mathbb{B}^{5})} 
   \lesssim  \| u_{1}  \|_{H^{3}(\mathbb{B}^{5})}^{2} |\alpha - \beta|,
\end{align*}
which concludes the proof.
\end{proof}

%
%
%
%
%
%

\section{The modulation equation}

To begin with, we apply Duhamel's principle to rewrite the modulation equation \eqref{modulation} coupled with initial data in a weak formulation. Due to \eqref{Duhamel}, we may write the Cauchy problem
  \[
    \left\{\begin{array}{lr}
        \partial _{\tau} \mathbf{\Phi}(\tau) - \big( \mathbf{L} + \mathbf{L}^{\prime}_{\alpha _{\infty}} \big) \mathbf{\Phi}(\tau) = 
\hat{\mathbf{L}}_{\alpha(\tau)} \mathbf{\Phi}(\tau) + \mathbf{N}_{\alpha(\tau)} (\mathbf{\Phi}(\tau)) - \partial _{\tau} \mathbf{\Psi}_{\alpha(\tau)}, & \\
        \Phi (0) = \mathbf{u} \in \mathcal{H}, & 
        \end{array}\right.
  \]
  as an integral equation, that is
\begin{align} \label{Duhamelmodulation}
\mathbf{\Phi }(\tau) = \mathbf{S}_{\alpha _{\infty}} (\tau) \mathbf{u} + \int_{0}^{\tau} \mathbf{S}_{\alpha _{\infty}} (\tau - \sigma) \Big( 
\hat{\mathbf{L}}_{\alpha(\sigma)} \mathbf{\Phi}(\sigma) + \mathbf{N}_{\alpha(\sigma)} (\mathbf{\Phi}(\sigma)) - \partial _{\sigma} \mathbf{\Psi}_{\alpha(\sigma)}
\Big) d \sigma,
\end{align}
provided that $\alpha_{\infty}$ is sufficiently small which we later verify, see \eqref{ainfty}. We use this formulation to define the notion of light-cone solutions.
\begin{definition} \label{def}
Fix $\alpha \in \mathbb{R}^{5}$ sufficiently small. We say that $u: C_{T} \longrightarrow \mathbb{R}$ is a solution to \eqref{Cauchy} if the corresponding $\mathbf{\Phi}:[0,\infty) \longrightarrow \mathcal{H}$ belongs to 
$C \big( [0,\infty);\mathcal{H} \big)$ and satisfies \eqref{Duhamelmodulation} for all $\tau \ge 0$.
\end{definition} 
Consequently, in order to establish a solution $u= u(t,x)$ to the initial Cauchy problem \eqref{Cauchy} we need to construct a global in $\tau$ solution $\mathbf{\Phi }(\tau)$ to \eqref{Duhamelmodulation}. To prove the existence of a global solution, we would like to apply a fixed point argument to the integral equation \eqref{Duhamelmodulation}. However, the solution operator $\mathbf{S}_{\alpha_{\infty}}$ for the linearized equation has two unstable subspaces $\rg \mathbf{Q}_{\alpha_{\infty}} $, $\rg \mathbf{P}_{\alpha_{\infty}} $ which appear due to the symmetries of the original equation, namely the Lorentz and time-translation symmetry, respectively (Lemma \ref{instability}). Specifically, initial data from $\rg \mathbf{Q}_{\alpha_{\infty}}$ and $\rg \mathbf{P}_{\alpha_{\infty}} $ lead to solutions which stay constant or grow exponentially in time, respectively. These growths prevent us from applying a fixed point argument directly. We overcome this obstruction as follows. In the first case, we choose the rapidity parameter $\alpha = \alpha (\tau)$ in such a way that this instability is suppressed. In the second case, we proceed differently and add a correction term to the initial data which stabilizes the evolution. In both cases, we use fixed point arguments to establish existence and uniqueness of the respective modified equations and hence we first introduce the Banach spaces.
\subsection{Banach spaces}
We define the following sets. 
\begin{align*}
& \mathcal{X}:=\left \{ \Phi \in C( [0,\infty);\mathcal{H}): \| \Phi \|_{\mathcal{X}} < \infty \right  \}, \\
& X:=\left \{ \alpha \in C^{1}( [0,\infty);\mathbb{R}^{5}): \alpha(0)=0 \text{~and~} \| \alpha \|_{X} < \infty \right  \},
\end{align*}
endowed with the norms
\begin{align*}
& \| \Phi \|_{\mathcal{X}}:=\sup_{\tau>0}  \left(e^{\frac{1}{2} \tau} \| \Phi (\tau) \| \right), \\
& \| \alpha \|_{X}:=\sup_{\tau>0} \left( e^{\frac{1}{2} \tau} | \dot{\alpha} (\tau) | + |\alpha(\tau)| \right),
\end{align*}
on $\mathcal{X}$ and $X$ respectively. Furthermore, we denote by
\begin{align*}
& \mathcal{X}_{\delta}:= \left \{ \Phi \in \mathcal{X}: \| \Phi \|_{\mathcal{X}} \leq \delta  \right \}, \\
& X_{\delta}:= \left \{ \alpha \in X: |\dot{\alpha} (\tau)| \leq \delta e^{-\frac{1}{2} \tau}  \right \},
\end{align*}
the closed subsets of $\mathcal{X}$ and $X$ respectively. Recall that $\mathcal{H}:=H^{3}(\mathbb{B}^{5}) \times H^{2}(\mathbb{B}^{5})$ and $\| \cdot \|:=\| \cdot \|_{H^{3}(\mathbb{B}^{5}) \times H^{2}(\mathbb{B}^{5})}$. First, notice that for an element $\alpha \in X_{\delta}$, the limit $\alpha_{\infty}:=\lim_{\tau \rightarrow \infty} \alpha(\tau)$ exists. Indeed, for all $0 < \tau_{1} \leq \tau_{2}$ with $\tau_{1},\tau_{2} \rightarrow \infty$,
\begin{align*}
\left |  \alpha(\tau_{2}) -\alpha(\tau_{1}) \right | \leq \int_{\tau_{1}}^{\tau_{2}} \left | \dot{\alpha}(\tau) \right | d \tau \lesssim \delta \left( e^{-\frac{1}{2} \tau_{1}} - e^{-\frac{1}{2} \tau_{2}} \right) \longrightarrow 0.
\end{align*}
Fixing $\tau_{1}$ and letting $\tau_{2}$ go to infinity, we obtain 
\begin{align} \label{aclose}
\forall \alpha \in X_{\delta}: \quad \left |  \alpha_{\infty} -\alpha(\tau) \right |  \lesssim \delta e^{-\frac{1}{2} \tau},~~~ \forall \tau >0.
\end{align}
In particular for $\tau=0$ we get the smallness condition 
\begin{align} \label{ainfty}
\left |  \alpha_{\infty}  \right |  \lesssim \delta.
\end{align}
Furthermore, by Lemma \ref{nonlinearestimate}, Lemma \ref{continuously}, Proposition \ref{instability} and the fact that $\partial _{\tau} \Psi _{\alpha (\tau)}=\dot{\alpha}^{k}(\tau) \mathbf{h}_{\alpha(\tau),k}$ we get the following result.
\begin{lemma} \label{bounds1}
Let $\delta >0$ be sufficiently small. Then, for all $\Phi \in \mathcal{X}_{\delta}$ and $\alpha \in X_{\delta}$, 
\begin{align*}
& \left \| \hat{ \mathbf{L} }_{\alpha (\tau)} \Phi (\tau) \right \| + \left \| \mathbf{N}_{\alpha(\tau)}( \Phi (\tau) ) \right \| \lesssim \delta ^{2} e^{- \tau}, \\
& \left \|  \mathbf{P}_{\alpha_{\infty}} \partial _{\tau} \Psi_{\alpha (\tau)} \right \| + \left \| (\mathbf{I}-\mathbf{Q}_{\alpha_{\infty}}) \partial _{\tau} \Psi _{\alpha (\tau)} \right \| \lesssim \delta ^{2} e^{- \tau}.
\end{align*}
for all $\tau>0$.
\end{lemma}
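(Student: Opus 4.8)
The plan is to estimate each of the four terms separately, exploiting the exponential decay built into the norms of $\mathcal{X}_\delta$ and $X_\delta$ together with the Lipschitz-type estimates already established. First, for $\|\mathbf{N}_{\alpha(\tau)}(\Phi(\tau))\|$, I would apply Lemma \ref{nonlinearestimate} with $\mathbf{v}=\mathbf{0}$ and $\beta=\alpha(\tau)$: since $\mathbf{N}_{\beta}(\mathbf{0})=\mathbf{0}$, this gives $\|\mathbf{N}_{\alpha(\tau)}(\Phi(\tau))\|\lesssim \|\Phi(\tau)\|^2$, and because $\Phi\in\mathcal{X}_\delta$ we have $\|\Phi(\tau)\|\le\delta e^{-\tau/2}$, hence $\|\mathbf{N}_{\alpha(\tau)}(\Phi(\tau))\|\lesssim\delta^2 e^{-\tau}$. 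One must first check that $\alpha(\tau)\in\overline{\mathbb{B}_\delta^5}$ so that the hypothesis of Lemma \ref{nonlinearestimate} applies; this follows from $\|\alpha\|_X\le\delta$ (in particular $|\alpha(\tau)|\le\delta$ for all $\tau$), which in turn uses $\alpha(0)=0$ and the bound $|\dot\alpha(\tau)|\le\delta e^{-\tau/2}$ integrated from $0$ to $\tau$.

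For the term $\|\hat{\mathbf{L}}_{\alpha(\tau)}\Phi(\tau)\|$, recall $\hat{\mathbf{L}}_{\alpha(\tau)}=\mathbf{L}'_{\alpha(\tau)}-\mathbf{L}'_{\alpha_\infty}$. By Lemma \ref{continuously}, $\|\hat{\mathbf{L}}_{\alpha(\tau)}\|=\|\mathbf{L}'_{\alpha(\tau)}-\mathbf{L}'_{\alpha_\infty}\|\lesssim|\alpha(\tau)-\alpha_\infty|$, and by \eqref{aclose} this is $\lesssim\delta e^{-\tau/2}$. Combining with $\|\Phi(\tau)\|\le\delta e^{-\tau/2}$ gives $\|\hat{\mathbf{L}}_{\alpha(\tau)}\Phi(\tau)\|\lesssim\delta^2 e^{-\tau}$, as desired. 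This handles the first displayed inequality.

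For the second displayed inequality, I would use the identity $\partial_\tau\mathbf{\Psi}_{\alpha(\tau)}=\dot\alpha^k(\tau)\,\mathbf{h}_{\alpha(\tau),k}$ noted just before the statement. By Remark \ref{propertiesQ} and Lemma \ref{instability}, $\mathbf{P}_{\alpha_\infty}$ and $\mathbf{Q}_{\alpha_\infty}$ annihilate the eigenfunctions $\mathbf{h}_{\alpha_\infty,k}$ in the appropriate way: $\mathbf{P}_{\alpha_\infty}\mathbf{h}_{\alpha_\infty,k}=0$ and $(\mathbf{I}-\mathbf{Q}_{\alpha_\infty})\mathbf{h}_{\alpha_\infty,k}=0$. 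Therefore
\[
\mathbf{P}_{\alpha_\infty}\partial_\tau\mathbf{\Psi}_{\alpha(\tau)}=\dot\alpha^k(\tau)\,\mathbf{P}_{\alpha_\infty}\bigl(\mathbf{h}_{\alpha(\tau),k}-\mathbf{h}_{\alpha_\infty,k}\bigr),
\]
and similarly for $(\mathbf{I}-\mathbf{Q}_{\alpha_\infty})\partial_\tau\mathbf{\Psi}_{\alpha(\tau)}$. Now I bound $|\dot\alpha^k(\tau)|\le\delta e^{-\tau/2}$ (from $\alpha\in X_\delta$), $\|\mathbf{P}_{\alpha_\infty}\|\lesssim1$, and $\|\mathbf{h}_{\alpha(\tau),k}-\mathbf{h}_{\alpha_\infty,k}\|\lesssim|\alpha(\tau)-\alpha_\infty|\lesssim\delta e^{-\tau/2}$ by \eqref{Lipg} of Lemma \ref{nonlinearestimates2} together with \eqref{aclose}. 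Multiplying these gives the bound $\lesssim\delta^2 e^{-\tau}$.

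The main subtlety — and the reason one cannot simply bound $\|\partial_\tau\mathbf{\Psi}_{\alpha(\tau)}\|$ by $|\dot\alpha(\tau)|\lesssim\delta e^{-\tau/2}$ directly — is that such a crude bound only yields $\delta e^{-\tau/2}$, not the required $\delta^2 e^{-\tau}$. The gain of the extra factor $\delta e^{-\tau/2}$ is precisely what comes from projecting onto the spaces $\rg\mathbf{P}_{\alpha_\infty}$ and $\rg(\mathbf{I}-\mathbf{Q}_{\alpha_\infty})$: these projections kill the "frozen-coefficient" eigenfunctions $\mathbf{h}_{\alpha_\infty,k}$, so only the difference $\mathbf{h}_{\alpha(\tau),k}-\mathbf{h}_{\alpha_\infty,k}$ survives, and this difference is itself $O(\delta e^{-\tau/2})$. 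Keeping track of which projection annihilates which eigenfunction, and verifying the operator-norm bounds $\|\mathbf{P}_{\alpha_\infty}\|,\|\mathbf{Q}_{\alpha_\infty}\|\lesssim1$ uniformly for small $\alpha_\infty$ (which follows from Lemma \ref{nonlinearestimates2} and the fact that $\mathbf{P}_0,\mathbf{Q}_0$ are fixed bounded operators), is the only place where care is needed; everything else is a direct application of the Lipschitz estimates already proved.
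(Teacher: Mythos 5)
Your proposal is correct and is essentially the same argument as the paper's (which is only cited from \cite{DonSch16a}, Lemma~5.4, but uses precisely the ingredients you invoke): Lemma~\ref{nonlinearestimate} with $\mathbf{v}=\mathbf{0}$ for $\mathbf{N}_{\alpha(\tau)}$, Lemma~\ref{continuously} plus \eqref{aclose} for $\hat{\mathbf{L}}_{\alpha(\tau)}$, and the cancellation $\mathbf{P}_{\alpha_\infty}\mathbf{h}_{\alpha_\infty,k}=0=(\mathbf{I}-\mathbf{Q}_{\alpha_\infty})\mathbf{h}_{\alpha_\infty,k}$ (from Remark~\ref{propertiesQ}) together with \eqref{Lipg} for the $\partial_\tau\mathbf{\Psi}_{\alpha(\tau)}$ terms; your explanation of why the naive bound $|\dot\alpha(\tau)|\lesssim\delta e^{-\tau/2}$ is insufficient and of the gain from projecting away $\mathbf{h}_{\alpha_\infty,k}$ is exactly the key point. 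One trivial inaccuracy: membership in $X_\delta$ gives $|\dot\alpha(\tau)|\le\delta e^{-\tau/2}$, which with $\alpha(0)=0$ integrates to $|\alpha(\tau)|\le 2\delta$ (not $\le\delta$), so $\|\alpha\|_X\le 3\delta$ rather than $\le\delta$; this changes only the implicit constants and not the conclusion.
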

\begin{proof}
The proof coincides with the proof of Lemma 5.4 in \cite{DonSch16a}. 
\end{proof}
We also prove the corresponding Lipschitz bounds.
\begin{lemma} \label{bounds2}
Let $\delta >0$ be sufficiently small. Then, for all $\Phi, \Psi \in \mathcal{X}_{\delta}$ and $\alpha, \beta \in X_{\delta}$, 
\begin{align*}
& \left \| \hat{ \mathbf{L} }_{\alpha (\tau)} \Phi (\tau) - \hat{ \mathbf{L} }_{\beta (\tau)} \Psi (\tau) \right \| 
\lesssim \delta ^{2} e^{- \tau} \left( \| \Phi - \Psi \|_{\mathcal{X}} + \| \alpha - \beta \|_{X}  \right), \\
& \left \| \mathbf{N}_{\alpha (\tau)} (\Phi (\tau)) - \mathbf{N}_{\beta (\tau)} (\Psi (\tau))  \right \| 
\lesssim \delta ^{2} e^{- \tau} \left( \| \Phi - \Psi \|_{\mathcal{X}} + \| \alpha - \beta \|_{X}  \right), \\
& \left \|  \mathbf{P}_{\alpha_{\infty}} \partial _{\tau} \Psi_{\alpha (\tau)}-  \mathbf{P}_{\beta_{\infty}} \partial _{\tau} \Psi_{\beta (\tau)} \right \|  
\lesssim \delta ^{2} e^{- \tau}  \| \alpha - \beta \|_{X}, \\
& \left \| (\mathbf{I}-\mathbf{Q}_{\alpha_{\infty}}) \partial _{\tau} \Psi _{\alpha(\tau)} - (\mathbf{I}-\mathbf{Q}_{\beta_{\infty}}) \partial _{\tau} \Psi _{\beta(\tau)} \right \| 
\lesssim \delta ^{2} e^{- \tau} \left( \| \Phi - \Psi \|_{\mathcal{X}} + \| \alpha - \beta \|_{X}  \right),
\end{align*}
for all $\tau>0$.
\end{lemma}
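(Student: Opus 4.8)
The plan is to follow the proof of Lemma \ref{bounds1} (the non-Lipschitz version, identical to Lemma 5.4 of \cite{DonSch16a}) and upgrade each of the four bounds by a telescoping argument: I would write every difference on the left-hand side as a sum of terms in which only one argument changes at a time, and control each such term by combining the Lipschitz estimates already at our disposal---Lemma \ref{continuously} for $\alpha\mapsto\mathbf{L}'_\alpha$, Lemma \ref{nonlinearestimates2} for $\mathbf{g}_\alpha$, $\mathbf{h}_{\alpha,j}$, $\mathbf{P}_\alpha$, $\mathbf{Q}_\alpha$, and Lemma \ref{nonlinearestimate} for $\mathbf{N}_\alpha$---with the a priori decay carried by elements of $\mathcal{X}_\delta$ and $X_\delta$. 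The inputs I would use repeatedly are $\|\Phi(\tau)\|\le\delta e^{-\tau/2}$ and $\|\Phi(\tau)-\Psi(\tau)\|\le e^{-\tau/2}\|\Phi-\Psi\|_{\mathcal{X}}$ for $\Phi,\Psi\in\mathcal{X}_\delta$, together with $|\dot\alpha(\tau)|\le\delta e^{-\tau/2}$, $|\alpha(\tau)|\le\delta$, $|\alpha(\tau)-\beta(\tau)|\le\|\alpha-\beta\|_X$, $|\alpha_\infty-\alpha(\tau)|\lesssim\delta e^{-\tau/2}$ (from \eqref{aclose}), $|\alpha_\infty-\beta_\infty|\lesssim\|\alpha-\beta\|_X$, and the mixed bound $\big|(\alpha(\tau)-\alpha_\infty)-(\beta(\tau)-\beta_\infty)\big|=\big|\int_\tau^\infty(\dot\alpha(s)-\dot\beta(s))\,ds\big|\lesssim e^{-\tau/2}\|\alpha-\beta\|_X$ for $\alpha,\beta\in X_\delta$; finally $\partial_\tau\mathbf{\Psi}_{\alpha(\tau)}=\dot\alpha^k(\tau)\mathbf{h}_{\alpha(\tau),k}$.

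The nonlinearity is immediate: applying Lemma \ref{nonlinearestimate} pointwise in $\tau$ with $\mathbf{u}=\Phi(\tau)$, $\mathbf{v}=\Psi(\tau)$ and rapidities $\alpha(\tau),\beta(\tau)\in\overline{\mathbb{B}_{\delta}^{5}}$ gives a bound $\lesssim(\|\Phi(\tau)\|+\|\Psi(\tau)\|)\|\Phi(\tau)-\Psi(\tau)\|+(\|\Phi(\tau)\|^2+\|\Psi(\tau)\|^2)|\alpha(\tau)-\beta(\tau)|$, and inserting the a priori bounds yields the stated decay. For the $\hat{\mathbf{L}}$-term I would use $\hat{\mathbf{L}}_{\alpha(\tau)}=\mathbf{L}'_{\alpha(\tau)}-\mathbf{L}'_{\alpha_\infty}$ and split
\begin{align*}
\hat{\mathbf{L}}_{\alpha(\tau)}\mathbf{\Phi}(\tau)-\hat{\mathbf{L}}_{\beta(\tau)}\mathbf{\Psi}(\tau)
&=\big(\mathbf{L}'_{\alpha(\tau)}-\mathbf{L}'_{\alpha_\infty}\big)\big(\mathbf{\Phi}(\tau)-\mathbf{\Psi}(\tau)\big) \\
&\quad+\Big[\big(\mathbf{L}'_{\alpha(\tau)}-\mathbf{L}'_{\alpha_\infty}\big)-\big(\mathbf{L}'_{\beta(\tau)}-\mathbf{L}'_{\beta_\infty}\big)\Big]\mathbf{\Psi}(\tau);
\end{align*}
the first summand is $\lesssim|\alpha(\tau)-\alpha_\infty|\,\|\mathbf{\Phi}(\tau)-\mathbf{\Psi}(\tau)\|$ by Lemma \ref{continuously}. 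The second summand is the delicate one in this group: the crude bound $\|\mathbf{L}'_{\alpha(\tau)}-\mathbf{L}'_{\beta(\tau)}\|+\|\mathbf{L}'_{\alpha_\infty}-\mathbf{L}'_{\beta_\infty}\|\lesssim\|\alpha-\beta\|_X$ loses a factor $e^{-\tau/2}$, so instead I would use the fundamental-theorem-of-calculus representation of $\alpha\mapsto\mathbf{L}'_\alpha$ (as in the proof of Lemma \ref{continuously}) to identify this mixed difference with a single integral of $\partial_{\alpha^k}\mathbf{L}'$ paired with $(\alpha(\tau)-\alpha_\infty)-(\beta(\tau)-\beta_\infty)$ plus a double integral of $\partial^2_\alpha\mathbf{L}'$; using the uniform $L^\infty$-boundedness of the $\alpha$-derivatives of $V_\alpha$ for small $\alpha$ (as in \eqref{boundV}), the mixed bound above, $|\alpha_\infty-\beta_\infty|\lesssim\|\alpha-\beta\|_X$, and $\|\mathbf{\Psi}(\tau)\|\lesssim\delta e^{-\tau/2}$, this produces the claimed $\delta^2 e^{-\tau}\|\alpha-\beta\|_X$.

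The two projection terms need the same cancellation that makes Lemma \ref{bounds1} work. Since $\partial_\tau\mathbf{\Psi}_{\alpha(\tau)}=\dot\alpha^k\mathbf{h}_{\alpha(\tau),k}$ decays only like $\delta e^{-\tau/2}$, the extra factor in the target must come from the fact that $\mathbf{h}_{\gamma,k}$ spans $\rg\mathbf{Q}_\gamma$, hence $\mathbf{P}_\gamma\mathbf{h}_{\gamma,k}=0$ and $(\mathbf{I}-\mathbf{Q}_\gamma)\mathbf{h}_{\gamma,k}=0$ by Remark \ref{propertiesQ}. I would therefore first rewrite
\[
\mathbf{P}_{\alpha_\infty}\partial_\tau\mathbf{\Psi}_{\alpha(\tau)}=\dot\alpha^k(\tau)\,\mathbf{P}_{\alpha_\infty}\big(\mathbf{h}_{\alpha(\tau),k}-\mathbf{h}_{\alpha_\infty,k}\big),
\]
and similarly with $\mathbf{I}-\mathbf{Q}_{\alpha_\infty}$, which makes the gain $\|\mathbf{h}_{\alpha(\tau),k}-\mathbf{h}_{\alpha_\infty,k}\|\lesssim|\alpha(\tau)-\alpha_\infty|\lesssim\delta e^{-\tau/2}$ manifest; then I would telescope the difference $\dot\alpha^k\mathbf{P}_{\alpha_\infty}(\mathbf{h}_{\alpha(\tau),k}-\mathbf{h}_{\alpha_\infty,k})-\dot\beta^k\mathbf{P}_{\beta_\infty}(\mathbf{h}_{\beta(\tau),k}-\mathbf{h}_{\beta_\infty,k})$ into three pieces, varying one of $\dot\alpha^k$, $\mathbf{P}_{\gamma_\infty}$ and $\mathbf{h}_{\gamma(\tau),k}-\mathbf{h}_{\gamma_\infty,k}$ at a time, and estimate them using $|\dot\alpha^k-\dot\beta^k|\le e^{-\tau/2}\|\alpha-\beta\|_X$, $\|\mathbf{P}_{\alpha_\infty}-\mathbf{P}_{\beta_\infty}\|\lesssim\|\alpha-\beta\|_X$ (Lemma \ref{nonlinearestimates2}), $\|\mathbf{h}_{\gamma(\tau),k}-\mathbf{h}_{\gamma_\infty,k}\|\lesssim\delta e^{-\tau/2}$, and a second-order-difference bound for the $\mathbf{h}$'s analogous to the mixed bound for the $\alpha$'s; the $\mathbf{I}-\mathbf{Q}_{\alpha_\infty}$ term is handled identically, and any superfluous $\|\mathbf{\Phi}-\mathbf{\Psi}\|_{\mathcal{X}}$ appearing on the right-hand side only weakens the claim. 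The main obstacle throughout is precisely this point: keeping the eigenspace-orthogonality cancellation intact (and, for $\hat{\mathbf{L}}$, the second-order structure in $\alpha$) when splitting into single-argument differences, so that every resulting piece still carries the full exponential rate $e^{-\tau}$; once this is arranged, the remaining estimates are routine bookkeeping with the quoted bounds.
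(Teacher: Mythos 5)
Your proof follows the same route as the paper's reference (Lemma 5.5 of \cite{DonSch16a}): apply the Lipschitz estimate of Lemma \ref{nonlinearestimate} pointwise in $\tau$ for $\mathbf{N}$, split $\hat{\mathbf{L}}_{\alpha}\mathbf{\Phi}-\hat{\mathbf{L}}_{\beta}\mathbf{\Psi}$ so that the loss-free piece is the second-order difference of $\mathbf{L}'$ acting on $\mathbf{\Psi}(\tau)$, and exploit the cancellation $\mathbf{P}_{\alpha_\infty}\mathbf{h}_{\alpha_\infty,k}=0$ and $(\mathbf{I}-\mathbf{Q}_{\alpha_\infty})\mathbf{h}_{\alpha_\infty,k}=0$ for the projection terms before telescoping. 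You correctly identify both places where the naive bound loses a factor $e^{-\tau/2}$ (the mixed $\hat{\mathbf{L}}$-difference, and the $\partial_\tau\mathbf{\Psi}_{\alpha(\tau)}$ terms) and supply exactly the right cancellations to recover it.

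One small point of arithmetic: your own chain of estimates actually yields $\delta e^{-\tau}$, not $\delta^2 e^{-\tau}$, in the $\|\mathbf{\Phi}-\mathbf{\Psi}\|_{\mathcal X}$-part of the first two bounds (since $(\|\mathbf\Phi(\tau)\|+\|\mathbf\Psi(\tau)\|)\|\mathbf\Phi(\tau)-\mathbf\Psi(\tau)\|\lesssim\delta e^{-\tau/2}\cdot e^{-\tau/2}\|\mathbf\Phi-\mathbf\Psi\|_{\mathcal X}$, and likewise the ``single-integral'' piece of the second-order FTC difference carries no extra $\delta$), so writing ``this produces the claimed $\delta^2 e^{-\tau}\|\alpha-\beta\|_X$'' slightly overclaims. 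The $\delta^2$ in the statement as printed appears to be a typographical remnant from Lemma \ref{bounds1} (where every factor does carry a $\delta$); the bound with a single power of $\delta$ is what the argument gives and is all that the fixed-point applications require. Your proof is correct once this exponent is read as $\delta$ rather than $\delta^2$.
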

\begin{proof}
The proof coincides with the proof of Lemma 5.5 in \cite{DonSch16a}. 
\end{proof}
\subsection{The Lorentz symmetry instability} Now, we focus on the instability induced by the Lorentz symmetry and in particular we will choose $\alpha = \alpha (\tau)$ in such a way that this instability is suppressed. To do so, we need an equation for $\alpha = \alpha (\tau)$. By Proposition \ref{instability}, we have $\mathbf{Q}_{\alpha_{\infty},j} \mathbf{S}_{\alpha_{\infty}}=\mathbf{Q}_{\alpha_{\infty},j}$ and therefore applying $\mathbf{Q}_{\alpha_{\infty},j}$ to the weak formulation of the modulation equation, that is \eqref{Duhamelmodulation}, we infer
\begin{align*}
\mathbf{Q}_{\alpha_{\infty},j} \mathbf{\Phi }(\tau) = \mathbf{Q}_{\alpha_{\infty},j} \mathbf{u} + \mathbf{Q}_{\alpha_{\infty},j} \int_{0}^{\tau} \mathbf{S}_{\alpha _{\infty}} (\tau - \sigma) \Big( 
\hat{\mathbf{L}}_{\alpha(\sigma)} \mathbf{\Phi}(\sigma) + \mathbf{N}_{\alpha(\sigma)} (\mathbf{\Phi}(\sigma)) - \partial _{\sigma} \mathbf{\Psi}_{\alpha(\sigma)}
\Big) d \sigma,
\end{align*}
for all $j \in \{1,2,3,4,5 \}$. To suppress the instability we would like to trivialize the range and set the right-hand side equal to zero. However, this is not possible since for $\tau=0$ the condition $\mathbf{Q}_{\alpha_{\infty},j} \mathbf{u}=0$ on the initial data would be required which is not true in general. Since we are only interested in the long-term evolution it however suffices to assume that $\mathbf{Q}_{\alpha_{\infty},j} \mathbf{\Phi }(\tau) $ vanishes for large $\tau$. Hence, we set
\begin{align*}
\mathbf{Q}_{\alpha_{\infty},j} \mathbf{\Phi }(\tau)  = \chi (\tau) \mathbf{h},\quad  \mathbf{h}:=\mathbf{Q}_{\alpha_{\infty},j} \mathbf{u} \in \rg \mathbf{Q}_{\alpha_{\infty}}
\end{align*}
where $\chi$ is a smooth cut-off function, which equals to $1$ on $[0, 1]$, $0$ for $\tau \geq 4$ and satisfies $|\dot{\chi}| \leq 1$ everywhere. Now, evaluation at $\tau=0$ yields $ \mathbf{h}=\mathbf{Q}_{\alpha_{\infty},j} \mathbf{u} $ which now holds true in general. This ansatz yields an equation for $\alpha$, namely
\begin{align} \label{mod2}
(1-\chi(\tau))\mathbf{h} + \mathbf{Q}_{\alpha_{\infty},j} \int_{0}^{\tau} \Big( 
\hat{\mathbf{L}}_{\alpha(\sigma)} \mathbf{\Phi}(\sigma) + \mathbf{N}_{\alpha(\sigma)} (\mathbf{\Phi}(\sigma)) - \partial _{\sigma} \mathbf{\Psi}_{\alpha(\sigma)} \Big) d \sigma = \mathbf{0}.
\end{align}
In particular, we define the auxiliary function
\begin{align*}
\hat{\mathbf{h}}_{\alpha(\tau),k}:=\mathbf{h}_{\alpha(\tau),k} - \mathbf{h}_{\alpha_{\infty},k},
\end{align*}
assume that $\alpha(0)=0$ and use the properties of $ \mathbf{Q}_{\alpha_{\infty},j}$ from remark \ref{propertiesQ} to write
\begin{align*}
 \mathbf{Q}_{\alpha_{\infty},j} \int _{0}^{\tau} \partial _{\sigma} \mathbf{\Psi}_{\alpha(\sigma)} d \sigma & = 
  \mathbf{Q}_{\alpha_{\infty},j} \int _{0}^{\tau}  \dot{\alpha}^{k}(\sigma) \mathbf{h}_{\alpha(\sigma),k} d \sigma \\
&  =   \mathbf{Q}_{\alpha_{\infty},j} \int _{0}^{\tau}  \dot{\alpha}^{k}(\sigma)\left( \hat{\mathbf{h}}_{\alpha(\tau),k} + \mathbf{h}_{\alpha_{\infty},k}   \right) d \sigma  \\
& =  \mathbf{Q}_{\alpha_{\infty},j} \int _{0}^{\tau}  \dot{\alpha}^{k}(\sigma) \hat{\mathbf{h}}_{\alpha(\tau),k}  d \sigma + \alpha^{j}(\tau) \mathbf{h}_{\alpha_{\infty},j}.  
\end{align*}
Therefore, we can write equation \eqref{mod2} as
\begin{align} \label{modsolve}
\alpha^{j}(\tau) \mathbf{h}_{\alpha_{\infty},j} & = (1-\chi(\tau)) \mathbf{Q}_{\alpha_{\infty},j} \mathbf{u} \nonumber \\
& +  \mathbf{Q}_{\alpha_{\infty},j} \int_{0}^{\tau} \Big( 
\hat{\mathbf{L}}_{\alpha(\sigma)} \mathbf{\Phi}(\sigma) + \mathbf{N}_{\alpha(\sigma)} (\mathbf{\Phi}(\sigma)) \Big) d \sigma \nonumber  \\
& -  \mathbf{Q}_{\alpha_{\infty},j} \int_{0}^{\tau} \dot{\alpha}^{k}(\sigma) \hat{\mathbf{h}}_{\alpha(\tau),k} d \sigma \nonumber \\
& := \int_{0}^{\tau} \mathbf{G}_{j}(\alpha,\Phi,\mathbf{u})(\sigma) d \sigma.
\end{align}
for the functions $\alpha^{j} = \alpha ^{j} (\tau) \in \mathbb{R}^{5}$, $j \in \{1,2,3,4,5\}$. Then, we have a fixed point formulation for $\alpha$,
\begin{align} \label{modsolve2}
\alpha(\tau)= \int_{0}^{\tau} G(\alpha,\Phi,\mathbf{u}):=\widetilde{G}(\alpha,\Phi,\mathbf{u}),
\end{align}
where $G=(G_{1},G_{2},G_{3},G_{4},G_{5})$ and
\begin{align*}
	G_{j} ( \alpha,\Phi,\mathbf{u})(\sigma) := \frac{1}{ \| \mathbf{h}_{\alpha_{\infty,j} } \|^{2}} \left( \mathbf{G}_{j} (\alpha,\Phi,\mathbf{u})(\sigma) | \mathbf{h}_{\alpha_{\infty},j} \right).
\end{align*}
 Finally, we use a fixed point argument to show that the function $\alpha:[0,\infty) \rightarrow \mathbb{R}^{5}$ can be chosen in such a way that \eqref{modsolve2} (equivalently \eqref{modsolve}) holds provided that $\Phi$ satisfies a smallness condition. Consequently, the instability induced by the Lorentz symmetry is suppressed.  
\begin{prop} \label{suppressedLorentz}
Let $\delta>0$ be sufficiently small, $c>0$ sufficiently large and suppose that $\Phi \in \mathcal{X}_{\delta}$. Then, there exists a unique function $\alpha \in X_{\delta}$ such that equation \eqref{modsolve2} holds for each $j \in \{1,2,3,4,5\}$ provided $\| \mathbf{u} \| \leq \frac{\delta}{c}$. Furthermore, the map $\Phi \longmapsto \alpha$ is Lipschitz continuous. 
\end{prop}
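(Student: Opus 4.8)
The plan is to solve the fixed-point equation \eqref{modsolve2} by the contraction mapping principle, viewing $\alpha\mapsto\widetilde{G}(\alpha,\Phi,\mathbf{u})$ as a self-map of the complete metric space $(X_{\delta},\|\cdot\|_{X})$ for a fixed $\Phi\in\mathcal{X}_{\delta}$ and a fixed $\mathbf{u}$ with $\|\mathbf{u}\|\le\delta/c$. Before that I would record two preliminary observations. First, since $\mathbf{Q}_{\alpha_{\infty},j}$ has rank one with range $\langle\mathbf{h}_{\alpha_{\infty},j}\rangle$ (Lemma \ref{instability} and remark \ref{propertiesQ}) and since $\|\mathbf{h}_{\alpha_{\infty},j}\|\simeq 1$ uniformly for small $\alpha_{\infty}$ (by continuity of $\alpha\mapsto\mathbf{h}_{\alpha,j}$, cf.\ \eqref{Lipg}, and $\mathbf{h}_{0,j}\neq 0$), every term on the right of \eqref{modsolve} is a scalar multiple of $\mathbf{h}_{\alpha_{\infty},j}$; hence pairing with $\mathbf{h}_{\alpha_{\infty},j}$ and dividing by $\|\mathbf{h}_{\alpha_{\infty},j}\|^{2}$ loses no information and the vector equation \eqref{modsolve} is genuinely equivalent to the scalar system \eqref{modsolve2}. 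Second, for $\alpha\in X_{\delta}$ the limit $\alpha_{\infty}$ exists with $|\alpha_{\infty}-\alpha(\tau)|\lesssim\delta e^{-\tau/2}$ by \eqref{aclose}, so the projections occurring in $\widetilde{G}$ are well defined.

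To see that $\widetilde{G}(\cdot,\Phi,\mathbf{u})$ maps $X_{\delta}$ into itself, note that $\widetilde{G}(\alpha,\Phi,\mathbf{u})(0)=0$ and $\tfrac{d}{d\tau}\widetilde{G}(\alpha,\Phi,\mathbf{u})(\tau)=G(\alpha,\Phi,\mathbf{u})(\tau)$, so it suffices to prove the pointwise bound $|G(\alpha,\Phi,\mathbf{u})(\tau)|\le\delta e^{-\tau/2}$; then automatically $|\widetilde{G}(\alpha,\Phi,\mathbf{u})(\tau)|\le\int_{0}^{\tau}\delta e^{-\sigma/2}\,d\sigma\le 2\delta$. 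Since $\|\mathbf{h}_{\alpha_{\infty},j}\|\gtrsim 1$, this reduces to bounding $\|\mathbf{G}_{j}(\alpha,\Phi,\mathbf{u})(\tau)\|$, which splits into three pieces: the cutoff term $\dot{\chi}(\tau)\mathbf{Q}_{\alpha_{\infty},j}\mathbf{u}$, supported in $[1,4]$ and bounded by $\|\mathbf{u}\|\le\delta/c\lesssim\delta e^{-\tau/2}$ there once $c$ is large; the term $\mathbf{Q}_{\alpha_{\infty},j}\bigl(\hat{\mathbf{L}}_{\alpha(\tau)}\Phi(\tau)+\mathbf{N}_{\alpha(\tau)}(\Phi(\tau))\bigr)$, which is $\lesssim\delta^{2}e^{-\tau}$ by Lemma \ref{bounds1}; and, writing $\int_{0}^{\tau}\dot{\alpha}^{k}\hat{\mathbf{h}}_{\alpha(\tau),k}\,d\sigma=\alpha^{k}(\tau)\hat{\mathbf{h}}_{\alpha(\tau),k}$, the derivative of the modulation term, controlled via $|\alpha(\tau)|\lesssim\delta$, $|\dot{\alpha}(\tau)|\le\delta e^{-\tau/2}$, $\|\hat{\mathbf{h}}_{\alpha(\tau),k}\|\lesssim|\alpha(\tau)-\alpha_{\infty}|\lesssim\delta e^{-\tau/2}$ and $\|\tfrac{d}{d\tau}\mathbf{h}_{\alpha(\tau),k}\|\lesssim|\dot{\alpha}(\tau)|$ (from \eqref{Lipg}), giving $\lesssim\delta^{2}e^{-\tau/2}$. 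Altogether $|G(\alpha,\Phi,\mathbf{u})(\tau)|\le C(\tfrac1c+\delta)\,\delta e^{-\tau/2}\le\delta e^{-\tau/2}$ for $c$ large and $\delta$ small, so $\widetilde{G}(\alpha,\Phi,\mathbf{u})\in X_{\delta}$.

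For the contraction property I would estimate $\|\widetilde{G}(\alpha,\Phi,\mathbf{u})-\widetilde{G}(\beta,\Phi,\mathbf{u})\|_{X}$ term by term, mirroring the previous step but now invoking the Lipschitz estimates of Lemma \ref{bounds2} for the $\hat{\mathbf{L}}$- and $\mathbf{N}$-terms, the bound \eqref{LipP} together with $|\alpha_{\infty}-\beta_{\infty}|\lesssim\|\alpha-\beta\|_{X}$ for the $\alpha$-dependence of $\mathbf{Q}_{\alpha_{\infty},j}$ and of $\|\mathbf{h}_{\alpha_{\infty},j}\|$, \eqref{Lipg} for the $\hat{\mathbf{h}}$-term, and $\|\mathbf{u}\|\le\delta/c$ for the cutoff term; every contribution carries a factor $\delta$ or $1/c$, so for $\delta$ small and $c$ large one gets $\|\widetilde{G}(\alpha,\Phi,\mathbf{u})-\widetilde{G}(\beta,\Phi,\mathbf{u})\|_{X}\le\tfrac12\|\alpha-\beta\|_{X}$, and Banach's theorem yields a unique $\alpha\in X_{\delta}$ with $\alpha=\widetilde{G}(\alpha,\Phi,\mathbf{u})$, i.e.\ \eqref{modsolve2} (equivalently \eqref{modsolve}) holds for each $j$. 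For the Lipschitz dependence $\Phi\mapsto\alpha$, denoting by $\alpha_{(i)}$ the fixed point attached to $\Phi_{i}\in\mathcal{X}_{\delta}$, I would subtract the two fixed-point identities and insert the intermediate term $\widetilde{G}(\alpha_{(1)},\Phi_{2},\mathbf{u})$, obtaining $\|\alpha_{(1)}-\alpha_{(2)}\|_{X}\le\tfrac12\|\alpha_{(1)}-\alpha_{(2)}\|_{X}+C\|\Phi_{1}-\Phi_{2}\|_{\mathcal{X}}$ from the contraction bound and the $\Phi$-Lipschitz part of Lemma \ref{bounds2}, and then absorb the first term.

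The whole argument is structurally routine once Lemmas \ref{bounds1} and \ref{bounds2} are available; the genuinely delicate feature—and the one I expect to cost the most care—is the self-referential appearance of $\alpha_{\infty}=\lim_{\tau\to\infty}\alpha(\tau)$ inside the projections $\mathbf{Q}_{\alpha_{\infty},j}$ that define $\widetilde{G}$. This is exactly why the iteration has to be carried out on the trajectory space $X_{\delta}$ rather than on a finite-dimensional parameter, and it is tamed by the decay estimate \eqref{aclose} for $\alpha_{\infty}-\alpha(\tau)$ together with the Lipschitz bound \eqref{LipP} for the projections.
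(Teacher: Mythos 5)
Your proposal fleshes out exactly the fixed-point argument that the paper merely sketches by citing Lemma 5.6 of \cite{DonSch16a}: reduction to a pointwise bound on $G$ via $\widetilde{G}(\alpha,\Phi,\mathbf{u})(0)=0$, the split of $G$ into the cutoff, the $\hat{\mathbf{L}}$--$\mathbf{N}$, and the modulation pieces, the use of Lemmas \ref{bounds1}, \ref{bounds2}, \ref{nonlinearestimates2} together with \eqref{aclose} and the uniform lower bound $\|\mathbf{h}_{\alpha_{\infty},j}\|\gtrsim 1$, and the standard contraction plus intermediate-term insertion to obtain Lipschitz dependence on $\Phi$. This is the same route as the paper's (and the reference's), so the proposal is correct.
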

\begin{proof}
The proof relies on a fixed point argument. The fact that $\widetilde{G}(\cdot,\Phi,\mathbf{u})$ maps $X_{\delta}$ to itself follows from Lemma \ref{nonlinearestimates2} and Lemma \ref{bounds2}. Furthermore, the contraction property is a direct consequence of Lemma \ref{bounds2} and finally the Lipschitz continuity follows from Lemma \ref{bounds2}. For more details see Lemma 5.6 in \cite{DonSch16a}. 
\end{proof}
\subsection{The time translation instability} 
Next, we turn our attention to the instability induced by the time translation symmetry. However, this time we proceed differently and we add a correction term to the initial data $\mathbf{\Phi}(0)=\mathbf{u}$ in the equation \eqref{Duhamelmodulation} which stabilizes the evolution. In other words, we consider the modified equation
\begin{align} \label{Modified}
\mathbf{\Phi }(\tau) = \mathbf{K}(\Phi,\alpha,\mathbf{u}),
\end{align}
where
\begin{align} \label{Kappa}
 \mathbf{K}(\Phi,\alpha,\mathbf{u})&:= \mathbf{S}_{\alpha _{\infty}} (\tau) \left(\mathbf{u}-\mathbf{C}(\Phi,\alpha,\mathbf{u}) \right) \nonumber \\
 & + \int_{0}^{\tau} \mathbf{S}_{\alpha _{\infty}} (\tau - \sigma) \Big( 
\hat{\mathbf{L}}_{\alpha(\sigma)} \mathbf{\Phi}(\sigma) + \mathbf{N}_{\alpha(\sigma)} (\mathbf{\Phi}(\sigma)) - \partial _{\sigma} \mathbf{\Psi}_{\alpha(\sigma)}
\Big) d \sigma,
\end{align}
and
\begin{align} \label{correction}
\mathbf{C}(\Phi,\alpha,\mathbf{u}):=\mathbf{P}_{\alpha_{\infty}} \mathbf{u} + \mathbf{P}_{\alpha_{\infty}} \int_{0}^{\infty} e^{-\sigma}  \Big( 
\hat{\mathbf{L}}_{\alpha(\sigma)} \mathbf{\Phi}(\sigma) + \mathbf{N}_{\alpha(\sigma)} (\mathbf{\Phi}(\sigma)) - \partial _{\sigma} \mathbf{\Psi}_{\alpha(\sigma)}
\Big) d \sigma.
\end{align}
Here, all integrals exist as Riemann integrals over continuous functions. Now, we can expect that the evolution \eqref{Modified} will have a solution provided that the initial data are sufficiently small. This is precisely our next result.
\begin{prop} \label{solutionmodified}
Let $\delta>0$ be sufficiently small and $c>0$ sufficiently large. If $\| \mathbf{u} \| \leq \frac{\delta}{c}$, then there exists a unique functions $\alpha \in X_{\delta}$ and $\Phi \in \mathcal{X}_{\delta}$ such that equation \eqref{Modified} holds for all $\tau>0$. 
\end{prop}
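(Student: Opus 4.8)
The plan is to recast \eqref{Modified} as a fixed point problem for $\Phi$ alone and then invoke the Banach fixed point theorem on the complete metric space $\mathcal{X}_{\delta}$. Given any $\Phi\in\mathcal{X}_{\delta}$, Proposition \ref{suppressedLorentz} produces a unique $\alpha=\alpha(\Phi)\in X_{\delta}$, depending Lipschitz-continuously on $\Phi$; inserting it into $\mathbf{K}$ yields the map $\mathbf{T}(\Phi):=\mathbf{K}(\Phi,\alpha(\Phi),\mathbf{u})$, and a fixed point $\Phi=\mathbf{T}(\Phi)$, together with its associated $\alpha(\Phi)$, is exactly a solution of \eqref{Modified}. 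So it suffices to show that, for $\delta$ sufficiently small and $c$ sufficiently large, $\mathbf{T}$ maps $\mathcal{X}_{\delta}$ into itself and is a contraction in $\|\cdot\|_{\mathcal{X}}$.

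For the self-mapping property I would split $\mathbf{T}(\Phi)(\tau)$ along the three invariant subspaces via the spectral projections $\mathbf{P}_{\alpha_{\infty}}$, $\mathbf{Q}_{\alpha_{\infty}}$ and $\widetilde{\mathbf{P}}_{\alpha_{\infty}}=\mathbf{I}-\mathbf{P}_{\alpha_{\infty}}-\mathbf{Q}_{\alpha_{\infty}}$, using Remark \ref{propertiesQ} and the identities $\mathbf{S}_{\alpha_{\infty}}(\tau)\mathbf{P}_{\alpha_{\infty}}=e^{\tau}\mathbf{P}_{\alpha_{\infty}}$ and $\mathbf{S}_{\alpha_{\infty}}(\tau)\mathbf{Q}_{\alpha_{\infty},j}=\mathbf{Q}_{\alpha_{\infty},j}$ from Lemma \ref{instability}. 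Since $\widetilde{\mathbf{P}}_{\alpha_{\infty}}\mathbf{C}=\mathbf{Q}_{\alpha_{\infty}}\mathbf{C}=0$ by the construction of \eqref{correction}, the stable component is $\mathbf{S}_{\alpha_{\infty}}(\tau)\widetilde{\mathbf{P}}_{\alpha_{\infty}}\mathbf{u}$ plus a Duhamel integral, and Lemma \ref{instability} together with Lemma \ref{bounds1} gives $\|\widetilde{\mathbf{P}}_{\alpha_{\infty}}\mathbf{T}(\Phi)(\tau)\|\lesssim(\|\mathbf{u}\|+\delta^{2})e^{-\frac{2}{3}\tau}$. The correction term in \eqref{correction} is designed precisely so that the unstable component collapses to a convergent tail,
\[
\mathbf{P}_{\alpha_{\infty}}\mathbf{T}(\Phi)(\tau)=-e^{\tau}\,\mathbf{P}_{\alpha_{\infty}}\int_{\tau}^{\infty}e^{-\sigma}\Big(\hat{\mathbf{L}}_{\alpha(\sigma)}\mathbf{\Phi}(\sigma)+\mathbf{N}_{\alpha(\sigma)}(\mathbf{\Phi}(\sigma))-\partial_{\sigma}\mathbf{\Psi}_{\alpha(\sigma)}\Big)d\sigma,
\]
so that Lemma \ref{bounds1} yields $\|\mathbf{P}_{\alpha_{\infty}}\mathbf{T}(\Phi)(\tau)\|\lesssim\delta^{2}e^{-\tau}$; and since $\alpha(\Phi)$ solves \eqref{mod2}, the $\mathbf{Q}$-component telescopes to $\mathbf{Q}_{\alpha_{\infty},j}\mathbf{T}(\Phi)(\tau)=\chi(\tau)\mathbf{Q}_{\alpha_{\infty},j}\mathbf{u}$, supported in $\tau\le 4$ and bounded by $\|\mathbf{u}\|\le\delta/c$. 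Summing the three pieces gives $\|\mathbf{T}(\Phi)\|_{\mathcal{X}}\le C_{1}\delta^{2}+C_{2}\delta/c\le\delta$ once $\delta$ is small and $c$ is large.

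The contraction estimate proceeds along identical lines. Writing $\mathbf{T}(\Phi_{1})-\mathbf{T}(\Phi_{2})$ and decomposing as above, one controls the difference with the Lipschitz bounds \eqref{Lipg}, \eqref{LipP}, \eqref{LipS} of Lemma \ref{nonlinearestimates2} for the $\alpha$-dependent eigenfunctions, projections and stable semigroup, the Lipschitz bounds of Lemma \ref{bounds2} for $\hat{\mathbf{L}}_{\alpha}$, $\mathbf{N}_{\alpha}$ and $\partial_{\tau}\mathbf{\Psi}_{\alpha}$, and the Lipschitz continuity of $\Phi\mapsto\alpha(\Phi)$ from Proposition \ref{suppressedLorentz}; this produces $\|\mathbf{T}(\Phi_{1})-\mathbf{T}(\Phi_{2})\|_{\mathcal{X}}\lesssim\delta\,\|\Phi_{1}-\Phi_{2}\|_{\mathcal{X}}$, a genuine contraction for $\delta$ small enough. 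The Banach fixed point theorem then furnishes a unique $\Phi\in\mathcal{X}_{\delta}$; setting $\alpha:=\alpha(\Phi)\in X_{\delta}$ yields the claimed pair, and uniqueness in $\mathcal{X}_{\delta}\times X_{\delta}$ follows from uniqueness of the fixed point together with Proposition \ref{suppressedLorentz}. I expect the main difficulty to be organisational rather than conceptual: all projections, semigroups and eigenfunctions must be evaluated at the $\Phi$-dependent limit $\alpha_{\infty}$, so one has to make sure the decomposition and the collapse identities above are legitimate and that the $\Phi$-dependence of $\alpha_{\infty}$ is tracked through \eqref{aclose} and \eqref{LipP}. This is where the Lipschitz machinery of Lemma \ref{nonlinearestimates2} is essential, and the overall structure mirrors the argument in \cite{DonSch16a}.
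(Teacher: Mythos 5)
Your proposal is correct and follows essentially the same route the paper takes (and that it attributes to Proposition 5.7 of \cite{DonSch16a}): you use Proposition \ref{suppressedLorentz} to reduce to a fixed-point problem for $\Phi$ alone, decompose $\mathbf{K}$ along $\mathbf{P}_{\alpha_\infty}$, $\mathbf{Q}_{\alpha_\infty}$ and $\widetilde{\mathbf{P}}_{\alpha_\infty}$, and exploit the collapse identities on the unstable subspaces together with Lemma \ref{bounds1}, Lemma \ref{bounds2}, Lemma \ref{nonlinearestimates2} and Lemma \ref{instability} to obtain self-mapping and contraction. Your explicit verification of the $\mathbf{P}$-telescoping identity and the $\mathbf{Q}$-cutoff identity is exactly what the paper only names, so this is a faithful, more detailed rendering of the intended argument.
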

\begin{proof}
Here, $\alpha \in X_{\delta}$ is associated to $\Phi$ via Lemma \ref{suppressedLorentz}. The proof relies on a fixed point argument. The fact that $\mathbf{K}(\cdot,\alpha,\mathbf{u}) $ maps $\mathcal{X}_{\delta}$ to itself follows from Lemma \ref{bounds1} and Proposition \ref{instability}. Furthermore, the contraction property is a direct consequence of Lemma \ref{bounds2} and Lemma \ref{suppressedLorentz} and finally the Lipschitz continuity follows from Lemma \ref{bounds2}, Lemma \ref{nonlinearestimates2} and Lemma \ref{suppressedLorentz}. For more details see Proposition 5.7 in \cite{DonSch16a}. 
\end{proof}
Recall that our initial goal is to solve the modulation equation \eqref{Duhamelmodulation} so that we can establish a solution to the initial Cauchy problem \eqref{Cauchy}. So far, we can do this only for the modified equation \eqref{Modified} where the correction term is included. However, the correction term $\mathbf{C}(\Phi,\alpha,\mathbf{u})$ is closely related to the time translation symmetry and therefore we can choose $T$ in such a way that the correction term vanishes. On the other hand, the blowup time $T$ appears explicitly only in the initial data and not in the equation itself. To be precise, we have that 
\begin{align*}
\Phi(0)(\xi)& = \Psi(0)(\xi) - \Psi_{\alpha(0)} (\xi) 
= T \begin{pmatrix}
\psi_{0,1}(T\xi) + \tilde{f}(T\xi) \\
\psi_{0,2}(T\xi) + \tilde{g}(T\xi)
\end{pmatrix} - \Psi_{0} (\xi), 
\end{align*}
for some fixed and given functions $(\widetilde{f}, \widetilde{g})$ which stand for a perturbation of the initial data, see \eqref{datatilde}. Note, that we may write the initial data as 
\begin{align} \label{initialdata}
\Phi(0)(\xi)=\mathbf{U}(T,\mathbf{v}),
\end{align}
to distinguish between the blowup time $T$ and the perturbation  
\begin{align} \label{v}
\mathbf{v}:=
\begin{pmatrix}
\tilde{f} \\
 \tilde{g}
\end{pmatrix},
\end{align}
where
\begin{align} \label{U}
\mathbf{U}(T,\mathbf{v}):=\mathbf{v}^{T} + \Psi ^{T}_{0} - \Psi _{0}.
\end{align}
Here, we also write
\begin{align*}
\mathbf{w}^{T}:= 
\begin{pmatrix}
T w_{1}(T\xi) \\
T w_{2}(T\xi)
\end{pmatrix},
\end{align*}
for a generic function $\mathbf{w}=(w_{1},w_{2}) \in \mathcal{H}$. Before describing how one can choose $T$ in such a way that the correction term vanishes, we must ensure that, for all $T\in[1-\frac{\delta}{c},1+\frac{\delta}{c}]$, the modified equation \eqref{Modified} has a solution with initial data $\mathbf{u}=\mathbf{U}(T,\mathbf{v})$ provided that the perturbation $\mathbf{v}$ is sufficiently small. This fact is a direct consequence of Proposition \ref{solutionmodified} and the following lemma.
\begin{lemma}
Let $\delta>0$ be sufficiently small. If $\mathbf{v} \in H^{3}(\mathbb{B}_{1+\delta}^{5}) \times H^{2}(\mathbb{B}_{1+\delta}^{5})  $ such that $\| \mathbf{v} \|_{H^{3}(\mathbb{B}_{1+\delta}^{5}) \times H^{2}(\mathbb{B}_{1+\delta}^{5}) } \leq \delta$ then
\begin{align*}
\| \mathbf{U} (T,\mathbf{v}) \|_{H^{3}(\mathbb{B}_{1+\delta}^{5}) \times H^{2}(\mathbb{B}_{1+\delta}^{5}) } \lesssim \delta,
\end{align*}
for all $T \in [1-\delta,1+\delta]$. Furthermore, the map $\mathbf{U}(\cdot,\mathbf{v}) \rightarrow \mathcal{H}$ is continuous. 
\end{lemma}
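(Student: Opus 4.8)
The plan is to unfold the definition \eqref{U} and write
\[
\mathbf{U}(T,\mathbf{v}) = \mathbf{v}^{T} + \bigl(\mathbf{\Psi}_{0}^{T}-\mathbf{\Psi}_{0}\bigr),
\]
and to treat the two summands separately; here $\mathbf{v}^{T}$ and $\mathbf{\Psi}_{0}^{T}$ are a priori defined on $\mathbb{B}^{5}_{(1+\delta)/T}\supseteq\mathbb{B}^{5}$, and it is their restriction to $\mathbb{B}^{5}$, i.e.\ their $\mathcal{H}$-norm, that is needed for Proposition~\ref{solutionmodified}. The key tool is a bound for the rescaling operator $\mathbf{w}\mapsto\mathbf{w}^{T}$ that is uniform in $T\in[1-\delta,1+\delta]$. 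Since for such $T$ and $\xi\in\mathbb{B}^{5}$ one has $T\xi\in\mathbb{B}^{5}_{1+\delta}$, the substitution $\eta=T\xi$ gives, for every scalar $w$ and every $k\in\{0,1,2,3\}$,
\[
\bigl\| T\,w(T\,\cdot)\bigr\|_{\dot{H}^{k}(\mathbb{B}^{5})}^{2}=T^{\,2k-3}\,\|w\|_{\dot{H}^{k}(\mathbb{B}^{5}_{T})}^{2}\le T^{\,2k-3}\,\|w\|_{\dot{H}^{k}(\mathbb{B}^{5}_{1+\delta})}^{2},
\]
and since $T^{2k-3}$ stays bounded above and below on $[1-\delta,1+\delta]$ for $k\le3$, summing the two components yields $\|\mathbf{w}^{T}\|\lesssim\|\mathbf{w}\|_{H^{3}(\mathbb{B}^{5}_{1+\delta})\times H^{2}(\mathbb{B}^{5}_{1+\delta})}$ with a constant independent of $T$. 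Applying this with $\mathbf{w}=\mathbf{v}$ gives $\|\mathbf{v}^{T}\|\lesssim\delta$.

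For the ground-state correction I would use that $A_{0}(0)=1$ and $A_{j}(0)=0$, so by \eqref{static} and \eqref{staticbold} the profile $\mathbf{\Psi}_{0}$ is the constant vector $(\sqrt{2},\sqrt{2})$; consequently $\mathbf{\Psi}_{0}^{T}-\mathbf{\Psi}_{0}=\sqrt{2}\,(T-1)\,(1,1)$, whose $\mathcal{H}$-norm is $\lesssim|T-1|\le\delta$. (If one prefers to avoid the explicit formula, one can instead write $\mathbf{\Psi}_{0}^{T}-\mathbf{\Psi}_{0}=\int_{1}^{T}\partial_{s}\bigl(\mathbf{\Psi}_{0}^{s}\bigr)\,ds$ and use the boundedness on $\mathbb{B}^{5}$, uniform for $s$ in a neighbourhood of $1$, of $\partial_{s}(\,\cdot\,)^{s}$ applied to $\mathbf{\Psi}_{0}\in C^{\infty}(\overline{\mathbb{B}^{5}})$.) Adding the two estimates gives $\|\mathbf{U}(T,\mathbf{v})\|\lesssim\delta$ uniformly in $T\in[1-\delta,1+\delta]$, which is the first assertion.

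For the continuity of $T\mapsto\mathbf{U}(T,\mathbf{v})\in\mathcal{H}$, note that $T\mapsto\mathbf{\Psi}_{0}^{T}$ is affine, hence continuous, so it suffices to show that $T\mapsto\mathbf{v}^{T}$ is continuous. I would argue by density: choose $\mathbf{v}_{n}\in\bigl(C^{\infty}(\overline{\mathbb{B}^{5}_{1+\delta}})\bigr)^{2}$ with $\|\mathbf{v}-\mathbf{v}_{n}\|_{H^{3}(\mathbb{B}^{5}_{1+\delta})\times H^{2}(\mathbb{B}^{5}_{1+\delta})}\to0$ and decompose
\[
\mathbf{v}^{T}-\mathbf{v}^{T'}=(\mathbf{v}-\mathbf{v}_{n})^{T}+\bigl(\mathbf{v}_{n}^{T}-\mathbf{v}_{n}^{T'}\bigr)+(\mathbf{v}_{n}-\mathbf{v})^{T'};
\]
the two outer terms are $\lesssim\|\mathbf{v}-\mathbf{v}_{n}\|$ by the uniform rescaling bound above, while for fixed smooth $\mathbf{v}_{n}$ the middle term tends to $0$ as $T'\to T$ by dominated convergence, the relevant integrands converging pointwise to $0$ and being uniformly dominated by means of the $C^{3}(\overline{\mathbb{B}^{5}_{1+\delta}})$ bounds on $\mathbf{v}_{n}$. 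An $\epsilon/3$ argument then finishes the proof. The statement is elementary; the only places that call for any care are the bookkeeping of the powers of $T$ and the Jacobian in the change-of-variables inequality and the standard density reduction for the continuity claim, neither of which presents a genuine obstacle.
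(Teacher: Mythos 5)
Your proof is correct and follows essentially the same route as the paper: split $\mathbf U(T,\mathbf v)=\mathbf v^T+(\mathbf\Psi_0^T-\mathbf\Psi_0)$, bound the rescaled perturbation $\mathbf v^T$ by a change-of-variables estimate that is uniform in $T\in[1-\delta,1+\delta]$, bound the ground-state correction using that $\mathbf\Psi_0$ is smooth (indeed constant, as you note, so the FTC step the paper invokes is trivial), and obtain continuity by a $C^\infty$-density approximation with the uniform rescaling bound closing the $\epsilon/3$ argument. This is exactly the content of the paper's proof and of the referenced Lemma 5.8 in \cite{DonSch16a}, and your write-out correctly isolates the one point where care is needed, namely that FTC cannot be applied directly to $\mathbf v^T$ for $\mathbf v$ of limited Sobolev regularity, so the scaling bound must be used there instead.
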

\begin{proof}
The smallness condition on $\mathbf{U} (T,\mathbf{v})$ follows immediately from the fundamental theorem of calculus since $\psi_{0,1},\psi_{0,2} \in C^{\infty}(\mathbb{R}^{5})$. Furthermore, the continuity of the map follows from the triangle inequality and an approximation argument using the density of $C^{\infty}(\overline{ \mathbb{B}_{1+\delta}^{5}) }$ in $H^{k}(\mathbb{B}_{1+\delta}^{5})$. For a detailed proof see Lemma 5.8 in \cite{DonSch16a}. 
\end{proof}
Now, one can apply Proposition \ref{solutionmodified} to get the following result.
\begin{cor} \label{solutionmodifiedu}
Let $\delta>0$ be sufficiently small and $c$ sufficiently large. Furthermore, fix $\mathbf{v} \in H^{3}(\mathbb{B}_{1+ \delta / c}^{5}) \times H^{2}(\mathbb{B}_{1+ \delta / c}^{5})$ such that $\| \mathbf{v} \|_{H^{3}(\mathbb{B}_{1+\delta / c}^{5}) \times H^{2}(\mathbb{B}_{1+ \delta / c}^{5}) } \leq \frac{\delta}{c}$ and $T\in[1-\frac{\delta}{c},1+\frac{\delta}{c}]$. Then, the modified equation \eqref{Modified} with $\mathbf{u}=\mathbf{U}(T,\mathbf{v})$ has a solution $(\Phi,\alpha) \in \mathcal{X}_{\delta} \times X_{\delta}$. Furthermore, the map $T \longmapsto (\Phi,\alpha)$ is continuous. 
\end{cor}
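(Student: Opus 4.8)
The plan is to obtain Corollary \ref{solutionmodifiedu} as a consequence of Proposition \ref{solutionmodified} and the lemma that immediately precedes it; the only genuine points are to reconcile the various smallness thresholds and to propagate continuity through the fixed point construction.

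First I would fix the constants. Write $c_{0}>0$ for the constant with which Proposition \ref{solutionmodified} holds, and let $C_{0}$ denote the implicit constant in the preceding lemma, so that $\|\mathbf{U}(T,\mathbf{v})\|_{H^{3}(\mathbb{B}_{1+s}^{5})\times H^{2}(\mathbb{B}_{1+s}^{5})}\le C_{0}\,s$ whenever $\|\mathbf{v}\|_{H^{3}(\mathbb{B}_{1+s}^{5})\times H^{2}(\mathbb{B}_{1+s}^{5})}\le s$ and $T\in[1-s,1+s]$ (applied with $s=\delta/c$). Choosing $c:=\max\{c_{0},\,C_{0}c_{0}\}$, and $\delta>0$ small enough that all earlier results apply, the inclusion $\mathbb{B}^{5}\subseteq\mathbb{B}_{1+\delta/c}^{5}$ together with monotonicity of $L^{2}$-based Sobolev norms under restriction of the domain give $\|\mathbf{U}(T,\mathbf{v})\|_{\mathcal{H}}\le\|\mathbf{U}(T,\mathbf{v})\|_{H^{3}(\mathbb{B}_{1+\delta/c}^{5})\times H^{2}(\mathbb{B}_{1+\delta/c}^{5})}\le C_{0}\,\delta/c\le\delta/c_{0}$. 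Hence $\mathbf{u}:=\mathbf{U}(T,\mathbf{v})$ satisfies the hypothesis of Proposition \ref{solutionmodified}, which therefore produces a unique pair $(\Phi,\alpha)\in\mathcal{X}_{\delta}\times X_{\delta}$ solving \eqref{Modified}, with $\alpha$ determined from $\Phi$ through Proposition \ref{suppressedLorentz}.

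For the continuity of $T\mapsto(\Phi,\alpha)$ I would factor the map as $T\mapsto\mathbf{u}=\mathbf{U}(T,\mathbf{v})\mapsto(\Phi,\alpha)$. The first arrow is continuous by the preceding lemma. For the second, one checks that the solution map $\mathbf{u}\mapsto(\Phi,\alpha)$ built in Proposition \ref{solutionmodified} is Lipschitz: the operator $\mathbf{K}(\cdot,\alpha,\mathbf{u})$ of \eqref{Kappa} coupled with the $\alpha$-fixed-point map of \eqref{modsolve2} is a uniform contraction of $\mathcal{X}_{\delta}\times X_{\delta}$, and its dependence on the external parameter $\mathbf{u}$ is Lipschitz because the only terms in which $\mathbf{u}$ enters, namely $\mathbf{S}_{\alpha_{\infty}}(\tau)\mathbf{u}$, $\mathbf{P}_{\alpha_{\infty}}\mathbf{u}$ and $(1-\chi(\tau))\mathbf{Q}_{\alpha_{\infty},j}\mathbf{u}$, are bounded and linear in $\mathbf{u}$ (Proposition \ref{instability}). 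The standard contraction mapping argument with parameters then makes the fixed point $(\Phi,\alpha)$ depend Lipschitz-continuously on $\mathbf{u}$, and composing with the continuous map $T\mapsto\mathbf{U}(T,\mathbf{v})$ finishes the proof; this is the pattern of the corresponding result in \cite{DonSch16a}.

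The main obstacle is this last step: one must re-open the fixed point scheme underlying Proposition \ref{solutionmodified} and Proposition \ref{suppressedLorentz} and track the joint dependence of $(\Phi,\alpha)$ on the parameter, making sure the Lipschitz estimates of Lemma \ref{bounds2} — which carry the crucial gain $e^{-\tau}$ dominating the weight $e^{\frac{1}{2}\tau}$ built into the norms on $\mathcal{X}$ and $X$ — are strong enough for the contraction constant to remain below $1$ uniformly in $\mathbf{u}$. Everything else is bookkeeping with the constants $c$ and $\delta$.
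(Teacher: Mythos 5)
Your proof is correct and takes essentially the same route the paper intends: the paper gives no explicit argument for this corollary beyond "apply Proposition \ref{solutionmodified}," and you correctly fill in the missing bookkeeping (rescaling the lemma's $\delta$ to $\delta/c$, domain monotonicity, and the choice $c\ge C_0c_0$) and then obtain continuity in $T$ by composing the continuous map $T\mapsto\mathbf{U}(T,\mathbf{v})$ with the Lipschitz-in-$\mathbf{u}$ solution map already asserted in the proof of Proposition \ref{solutionmodified}.
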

Now, we focus on the correction term. To begin with we fix $\delta>0$ sufficiently small, $c$ sufficiently large and let $\mathbf{v} \in H^{3}(\mathbb{B}_{1+ \delta / c}^{5}) \times H^{2}(\mathbb{B}_{1+ \delta / c}^{5})$ such that $\| \mathbf{v} \|_{H^{3}(\mathbb{B}_{1+ \delta / c}^{5}) \times H^{2}(\mathbb{B}_{1+ \delta / c}^{5}) } \leq \frac{\delta}{c^2}$. Furthermore, pick an arbitrary $T= T_{\mathbf{v} }\in[1-\frac{\delta}{c},1+\frac{\delta}{c}]$ and let $(\Phi,\alpha) = (\Phi_{T},\alpha_{T}) \in \mathcal{X}_{\delta} \times X_{\delta}$ be a solution to the modified equation \eqref{Modified} with $\mathbf{u}=\mathbf{U}(T,\mathbf{v})$ by corollary \ref{solutionmodifiedu}. 
\begin{lemma} \label{correctionvanish}
There exists $T_{\mathbf{v}} \in [1-\frac{\delta}{c},1+\frac{\delta}{c}]$ such that $\mathbf{C} \left( \Phi _{T_{\mathbf{v}}}, \alpha _{T_{\mathbf{v}}}, \mathbf{U} \left( T_{\mathbf{v}},\mathbf{v} \right) \right) = 0$.
\end{lemma}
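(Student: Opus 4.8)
The plan is to reduce the vectorial equation $\mathbf{C}\big(\Phi_{T},\alpha_{T},\mathbf{U}(T,\mathbf{v})\big)=\mathbf{0}$ to a scalar equation and to solve the latter by the intermediate value theorem on $\big[1-\tfrac{\delta}{c},\,1+\tfrac{\delta}{c}\big]$. By \eqref{correction} the correction term lies in $\rg\mathbf{P}_{\alpha_\infty}$, which by Lemma~\ref{instability} equals $\langle\mathbf{g}_{\alpha_\infty}\rangle$; moreover $|\alpha_\infty|\lesssim\delta$ by \eqref{ainfty} and $\mathbf{g}_{\alpha_\infty}\to\mathbf{g}_0$ as $\alpha_\infty\to 0$ by \eqref{Lipg}, so that $(\mathbf{g}_{\alpha_\infty}\,|\,\mathbf{g}_0)\geq\tfrac12\|\mathbf{g}_0\|^{2}>0$ once $\delta$ is small. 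Writing $\mathbf{C}=\kappa\,\mathbf{g}_{\alpha_\infty}$, the real-valued function
\[
F(T):=\big(\,\mathbf{C}(\Phi_{T},\alpha_{T},\mathbf{U}(T,\mathbf{v}))\ \big|\ \mathbf{g}_0\,\big)
\]
then satisfies $F(T)=\kappa\,(\mathbf{g}_{\alpha_\infty}\,|\,\mathbf{g}_0)$, hence $F(T)=0$ if and only if $\mathbf{C}\big(\Phi_{T},\alpha_{T},\mathbf{U}(T,\mathbf{v})\big)=\mathbf{0}$. The map $T\mapsto F(T)$ is continuous: $T\mapsto(\Phi_{T},\alpha_{T})$ is continuous by Corollary~\ref{solutionmodifiedu}, $T\mapsto\mathbf{U}(T,\mathbf{v})$ is continuous by the lemma preceding it, $\alpha\mapsto\mathbf{P}_{\alpha_\infty}$ is Lipschitz by \eqref{LipP}, and the integral in \eqref{correction} depends continuously on $(\Phi,\alpha)$ by Lemma~\ref{bounds2} and dominated convergence.

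The next step is to extract the leading behaviour of $F$ near $T=1$. Splitting $\mathbf{C}$ as in \eqref{correction} and using Lemma~\ref{bounds1}, which bounds $\big\|\mathbf{P}_{\alpha_\infty}\big(\hat{\mathbf{L}}_{\alpha(\sigma)}\Phi(\sigma)+\mathbf{N}_{\alpha(\sigma)}(\Phi(\sigma))-\partial_{\sigma}\mathbf{\Psi}_{\alpha(\sigma)}\big)\big\|$ by $\delta^{2}e^{-\sigma}$, the integral contributes (after integration against $e^{-\sigma}\,d\sigma$) an $O(\delta^{2})$ term to $F(T)$. For the remaining term I would use $\mathbf{U}(T,\mathbf{v})=\mathbf{v}^{T}+\big(\mathbf{\Psi}_0^{T}-\mathbf{\Psi}_0\big)$: since $\|\mathbf{v}^{T}\|\lesssim\|\mathbf{v}\|\le\delta/c^{2}$, its $\mathbf{v}^{T}$-contribution to $F(T)$ is $O(\delta/c^{2})$, while $T\mapsto\mathbf{\Psi}_0^{T}$ is smooth into $\mathcal{H}$ with $\mathbf{\Psi}_0^{1}=\mathbf{\Psi}_0$, so Taylor's theorem gives
\[
\mathbf{\Psi}_0^{T}-\mathbf{\Psi}_0=(T-1)\,\mathbf{k}+O\big(|T-1|^{2}\big),\qquad \mathbf{k}:=\partial_{T}\mathbf{\Psi}_0^{T}\big|_{T=1}\in\mathcal{H}.
\]
A direct computation of the passage to similarity coordinates with parameter $T$ shows that $\mathbf{k}$ is a nonzero multiple of the time-translation eigenfunction $\mathbf{g}_0=(1,2)^{\top}$, so that, using $\mathbf{P}_0\mathbf{g}_0=\mathbf{g}_0$ and $\|\mathbf{P}_{\alpha_\infty}-\mathbf{P}_0\|\lesssim\delta$, one obtains $(\mathbf{P}_{\alpha_\infty}\mathbf{k}\,|\,\mathbf{g}_0)=a_0+O(\delta)$ with $a_0:=(\mathbf{P}_0\mathbf{k}\,|\,\mathbf{g}_0)\neq 0$ a fixed constant. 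Collecting the pieces,
\[
F(T)=a_0\,(T-1)+O(\delta)\,|T-1|+O(\delta^{2})+O\big(\delta/c^{2}\big).
\]

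Finally, evaluating at $T_{\pm}:=1\pm\tfrac{\delta}{c}$ gives
\[
F(T_{\pm})=\pm\tfrac{\delta}{c}\,a_0+O\big(\tfrac{\delta^{2}}{c}\big)+O(\delta^{2})+O\big(\tfrac{\delta}{c^{2}}\big)=\tfrac{\delta}{c}\Big(\pm a_0+O(c\,\delta)+O\big(\tfrac{1}{c}\big)\Big),
\]
and, choosing first $c$ sufficiently large and then $\delta$ sufficiently small, the bracket is dominated by $\pm a_0$, so that $F(T_{-})$ and $F(T_{+})$ are nonzero and of opposite signs. Continuity of $F$ and the intermediate value theorem then produce $T_{\mathbf{v}}\in(T_{-},T_{+})\subset[1-\tfrac{\delta}{c},1+\tfrac{\delta}{c}]$ with $F(T_{\mathbf{v}})=0$, which by the first step means $\mathbf{C}\big(\Phi_{T_{\mathbf{v}}},\alpha_{T_{\mathbf{v}}},\mathbf{U}(T_{\mathbf{v}},\mathbf{v})\big)=\mathbf{0}$. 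The one genuinely delicate point is the assertion $a_0\neq 0$: it says that the variation of the unperturbed initial data with respect to the coordinate blowup time has a nonzero component along the single unstable mode $\mathbf{g}_0$ (equivalently, is not contained in $\ker\mathbf{P}_0$), which is the reflection of the time-translation symmetry of \eqref{cubicwave} and is verified by the explicit computation indicated above, as in the corresponding step of \cite{DonSch16a}.
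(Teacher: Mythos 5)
Your proposal is correct and follows essentially the same route as the paper: reduce to a scalar equation via $\rg\mathbf{P}_{\alpha_\infty}=\langle\mathbf{g}_{\alpha_\infty}\rangle$, Taylor-expand $\mathbf{U}(T,\mathbf{v})$ in $T$ around $T=1$ to isolate the linear term coming from $\partial_T\mathbf{\Psi}_0^T\big|_{T=1}$ (which the paper computes explicitly to be $2\mathbf{g}_0$), bound the remaining contributions by $O(\delta/c^2)$, and close with a one-dimensional topological fixed-point argument. The only cosmetic differences are that you test against the fixed vector $\mathbf{g}_0$ rather than $\mathbf{g}_{\alpha_\infty}$ (harmless, since $(\mathbf{g}_{\alpha_\infty}\,|\,\mathbf{g}_0)>0$ for small $\delta$), and you invoke the intermediate value theorem directly at the endpoints $1\pm\tfrac{\delta}{c}$ where the paper rewrites the equation as $T=1+F(T)$ and applies Brouwer on the interval — in one dimension these are the same argument.
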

\begin{proof}
Since $\mathbf C$ has values in $\rg \mathbf {P}_{\alpha_{\infty}}=\langle\mathbf {g}_{\alpha_{\infty}}\rangle$ (see Lemma \ref{instability}), the vanishing of the correction term is equivalent to 
\begin{align}
\label{eq:Tv}
\exists T_{\mathbf{v}} \in [1-\frac{\delta}{c},1+\frac{\delta}{c}]: \quad \Big< \mathbf{C} \left( \Phi _{T_{\mathbf{v}}}, \alpha _{T_{\mathbf{v}}}, \mathbf{U} \left( T_{\mathbf{v}},\mathbf{v} \right) \right),\mathbf{g}_{\alpha_{\infty}} \Big>_{\mathcal H} = 0.
\end{align}
The key observation here is that
\begin{align*}
\partial _{T} \Psi _{0}^{T}  \big |_{T=1} = 2 \mathbf{g}_{0}
\end{align*}
and thus expanding $\Psi _{0}^{T}$ in Taylor with respect to $T$ around $T=1$ we get 
\begin{align*}
\mathbf U(T,\mathbf v) =\mathbf{v}^{T} + 2 \mathbf{g}_{0} (T-1) + \mathbf{R}_{T} (T-1)^{2}, 
\end{align*}
for some remainder term $ \mathbf{R}_{T}$, which we rewrite as
\begin{align*}
\mathbf U(T,\mathbf v) =\mathbf{v}^{T} + 2\mathbf{g}_{\alpha_{\infty}}(T-1)+2 \left( \mathbf{g}_{0} - \mathbf{g}_{\alpha_{\infty}} \right)(T-1) + \mathbf{R}_{T} (T-1)^{2}. 
\end{align*}
Now, the fact fact $\alpha(0)=0$ and \eqref{aclose} yield $|\alpha_{\infty}-\alpha(0)| \lesssim \delta$ and from Lemma \ref{nonlinearestimates2} (in particular \eqref{Lipg}) we get $\|  \mathbf{g}_{0} -  \mathbf{g}_{\alpha_{\infty}} \|\lesssim \delta$. In addition, $\| \mathbf{R}_{T} \| \lesssim 1$ for all $T\in[1-\frac{\delta}{c},1+\frac{\delta}{c}]$. Hence, using $\| \mathbf{v} \|_{H^{3}(\mathbb{B}_{1+ \delta / c}^{5}) \times H^{2}(\mathbb{B}_{1+ \delta / c}^{5}) } \leq \frac{\delta}{c^2}$ and $\rg \mathbf {P}_{\alpha_{\infty}}=\langle\mathbf {g}_{\alpha_{\infty}}\rangle$ from Lemma \ref{instability}, 
we infer
\begin{align*}
\Big< \mathbf{P}_{\alpha_{\infty}} \mathbf U(T,\mathbf v) ,\mathbf{g}_{\alpha_{\infty}} \Big>_{\mathcal H} = 
O \left( \frac{\delta}{c^2} \right) +2 \left \| \mathbf{g}_{\alpha_{\infty}} \right \|^{2} (T-1) + O \left( \frac{\delta ^2}{c} \right) + O \left( \frac{\delta^2}{c^2} \right).
\end{align*}
Moreover, the bounds of Lemma \ref{bounds1} imply
\begin{align*}
\Big< \mathbf{P}_{\alpha_{\infty}}  \int_{0}^{\infty} e^{-\sigma}  \Big( 
\hat{\mathbf{L}}_{\alpha(\sigma)} \mathbf{\Phi}(\sigma) + \mathbf{N}_{\alpha(\sigma)} (\mathbf{\Phi}(\sigma)) - \partial _{\sigma} \mathbf{\Psi}_{\alpha(\sigma)}
\Big) d \sigma ,\mathbf{g}_{\alpha_{\infty}} \Big>_{\mathcal H} = 
O \left( \delta^2 \right) \left \| \mathbf{g}_{\alpha_{\infty}} \right \|^{2}.
\end{align*}
Finally, summing up we get
\begin{align*}
\Big< \mathbf{C} \left( \Phi _{T_{\mathbf{v}}}, \alpha _{T_{\mathbf{v}}}, \mathbf{U} \left( T_{\mathbf{v}},\mathbf{v} \right) \right),\mathbf{g}_{\alpha_{\infty}} \Big>_{\mathcal H}  = 
2 \left \| \mathbf{g}_{\alpha_{\infty}} \right \|^{2} (T-1) + O \left( \frac{\delta}{c^2} \right).
\end{align*}
Setting the left hand side equal to zero we obtain the equation
\begin{align*}
T=1 + F(T)
\end{align*}
where $F$ is a continuous function in $T$ such that $F(T)=O\left( \frac{\delta}{c} \right)$. We choose $c$ sufficiently large and $\delta = \delta (c)$ sufficiently small so that $| F(T) | \leq \frac{\delta}{c}$. Now, the continuous function $T \longmapsto 1 + F (T )$ maps the closed interval $[1-\frac{\delta}{c},1+\frac{\delta}{c}]$ to itself and from Brouwer's fixed point theorem we get a fixed point $T = T_{\mathbf{v}}$. This proves \eqref{eq:Tv} and concludes the proof. 
\end{proof}
%
%
%
%
%
%
%
%
%
\section{Proof of the main theorem}
To begin with, we summarise the results of the previous section.
\begin{theorem} \label{main}
Let $\delta>0$ be sufficiently small, $c$ sufficiently large and pick an arbitrary $\mathbf{v} \in H^{3}(\mathbb{B}_{1+ \delta / c}^{5}) \times H^{2}(\mathbb{B}_{1+ \delta / c}^{5})$ such that $\| \mathbf{v} \|_{H^{3}(\mathbb{B}_{1+\delta / c}^{5}) \times H^{2}(\mathbb{B}_{1+ \delta / c}^{5}) } \leq \frac{\delta}{c^2}$. Then, there exists $T = T_{\mathbf{v}} \in[1-\frac{\delta}{c},1+\frac{\delta}{c}]$ such that the full, non-corrected equation \eqref{Duhamelmodulation} 
with initial data $\mathbf{u}=\mathbf{U}(T_{\mathbf{v}},\mathbf{v})$, that is 
\begin{align*}
\mathbf{\Phi }(\tau) = \mathbf{S}_{\alpha _{\infty}} (\tau) \mathbf{U}(T_{\mathbf{v}},\mathbf{v})+ \int_{0}^{\tau} \mathbf{S}_{\alpha _{\infty}} (\tau - \sigma) \Big( 
\hat{\mathbf{L}}_{\alpha(\sigma)} \mathbf{\Phi}(\sigma) + \mathbf{N}_{\alpha(\sigma)} (\mathbf{\Phi}(\sigma)) - \partial _{\sigma} \mathbf{\Psi}_{\alpha(\sigma)}
\Big) d \sigma,
\end{align*}
has a solution $(\Phi,\alpha) = (\Phi_{T_{\mathbf{v}}},\alpha_{T_{\mathbf{v}}}) \in \mathcal{X}_{\delta} \times X_{\delta}$. 
\end{theorem}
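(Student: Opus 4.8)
The plan is to assemble the two substantive results of the preceding subsection --- solvability of the \emph{modified} equation \eqref{Modified} for every admissible blowup time (Corollary~\ref{solutionmodifiedu}) and the selection of the blowup time so that the correction term vanishes (Lemma~\ref{correctionvanish}) --- and then to observe that once the correction term is gone, the modified equation \emph{is} the full equation \eqref{Duhamelmodulation}.

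Concretely, I would first fix $\delta>0$ sufficiently small, $c$ sufficiently large, and take $\mathbf v\in H^{3}(\mathbb B_{1+\delta/c}^{5})\times H^{2}(\mathbb B_{1+\delta/c}^{5})$ with $\|\mathbf v\|_{H^{3}(\mathbb B_{1+\delta/c}^{5})\times H^{2}(\mathbb B_{1+\delta/c}^{5})}\le\delta/c^{2}$. For each $T\in[1-\delta/c,1+\delta/c]$, Corollary~\ref{solutionmodifiedu} supplies a pair $(\Phi_{T},\alpha_{T})\in\mathcal X_{\delta}\times X_{\delta}$ solving \eqref{Modified} with $\mathbf u=\mathbf U(T,\mathbf v)$, and the map $T\mapsto(\Phi_{T},\alpha_{T})$ is continuous. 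Lemma~\ref{correctionvanish} then produces a value $T_{\mathbf v}\in[1-\delta/c,1+\delta/c]$ for which $\mathbf C\big(\Phi_{T_{\mathbf v}},\alpha_{T_{\mathbf v}},\mathbf U(T_{\mathbf v},\mathbf v)\big)=0$.

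The last step is to compare \eqref{Modified} with \eqref{Duhamelmodulation}. By the definition \eqref{Kappa} of $\mathbf K$, the only discrepancy between the two is the summand $-\,\mathbf S_{\alpha_{\infty}}(\tau)\,\mathbf C(\Phi,\alpha,\mathbf u)$; since this vanishes for $(\Phi_{T_{\mathbf v}},\alpha_{T_{\mathbf v}},\mathbf U(T_{\mathbf v},\mathbf v))$, that pair automatically satisfies \eqref{Duhamelmodulation} with $\mathbf u=\mathbf U(T_{\mathbf v},\mathbf v)$, which is exactly the assertion.

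Since all of the analytic work has already been absorbed into Proposition~\ref{instability}, Proposition~\ref{suppressedLorentz}, Proposition~\ref{solutionmodified}/Corollary~\ref{solutionmodifiedu} and Lemma~\ref{correctionvanish}, the present theorem is essentially bookkeeping and there is no genuine obstacle. The one point deserving care is the consistency of data across the nested fixed-point constructions: the parameter $\alpha$ attached to $\Phi$ is the one from Proposition~\ref{suppressedLorentz}, the dependence $T\mapsto(\Phi_{T},\alpha_{T})$ must be continuous so that the Brouwer argument inside Lemma~\ref{correctionvanish} applies, and the limit $\alpha_{\infty}$ entering $\mathbf S_{\alpha_{\infty}}$ and $\mathbf C$ is the one associated with this same $\alpha_{T}$, so that its smallness --- and hence all the spectral and semigroup estimates --- remains in force.
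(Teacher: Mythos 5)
Your proposal is correct and coincides with the paper's intended argument: Theorem~\ref{main} is simply the conjunction of Corollary~\ref{solutionmodifiedu} (solvability of the modified equation for each admissible $T$, with continuous dependence on $T$) and Lemma~\ref{correctionvanish} (existence of a $T_{\mathbf v}$ killing the correction term), together with the observation from \eqref{Kappa} that, once $\mathbf C$ vanishes, the modified equation \eqref{Modified} reduces to the full equation \eqref{Duhamelmodulation}. The consistency points you flag (the $\alpha_\infty$ entering $\mathbf S_{\alpha_\infty}$ and $\mathbf C$ being the one attached to $\alpha_{T_{\mathbf v}}$, and continuity in $T$ feeding the Brouwer argument) are precisely what the paper's setup ensures.
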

Now, we are in position to prove our main result. 
\begin{proof}[Proof of Theorem $\ref{mainresult}$ for $d=5$]
Fix $\delta >0$ sufficiently small and $c>0$ sufficiently large according to Theorem \ref{main}. Set $\delta^{\prime}:=\frac{\delta}{c}$ and $M:=c$. Furthermore, pick any initial data
\begin{align*}
(f,g)  \in H^{3} (\mathbb{B}_{1+\delta^{\prime}}^{5}) \times H^{2} (\mathbb{B}_{1+\delta^{\prime}}^{5}  )
\end{align*}
satisfying 
\begin{align*}
\Big \|  (f,g) -  u_{1,0} [0] \Big \|_{ H^{3} (\mathbb{B}_{1+\delta^{\prime}}^{5}) \times H^{2 } (\mathbb{B}_{1+\delta^{\prime}}^{5}  )} \leq \frac{\delta^{\prime}}{M}.
\end{align*}
Then, the perturbed initial data $ \mathbf{v}:=(\tilde{f},\tilde{g})$ (see \eqref{datatilde}) satisfy
\begin{align*}
 \| \mathbf{v}  \|_{ H^{3} (\mathbb{B}_{1+\frac{\delta}{c}}^{5}) \times H^{2 } (\mathbb{B}_{1+\frac{\delta}{c}}^{5}  )} =
\Big \| (f,g) - u_{1,0}[0] \Big \|_{ H^{3} (\mathbb{B}_{1+\delta^{\prime}}^{5}) \times H^{2 } (\mathbb{B}_{1+\delta^{\prime}}^{5}  )} \leq 
\frac{\delta^{\prime}}{M} =
\frac{\delta}{c^2}
\end{align*}
and Theorem $\ref{main}$ yields the existence of 
$T = T_{\mathbf{v}} \in [1-\delta^{\prime},1+\delta^{\prime}]$ such that equation \eqref{Duhamelmodulation} has a unique solution $(\Phi,\alpha) \in \mathcal{X}_{\delta} \times X_{\delta}$ 
with initial data $\Phi(0)=\mathbf{U}(T_{\mathbf{v}},\mathbf{v})$. Translating back this statement to the origin setting we obtain a weak solution $\Psi(\tau)=\Psi_{\alpha(\tau)} + \Phi(\tau)$ to the initial system \eqref{systempsi} with initial data $\Psi(0)=\Psi_{0}+\mathbf{U}(T_{\mathbf{v}},\mathbf{v})$. This means that 
\begin{align*}
u(t,x)=\frac{1}{T-t} \psi_{1} \left( \log \left( \frac{T}{T-t} \right),\frac{x}{T-t} \right)
\end{align*}
solves the cubic wave equation \eqref{cubicwave} with initial data
\begin{align*}
& u(0,x)=\frac{1}{T} \psi_{1}(0,\frac{x}{T}) = \psi_{1,0}(x) + \tilde{f}(x) = u_{1,0}(x) + \tilde{f}(x) \\
& \partial _{t} u(0,x)=\frac{1}{T^2} \psi_{2} (0,\frac{x}{T}) = \psi_{2,0}(x) + \tilde{g}(x) = \partial _{t} u_{1,0}(x) + \tilde{g}(x)
\end{align*}
for all $x\in\mathbb{B}_{1+\delta^{\prime}}^{5}$ and therefore is a solution to the Cauchy problem \eqref{Cauchy}. Finally, the fact that $\Phi  \in \mathcal{X}_{\delta}$ implies 
\begin{align*}
\| \Phi (\tau) \| \leq \delta e^{-\frac{1}{2} \tau},\quad \forall \tau>0
\end{align*}
and hence, for all $t\in [0,T)$ and $k=0,1,2,3$ we can estimate
\begin{align*}
& (T-t)^{k-\frac{5}{2}+1}\left \| u(t,\cdot) - u_{T,\alpha_{\infty}}(t,\cdot)
\right \|_{\dot H^{k}(\mathbb B^{5}_{T-t})}   = \\
&(T-t)^{k-\frac{5}{2}+1}\left \| \frac{1}{T-t} \psi_{1} \left (\log \left( \frac{T}{T-t} \right),\frac{\cdot}{T-t}\right ) -  
 \frac{1}{T-t} \psi_{\alpha_{\infty},1} \left (\frac{\cdot}{T-t}\right ) \right \|_{\dot H^{k}(\mathbb B^{5}_{T-t})} = \\
&(T-t)^{k-\frac{5}{2}}\left \| \psi_{1}  \left (\log \left( \frac{T}{T-t} \right),\frac{\cdot}{T-t}\right ) - \psi_{\alpha_{\infty},1}  \left (\frac{\cdot}{T-t}\right ) \right \|_{\dot H^{k}(\mathbb B^{5}_{T-t})} = \\
& \left \| \psi_{1}  \left (\log \left( \frac{T}{T-t} \right),\cdot \right ) - \psi_{\alpha_{\infty},1} \right \|_{\dot H^{k}(\mathbb B^{5}_{1})} \leq \\
& \left \| \psi_{1}  \left ( \log \left( \frac{T}{T-t} \right),\cdot \right ) - \psi_{\alpha (\log \left( \frac{T}{T-t} \right)),1} \right \|_{\dot H^{k}(\mathbb B^{5}_{1})} + 
 \left \| \psi_{\alpha(\log \left( \frac{T}{T-t} \right)),1}  - \psi_{\alpha_{\infty},1} \right \|_{\dot H^{k}(\mathbb B^{5}_{1})}.
\end{align*}
For the first term, we get
\begin{align*}
\left \| \psi_{1}  \left ( \log \left( \frac{T}{T-t} \right),\cdot \right ) - \psi_{\alpha (\log \left( \frac{T}{T-t} \right)),1} \right \|_{\dot H^{k}(\mathbb B^{5}_{1})} & \leq
\left \| \psi_{1}  \left ( \log \left( \frac{T}{T-t} \right),\cdot \right ) - \psi_{\alpha (\log \left( \frac{T}{T-t} \right)),1} \right \|_{ H^{3}(\mathbb B^{5}_{1})} \\
&\leq \left \| \Psi  \left ( \log \left( \frac{T}{T-t} \right) \right ) - \Psi_{\alpha (\log \left( \frac{T}{T-t} \right))} \right \|  \\
& = \left \| \Phi  \left ( \log \left( \frac{T}{T-t} \right) \right )  \right \| \\
& \lesssim (T-t)^{\frac{1}{2}}.
\end{align*}
For the second term, fix $t\in [0,T)$ and let $\gamma (s):=s  \alpha_{\infty} +(1-s) \alpha \left ( \log \left( \frac{T}{T-t} \right) \right ),~s \in[0,1]$ be a parametrisation of the line segment $E[\alpha \left ( \log \left( \frac{T}{T-t} \right) \right ) , \alpha_{\infty}]$ joining $\alpha \left ( \log \left( \frac{T}{T-t} \right) \right )$ and $ \alpha_{\infty}$. Then, the fundamental theorem of calculus yields
\begin{align*}
\psi _{\alpha \left ( \log \left( \frac{T}{T-t} \right) \right ),1} - \psi _{\alpha_{\infty},1} & = 
\psi _{\alpha \left ( \log \left( \frac{T}{T-t} \right) \right )} - \psi _{\alpha_{\infty}}  \\
&= \psi _{\gamma(0)} - \psi _{\gamma(1)} \\
&= \int_{ E[\alpha \left ( \log \left( \frac{T}{T-t} \right) \right ) , \alpha_{\infty}] } \partial \psi _{\gamma} \cdot d \ell \\
& = \sum _{j=1}^{5}  \left ( \alpha^{j} \left ( \log \left( \frac{T}{T-t} \right) \right ) - \alpha_{\infty}^{j} \right ) \int_{0}^{1} \partial _{\gamma ^{j}} \psi _{\gamma(t)} dt,
\end{align*} 
which implies the bound
\begin{align*} 
 \left \| \psi_{\alpha(\log \left( \frac{T}{T-t} \right)),1}  - \psi_{\alpha_{\infty},1} \right \|_{\dot H^{k}(\mathbb B^{5}_{1})} & = 
\left \| \partial^{k} \left( \psi_{\alpha(\log \left( \frac{T}{T-t} \right))}  - \psi_{\alpha_{\infty}}\right) \right \|_{L^{2}(\mathbb{B}^{5})} \\
 & \lesssim \sum _{j=1}^{5} \left | \alpha^{j} \left ( \log \left( \frac{T}{T-t} \right) \right ) - \alpha_{\infty}^{j} \right | \sup _{s \in E[\alpha \left ( \log \left( \frac{T}{T-t} \right) \right ) , \alpha_{\infty}]  } 
 \| \partial ^{k} \partial _{\gamma ^{j}} \psi _{\gamma(s)} \|_{L^{2}(\mathbb{B}^{5})} 
 \\
 & \lesssim \left | \alpha \left ( \log \left( \frac{T}{T-t} \right) \right ) - \alpha_{\infty} \right  | \\
 & \lesssim (T-t)^{\frac{1}{2}}
\end{align*}
due to \eqref{psibound} and \eqref{aclose} since $\alpha \in X_{\delta}$. The second estimate for $\partial_{t} \left(u(t,\cdot) - u_{T,\alpha_{\infty}}(t,\cdot)\right)$ follows similarly. These estimates conclude the proof.
\end{proof}
\begin{proof}[Proof of Theorem $\ref{mainresult}$ for $d\in \{ 7,9,11,13\}$]
All the results of the previous sections can be carried on for any $d \in \{ 7,9,11,13\}$ with slight modifications. The important parts are the function spaces which lead to a sharp decay for the free evolution and the spectral 
equation for $\alpha=0$. 
\\ \\
Referring to the spectral equation for $\alpha=0$ in higher space dimensions, one can readily verify that the potential $V_{0}$ in the definition of $\mathbf{L}^{\prime}_{0}$, see \eqref{Lprime}, will still turn out to be a constant function. Consequently, the spectral equation will be solved explicitly, solutions will belong to the hypergeometric class as well and we can still use the connection formula which is well known for this class. Then, one can proceed to the case where $\alpha \neq 0$ and since we are only interested in small $\alpha$ we can still apply a perturbative approach. To be precise, all estimates, Lipschitz bounds and decay rates will stay the same in all higher space dimensions since our results are formulated and proved using elements of abstract semigroup theory.
\\ \\
On the other hand, regarding the function spaces in higher space dimensions, one can still define a suitable inner product on 
\begin{align*}
\mathcal{H}=H^{\frac{d+1}{2}} \left( \mathbb{B}^{d} \right) \times H^{\frac{d-1}{2}} \left( \mathbb{B}^{d} \right)
\end{align*}
which yields a sharp decay for the "free" evolution operator. To be precise, we let
\begin{align*}
\mathcal{\widetilde{H}}=C^{\frac{d+1}{2}} (\overline{ \mathbb{B}^{d} } ) \times C^{\frac{d-1}{2}} ( \overline{ \mathbb{B}^{d}} ),
\end{align*}
and define
\begin{align*} 
\left( \cdot \big{|} \cdot \right): \mathcal{\widetilde{H}} \times \mathcal{\widetilde{H}} \longrightarrow \mathbb{R},\quad \left( \mathbf{u} \big{|} \mathbf{v}  \right):=\sum _{i =1}^{d}  \left( \mathbf{u} \big{|} \mathbf{v}  \right)_{i},
\end{align*} 
where, for $d=2k+1$, the sesquilinear forms are
\begin{align*} 
& \left( \mathbf{u} \big{|} \mathbf{v}  \right)_{1} := 
\int _{\mathbb{B}^{2k+1}} 
\partial _{m} \partial _{i_{1}} \cdots \partial _{i_{k}}  u_{1} (\xi)
\overline{ \partial ^{m}  \partial ^{i_{1}} \cdots \partial ^{i_{k}}  v_{1} (\xi) } 
d \xi \\
& \quad\quad\quad +  
\int _{\mathbb{B}^{2k+1}} 
 \partial _{i_{1}} \cdots \partial _{i_{k}} u_{2} (\xi)
\overline{ \partial ^{i_{1}} \cdots \partial ^{i_{k}} v_{2} (\xi) } 
d \xi \\ 
& \quad\quad\quad + 
\int _{\mathbb{S}^{2k}} 
 \partial _{i_{1}} \cdots \partial _{i_{k}}  u_{1} (\omega)
 \overline{  \partial ^{i_{1}} \cdots \partial ^{i_{k}} v_{1} (\omega) } 
 d \sigma (\omega), \\
& \left( \mathbf{u} \big{|} \mathbf{v}  \right)_{2}:= 
\int _{\mathbb{B}^{2k+1}}
\partial_{m} \partial ^{m}  \partial _{i_{1}} \cdots \partial _{i_{k-1}} u_{1}  (\xi)
\overline{ \partial_{n} \partial ^{n}  \partial ^{i_{1}} \cdots \partial ^{i_{k-1}} v_{1} (\xi) } d\xi \\
& \quad\quad\quad +
\int _{\mathbb{B}^{2k+1}}
  \partial _{i_{1}} \dots \partial _{i_{k}} u_{2}  (\xi)
\overline{  \partial ^{i_{1}} \cdots \partial ^{i_{k}} v_{2} (\xi) } d\xi \\
& \quad\quad\quad+ \int _{\mathbb{S}^{2k}} 
 \partial _{i_{1}} \cdots \partial _{i_{k-1}}  u_{2} (\omega)
 \overline{  \partial ^{i_{1}} \cdots \partial ^{i_{k-1}} v_{2} (\omega) } 
 d \sigma (\omega), \\
 &\quad\quad\quad \vdots \\
& \left( \mathbf{u} \big{|} \mathbf{v}  \right)_{3+2q}:=
 \sum _{p=1}^{2q+2} A_{q}^{p}(d) \left( \mathbf{u} \big{|} \mathbf{v}  \right)_{p} +
 \int _{\mathbb{S}^{2k}} 
 \partial _{i_{1}} \cdots \partial _{i_{k-2-q}}  u_{2} (\omega)
 \overline{  \partial ^{i_{1}} \cdots \partial ^{i_{k-2-q}} v_{2} (\omega) } 
 d \sigma (\omega), \\
& \left( \mathbf{u} \big{|} \mathbf{v}  \right)_{4+2q}:= 
 \sum _{r=1}^{2q+3} B_{q}^{r}(d)  \left( \mathbf{u} \big{|} \mathbf{v}  \right)_{r} +
 \int _{\mathbb{S}^{2k}} 
 \partial _{i_{1}} \cdots \partial _{i_{k-1-q}}  u_{1} (\omega)
 \overline{  \partial ^{i_{1}} \cdots \partial ^{i_{k-1-q}} v_{1} (\omega) } 
 d \sigma (\omega),
\end{align*}
for some constants $A_{q}^{p}(d)$ and $B_{q}^{r}(d)$ and for all $q=0,1,\dots,k-2$ and all $\mathbf{u},\mathbf{v} \in \mathcal{\widetilde{H}}$. In addition, the missing piece for it to define a norm is given by
\begin{align*}
\left( \mathbf{u} \big{|} \mathbf{v}  \right)_{2k+1}:= 
\left(  \int _{\mathbb{S}^{2k}} \zeta \left(\omega,\mathbf{u} (\omega) \right) d \sigma (\omega)  \right) 
\left(  \int _{\mathbb{S}^{2k}} \overline{ \zeta \left(\omega,\mathbf{v}(\omega)  \right) } d \sigma (\omega)  \right) 
\end{align*}
where
\begin{align*}
\zeta \left(\omega,\mathbf{w}(\omega)  \right):=  D_{2k+1} w_{1} (\omega) + \tilde{D}_{2k+1} w_{2}  (\omega)
\end{align*}
and
\begin{align*}
& D_{2k+1} w_{1} (\omega) := \sum_{j=1}^{k} a_{j} \omega ^{i_{1}}  \cdots  \omega ^{i_{j}} \partial _{i_{1}} \cdots \partial _{i_{j}} w_{1} (\omega) +  a_{0} w_{1} (\omega), \\
& \tilde{D}_{2k+1} w_{2} (\omega) := \sum_{j=1}^{k-1} b_{j} \omega ^{i_{1}}  \cdots  \omega ^{i_{j}} \partial _{i_{1}} \cdots \partial _{i_{j}} w_{2} (\omega)+  b_{0} w_{2} (\omega),
\end{align*}
for appropriate constants $a_{j},b_{j},a_{0}$ and $b_{0}$. Recall that in all these definitions the Einstein summation convention is assumed. Now, the constants $a_{j},b_{j},a_{0}$ and $b_{0}$ are chosen in such a way that the identity
\begin{align} \label{identityzeta}
 \zeta \left(\omega, \widetilde{ \mathbf{L} } \mathbf{u} (\omega) \right) = - \zeta \left(\omega,  \mathbf{u} (\omega) \right) +
 \Delta^{\mathbb{S}^{2k}} \left( \tilde{D}_{2k-1} \Big( u_{1} (\omega) + \omega ^{j} \partial _{j} u_{1} (\omega) \Big) \right)
\end{align}
holds which is the key identity to obtain the decay
\begin{align} \label{decay1}
\mathrm{Re}~ \big( \widetilde{ \mathbf{L} }  \mathbf{u} \big{|} \mathbf{u}  \big)_{2k+1} = - \| \mathbf{u} \|_{2k+1}^{2},
\end{align}
see \eqref{identityzeta5}. In higher space dimensions, although it is easy to prove that the inner product $\left( \cdot \big{|} \cdot \right)$ defines indeed a norm equivalent to $\mathcal{H}$, there are two main difficulties. On the one hand, we can find a defining recurrence relation for the coefficients $a_{j},b_{j},a_{0}$ and $b_{0}$ which unfortunately is not convenient to write it down nor easy to use and therefore proving \eqref{identityzeta} turns out to be too difficult for us. On the other hand, we can use induction to prove that
\begin{align} \label{decay2}
\mathrm{Re}~ \big( \widetilde{ \mathbf{L} }  \mathbf{u} \big{|} \mathbf{u}  \big)_{i} \leq - \frac{3}{2} \| \mathbf{u} \|_{i}^{2},
\end{align}
for all $i \in \{1,2,\dots,2k \}$, but the proof is rather involved. 
\\ \\
However, for small $d$, say $d \in \{7,9,11,13 \}$, we can find the coefficients $a_{j},b_{j},a_{0}$ and $b_{0}$ explicitly, define $D_{2k+1}$ and $\tilde{D}_{2k+1}$ without recurrence relations and successfully verify \eqref{decay2}, \eqref{identityzeta} and therefore \eqref{decay1}. Furthermore, in this case, the proof of \eqref{decay2} rely on similar estimates to the ones in Lemma \ref{Lumer1a} without any additional tools. Specifically, for $d=7$, we define
\begin{align*}
\mathcal{D} \big(\widetilde{ \mathbf{L} }  \big):= C^{5} \big( \overline{ \mathbb{B}^{7} } \big) \times C^{4} \big( \overline{ \mathbb{B}^{7} } \big)
\end{align*}
and
\begin{align*} 
\left( \cdot \big{|} \cdot \right): \left( C^{4} ( \overline{ \mathbb{B}^{7} } ) \times C^{3} (\overline{ \mathbb{B}^{7} } ) \right)^2
 \longrightarrow \mathbb{R},\quad \left( \mathbf{u} \big{|} \mathbf{v}  \right):=\sum _{i =1}^{7}  \left( \mathbf{u} \big{|} \mathbf{v}  \right)_{i},
\end{align*} 
where the sesquilinear forms are
\begin{align*} 
& \left( \mathbf{u} \big{|} \mathbf{v}  \right)_{1}:= \int _{\mathbb{B}^{7}} 
\partial _{i} \partial _{j} \partial _{k} \partial _{\ell} u_{1} (\xi )
\overline{ \partial ^{i} \partial ^{j} \partial ^{k}  \partial ^{\ell}  v_{1} (\xi ) } d \xi  +  \int _{\mathbb{B}^{7}} \partial _{i} \partial _{j}  \partial _{k}  u_{2} (\xi ) \overline{\partial ^{i} \partial ^{j}  \partial ^{k} v_{2} (\xi ) } d \xi \\
&\quad\quad\quad + \int _{\mathbb{S}^{6}} \partial _{i} \partial _{j}  \partial _{k}  u_{1}  (\omega)  \overline{ \partial ^{i} \partial ^{j}  \partial ^{k} v_{1}  (\omega) } d \sigma (\omega), \\
& \left( \mathbf{u} \big{|} \mathbf{v}  \right)_{2}:= \int _{\mathbb{B}^{7}} \partial _{i} \partial _{j} \partial ^{k}  \partial _{k} u_{1} (\xi ) \overline{ \partial ^{i} \partial ^{j} \partial ^{l} \partial _{l} v_{1}  (\xi ) } d \xi 
+  \int _{\mathbb{B}^{7}} \partial _{i} \partial _{j} \partial _{k} u_{2} (\xi ) \overline{ \partial ^{i} \partial ^{j} \partial ^{k} v_{2}(\xi ) } d \xi \\
& \quad\quad\quad + \int _{\mathbb{S}^{6}} \partial _{i} \partial _{j} u_{2} (\omega) \overline{\partial ^{i} \partial ^{j} v_{2}(\omega) } d \sigma (\omega), \\
& \left( \mathbf{u} \big{|} \mathbf{v}  \right)_{3}:= \sum_{j=1}^{2} A_{3}^{j} \left( \mathbf{u} \big{|} \mathbf{v}  \right)_{j} + \int _{\mathbb{S}^{6}} \partial _{i} u_{2} (\omega) \overline{ \partial ^{i} v_{2} (\omega) } d \sigma (\omega), \\
& \left( \mathbf{u} \big{|} \mathbf{v}  \right)_{4}:= \sum_{j=1}^{3} A_{4}^{j}  \left( \mathbf{u} \big{|} \mathbf{v}  \right)_{j} + \int _{\mathbb{S}^{6}} \partial _{i} 
\partial _{j} u_{1} (\omega)  \overline{ \partial ^{i} \partial ^{j} v_{1} (\omega) }d \sigma (\omega), \\
& \left( \mathbf{u} \big{|} \mathbf{v}  \right)_{5}:= \sum_{j=1}^{4}  A_{5}^{j}  \left( \mathbf{u} \big{|} \mathbf{v}  \right)_{j} + \int _{\mathbb{S}^{6}}  u_{2}(\omega)  \overline{  v_{2} (\omega) } d \sigma (\omega), \\
& \left( \mathbf{u} \big{|} \mathbf{v}  \right)_{6}:= \sum_{j=1}^{5}  A_{6}^{j}  \left( \mathbf{u} \big{|} \mathbf{v}  \right)_{j} + \int _{\mathbb{S}^{6}}  \partial _{i} u_{1} (\omega) \overline{  \partial ^{i} v_{1}(\omega) } d \sigma (\omega), \\
& \left( \mathbf{u} \big{|} \mathbf{v}  \right)_{7}:= 
\left(  \int _{\mathbb{S}^{6}} \zeta \left(\omega,\mathbf{u} (\omega) \right) d \sigma (\omega)  \right) 
\left(  \int _{\mathbb{S}^{6}} \overline{ \zeta \left(\omega,\mathbf{v}(\omega)  \right) } d \sigma (\omega)  \right), 
\end{align*}
for some constants $A_{i}^{j}$ and for all $\mathbf{u},\mathbf{v} \in C^{4} (\overline{ \mathbb{B}^{7} } ) \times C^{3} (\overline{ \mathbb{B}^{7} })$. Here,
\begin{align*}
& \zeta \left(\omega,\mathbf{w}(\omega)  \right):=  D_{7} w_{1} (\omega) + \tilde{D}_{7} w_{2} (\omega), \\ 
& D_{7} w_{1}(\omega) := \omega ^{i}  \omega ^{j}  \omega ^{k} \partial _{i} \partial _{j}  \partial _{k} w_{1}(\omega) + 12 \omega ^{i} \omega ^{j} \partial _{i} \partial _{j} w_{1} (\omega)+ 33 \omega ^{i} \partial _{i} w_{1} (\omega)+  15 w_{1}(\omega), \\
& \tilde{D}_{7} w_{2}(\omega) := \omega ^{i} \omega ^{j} \partial _{i} \partial _{j} w_{2}(\omega) + 9 \omega ^{j} \partial _{j} w_{2}(\omega) +  15 w_{2}(\omega).
\end{align*}
One can prove that this inner product defines indeed a norm equivalent to $H^{4} \left( \mathbb{B}^{7} \right) \times H^{3} \left( \mathbb{B}^{7} \right)$ and the decay estimates \eqref{decay1} and \eqref{decay2} hold. Furthermore, for $d=9$, we define
\begin{align*}
\mathcal{D} \big(\widetilde{ \mathbf{L} }  \big):= C^{6} \big( \overline{ \mathbb{B}^{9} } \big) \times C^{5} \big( \overline{ \mathbb{B}^{9} } \big).
\end{align*}
and
\begin{align*} 
\left( \cdot \big{|} \cdot \right): \left( C^{5} (\overline{ \mathbb{B}^{9} } ) \times C^{4} ( \overline{ \mathbb{B}^{9} } ) \right)^2
 \longrightarrow \mathbb{R},\quad \left( \mathbf{u} \big{|} \mathbf{v}  \right):=\sum _{i =1}^{9}  \left( \mathbf{u} \big{|} \mathbf{v}  \right)_{i},
\end{align*} 
where the sesquilinear forms are
\begin{align*} 
& \left( \mathbf{u} \big{|} \mathbf{v}  \right)_{1}:= \int _{\mathbb{B}^{9}} 
\partial _{i} \partial _{j} \partial _{k} \partial _{\ell} \partial _{m} u_{1} (\xi)
\overline{ \partial ^{i} \partial ^{j} \partial ^{k}  \partial ^{\ell}  \partial ^{m} v_{1}(\xi) } d \xi 
+  \int _{\mathbb{B}^{9}} \partial _{i} \partial _{j}  \partial _{k} \partial _{\ell}  u_{2}(\xi) \overline{\partial ^{i} \partial ^{j}  \partial ^{k}  \partial ^{\ell} v_{2} (\xi) } d \xi \\
& \quad\quad\quad+ \int _{\mathbb{S}^{8}} \partial _{i} \partial _{j}  \partial _{k}  \partial _{\ell} u_{1}  (\omega) \overline{ \partial ^{i} \partial ^{j}  \partial ^{k}  \partial ^{\ell} v_{1} (\omega) } d \sigma (\omega), \\
& \left( \mathbf{u} \big{|} \mathbf{v}  \right)_{2}:= \int _{\mathbb{B}^{9}} \partial _{i} \partial _{j} \partial ^{k}  \partial _{\ell}  \partial ^{\ell} u_{1}(\xi)  \overline{ \partial ^{i} \partial ^{j}  \partial _{k} \partial ^{m} \partial _{m} v_{1}(\xi)  } d \xi 
+  \int _{\mathbb{B}^{9}} \partial _{i} \partial _{j} \partial _{k}  \partial _{\ell} u_{2} (\xi)   \overline{ \partial ^{i} \partial ^{j} \partial ^{k}  \partial ^{\ell} v_{2} (\xi)  } d \xi  \\
&\quad\quad\quad+ \int _{\mathbb{S}^{8}} \partial _{i} \partial _{j}  \partial _{k} u_{2} (\omega) \overline{\partial ^{i} \partial ^{j}  \partial ^{k} v_{2} (\omega) } d \sigma (\omega), \\
& \left( \mathbf{u} \big{|} \mathbf{v}  \right)_{3}:= \sum_{j=1}^{2} B_{3}^{j} \left( \mathbf{u} \big{|} \mathbf{v}  \right)_{j} + \int _{\mathbb{S}^{8}} \partial _{i}  \partial _{j} u_{2} (\omega) \overline{ \partial ^{i}  \partial ^{j} v_{2} (\omega) } 
d \sigma (\omega), \\
& \left( \mathbf{u} \big{|} \mathbf{v}  \right)_{4}:= \sum_{j=1}^{3}  B_{4}^{j} \left( \mathbf{u} \big{|} \mathbf{v}  \right)_{j} + \int _{\mathbb{S}^{8}} \partial _{i} 
\partial _{j}  \partial _{k} u_{1} (\omega) \overline{ \partial ^{i} \partial ^{j}  \partial _{k} v_{1}(\omega) }d \sigma (\omega), \\
& \left( \mathbf{u} \big{|} \mathbf{v}  \right)_{5}:= \sum_{j=1}^{4} B_{5}^{j} \left( \mathbf{u} \big{|} \mathbf{v}  \right)_{j} + \int _{\mathbb{S}^{8}}   \partial _{i} u_{2} (\omega) \overline{  \partial ^{i} v_{2}(\omega) }d \sigma (\omega), \\
& \left( \mathbf{u} \big{|} \mathbf{v}  \right)_{6}:= \sum_{j=1}^{5} B_{6}^{j} \left( \mathbf{u} \big{|} \mathbf{v}  \right)_{j} + \int _{\mathbb{S}^{8}}  \partial _{i}  \partial _{j} u_{1}(\omega)  \overline{  \partial ^{i}  \partial ^{j} v_{1}(\omega) }d \sigma (\omega), \\
& \left( \mathbf{u} \big{|} \mathbf{v}  \right)_{7}:= \sum_{j=1}^{6}B_{7}^{j} \left( \mathbf{u} \big{|} \mathbf{v}  \right)_{j} + \int _{\mathbb{S}^{8}}  u_{2} (\omega) \overline{   v_{2}(\omega) }d \sigma (\omega), \\
& \left( \mathbf{u} \big{|} \mathbf{v}  \right)_{8}:= \sum_{j=1}^{7} B_{8}^{j}\left( \mathbf{u} \big{|} \mathbf{v}  \right)_{j} + \int _{\mathbb{S}^{8}}  \partial _{i}   u_{1} (\omega) \overline{  \partial ^{i}  v_{1}(\omega) }d \sigma (\omega), \\
& \left( \mathbf{u} \big{|} \mathbf{v}  \right)_{9}:= 
\left(  \int _{\mathbb{S}^{6}} \zeta \left(\omega,\mathbf{u} (\omega) \right) d \sigma (\omega)  \right) 
\left(  \int _{\mathbb{S}^{6}} \overline{ \zeta \left(\omega,\mathbf{v}(\omega)  \right) } d \sigma (\omega)  \right), 
\end{align*}
for some constants $B_{i}^{j}$ and for all $\mathbf{u},\mathbf{v} \in C^{5} (\overline{ \mathbb{B}^{9} }) \times C^{4} (\overline{ \mathbb{B}^{9} })$. Here,
\begin{align*}
& \zeta \left(\omega,\mathbf{w}(\omega)  \right):=  D_{9} w_{1}(\omega) + \tilde{D}_{9} w_{2}(\omega), \\ 
& D_{9} w_{1} (\omega) := \omega ^{i}  \omega ^{j}  \omega ^{k} \omega ^{\ell} \partial _{i} \partial _{j}  \partial _{k} \partial _{\ell}  w_{1} (\omega)+
22  \omega ^{i}  \omega ^{j}  \omega ^{k} \partial _{i} \partial _{j}  \partial _{k} w_{1} (\omega)+ 141 \omega ^{i} \omega ^{j} \partial _{i} \partial _{j} w_{1} (\omega) \\
& \quad \quad \quad \quad + 279 \omega ^{i} \partial _{i} w_{1}(\omega) +  105 w_{1} (\omega), \\
& \tilde{D}_{9} w_{2}(\omega)  :=  \omega ^{i}  \omega ^{j}  \omega ^{k} \partial _{i} \partial _{j}  \partial _{k} w_{2} (\omega)+
18 \omega ^{i} \omega ^{j} \partial _{i} \partial _{j} w_{2} (\omega)+ 87 \omega ^{j} \partial _{j} w_{2}(\omega) +  105 w_{2}(\omega).
\end{align*}
We can verify that this inner product defines indeed a norm equivalent to $H^{5} \left( \mathbb{B}^{9} \right) \times H^{4} \left( \mathbb{B}^{9} \right)$ and the decay estimates \eqref{decay1} and \eqref{decay2} hold. In addition, for $d=11$, we define
\begin{align*}
\mathcal{D} \big(\widetilde{ \mathbf{L} }  \big):= C^{7} \big( \overline{ \mathbb{B}^{11} } \big) \times C^{6} \big( \overline{ \mathbb{B}^{11} } \big).
\end{align*}
and
\begin{align*} 
\left( \cdot \big{|} \cdot \right): \left( C^{6} (\overline{ \mathbb{B}^{11} }) \times C^{5} (\overline{ \mathbb{B}^{11} }) \right)^2
 \longrightarrow \mathbb{R},\quad \left( \mathbf{u} \big{|} \mathbf{v}  \right):=\sum _{i =1}^{11}  \left( \mathbf{u} \big{|} \mathbf{v}  \right)_{i},
\end{align*} 
where the sesquilinear forms are
\begin{align*} 
& \left( \mathbf{u} \big{|} \mathbf{v}  \right)_{1}:= \int _{\mathbb{B}^{11}} 
\partial _{i} \partial _{j} \partial _{k} \partial _{\ell} \partial _{m}  \partial _{n} u_{1}  (\xi)
\overline{ \partial ^{i} \partial ^{j} \partial ^{k}  \partial ^{\ell}  \partial ^{m} \partial ^{n} v_{1}  (\xi) } d \xi 
+  \int _{\mathbb{B}^{11}} \partial _{i} \partial _{j}  \partial _{k} \partial _{\ell}  \partial _{n} u_{2}  (\xi) \overline{\partial ^{i} \partial ^{j}  \partial ^{k}  \partial ^{\ell} \partial ^{n} v_{2}  (\xi) } d \xi \\
& \quad\quad\quad+ \int _{\mathbb{S}^{10}} \partial _{i} \partial _{j}  \partial _{k}  \partial _{\ell} \partial _{n} u_{1} (\omega)  \overline{ \partial ^{i} \partial ^{j}  \partial ^{k}  \partial ^{\ell} \partial ^{n} v_{1}(\omega) } d \sigma (\omega), \\
& \left( \mathbf{u} \big{|} \mathbf{v}  \right)_{2}:= \int _{\mathbb{B}^{11}} \partial _{i} \partial _{j} \partial _{k}  \partial _{\ell} \partial_{n} \partial ^{n} u_{1}  (\xi)
\overline{ \partial ^{i} \partial ^{j}  \partial _{k} \partial _{\ell} \partial ^{m} \partial _{m} v_{1}  (\xi) } d \xi 
+  \int _{\mathbb{B}^{11}} \partial _{i} \partial _{j} \partial _{k}  \partial _{\ell} \partial _{n} u_{2}  (\xi) \overline{ \partial ^{i} \partial ^{j} \partial ^{k}  \partial ^{\ell} \partial ^{n} v_{2} (\xi) } d \xi  \\
&\quad\quad\quad+ \int _{\mathbb{S}^{10}} \partial _{i} \partial _{j}  \partial _{k}  \partial _{\ell} u_{2}  (\omega) \overline{\partial ^{i} \partial ^{j}  \partial ^{k}  \partial ^{\ell} v_{2} (\omega) } d \sigma (\omega), \\
& \left( \mathbf{u} \big{|} \mathbf{v}  \right)_{3}:= \sum_{j=1}^{2}C_{3}^{j} \left( \mathbf{u} \big{|} \mathbf{v}  \right)_{j} + \int _{\mathbb{S}^{1}} \partial _{i}  \partial _{j} \partial _{k} u_{2} (\omega)
 \overline{ \partial ^{i}  \partial ^{j} \partial ^{k} v_{2}(\omega) } d \sigma (\omega), \\
& \left( \mathbf{u} \big{|} \mathbf{v}  \right)_{4}:= \sum_{j=1}^{3}C_{4}^{j}  \left( \mathbf{u} \big{|} \mathbf{v}  \right)_{j} + \int _{\mathbb{S}^{10}} \partial _{i} 
\partial _{j}  \partial _{k} \partial _{\ell} u_{1} (\omega) \overline{ \partial ^{i} \partial ^{j}  \partial _{k} \partial ^{\ell} v_{1}(\omega) }d \sigma (\omega), \\
& \left( \mathbf{u} \big{|} \mathbf{v}  \right)_{5}:= \sum_{j=1}^{4} C_{5}^{j} \left( \mathbf{u} \big{|} \mathbf{v}  \right)_{j} + \int _{\mathbb{S}^{10}}   \partial _{i} \partial _{j}  u_{2} (\omega) \overline{  \partial ^{i} \partial _{j} v_{2} (\omega) }d \sigma (\omega), \\
& \left( \mathbf{u} \big{|} \mathbf{v}  \right)_{6}:= \sum_{j=1}^{5} C_{6}^{j} \left( \mathbf{u} \big{|} \mathbf{v}  \right)_{j} + \int _{\mathbb{S}^{10}}  \partial _{i}  \partial _{j} \partial _{k} u_{1} (\omega)  \overline{  \partial ^{i}  \partial ^{j} \partial _{k} v_{1}(\omega) }d \sigma (\omega), \\
& \left( \mathbf{u} \big{|} \mathbf{v}  \right)_{7}:= \sum_{j=1}^{6} C_{7}^{j}  \left( \mathbf{u} \big{|} \mathbf{v}  \right)_{j} + \int _{\mathbb{S}^{10}}\partial _{i}   u_{2} (\omega)  \overline{ \partial ^{i}   v_{2} (\omega) }d \sigma (\omega), \\
& \left( \mathbf{u} \big{|} \mathbf{v}  \right)_{8}:= \sum_{j=1}^{7} C_{8}^{j}  \left( \mathbf{u} \big{|} \mathbf{v}  \right)_{j} + \int _{\mathbb{S}^{10}}  \partial _{i}  \partial _{j}  u_{1} (\omega) \overline{  \partial ^{i}  \partial ^{j} v_{1}(\omega) }d \sigma (\omega), \\
&  \left( \mathbf{u} \big{|} \mathbf{v}  \right)_{9}:= \sum_{j=1}^{8}C_{9}^{j}  \left( \mathbf{u} \big{|} \mathbf{v}  \right)_{j} + \int _{\mathbb{S}^{10}}   u_{2}(\omega)  \overline{  v_{2}(\omega) }d \sigma (\omega), \\
&  \left( \mathbf{u} \big{|} \mathbf{v}  \right)_{10}:= \sum_{j=1}^{7} C_{10}^{j} \left( \mathbf{u} \big{|} \mathbf{v}  \right)_{j} + \int _{\mathbb{S}^{10}}  \partial _{i}  u_{1}(\omega)  \overline{  \partial ^{i}  v_{1}(\omega) }d \sigma (\omega), \\
& \left( \mathbf{u} \big{|} \mathbf{v}  \right)_{11}:= 
\left(  \int _{\mathbb{S}^{10}} \zeta \left(\omega,\mathbf{u} (\omega) \right) d \sigma (\omega)  \right) 
\left(  \int _{\mathbb{S}^{10}} \overline{ \zeta \left(\omega,\mathbf{v}(\omega)  \right) } d \sigma (\omega)  \right), 
\end{align*}
for some constants $C_{i}^{j}$ and for all $\mathbf{u},\mathbf{v} \in C^{6} (\overline{ \mathbb{B}^{11} }) \times C^{5} (\overline{ \mathbb{B}^{11} })$. Here,
\begin{align*}
& \zeta \left(\omega,\mathbf{w}(\omega)  \right):=  D_{11} w_{1} (\omega)+ \tilde{D}_{11} w_{2}(\omega), \\ 
& D_{11} w_{1}(\omega) := \omega ^{i}  \omega ^{j}  \omega ^{k} \omega ^{\ell}  \omega ^{m} \partial _{i} \partial _{j}  \partial _{k} \partial _{\ell}  \partial _{m}  w_{1} (\omega)+
35 \omega ^{i}  \omega ^{j}  \omega ^{k} \omega ^{\ell} \partial _{i} \partial _{j}  \partial _{k} \partial _{\ell}  w_{1}(\omega) +
405  \omega ^{i}  \omega ^{j}  \omega ^{k} \partial _{i} \partial _{j}  \partial _{k} w_{1}(\omega) \\
&\quad\quad\quad \quad + 
1830 \omega ^{i} \omega ^{j} \partial _{i} \partial _{j} w_{1} (\omega)+ 
2895 \omega ^{i} \partial _{i} w_{1}(\omega) +  945 w_{1}(\omega), \\
& \tilde{D}_{11} w_{2}(\omega) :=  \omega ^{i}  \omega ^{j}  \omega ^{k}  \omega ^{\ell} \partial _{i} \partial _{j}  \partial _{k}  \partial _{\ell}  w_{2}(\omega) +
30 \omega ^{i}  \omega ^{j}  \omega ^{k} \partial _{i} \partial _{j}  \partial _{k} w_{2} (\omega)+
285 \omega ^{i} \omega ^{j} \partial _{i} \partial _{j} w_{2}(\omega) \\
&\quad\quad\quad \quad + 
975 \omega ^{j} \partial _{j} w_{2}(\omega) +  945 w_{2}(\omega).
\end{align*}
We can verify that this inner product defines indeed a norm equivalent to $H^{6} \left( \mathbb{B}^{11} \right) \times H^{5} \left( \mathbb{B}^{11} \right)$ and the decay estimates \eqref{decay1} and \eqref{decay2} hold. Similarly, we get analogous formulas for the case $d=13$ and verify \eqref{decay1} and \eqref{decay2}.
\end{proof}

 \bibliographystyle{plain}
 \bibliography{waveodd_nosym}
\end{document}